\newcommand{\setof}[1]{\{ #1 \}}
\newcommand{\aone}{{\mathbb A}^1}
\newcommand{\pone}{{\mathbb P}^1}
\newcommand{\tensor}{\otimes}
\newcommand{\colim}{\operatorname{colim}}
\renewcommand{\O}{{\mathcal O}}
\newcommand{\ho}[1]{{\mathscr H}({#1})}
\newcommand{\hop}[1]{{\mathscr H}_{\bullet}({#1})}
\newcommand{\cplx}{{\mathbb C}}
\newcommand{\Z}{{\mathbb Z}}
\newcommand{\ga}{{\mathbb G}_{\bf a}}
\newcommand{\gm}{{\mathbb G}_{\bf m}}
\newcommand{\hsnis}{{\mathcal H}_s(({\mathcal Sm}_k)_{Nis})}
\newcommand{\isomto}{\stackrel{\sim}{\longrightarrow}}
\newcommand{\Spec}{\operatorname{Spec}}
\newcommand{\Sm}{{\mathcal Sm}}
\newcommand{\kbar}{{\overline{k}}}
\newcommand{\Spc}{{\mathcal Spc}}
\newcommand{\simpspc}{\Delta^{\circ}\Spc}
\newcommand{\codim}{\operatorname{codim}}
\newcommand{\Supp}{\operatorname{Supp}}
\newcommand{\real}{{\mathbb{R}}}
\newcommand{\Star}{{\operatorname{Star}}}
\newcommand{\holim}{{\operatorname{holim}}}
\theoremstyle{plain}
\newtheorem{thm}{Theorem}[section]
\newtheorem{lem}[thm]{Lemma}
\newtheorem{cor}[thm]{Corollary}
\newtheorem{prop}[thm]{Proposition}
\newtheorem{question}[thm]{Question}
\newtheorem{conj}[thm]{Conjecture}
\theoremstyle{definition}
\newtheorem{defn}[thm]{Definition}
\theoremstyle{remark}
\newtheorem{rem}[thm]{Remark}
\newtheorem{ex}[thm]{Example}
\newtheorem{entry}[thm]{}
\newtheorem{construction}[thm]{Construction}
\newtheorem{convention}[thm]{Convention}
\numberwithin{equation}{thm}
\numberwithin{figure}{subsection}
\newcommand{\AnZ}{\ensuremath{\mathbb{A}^n - Z}}
\newcommand{\shrinkmargins}[1]{
  \addtolength{\textheight}{#1\topmargin}
  \addtolength{\textheight}{#1\topmargin}
  \addtolength{\textwidth}{#1\oddsidemargin}
  \addtolength{\textwidth}{#1\evensidemargin}
  \addtolength{\topmargin}{-#1\topmargin}
  \addtolength{\oddsidemargin}{-#1\oddsidemargin}
  \addtolength{\evensidemargin}{-#1\evensidemargin}
  }
\begin{document}
\pagestyle{fancy}
\renewcommand{\sectionmark}[1]{\markright{\thesection\ #1}}
\fancyhead{}
\fancyhead[LO,RE]{\bfseries\footnotesize\thepage}
\fancyhead[LE]{\bfseries\footnotesize\rightmark}
\fancyhead[RO]{\bfseries\footnotesize\rightmark}
%\lhead[odd]{\scriptsize\leftmark}
%\rhead[\arabic{page}]{\footnotesize\arabic{page}}
\chead[]{}
\cfoot[]{}
\setlength{\headheight}{1cm}

\author{Aravind Asok \\ \begin{footnotesize}Department of Mathematics\end{footnotesize} \\ \begin{footnotesize}University of California, Los Angeles\end{footnotesize} \\ \begin{footnotesize}Los Angeles, CA 90095-1555 \end{footnotesize} \\ \begin{footnotesize}\url{asok@math.ucla.edu}\end{footnotesize}\\
\and Brent Doran \\ \begin{footnotesize}School of
Mathematics\end{footnotesize} \\ \begin{footnotesize}Institute for
Advanced Study\end{footnotesize} \\ \begin{footnotesize}Princeton,
NJ 08540\end{footnotesize} \\
\begin{footnotesize}\url{doranb@ias.edu}\end{footnotesize}}

\title{{\bf {${\mathbb A}^1$-homotopy groups, excision, and solvable quotients}}}
\maketitle

\begin{abstract}
We study some properties of $\aone$-homotopy groups:
geometric interpretations of connectivity, excision results, and a
re-interpretation of quotients by free actions of connected solvable groups in terms of covering spaces in the sense of $\aone$-homotopy theory.  These concepts and results are well-suited to the study of certain quotients via geometric invariant theory.

As a case study in the geometry of solvable group quotients, we investigate $\aone$-homotopy groups of smooth toric varieties. We give simple combinatorial conditions (in terms of fans) guaranteeing vanishing
of low degree $\aone$-homotopy groups of smooth (proper) toric
varieties. Finally, in certain cases, we can actually compute the
``next" non-vanishing $\aone$-homotopy group (beyond $\pi_1^{\aone}$) of a smooth toric variety.  From this point of view, $\aone$-homotopy theory, even with its exquisite sensitivity to algebro-geometric structure, is
almost ``as tractable" (in low degrees) as ordinary homotopy for
large classes of interesting varieties.
\end{abstract}

\begin{footnotesize}
\tableofcontents
\end{footnotesize}

\section{Introduction}
One of the basic problems in $\aone$-homotopy theory (see
\cite{VICM} \S 7) is to understand concretely, that is in terms of
explicit algebraic and combinatorial data, the isomorphism class of a smooth algebraic variety $X$ over a field $k$ viewed as an object in the Morel-Voevodsky $\aone$-homotopy category (see \cite{MV}).  More briefly, we want to understand concretely the $\aone$-homotopy types of smooth varieties.  Knowing the $\aone$-homotopy type of a variety $X$ provides, in a precise sense, universal cohomological information about $X$ and, following the basic idea of the theory of motives, gives insight into how geometric properties of $X$ control arithmetic properties.  

Mimicking constructions of classical homotopy theory, Morel and Voevodsky introduced analogs of homotopy groups, the so-called $\aone$-homotopy groups, into $\aone$-homotopy theory.  They proved a Whitehead theorem (see Theorem \ref{thm:Whitehead}), which shows the $\aone$-homotopy groups (strictly speaking, Nisnevich sheaves of groups) can be used to detect isomorphisms, i.e., $\aone$-weak equivalences, in the $\aone$-homotopy category.  Thus, these $\aone$-homotopy groups, while very mysterious, and extremely difficult to compute in general, provide a fundamental class of algebro-combinatorial invariants of an $\aone$-homotopy type.  The general goal of this paper is to give new techniques for computing $\aone$-homotopy groups of special smooth varieties $X$, and to thereby investigate the extent to which geometry controls arithmetic of $X$ for such varieties.

Many interesting classes of varieties, e.g., many moduli spaces, can be constructed as geometric quotients by group actions.  Typically, the space on which the group acts is comparatively easy to understand, and one is interested in studying geometry and topology of the quotient variety.  In this paper, we study $\aone$-homotopy groups of certain varieties constructed as geometric quotients.  In particular, Morel has developed a theory of covering spaces in $\aone$-homotopy theory that gives a natural geometric interpretation of the first $\aone$-homotopy group, i.e., the $\aone$-fundamental group.  A primary motivation for this paper was the realization that geometric quotients of smooth schemes by free actions of split solvable affine algebraic groups are, up to $\aone$-weak equivalence, in fact covering spaces in $\aone$-homotopy theory.  For varieties over $\cplx$, $\aone$-covering spaces {\em do not} in general give rise to covering spaces in the sense of topology, so one needs to develop somewhat different intuition for study of $\aone$-covering spaces.  We offer the following intuitive explanation: over $\real$, one sees a torsor under a split torus is simply, up to ordinary homotopy, a finite covering space in the usual topological sense.

%Geometric quotients by group actions arise naturally in algebraic geometry in moduli problems and in the construction of a host of classical varieties; typically, the space on which the group acts is comparatively easy to understand, and one is interested in the geometry of the quotient.  

%In another direction, Morel has developed a theory of $\aone$-covering spaces for smooth schemes, in analogy with classical algebraic topology. 

%A major motivation for this paper was the realization that smooth geometric quotients by free actions of (split) solvable (affine algebraic) groups are in fact $\aone$-covering spaces up to $\aone$-weak equivalence.    

In this spirit, start with a smooth scheme $X$ whose $\aone$-homotopy groups we understand.   Suppose $X$ is endowed with a free action of a split solvable affine algebraic group $G$.  If the quotient of $X$ by $G$ exists as a smooth scheme, then the $\aone$-homotopy groups of $X/G$ can be computed from those of $X$ ({\em cf.} Proposition \ref{prop:solvablequot} and Corollary \ref{cor:homotopycovering}).  In the case of additive, and more generally split unipotent, groups we exploited this to produce arbitrary dimensional moduli of $\aone$-contractible varieties \cite{AD1} as quotients of affine space.  One application of this paper is to study of $\aone$-homotopy groups of quotients of (open subsets of) affine space by free actions of split tori, which is in some sense complementary to the study of \cite{AD1}.   

%By modifying $X$ in high codimension and taking quotients by the $G$-action, we can attempt to produce other smooth schemes whose $\aone$-homotopy groups we likewise understand.  

For reductive, or even just multiplicative, group actions the
natural way to produce quotient schemes is through geometric
invariant theory, which helps identify closed subschemes that
prevent the action from being proper or free.  Much can be said
about the motivic cohomology of such quotients, by using an
inductive description of a natural stratification of the
aforementioned closed subschemes (see \cite{ADK}). Of course, when studying cohomology one constantly uses excision and localization sequences, neither of which exist as such for homotopy groups.  The technical heart of the paper is devoted to proving a version of excision with appropriate connectivity hypotheses in place (see Theorem \ref{thm:excision}).  This theorem will allow us to relate $\aone$-homotopy groups of nice smooth schemes and open subschemes with complement of high codimension.  

%After proving excision results for sufficiently ``highly connected in the $\aone$-sense" spaces (see Theorem \ref{thm:excision}), we invoke the above perspective to concretely compute some $\aone$-homotopy groups for smooth toric varieties (see Theorem \ref{thm:abelian}). 

We then invoke the perspective discussed above to concretely compute some $\aone$-homotopy groups for smooth toric varieties (see Theorem \ref{thm:abelian}).  These varieties arise as quotients of open subsets of affine space (presented as complements of arrangements of linear coordinate subspaces) by the natural (free) action of the torus dual to the Picard group of the variety.  In actuality we do not directly use GIT, which only picks out certain open subsets with a free group action (in particular, these quotients must be quasi-projective), but rather the related homogeneous coordinate ring description for toric varieties given by Cox (and many others).   This presentation captures all of the relevant open subsets in a pleasant combinatorial manner. In particular, we stress that these computations hold even for non-quasi-projective smooth toric varieties; we will discuss this more in Section \ref{ss:examples}.  We thereby see that the (fundamentally algebro-geometric) $\aone$-homotopy theory can be nearly ``as tractable" as ordinary homotopy for large classes of interesting varieties.  We now give a precise outline of the method of investigation.  

% Some comment about universal torsors?  

\subsubsection*{The topological story}
For the moment, we consider only varieties over $\cplx$ for simplicity.  Let us recall the basic techniques one can use to study the homotopy theory of quotients.  Let $G$ denote a connected, affine algebraic group and suppose $X$ is a connected smooth scheme equipped with a (left) $G$-action.  Suppose furthermore that $G$ acts freely on an open subscheme $U \subset X$, such that a (geometric) quotient $U/G$ exists as a smooth scheme.  Let us denote by $Z \subset X$ the closed complement of $U$ (equipped with the reduced induced scheme structure).  We consider the triple $(X,G,Z)$.  Assume, for simplicity of notation, that we know that $X(\cplx)$ is $(d-2)$-connected where the maximal dimensional components of $Z$ have codimension $d$ in $X$.    
\begin{itemize}
\item {\bf Excision} of a codimension $d$ closed subspace from a $(d-2)$-connected manifold induces isomorphisms on $\pi_i$ for $i \leq d-2$.  Thus $\pi_i(U(\cplx))$ vanishes for $i \leq d-2$ and $\pi_{d-1}(X(\cplx)) \cong \pi_{d-1}(U(\cplx))$.  
\item The {\bf Hurewicz} theorem gives a canonical morphism $\pi_{d-1}(U(\cplx)) \longrightarrow H_{d-1}(U(\cplx))$ that is an isomorphism if $d \geq 3$ and a surjection (indeed, abelianization) if $d = 2$.  Furthermore, {\bf Mayer-Vietoris} sequences can be used to study $H_{d-1}(U(\cplx))$ in good situations.  
\item The {\bf fibration} $U(\cplx) \longrightarrow U/G(\cplx)$ gives rise to a long exact sequence relating $\pi_i(U(\cplx))$ and $\pi_{i}(U/G(\cplx))$.    
\end{itemize}

\subsubsection*{The motivic story and its complications}
Amazingly, many of the above ideas can be adapted to study $\aone$-homotopy and $\aone$-homology.  This adaptation is by no means formal:  while notationally the results are very similar, we must prove many things to adapt the above outline, and several arithmetic subtleties rear their heads.  

\begin{itemize}
\item
{\bf Excision} holds with some additional connectivity hypotheses in place.  In particular, one must impose the extra condition that the
complement of the codimension $d$ closed subspace, here $U$, be $\aone$-connected (see Theorem \ref{thm:excision}).  Unlike with ordinary homotopy, it is {\em not} true in general that codimension at least $2$ complements of an $\aone$-connected space (see Remark \ref{rem:spacefilling}) are $\aone$-connected.    
\item
A {\bf Hurewicz} theorem relating $\aone$-homotopy and
$\aone$-homology groups has been established in work of Morel (see Definition \ref{defn:aonehomology} and
Theorem \ref{thm:aonehurewicz}).  One caveat deserves mention here.  If $\pi_1^{\aone}$ is non-trivial, then the Hurewicz morphism $\pi_1^{\aone} \longrightarrow H_1^{\aone}$ is not known to be an epimorphism (or abelianization) in general. (If $\pi_1^{\aone}$ is known independently to be abelian, the Hurewicz homomorphism is an isomorphism as expected.)  
\item The notion of a fibration in $\aone$-homotopy theory, i.e., $\aone$-fibration, is characterized by an appropriate lifting property.  While quotients as above are fibrations in classical homotopy theory, it is {\em extremely} difficult to check, and probably false in general, that they are $\aone$-fibrations.  In any case, the long exact homotopy sequence of a fibration exists in any model category in the sense of Quillen.  
\end{itemize}

In topology, the simplest examples of fibrations are covering spaces.  Analogously, the simplest $\aone$-fibrations are $\aone$-covering spaces ({\em cf.} Definition \ref{defn:geometriccovering}).  This explains our focus on quotients by solvable group actions, though other complications arise.  In stark contrast to classical covering spaces, even when $U$ is assumed $\aone$-connected, it is not {\em a priori} clear that the quotient $U/G$ is also $\aone$-connected (see e.g., Example \ref{ex:finitequotient}).  In the situations we consider, $U/G$ will be proper, and can be shown to be $\aone$-connected.  In our terminology, the map $U \longrightarrow U/G$ will be {\em a geometric Galois $\aone$-covering space} (see Section \ref{s:geometry}).

%he easiest way to guarantee that $U \longrightarrow U/G$ is an $\aone$-fibration is to ask that it be an $\aone$-covering space

The need to verify $\aone$-connectedness of $U$ (to apply the
excision theorem) and of $U/G$ (to meaningfully discuss $\aone$-homotopy groups independent of choice of base point) requires an extended foray into the geometry underlying $\aone$-connectedness.  We introduce a geometric criterion called $\aone$-chain connectedness, which for smooth proper varieties is stronger than rational chain connectivity in the sense of Koll\'ar-Miyaoka-Mori, and show how it implies $\aone$-connectedness.  

To illustrate the above method, we consider the case of smooth proper toric varieties, which can always be presented, by work of Cox (\cite{Cox}), as torus quotients.  Here $X = {\mathbb A}^n$, $G$ is a split torus, and $Z$ is a union of coordinate subspaces.  In this case, Lemma \ref{lem:affinespace} and Lemma \ref{lem:connected} establish the relevant $\aone$-connectedness hypotheses.  Consequently, the quotient morphism $U \rightarrow U/G$ is an $\aone$-covering space.  Let us emphasize that while smooth proper toric varieties are {\em simply connected} from the standpoint of classical topology (e.g., \cite{Fulton} \S 3.2 p. 56 Proposition), each such variety has {\em non-trivial} $\aone$-fundamental group!

Continuing, affine space is $\aone$-contractible (and hence highly $\aone$-connected) and the $\aone$-homology of $U$ can sometimes be computed using the Mayer-Vietoris sequence in $\aone$-homology (see Proposition \ref{prop:mayervietoris}).  One difficulty with the Mayer-Vietoris computation is that if a pair of codimension $d$ components of $Z$ intersect in codimension $d+1$ then the relevant part of the sequence does not split, so the problem is harder to analyze.  (This reflects how the constraint of {\em even arrangements}, introduced by Goresky and MacPherson in their study of the homology of arrangement complements in affine space via stratified Morse theory (see \cite{GM} Part III), is a convenient simplifying assumption.)  

The rest of the work is concerned with deducing combinatorial conditions (see Proposition \ref{prop:combinatorics}) that guarantee that the objects of study in each of the above steps
are explicitly computable.  In particular, we give sufficient
conditions for $\pi_1^{\aone}(\AnZ)$ to be abelian so we can apply the Hurewicz theorem, and for the relevant portion of the Mayer-Vietoris sequence to split (see Proposition \ref{prop:abelian}).  Conveniently, both of these have to do with whether codimension $d$
linear subspaces intersect in a variety of codimension $d+2$.

\begin{rem}
Suppose $X$ is now a variety defined over $\real$.  Our results provide corroborating evidence for the philosophy (due to Voevodsky and emphasized by Morel) that $\aone$-connectivity properties are more closely reflected by the topology of the {\em real} points.  The topological fundamental group of the real points of a smooth proper toric variety has been completely described by V. Uma (\cite{Uma}), and we refer the reader to {\em ibid.} for a longer discussion of topology of real points of toric varieties.  V. Uma also uses Cox's homogeneous coordinate ring presentation.
\end{rem}

\begin{rem}
Let us remark that if $X$ is a smooth and projective toric variety, then the Bialynicki-Birula decomposition can be used to decompose the motive of $X$.  Indeed, it follows from e.g., \cite{Brosnan} Theorem 3.3, that ${\sf M}(X)$ is a direct sum of Tate motives.  The Chow ring of a general smooth proper toric variety over a field of characteristic $0$ has been computed by Danilov (see \cite{Danilov} Theorem 10.8).  Presumably the decomposition above into Tate motives holds for all smooth proper toric varieties as well.  
\end{rem}

\begin{rem}
The algebraic K-theory of smooth, proper, equivariant compactifications of {\em non-split} tori, has been studied by Merkurjev and Panin in \cite{MerkurjevPanin}.  Proposition 5.6 of {\em ibid.} shows that a version of Cox's homogeneous coordinate ring exists for such varieties.  In other words, given such a variety $X$, one can find a torus $S$ (not necessarily split), an $S$-torsor $U$ and an $S$-equivariant embedding of $U$ into an affine space.  This $S$-torsor provides an example of a {\em universal torsor} in the sense of \cite{ColliotTheleneSansuc} 2.4.4.  In principle, many of our techniques might be extended to study $\aone$-homotopy groups of such compactifications.  
\end{rem}

\subsubsection*{Overview of sections}
We have endeavored to make this work as self-contained as possible.
Achieving this goal necessitated reviewing, at least schematically,
aspects of $\aone$-algebraic topology (as developed by Morel and Voevodsky in \cite{MV} and further by Morel in \cite{MField,MStable}) and the theory of toric varieties (see e.g., \cite{Fulton}).  While essentially none of this material is original, we hope that the novelty of presentation justifies its inclusion in the present work.

Section \ref{s:connected} is devoted to introducing an $\aone$-analog of the topological notion of path connectedness. We define a notion of $\aone$-chain connectedness (see Definition \ref{defn:connected}) and prove (see Proposition \ref{prop:aoneconnected}) that $\aone$-chain connected varieties are $\aone$-connected.  Furthermore, we discuss $\aone$-connectivity of certain open subsets of affine space.

Section \ref{s:higher} is devoted to a quick review of basic definitions and properties of higher $\aone$-homotopy and $\aone$-homology groups.  The only novelty here is that we prove the existence of Mayer-Vietoris sequences for $\aone$-homology.  Along the way, we discuss $\aone$-covering space theory and the computations of some $\aone$-homotopy groups of ${\mathbb A}^n - 0$ and ${\mathbb P}^n$, and the Postnikov tower in $\aone$-homotopy theory. 

Section \ref{s:excision} proves the first main result of the paper (see Theorem \ref{thm:excision}): an ``excision" result for $\aone$-homotopy groups.  The hardest part of this theorem is treating the case of the $\aone$-fundamental group.  Morel has developed a collection of techniques for dealing with the so-called {\em strongly $\aone$-invariant sheaves} of groups; all the $\aone$-homotopy groups of a (simplicial) space ${\mathcal X}$ are in fact strongly $\aone$-invariant, while the higher $\aone$-homotopy groups are furthermore {\em abelian}.  In \cite{CTHK}, a collection of axioms are given to prove the existence of ``Cousin resolutions" for certain ``$\aone$-invariant" sheaves of groups.  We review aspects of this theory here together with Morel's extension for strongly $\aone$-invariant sheaves of groups; these results are the key technical tools involved in proving our excision results.  

Section \ref{s:geometry} is devoted to studying the geometry of
toric varieties and, more generally, quotients by free solvable
group actions from the standpoint of $\aone$-covering space theory.
For us, a toric variety is a normal algebraic variety $X$ over a
field $k$ on which a {\em split} torus $T$ acts with an open dense
orbit.  We then review Cox's presentation of a smooth proper toric
variety as a quotient of an open subset of affine space by the
action of the dual torus of the Picard group (the latter is in fact
free abelian); we refer to this construction as the ``Cox cover."  The main results of this section are two-fold.  Proposition
\ref{prop:solvablequot}, Corollary \ref{cor:homotopycovering}, and
Proposition \ref{prop:galoiscover} together show that the Cox's
quotient presentation of any smooth proper toric variety {\em is} a
geometric Galois $\aone$-covering space (in the sense of Definition
\ref{defn:geometriccovering}).  Next, the combinatorial Proposition \ref{prop:combinatorics} sets the stage for the application of Theorem \ref{thm:excision} to study the $\aone$-homotopy groups of smooth proper toric varieties.

Section \ref{s:vanishing} is devoted to proving certain {\em vanishing} and {\em non-vanishing} results for low-degree $\aone$-homotopy groups of smooth toric varieties.  The main result of this section is Theorem \ref{thm:abelian}, which serves two purposes.  It first gives a neat combinatorial/geometric condition guaranteeing
vanishing of low degree $\aone$-homotopy groups of the Cox cover and computes the
next non-trivial (i.e., beyond $\pi_1^{\aone}$) $\aone$-homotopy
group in simple situations.  To close, this section illustrates some sample computations using these techniques.  

\begin{rem}
Let us also remark that Wendt \cite{Wendt} has provided a beautiful study of the $\aone$-fundamental group of an arbitrary smooth toric variety, using generalizations of the van Kampen theorem in $\aone$-homotopy theory.  We learned of Wendt's work after a preprint version of this work was made available.  In the language of this paper, Wendt explicitly computes the $\aone$-fundamental group of the Cox cover of {\em any} smooth proper toric variety.  Nevertheless, the $\aone$-fundamental group of a smooth proper toric variety itself is still {\em not} completely understood in general; we will discuss this point for Hirzebruch surfaces in Section \ref{s:vanishing}.  
\end{rem}

\subsubsection*{Acknowledgements}
This work was greatly helped by conversations with Fabien Morel at the Topology meeting held at the Banff International Research Center in February 2007.  He patiently explained how his structural results for $\aone$-homotopy groups might be used to prove results of ``excision" type.  We thank him kindly for these inspiring interactions and generous subsequent encouragement.  We would also like to thank Joseph Ayoub for some useful discussions at the IAS in May 2007.  Finally, we thank the referee for catching an error in the proof of the Theorem \ref{thm:excision}, and for suggesting a correction.

\subsubsection*{Conventions}
Throughout this paper $k$ will denote an arbitrary field unless
otherwise mentioned.  We will also use $\kbar$ to denote a fixed algebraic closure of $k$.  The word {\em scheme} will be synonymous with separated scheme, having essentially finite type over $k$ (i.e., a filtered limit of schemes with smooth affine transition morphisms).  A {\em variety} will be an integral scheme having finite type over $k$.  For any scheme $X$, $X^{(p)}$ will denote the set of codimension $p$ points of $X$.  If $L/k$ is a field extension, we will denote by $X_L$ the fiber product $X \times_{\Spec k} \Spec L$.  The word {\em group} will have two meanings, one in the context of schemes, and one to be explained shortly.  In the scheme theoretic context, it will mean linear algebraic group.  Following Borel, a connected {\em split solvable} group is a connected linear algebraic group admitting an increasing filtration by connected normal algebraic subgroups with sub-quotients isomorphic to $\ga$ or $\gm$.  

Let $\Sm_k$ denote the category of smooth schemes having finite type over $k$.  Unless otherwise mentioned, the word {\em sheaf} will mean Nisnevich sheaf on $\Sm_k$. Also $\Spc_k$ will denote the category of Nisnevich sheaves of sets on $\Sm_k$; thus {\em sheaf} and {\em space} will be synonymous.  The second meaning of the word group will be a group object in the category $\Spc_k$.  The Yoneda embedding $\Sm_k \hookrightarrow \Spc_k$ defined by $X \mapsto X(\cdot) = Hom_{\Sm_k}(\cdot,X)$ is a fully-faithful functor.  We use this functor to identify schemes with their corresponding spaces.  We let $\simpspc_k$ denote the category of simplicial spaces (i.e., simplicial Nisnevich sheaves on $\Sm_k$).  There is a canonical functor $\Spc_k \hookrightarrow \simpspc_k$ sending ${\mathcal X}$ to the simplicial sheaf whose $n$-th term is the space ${\mathcal X}$ and all of whose face and degeneracy maps are the identity morphism; this functor is fully-faithful.  Given a simplicial space ${\mathcal X}$, we will denote by ${\mathcal X}_n$ the sheaf of $n$-simplices of ${\mathcal X}$.  We will also use the notation $\Spc_{k,\bullet}$ (resp. $\simpspc_{k,\bullet}$) for the category of pointed (simplicial) spaces.  We will also often write $\ast$ for the one-point space (i.e., $\Spec k$), and ${\mathcal X}_+$ will denote, as in topology, the space ${\mathcal X} \coprod \ast$ pointed by $\ast$. 

We will often write ${\mathbb A}^1$ for ${\mathbb A}^1_k$ when $k$ is clear from context.  The affine line has two canonical $k$-rational points $0,1: \Spec k \longrightarrow \aone$.  Suppose ${\mathcal X}$ is a (simplicial) space.  Given a morphism $f: \aone \longrightarrow {\mathcal X}$, we will denote by $f(0)$ (resp. $f(1)$) the morphism $\Spec k \longrightarrow {\mathcal X}$
obtained by composing with the morphism $0$ (resp. $1$).

We let $\ho{k}$ and $\hop{k}$ denote the unpointed and pointed
motivic homotopy categories.  Objects of these categories are
simplicial spaces and morphisms are $\aone$-homotopy classes of
maps.  The latter will be denoted by $[{\mathcal X},{\mathcal
Y}]_{\aone}$ (resp. $[({\mathcal X},x),({\mathcal Y},y)]_{\aone}$). When discussing the $\aone$-homotopy type of a scheme $X$, we will always implicitly be considering the corresponding sheaf $X(\cdot)$ on $\Sm_k$.  Also, $\Sigma^1_s$ denotes the {\em simplicial} suspension functor.  We write ${\mathcal Ab}_k$ for the category of Nisnevich sheaves of abelian groups.

We offer a word of caution with regard to our use of the word
{\em torsor.}  If $G$ is a group scheme, a (left) $G$-torsor over a
smooth scheme $X$ consists of a triple $({\mathscr P},\pi,G)$ where
${\mathscr P}$ is a scheme equipped with a left action of $G$, $\pi:
{\mathscr P} \longrightarrow X$ is a faithfully flat, quasi-compact
$G$-equivariant morphism (for the trivial action of $G$ on $X$), and
the canonical map $G \times {\mathscr P} \longrightarrow {\mathscr
P} \times {\mathscr P}$ is an isomorphism onto ${\mathscr P}
\times_{\pi,X,\pi} {\mathscr P}$.  On the other hand, if $G$ is a
sheaf of groups, and ${\mathcal X}$ is a space, a $G$-torsor over
${\mathcal X}$ is a triple $({\mathcal P},\pi,{\mathcal X})$ where
${\mathcal P}$ is a $G$-space such that the action morphism $G
\times {\mathcal P} \longrightarrow {\mathcal P} \times {\mathcal
P}$ is a monomorphism, and such that the canonical morphism
${\mathcal P}/G \longrightarrow {\mathcal X}$ is an isomorphism.
Torsors in the former sense give rise to torsors in the latter sense
since we are assuming our schemes separated (see e.g., \cite{GIT}
Lemma 0.6).  When we speak of torsors over schemes, we will always
mean the former notion.  Finally, given a connected linear algebraic group $G$, we say that a $G$-scheme is $G$-quasi-projective if it admits an ample $G$-equivariant line bundle.

\section{$\aone$-homotopy theory and $\aone$-chain connectivity}
\label{s:connected}
The main goal of this section is to study the sheaf of $\aone$-connected components of a smooth scheme $X$.  Proceeding in na\"ive analogy with topology, one might expect that a smooth scheme $X$ is $\aone$-connected if any pair of $k$-points of $X$ lie in the image of a morphism $f: \aone \longrightarrow X$.  Statements of this form are complicated by two arithmetic issues: i) the set $X(k)$ can be empty, and ii) this kind of connectedness property need not behave well under field extensions.  Furthermore, there is an algebro-geometric distinction that occurs here:  in topology, if a pair of points can be connected by a chain of paths, then they can be connected by a single path; unfortunately, it is not clear that this is true in algebraic geometry.  Taking all of these conditions into account leads to a good geometric notion, which we call {\em $\aone$-chain connectedness} (see Definition \ref{defn:connected}).  

We begin, however, by recalling some basic notions of $\aone$-homotopy theory.  This is necessary to give a precise definition of the sheaf of $\aone$-connected components.  We also mention here that all of the results in this section that are presented without other attribution were known to Morel.  We take full responsibility for any of the (perhaps irrationally exuberant) conjectures.  

\subsubsection*{The $\aone$-homotopy category}
The $\aone$-homotopy category is constructed in the context of model categories by a two-stage categorical localization process from the category $\simpspc_k$.  Good general references for model categories are the original work of Quillen (see \cite{Quillen}) or the book by Hovey (see \cite{Hovey}).  First, one equips $\simpspc_k$ with the so-called {\em local injective} model structure (also called the Joyal-Jardine model structure).  Recall that this means that one defines cofibrations and weak equivalences in $\simpspc_k$ to be monomorphisms (of simplicial Nisnevich sheaves) and those morphisms that stalkwise induce weak equivalences of simplicial sets.  One then defines (simplicial) fibrations to be those morphisms that have the right lifting property with respect to acyclic cofibrations.  This triple of collections of morphisms equips $\simpspc_k$ with the structure of a closed model category (see e.g.,\cite{MV} \S 2 Theorem 1.4); the resulting homotopy category is denoted $\hsnis$; we will refer to this category as the {\em simplicial homotopy category}.  Morphisms between two (simplicial) spaces ${\mathcal X},{\mathcal Y}$ in $\hsnis$ will be denoted $[{\mathcal X},{\mathcal Y}]_s$ (with the field $k$ clear from context).  It is important to note that the functor $\Spc_k \longrightarrow \hsnis$, obtained from the canonical embedding $\Spc_k \hookrightarrow \simpspc_k$ is fully-faithful (see \cite{MV} \S 2 Remark 1.14).

The $\aone$-homotopy category $\ho{k}$ is constructed from by a localization in the sense of Bousfield.  One says that a (simplicial) space ${\mathcal X}$ is {\em $\aone$-local}, if for any test (simplicial) space ${\mathcal T}$, the map
\begin{equation*}
[{\mathcal T},{\mathcal X}]_s \longrightarrow [{\mathcal T} \times \aone,{\mathcal X}]_s
\end{equation*}
induced by projection is a bijection.  

\begin{defn}
\label{defn:aoneweakequivalence}
A morphism $f: {\mathcal X} \longrightarrow {\mathcal X}'$ of (simplicial) spaces is said to be an {\em $\aone$-weak equivalence} if for any $\aone$-local (simplicial) space ${\mathcal Y}$, the induced morphism
\begin{equation*}
[{\mathcal X}',{\mathcal Y}]_s \longrightarrow [{\mathcal X},{\mathcal Y}]_s
\end{equation*}
is a bijection.  
\end{defn}

\begin{defn}
\label{defn:aonefibration}
A morphism $f: {\mathcal X} \longrightarrow {\mathcal Y}$ is an {\em $\aone$-fibration} if for any morphism $j: {\mathcal A} \longrightarrow {\mathcal B}$ that is a monomorphism and $\aone$-weak equivalence, and any diagram
\begin{equation}
\label{eqn:fibrationlifting}
\xymatrix{
{\mathcal A} \ar[r]\ar[d]^j & {\mathcal X}\ar[d]^f \\
{\mathcal B} \ar[r] & {\mathcal Y},
}
\end{equation}
there exists a morphism ${\mathcal B} \longrightarrow {\mathcal X}$ making the two resulting triangles commute.  
\end{defn}

The category $\simpspc_k$ equipped with the collections of $\aone$-weak equivalences, $\aone$-fibrations and monomorphisms has the structure of a closed model category by \cite{MV} \S 2 Theorem 3.2.  We denote the associated homotopy category (obtained by localizing $\simpspc_k$ along the class of weak equivalences) by $\ho{k}$ and refer to this object as the {\em $\aone$-homotopy category}.  Let ${\mathcal H}_{s,\aone}((\Sm_k)_{Nis})$ denotes the subcategory of $\hsnis$ consisting of $\aone$-local objects.  Theorem 3.2 of {\em loc. cit.} also shows that inclusion just mentioned has a left adjoint 
\begin{equation*}
L_{\aone}: \hsnis \longrightarrow {\mathcal H}_{s,\aone}((\Sm_k)_{Nis})
\end{equation*}
that we refer to as the {\em $\aone$-localization functor.}  The category $\ho{k}$ can be identified with the category ${\mathcal H}_{s,\aone}((\Sm/k)_{Nis})$ via the $\aone$-localization functor.  We will write $[{\mathcal X},{\mathcal Y}]_{\aone}$ for the set of morphisms between ${\mathcal X}$ and ${\mathcal Y}$ in $\ho{k}$ (once more, we have suppressed $k$). 

\begin{rem}
A pointed $k$-space $({\mathcal X},x)$ is a space ${\mathcal X}$ together with a morphism of spaces $x: \Spec k \longrightarrow {\mathcal X}$.  One can make ``pointed" versions of all of these constructions by forgetting the base-point.  Thus, a morphism of pointed spaces is an $\aone$-weak equivalence if and only if the corresponding map of unpointed spaces is an $\aone$-weak equivalence.  We write $\Spc_{k,\bullet}$ for the category of pointed spaces, ${\mathcal H}_{s,\bullet}((\Sm_k)_{Nis})$ for the pointed simplicial homotopy category, and $\hop{k}$ for the pointed $\aone$-homotopy category.  
\end{rem}

Observe that in this model structure, all simplicial spaces are cofibrant. Following the general theory of model categories, in order to compute $[{\mathcal X},{\mathcal Y}]_s$ or $[{\mathcal X},{\mathcal Y}]_{\aone}$, it is necessary to choose a fibrant (or $\aone$-fibrant) replacement of ${\mathcal Y}$.  In fact, this can be done {\em functorially}.  Recall that an $\aone$-resolution functor consists of a pair $(Ex_{\aone},\theta)$, where $\theta$ is a natural transformation $Id \longrightarrow Ex_{\aone}$, and for any simplicial space ${\mathcal X}$, $Ex_{\aone}({\mathcal X})$ is simplicially fibrant, $\aone$-local (in fact this means it is $\aone$-fibrant as well by \cite{MV} \S 2 Proposition 2.28) and the morphism ${\mathcal X} \longrightarrow Ex_{\aone}({\mathcal X})$ is an $\aone$-acyclic cofibration (see \cite{MV} \S 2 Definition 3.18).

\begin{lem}
\label{lem:finproducts}
There exists an $\aone$-resolution functor that commutes with formation of finite products of simplical spaces.
\end{lem}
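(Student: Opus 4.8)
The plan is to exhibit the $\aone$-localization functor as a (transfinite) iteration of two building blocks, the algebraic singular functor $Sing^{\aone}_\bullet$ and a suitable simplicially fibrant replacement $Ex$, each chosen to commute with finite products, and then to observe that the colimit assembling them inherits this property. Concretely, following \cite{MV} \S 2, one sets $F := Ex \circ Sing^{\aone}_\bullet$ and defines $Ex_{\aone}(\mathcal{X}) := \colim_{n} F^{\circ n}(\mathcal{X})$ (over $\mathbb{N}$, or a larger ordinal if needed), with $\theta \colon \mathcal{X} \to Ex_{\aone}(\mathcal{X})$ the evident transfinite composite of unit maps. That this $(Ex_{\aone},\theta)$ is an $\aone$-resolution functor is part of the standard theory recalled above, so the only point to check is product-compatibility.

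First I would dispatch the singular functor. Since $Sing^{\aone}_\bullet(\mathcal{X})_n = \underline{\Hom}(\Delta^n_{\aone},\mathcal{X})$ and internal $\Hom$ out of a fixed object is a right adjoint (the topos of Nisnevich sheaves is cartesian closed), hence preserves limits, there is a natural isomorphism $Sing^{\aone}_\bullet(\mathcal{X} \times \mathcal{Y}) \cong Sing^{\aone}_\bullet(\mathcal{X}) \times Sing^{\aone}_\bullet(\mathcal{Y})$; indeed $Sing^{\aone}_\bullet$ preserves arbitrary products.

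The crux --- and the step I expect to be the main obstacle --- is producing a functorial \emph{simplicially fibrant} replacement $Ex$ that commutes with finite products: the naive replacement furnished by the small object argument is built from transfinite pushouts and need not respect products. I would instead use a Godement-type resolution attached to the conservative family of Nisnevich points $\{x\}$, i.e.\ the monad $T\mathcal{F} = \prod_{x} x_* x^* \mathcal{F}$ together with Kan's $Ex^\infty$ in the simplicial direction. Each ingredient preserves finite products: the stalk functors $x^*$ do, because in the topos of Nisnevich sheaves finite limits commute with the filtered colimits computing stalks; the skyscraper functors $x_*$ and the product $\prod_{x}$ over points are limits and so preserve all products; and $Ex^\infty = \colim_{m} Ex^{m}$ is a filtered colimit of the right adjoints $Ex$ (right adjoint to subdivision), each of which preserves products, so $Ex^\infty$ preserves finite products. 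Totalizing the cosimplicial Godement resolution (an end over $\Delta$) and combining with $Ex^\infty$ yields a globally fibrant replacement $Ex$ preserving finite products, whose unit is moreover a monomorphism.

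Finally I would assemble the pieces. A composite of finite-product-preserving functors preserves finite products, so each iterate $F^{\circ n}$ does; the system $\{F^{\circ n}(\mathcal{X})\}$ is directed, and in the topos $\simpspc_k$ finite products, being finite limits, commute with filtered colimits, whence $Ex_{\aone}$ preserves finite products. The transition maps together with the Godement and singular-functor units are monomorphisms, and filtered colimits of monomorphisms are monomorphisms, so $\theta$ is a cofibration; it is an $\aone$-weak equivalence by the standard theory, giving the required $\aone$-acyclic cofibration. Thus $(Ex_{\aone},\theta)$ is an $\aone$-resolution functor commuting with finite products.
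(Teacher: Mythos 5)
Your proposal is correct and follows essentially the same route as the paper: the paper's own justification is exactly that $Ex_{\aone}$ is built by transfinitely iterating $Sing^{\aone}_*$ (which commutes with finite limits by definition, \cite{MV} p.~87) composed with the Godement-resolution fibrant replacement (whose finite-product compatibility is the content of \cite{MV} \S 2 Theorem 1.66), and then passing to a filtered colimit. Your write-up simply fills in the verifications that the paper delegates to those citations.
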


\begin{rem}
The construction of this functor uses the properties of the Godement resolution functor stated in \cite{MV} \S 2 Theorem 1.66 and the fact (see \cite{MV} p. 87) that by definition the functor $Sing_{*}^{\aone}(\cdot)$ commutes with formation of finite limits (and hence finite products).
\end{rem}

Consider two (simplicial) spaces ${\mathcal X},{\mathcal Y}$.  We say that two morphisms $f_0,f_1: {\mathcal X} \longrightarrow {\mathcal Y}$ are simplicial homotopy equivalent if there exists a morphism $H: {\mathcal X} \times \aone \longrightarrow {\mathcal Y}$ such that $H(0) = f_0$ and $H(1) = f_1$ (see e.g., \cite{Quillen} Ch. 2 \S 1 Definition 4).  Unfortunately, simplicial homotopy equivalence may fail to be an equivalence relation unless ${\mathcal Y}$ is $\aone$-fibrant.  If ${\mathcal Y}$ is $\aone$-fibrant, we denote this equivalence relation by $\sim_{\aone}$.  The general machinery of model categories (see e.g., \cite{Quillen} Ch. 2 \S 2 Proposition 5) then provides us with an identification
\begin{equation*}
[{\mathcal X},{\mathcal Y}]_{\aone} = Hom_{\simpspc_k}({\mathcal X},Ex_{\aone}({\mathcal Y}))/\sim_{\aone}.
\end{equation*}
Thus, we need a sufficiently explicit understanding of $Ex_{\aone}({\mathcal Y})$ to compute $\aone$-homotopy classes of maps.

\subsubsection*{The sheaf $\pi_0^{\aone}$}
Here and through the rest of this section, we fix a base field $k$.  All (simplicial) spaces, and schemes, unless otherwise noted, are defined over $k$.

Suppose ${\mathcal X}$ is a (simplicial) space.  Define $\pi_0^{\aone}({\mathcal X})$ to be the Nisnevich sheaf associated with the presheaf defined by 
\begin{equation*}
U \mapsto [U,{\mathcal X}]_{\aone}.
\end{equation*}
for $U \in {\mathcal Sm}_k$.  We will write the stalks of $\pi_0^{\aone}({\mathcal X})$ as $[S,{\mathcal X}]_{\aone}$ where $S$ is a Henselian local scheme.  Of course, this last statement requires comment as $S$ can be {\em essentially of finite type} (i.e., a(n inverse) limit of a filtering system of smooth schemes with smooth affine bonding morphisms).  Thus, we define $[S,{\mathcal X}]_{\aone}$ to be $\colim_{\alpha} [U_{\alpha},{\mathcal X}]_{\aone}$ for any choice of inverse system defining $S$; this is independent of the choice of such an inverse system.

\begin{defn}
\label{defn:aoneconnected}
A (simplicial) space  ${\mathcal X}$ is said to be {\em $\aone$-connected} if the canonical map $\pi_0^{\aone}({\mathcal X}) \to \pi_0^{\aone}(\Spec k)$ is an isomorphism, i.e., $\pi_0^{\aone}({\mathcal X})$ is the sheaf $\Spec k$.   We will sometimes refer to $\aone$-connected spaces as {\em $\aone$-$0$-connected} in analogy with classical homotopy theory.  Any space that is not $\aone$-connected will be called {\em $\aone$-disconnected}.
\end{defn}

Thus, a (simplicial) space is $\aone$-connected if and only if, for any Henselian local scheme $S$, any pair of $S$-points $s_0,s_1$ of $Ex_{\aone}({\mathcal X})$ can be connected by a morphism $S \times \aone \longrightarrow Ex_{\aone}({\mathcal X})$.  We now proceed to re-interpret this condition.  Observe that finitely generated separable extensions of $k$ are necessarily Henselian local schemes.  The following result, sometimes called the {\em unstable $\aone$-$0$-connectivity theorem}, can be deduced from the existence of the functor $Ex_{\aone}$. 

\begin{thm}[\cite{MV} \S 2 Corollary 3.22]
\label{thm:0connectivity}
Suppose ${\mathcal X}$ is a simplicial space.  The morphism of sheaves ${\mathcal X} \longrightarrow Ex_{\aone}({\mathcal X})$ induces a surjective morphism of Nisnevich sheaves
\begin{equation*}
{\mathcal X}_0 \longrightarrow \pi_0^{\aone}({\mathcal X}).
\end{equation*}
\end{thm}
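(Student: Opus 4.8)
The plan is to deduce the statement from the explicit construction of the resolution functor $Ex_{\aone}$, reducing everything to the behaviour of the sheaf of \emph{simplicial} connected components. For a simplicial space $\mathcal Y$, write $\pi_0^s(\mathcal Y)$ for the Nisnevich sheaf associated to the presheaf coequalizer of the two face maps $\mathcal Y_1 \rightrightarrows \mathcal Y_0$; by construction there is a tautological epimorphism $\mathcal Y_0 \twoheadrightarrow \pi_0^s(\mathcal Y)$. First I would record the identification $\pi_0^{\aone}(\mathcal X) \cong \pi_0^s(Ex_{\aone}(\mathcal X))$: since $Ex_{\aone}(\mathcal X)$ is simplicially fibrant, a morphism into it from a (constant simplicial) smooth scheme $U$ is the same datum as a section in $(Ex_{\aone}\mathcal X)_0(U)$, and two such sections are simplicially $\aone$-homotopic exactly when they are joined by a $1$-simplex; sheafifying $U \mapsto [U,\mathcal X]_{\aone}$ therefore produces $\pi_0^s(Ex_{\aone}\mathcal X)$. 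Granting this, and noting that the map in the statement factors as $\mathcal X_0 \twoheadrightarrow \pi_0^s(\mathcal X) \to \pi_0^s(Ex_{\aone}\mathcal X) = \pi_0^{\aone}(\mathcal X)$, it suffices to prove that the canonical map $\pi_0^s(\mathcal X) \to \pi_0^s(Ex_{\aone}\mathcal X)$ is an epimorphism of Nisnevich sheaves.

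Next I would unwind the construction of $Ex_{\aone}$ alluded to in the Remark following Lemma~\ref{lem:finproducts}: it is obtained by a transfinite iteration of the endofunctor $G := Ex \circ Sing_{*}^{\aone}$, where $Sing_{*}^{\aone}$ is the singular functor built from the cosimplicial scheme $\Delta^{\bullet}_{\aone}$ and $Ex$ is the simplicial (stalkwise-fibrant) Godement resolution of \cite{MV} \S 2 Theorem 1.66, with $Ex_{\aone}(\mathcal X) = \colim_{\alpha} G^{\alpha}(\mathcal X)$ taken over a sufficiently large ordinal. This reduces the problem to two local statements about $\pi_0^s$ together with a compatibility with colimits.

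The three inputs are as follows. (i) Because $\Delta^0_{\aone} = \Spec k$, the singular functor is the identity on the sheaf of $0$-simplices, $(Sing_{*}^{\aone}\mathcal Y)_0 = \mathcal Y_0(- \times \Delta^0_{\aone}) = \mathcal Y_0$; since the map on $0$-simplices is an isomorphism and $\pi_0^s$ is a quotient of the $0$-simplices, the natural map $\pi_0^s(\mathcal Y) \to \pi_0^s(Sing_{*}^{\aone}\mathcal Y)$ is an epimorphism. (ii) The Godement map $\mathcal Y \to Ex(\mathcal Y)$ is a stalkwise weak equivalence of simplicial sheaves, hence induces an isomorphism on all sheaves of simplicial homotopy groups, in particular on $\pi_0^s$. (iii) The functor $\pi_0^s$ commutes with filtered colimits of simplicial spaces, since presheaf coequalizers and Nisnevich sheafification both do. Combining (i) and (ii), each structure map $G^{\alpha}(\mathcal X) \to G^{\alpha+1}(\mathcal X)$ induces an epimorphism on $\pi_0^s$; by (iii) and transfinite induction, checked on Nisnevich stalks where everything becomes a filtered colimit of surjections of sets, the map from the initial term $\pi_0^s(\mathcal X)$ to $\pi_0^s(Ex_{\aone}\mathcal X) = \colim_{\alpha}\pi_0^s(G^{\alpha}\mathcal X)$ is an epimorphism, as desired.

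The main obstacle is input (i), and more precisely making rigorous that the resolution functor may be presented as a filtered iteration of $Ex \circ Sing_{*}^{\aone}$ so that this computation applies. This is the genuinely geometric point: the algebraic simplices satisfy $\Delta^0_{\aone} = \Spec k$, so $Sing_{*}^{\aone}$ never introduces new $0$-simplices and can only identify existing connected components, which is exactly why no $\aone$-homotopy class can fail to be represented by an honest $0$-simplex. I would emphasize that this phenomenon is special to $\pi_0$: for the higher homotopy sheaves $Sing_{*}^{\aone}$ genuinely alters the simplicial structure, which is why the analogous surjectivity fails and the corresponding connectivity statements must be treated by separate arguments.
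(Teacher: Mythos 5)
Your proof is correct and is essentially the argument behind the result as the paper presents it: the paper states this theorem without proof, attributing it to \cite{MV} \S 2 Corollary 3.22 and remarking only that it ``can be deduced from the existence of the functor $Ex_{\aone}$.'' Your unwinding of $Ex_{\aone}$ as a transfinite iteration of $Ex \circ Sing_{*}^{\aone}$ --- with $Sing_{*}^{\aone}$ fixing $0$-simplices since $\Delta^0_{\aone} = \Spec k$, the Godement resolution inducing an isomorphism on $\pi_0^s$, and $\pi_0^s$ commuting with filtered colimits, checked on Nisnevich stalks --- is precisely the Morel--Voevodsky deduction, so there is nothing to correct.
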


Observe that an immediate consequence of this theorem is that any $\aone$-connected (simplicial) space ${\mathcal X}$ necessarily has a $k$-point.  This provides an obstruction to $\aone$-connectedness of a scheme ${\mathcal X}$.

As we observed above, to check that the sheaf $\pi_0^{\aone}({\mathcal X})$ is trivial, it suffices to check that it is trivial at all stalks.  The following result shows that $\aone$-connectedness can even be reduced to a check over fields. 

\begin{lem}[\cite{MStable} Lemma 6.1.3]
\label{lem:weaklyconnected}
The space ${\mathcal X}$ is $\aone$-connected if and only if for every separable, finitely generated extension $L/k$, the simplicial set $Ex_{\aone}({\mathcal X})(L)$ is a connected simplicial set.
\end{lem}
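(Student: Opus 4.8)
The plan is to translate the global statement ``$\mathcal{X}$ is $\aone$-connected'' into a statement about the stalks of $\pi_0^{\aone}(\mathcal{X})$ at the points of the Nisnevich topos, and then to show that triviality of these stalks is already forced by triviality over fields. By Definition \ref{defn:aoneconnected}, $\mathcal{X}$ is $\aone$-connected precisely when $\pi_0^{\aone}(\mathcal{X})$ is the terminal sheaf; since a morphism of Nisnevich sheaves is an isomorphism if and only if it induces an isomorphism on every stalk, this holds if and only if the stalk $[S,\mathcal{X}]_{\aone}$ is a one-point set for every Henselian local scheme $S$. As recorded before Definition \ref{defn:aoneconnected}, this stalk is identified with $\pi_0$ of the simplicial set $Ex_{\aone}(\mathcal{X})(S)$, so $\aone$-connectedness is equivalent to connectedness of $Ex_{\aone}(\mathcal{X})(S)$ for all Henselian local $S$.

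The ``only if'' direction is then immediate: finitely generated separable extensions $L/k$ are among the Henselian local schemes, so connectedness of $Ex_{\aone}(\mathcal{X})(S)$ for all Henselian local $S$ restricts to connectedness over all such $L$ (taking $L=k$ also records the $k$-point needed for non-emptiness). For the ``if'' direction, assume $Ex_{\aone}(\mathcal{X})(L)$ is connected for every finitely generated separable $L/k$. Taking $L=k$ produces a $k$-rational base point, so $\pi_0^{\aone}(\mathcal{X})$ carries a global section and every stalk is non-empty. Now fix a Henselian local scheme $S$, which we may take to be the Henselization of a smooth $k$-scheme at a point, and let $\eta = \Spec K$ be its generic point, so that $K$ is a finitely generated separable extension of $k$. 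First I would invoke the unstable $\aone$-$0$-connectivity theorem (Theorem \ref{thm:0connectivity}): since $\mathcal{X}_0 \to \pi_0^{\aone}(\mathcal{X})$ is an epimorphism of Nisnevich sheaves and $S$ is a point of the Nisnevich topos, every class in $[S,\mathcal{X}]_{\aone}$ is represented by an honest morphism $\sigma\colon S \to \mathcal{X}_0$. Restricting $\sigma$ along $\eta \hookrightarrow S$ yields a class in $[K,\mathcal{X}]_{\aone} = \pi_0(Ex_{\aone}(\mathcal{X})(K))$, which is trivial by hypothesis; hence $\sigma$ agrees with the base point after restriction to the generic point.

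It remains to promote this generic equality to an equality over $S$, and this is the crux. The required input is that the $\aone$-invariant Nisnevich sheaf $\pi_0^{\aone}(\mathcal{X})$ (its $\aone$-invariance being a formal consequence of the $\aone$-locality of $Ex_{\aone}(\mathcal{X})$) is \emph{unramified}, in the sense that the restriction $[S,\mathcal{X}]_{\aone} \to [K,\mathcal{X}]_{\aone}$ is injective. Granting this, $\sigma$ coincides with the base point already in $[S,\mathcal{X}]_{\aone}$, so the stalk is trivial and the proof is complete.

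The main obstacle is precisely this injectivity: it is the one step that uses genuinely more than the formal $\aone$-locality of $Ex_{\aone}(\mathcal{X})$. It rests on Morel's structure theory for strongly $\aone$-invariant sheaves, together with the Gersten/Cousin-type analysis reviewed in Section \ref{s:excision}, which guarantees that the sections of an $\aone$-invariant sheaf over a smooth local scheme embed into its sections over the generic point. Everything else in the argument is either formal (the reduction to Nisnevich stalks and the identification of stalks with $\pi_0(Ex_{\aone}(\mathcal{X})(S))$) or a direct application of the surjectivity in Theorem \ref{thm:0connectivity}.
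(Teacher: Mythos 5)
The paper itself offers no proof of this lemma: it is imported verbatim from Morel (\cite{MStable} Lemma 6.1.3), so your proposal has to stand on its own. Its formal skeleton does: the reduction of $\aone$-connectedness (Definition \ref{defn:aoneconnected}) to triviality of the stalks $[S,{\mathcal X}]_{\aone} = \pi_0\bigl(Ex_{\aone}({\mathcal X})(S)\bigr)$ at Henselian local essentially smooth $S$, the ``only if'' direction, and the use of Theorem \ref{thm:0connectivity} to represent a stalk class by an honest section of ${\mathcal X}_0$ are all correct.

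The gap is exactly where you flag it, and it cannot be filled by the tools you invoke. The injectivity of $[S,{\mathcal X}]_{\aone} \to [K,{\mathcal X}]_{\aone}$ is an unramifiedness statement about the sheaf of \emph{sets} $\pi_0^{\aone}({\mathcal X})$, whereas the Cousin/Gersten machinery of Section \ref{s:excision} --- axioms \textbf{(C0)}--\textbf{(C2)}, Theorem \ref{thm:stronga1}, Corollaries \ref{cor:excision-abelian} and \ref{cor:excision-nonabelian} --- applies only to strongly $\aone$-invariant sheaves of \emph{groups} (Definition \ref{defn:stronglyinvariant}) and strictly $\aone$-invariant sheaves of abelian groups (Definition \ref{defn:strictly}). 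The sheaf $\pi_0^{\aone}({\mathcal X})$ carries no group structure, and none of Morel's structure theory reviewed in this paper applies to it. Even the weaker assertion you make in passing --- that $\aone$-invariance of the Nisnevich \emph{sheaf} $\pi_0^{\aone}({\mathcal X})$ is a formal consequence of $\aone$-locality --- is not formal: only the presheaf $U \mapsto [U,{\mathcal X}]_{\aone}$ is formally $\aone$-invariant, and whether sheafification preserves this is precisely a conjecture of Morel, not a consequence of anything in this paper. So ``granting'' the injectivity grants the entire hard content of the lemma, which is why the authors cite it rather than prove it. There is also a secondary error: the generic point of the Henselization $S = \Spec \O_{X,x}^h$ does \emph{not} have finitely generated function field over $k$ (the fraction field of $\O_{X,x}^h$ is an infinite algebraic extension of $k(X)$), so your hypothesis does not literally apply to $K$; this part is repairable by writing $K$ as a filtered colimit of finitely generated separable extensions and commuting $\pi_0\bigl(Ex_{\aone}({\mathcal X})(-)\bigr)$ with the colimit, but the unramifiedness gap is not.
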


\subsubsection*{$\aone$-chain connectedness}
With these results in place, we can now give a geometric criterion that guarantees $\aone$-connectedness.

\begin{defn}
\label{defn:connected} 
We will say that a (simplicial) space
${\mathcal X}$ is {\em $\aone$-path connected} if for every finitely
generated, separable field extension $L/k$ the set ${\mathcal
X}_0(L)$ is non-empty and for every pair of $L$-points $x_0,x_1:
\Spec L \longrightarrow {\mathcal X}$, there exists a morphism $f:
\aone_L \longrightarrow {\mathcal X}$ such that $f(0) = x_0$ and
$f(1) = x_1$.  Similarly, we will say that a (simplicial) space
${\mathcal X}$ is {\em $\aone$-chain connected} if for every
finitely generated, separable field extension $L/k$ the set
${\mathcal X}_0(L)$ is non-empty, and for every pair of $L$-points
$x_0,x_1: \Spec L \longrightarrow {\mathcal X}$ there exist a finite
sequence $y_1,\ldots,y_{n} \in {\mathcal X}(L)$ and a collection of
morphisms $f_i: \aone \longrightarrow {\mathcal X}$ ($i = 0,
\ldots,n$) such that $f_0(0) = x_0$, $f_n(1) = x_1$, and $f_{i-1}(1)
= f_i(0) = y_i$.
\end{defn}

\begin{rem}
\label{rem:problem1}
Observe that in Definition \ref{defn:connected} it is essential that we make a statement for all separable field extensions $L/k$.  For example, given any smooth projective curve of genus $g \geq 1$ or abelian variety $X$ over a finite field $k$, $X(k)$ is a finite set.  Consider the open subscheme $U$ of $X$ obtained by removing all but a single $k$-rational point.  The trivial morphism $\aone \longrightarrow U$ whose image is the remaining $k$-point gives a chain connecting all $k$-points.  However, for such varieties, after making a finite extension $L/k$, no pair of distinct $L$-points can be connected by a morphism $\aone \longrightarrow U$ (see also Example \ref{ex:aonerigid}).  For smooth curves of genus $g \geq 2$, a similar statement can be made for $k$ a number field.
\end{rem}

\begin{prop}
\label{prop:aoneconnected}
Suppose ${\mathcal X}$ is an $\aone$-chain connected (simplicial) space over a field $k$.  Then ${\mathcal X}$ is $\aone$-connected.
\end{prop}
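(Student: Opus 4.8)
The plan is to deduce $\aone$-connectedness directly from Lemma \ref{lem:weaklyconnected}: it suffices to show that for every finitely generated separable extension $L/k$, the simplicial set $Ex_{\aone}(\mathcal{X})(L)$ is connected. Write $\mathcal{Y} := Ex_{\aone}(\mathcal{X})$. Since $\mathcal{Y}$ is simplicially fibrant, $\mathcal{Y}(L)$ is a Kan complex, and the standard identification of simplicial homotopy classes out of a point gives $\pi_0(\mathcal{Y}(L)) = [\Spec L, \mathcal{Y}]_s$, which is exactly the stalk of $\pi_0^{\aone}(\mathcal{X})$ at $\Spec L$. Thus I must show that this $\pi_0$ is a single point and is non-empty; the latter is immediate, since $\aone$-chain connectedness (Definition \ref{defn:connected}) forces $\mathcal{X}_0(L) \neq \emptyset$.

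The key step is to show that a single $\aone$-path collapses in $\mathcal{Y}(L)$. Suppose $f : \aone_L \to \mathcal{X}$ with $f(0) = x_0$ and $f(1) = x_1$ two $L$-points, and let $\overline{x}_0, \overline{x}_1$ denote their images in $\mathcal{Y}(L)$ under the canonical map $\mathcal{X} \to \mathcal{Y}$. Because $\mathcal{Y}$ is $\aone$-local, taking the test object $\mathcal{T} = \Spec L$ shows that the projection $p : \aone_L = \Spec L \times \aone \to \Spec L$ induces a bijection $p^{\ast} : [\Spec L, \mathcal{Y}]_s \to [\aone_L, \mathcal{Y}]_s$. The two sections $0,1 : \Spec L \to \aone_L$ both satisfy $p \circ 0 = p \circ 1 = \mathrm{id}$, so $0^{\ast}$ and $1^{\ast}$ are each a left inverse of the bijection $p^{\ast}$; as $p^{\ast}$ is invertible, $0^{\ast} = (p^{\ast})^{-1} = 1^{\ast}$. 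Applying this common map to the class of the composite $\aone_L \to \mathcal{X} \to \mathcal{Y}$ yields $[\overline{x}_0] = [\overline{x}_1]$ in $[\Spec L, \mathcal{Y}]_s = \pi_0(\mathcal{Y}(L))$, i.e. $\overline{x}_0$ and $\overline{x}_1$ lie in the same connected component. Iterating over the finite chain $x_0 = f_0(0), f_0(1) = y_1 = f_1(0), \dots, f_n(1) = x_1$ supplied by $\aone$-chain connectedness then shows that the images in $\mathcal{Y}(L)$ of any two $L$-points of $\mathcal{X}$ lie in one and the same component, say $c_0$.

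The remaining point is that there might be components of $\mathcal{Y}(L)$ not hit by any $L$-point of $\mathcal{X}$, since $\mathcal{Y}$ has far more sections than $\mathcal{X}$. Here I invoke the unstable $\aone$-$0$-connectivity theorem (Theorem \ref{thm:0connectivity}): the canonical morphism of sheaves $\mathcal{X}_0 \to \pi_0^{\aone}(\mathcal{X})$ is an epimorphism. Evaluating at the stalk $\Spec L$ (a field has no nontrivial Nisnevich covers, so its stalk is its value) shows that $\mathcal{X}_0(L) \to \pi_0(\mathcal{Y}(L))$, $x \mapsto [\overline{x}]$, is surjective. Hence every component of $\mathcal{Y}(L)$ equals $[\overline{x}]$ for some $x \in \mathcal{X}_0(L)$, and by the previous paragraph each such class is $c_0$. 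Therefore $\pi_0(\mathcal{Y}(L)) = \{c_0\}$ is a single point and $\mathcal{Y}(L)$ is connected; Lemma \ref{lem:weaklyconnected} then yields $\aone$-connectedness.

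I expect this last gap to be the main obstacle: $\aone$-chain connectedness is an assertion only about honest $L$-points of $\mathcal{X}$ and genuine $\aone$-morphisms into $\mathcal{X}$, whereas connectedness of $\mathcal{Y}(L)$ concerns the much larger fibrant replacement, which a priori could acquire extra components. Bridging the two rests essentially on the surjectivity in Theorem \ref{thm:0connectivity}; the rest of the argument is the formal manipulation of $\aone$-locality carried out above.
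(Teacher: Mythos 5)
Your proposal is correct and follows essentially the same route as the paper's proof: reduce via Lemma \ref{lem:weaklyconnected} to stalks at finitely generated separable extensions $L/k$, use the surjectivity of ${\mathcal X}_0(L) \to [\Spec L, {\mathcal X}]_{\aone}$ from Theorem \ref{thm:0connectivity}, and use $\aone$-chain connectedness to collapse all these classes to a single one. The only cosmetic difference is that you justify the collapsing of an $\aone$-path by the formal $\aone$-locality argument ($0^{\ast} = 1^{\ast} = (p^{\ast})^{-1}$), where the paper simply observes that the images of the two endpoints in $Ex_{\aone}({\mathcal X})$ are simplicially homotopy equivalent.
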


\begin{proof}
Suppose ${\mathcal X}$ is $\aone$-chain connected.  Consider the $\aone$-acyclic cofibration ${\mathcal X} \longrightarrow Ex_{\aone}({\mathcal X})$ given by Lemma \ref{lem:finproducts}.  By Theorem \ref{thm:0connectivity}, for any Henselian local scheme $S$ there is a canonical surjective morphism ${\mathcal X}_0(S) \longrightarrow [S,{\mathcal X}]_{\aone}$.  Since $Ex_{\aone}({\mathcal X})$ is (simplicially) fibrant and $\aone$-local, this last set can be computed in terms of simplicial homotopy classes of maps from $S$ to $Ex_{\aone}({\mathcal X})$.

Now, by Lemma \ref{lem:weaklyconnected}, to check that ${\mathcal X}$ is $\aone$-connected, it suffices to check that $[\Spec L,{\mathcal X}]_{\aone}$ is reduced to a point for every separable, finitely generated field extension $L/k$.  By the previous paragraph, we know that $[\Spec L,{\mathcal X}]_{\aone}$ is a quotient of ${\mathcal X}_0(L)$.  Since ${\mathcal X}$ is $\aone$-chain connected, it follows that ${\mathcal X}_0(L)$ is non-empty and the image of any two $L$-points of ${\mathcal X}$ in $Ex_{\aone}({\mathcal X})$ are in fact simplicially homotopy equivalent.  Thus $[\Spec L,{\mathcal X}]_{\aone}$ is reduced to a point.
\end{proof}

The definition of $\aone$-chain connectivity is, of course, more geometrically suggestive if $X$ is a smooth $k$-scheme.  Then, using full-faithfulness of the embedding $\Sm_k \hookrightarrow \Spc_k$, $\aone$-chain connectedness is a condition involving actual morphisms $
\aone \longrightarrow X$.  However, in this case, checking that $X$ is $\aone$-chain connected can be cumbersome because one has to check something for all finitely generated field extensions.  Fortunately, many of the examples considered in this paper will satisfy the hypotheses of the following easy lemma.

\begin{lem}
\label{lem:affineneighborhoods}
Suppose $X$ is an irreducible $n$-dimensional smooth scheme over a field $k$.  If $X$ admits an open cover $X = \cup_i U_i$ where each $U_i$ is ($k$-)isomorphic to ${\mathbb A}^n_k$, then $X$ is $\aone$-chain connected, and in particular $\aone$-connected.
\end{lem}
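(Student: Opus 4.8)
The plan is to invoke Proposition \ref{prop:aoneconnected}: since $\aone$-chain connectedness implies $\aone$-connectedness, it suffices to verify that $X$ is $\aone$-chain connected. Fix a finitely generated separable extension $L/k$. The non-emptiness of $X(L) = X_0(L)$ is immediate: any chart $U_i \cong \mathbb{A}^n_k$ has $U_i(L) = \mathbb{A}^n(L) = L^n \neq \emptyset$, and $U_i(L) \subseteq X(L)$ via the open immersion. So the real content is the connecting condition in Definition \ref{defn:connected}.

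First I would record the basic move inside a single chart. If two $L$-points $a,b$ lie in a common chart $U_i$, then under a fixed isomorphism $U_i \cong \mathbb{A}^n_k$ they correspond to vectors $\bar a, \bar b \in L^n$, and the morphism $\aone_L \to (U_i)_L \cong \mathbb{A}^n_L$ given on points by $t \mapsto (1-t)\bar a + t\bar b$, composed with $(U_i)_L \to U_i \hookrightarrow X$, is a morphism $f \colon \aone_L \to X$ with $f(0) = a$ and $f(1) = b$. Thus any two $L$-points of a single chart are joined by one $\aone$, so they lie in the same chain-connected class of $X(L)$.

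Next, to connect points in \emph{different} charts, I would use irreducibility: since $X$ is irreducible, any two non-empty opens meet, so for charts $U_{i_0}, U_{i_1}$ the overlap $V = U_{i_0} \cap U_{i_1}$ is a non-empty (hence dense) open. Given $x_0 \in U_{i_0}(L)$ and $x_1 \in U_{i_1}(L)$, it then suffices to produce a single \emph{hub} point $z \in V(L)$: the chain $x_0 \rightsquigarrow z$ (a line inside $U_{i_0}$) followed by $z \rightsquigarrow x_1$ (a line inside $U_{i_1}$) connects $x_0$ to $x_1$ with $y_1 = z$. More generally, organizing the charts into a graph with an edge $U_i - U_j$ whenever $(U_i \cap U_j)(L) \neq \emptyset$, chain-connectedness of $X(L)$ reduces to connectivity of this graph, since every $L$-point lies in some chart and every chart has $L$-points.

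The hard part will be the existence of such hub points, i.e.\ connectivity of the overlap graph over $L$. When $L$ is infinite this is free: a non-empty open subscheme of $\mathbb{A}^n_L \cong (U_{i_0})_L$ always has an $L$-rational point (rational points are dense in affine space over an infinite field), so every overlap $V$ satisfies $V(L) \neq \emptyset$, the graph is complete, and a two-step chain always works. The genuinely delicate case is $L$ finite, where a non-empty open of $\mathbb{A}^n$ may avoid all of the finitely many $L$-points and an overlap can have $V(L) = \emptyset$. Here I would either lengthen the chain to route through intermediate charts, or---more to the point for the applications in this paper---exploit the explicit geometry of the relevant covers (overlaps that are tori, or products of tori with affine spaces, always contain the rational point $(1,\dots,1)$), which is precisely the structure present for the Cox covers of toric varieties. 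I expect the verification of this hub-point existence to be the only nontrivial input; everything else is formal.
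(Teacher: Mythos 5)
The paper gives no proof of this lemma at all---it is introduced as an ``easy lemma'' and stated without argument---so the only meaningful comparison is with the argument the authors evidently intended, and that is exactly yours: reduce to Proposition \ref{prop:aoneconnected}, join two $L$-points of a single chart by the straight line through them, and pass between charts through a hub point of the overlap, which is a dense open of each chart because $X$ is irreducible. Your execution of those steps is correct, and since a non-empty open subscheme of $\mathbb{A}^n_L$ always has an $L$-point when $L$ is infinite, your proof is complete whenever $L$ is infinite; in particular it proves the lemma in full whenever $k$ is infinite, and it also disposes of the infinite extensions (such as $k(t)$) of a finite $k$.

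The case you flag as delicate---$k$ finite and $L/k$ a finite extension---is a genuine gap, but it is a gap in the lemma as stated rather than a defect peculiar to your argument. Over a finite field $L$ a dense open subscheme of $\mathbb{A}^n_L$ can indeed contain no $L$-point whatsoever (vanishing of $\prod_{a \in L}(x_1-a)$ removes them all; the paper's own Remark \ref{rem:spacefilling} records this phenomenon), so the hub step has no general justification; and your fallback of routing through intermediate charts meets the same obstruction, since a priori \emph{every} pairwise overlap could be $L$-point-free. It is telling that every neighbouring result the paper actually proves (Lemma \ref{lem:affinespace}, Theorem \ref{thm:excision}, Proposition \ref{prop:galoiscover}) carries an explicit infinite-field hypothesis; the unqualified statement here should be read with the same caveat, or else completed by example-specific arguments of the sort you sketch. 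On that sketch, one correction: the toric rescue pertains not to the Cox cover $\mathbb{A}^{\Sigma(1)} - Z_{\Sigma}$ (that is handled by Lemma \ref{lem:affinespace}, where $k$ is assumed infinite) but to the toric variety $X_{\Sigma}$ itself, where it works over any field: every chart $U_{\sigma}$ contains the dense torus $T$, hence the single rational point $1 \in T(k) \subseteq T(L)$, which serves as a hub common to all charts simultaneously. Likewise the paper's flag-variety example survives over finite fields via the Steinberg generation argument the paper alludes to, not via the present lemma; so the applications are safe even though the general finite-field statement remains unproved.
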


\begin{ex}
\label{ex:spherical}
In particular, as we shall see, this can be used to show that smooth proper toric varieties and flag varieties (under connected, split reductive groups) are $\aone$-connected.  More generally, the techniques used to prove these facts can be generalized to study smooth proper spherical varieties $X$ under connected, split reductive groups $G$ (here, {\em spherical} means that a Borel subgroup $B \subset G$ acts on $X$ with an open dense orbit), at least over algebraically closed fields of characteristic $0$.
\end{ex}

Thus, by definition, any $\aone$-chain-connected variety over a field $k$ admits many rational curves, and one hopes that an arbitrary $\aone$-connected smooth scheme has the same property.  

\begin{ex}
\label{ex:aonerigid}
Recall that a variety $X$ is said to be {\em $\aone$-rigid} if for any smooth scheme $U$, the canonical morphism $Hom_{\Sm_k}(U,X) \longrightarrow Hom_{\Sm_k}(U \times \aone,X)$ is bijective (see \cite{MV} \S 3 Example 2.4).  For $\aone$-rigid varieties, one has an isomorphism of sheaves $\pi_0^{\aone}(X) \cong X$; thus $\aone$-rigid varieties are {\em totally disconnected}.  Indeed, if $X$ is $\aone$-rigid, then $X$ is $\aone$-local and the result follows from the full-faithfulness of the functor $\Sm_k \longrightarrow \hsnis$.  Examples of $\aone$-rigid varieties include smooth curves of geometric genus $g \geq 1$, open subschemes of $\gm$, abelian varieties, products of such varieties, sub-schemes of such things, etc.  Observe then that, using classification results, one knows that a smooth algebraic curve is $\aone$-connected if and only if it is $\aone$-chain connected.

Furthermore, one can show that, if $k = \overline{k}$, the sheaf of $\aone$-connected components of a smooth projective ruled surface $\pi: X \longrightarrow C$ is trivial if $C$ has genus $0$ and isomorphic to the sheaf $C$ defined by the base curve of the fibration when the genus of $C$ is (strictly) greater than $0$.  In other words, the only $\aone$-connected smooth proper varieties of dimension $\leq 2$ are rational varieties.  
\end{ex}

\subsubsection*{Excision and $\aone$-connectivity}
We will not make a general comparison between $\aone$-chain connected varieties and $\aone$-connected varieties here.  Rather, we will focus on the main example that we will use for the rest of the paper: open subvarieties of affine space.

\begin{lem}
\label{lem:affinespace}
Suppose $k$ is an infinite field.  If $U \subset {\mathbb A}^n$ is an open subscheme realized as the complement of a union of coordinate subspaces (for some choice of coordinates on ${\mathbb A}^n$) of codimension $d \geq 2$, then $U$ is $\aone$-connected.  
\end{lem}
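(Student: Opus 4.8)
The plan is to reduce the statement to $\aone$-chain connectedness and then invoke Proposition \ref{prop:aoneconnected}, which says that an $\aone$-chain connected space is $\aone$-connected. So I would show directly that $U$ is $\aone$-chain connected in the sense of Definition \ref{defn:connected}. Let me note at the outset that the only feature of $Z$ I expect to use is that it is a closed subset of codimension $\geq 2$, i.e. $\dim Z \leq n-2$; the hypothesis that $Z$ is a union of coordinate subspaces plays no role beyond guaranteeing this, so the argument should go through for any such $Z$. Since $k$ is infinite, every finitely generated separable extension $L/k$ is again infinite, and because $Z$ is a proper closed subset of ${\mathbb A}^n$ the set $U(L) = {\mathbb A}^n(L) \setminus Z(L)$ is non-empty, as an infinite field is never covered by the $L$-points of a proper closed subset. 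This disposes of the non-emptiness clause in the definition.

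For the connectivity clause, fix such an $L$ and a pair of points $x_0, x_1 \in U(L)$. The key geometric device is the affine cone $C_x(Z)$ over $Z$ with vertex $x$, i.e. the closure of the union of all lines joining $x$ to a point of $Z$; concretely it is the image of the morphism $Z \times \aone \to {\mathbb A}^n$ sending $(z,s)$ to $x + s(z-x)$. Since $\dim Z \leq n-2$, this image has dimension at most $n-1$, so $C_x(Z)$ is a \emph{proper} closed subset of ${\mathbb A}^n$. The observation driving the proof is that, for $x \in U(L)$ and $y \neq x$, the affine line through $x$ and $y$ misses $Z$ (over $\bar L$) if and only if $y \notin C_x(Z)$: a point of $Z$ lying on the line would exhibit $y$ as a point of a line through $x$ and $Z$, and conversely.

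I would then apply this with $x = x_0$ and $x = x_1$. The union $C_{x_0}(Z) \cup C_{x_1}(Z)$ has dimension at most $n-1$, hence is a proper closed subset of ${\mathbb A}^n$, so its set of $L$-points does not exhaust $L^n = {\mathbb A}^n(L)$; I pick $y \in {\mathbb A}^n(L)$ outside this union. Since $Z \subseteq C_{x_0}(Z)$ (take $s = 1$ in the parametrization), such $y$ automatically lies in $U(L)$. By the observation above, both the line through $x_0$ and $y$ and the line through $y$ and $x_1$ lie entirely in $U$, so the linear parametrizations $f_0(t) = (1-t)x_0 + ty$ and $f_1(t) = (1-t)y + tx_1$ define morphisms $\aone_L \to U$ with $f_0(0) = x_0$, $f_0(1) = f_1(0) = y$, and $f_1(1) = x_1$. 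This is precisely a chain of length two joining $x_0$ to $x_1$, establishing $\aone$-chain connectedness and hence the lemma.

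The step I expect to demand the most care is the cone estimate together with the precise equivalence ``$y \notin C_x(Z) \Leftrightarrow$ the line $\overline{xy}$ misses $Z$'', since one must be attentive to the distinction between $L$-rational and geometric points: the line is required to avoid $Z$ over $\bar L$, whereas $y$ is chosen as an $L$-point of the complement of a subvariety that is itself only defined over $L$. The remaining input, that over an infinite field the $L$-points of ${\mathbb A}^n$ are not contained in any proper closed subset (a nonzero polynomial over an infinite field does not vanish identically), is standard and is exactly the place where the hypothesis that $k$, and hence $L$, is infinite is used.
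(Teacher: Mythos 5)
Your proposal is correct and takes essentially the same route as the paper: the paper's (very terse) proof also establishes $\aone$-chain connectedness of $U$ over every finitely generated separable extension $L/k$ by connecting $L$-points with chains of \emph{linear} maps $\aone_L \longrightarrow U$ avoiding the subspace arrangement, and then concludes via Proposition \ref{prop:aoneconnected}. Your cone construction $C_x(Z)$ simply supplies the general-position details the paper leaves implicit; the only caveat is that your stated equivalence should just be the implication ``$y \notin C_x(Z) \Rightarrow$ the line through $x$ and $y$ misses $Z$'' (the converse can fail because the image of $Z \times \aone$ need not be closed), but that is precisely the direction your argument uses, so the proof stands.
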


\begin{proof}
\label{cor:affinespace}
Let $k$ be an infinite field, and $K$ any finitely generated, separable extension of $k$.  Suppose $W_i$ are a sequence of linear subspaces defined over $k$ whose union has complement of codimension $\geq 2$ in ${\mathbb A}^n$.  In this case, we can even choose sequences of {\em linear} maps $\aone \longrightarrow X$ (defined over $K$) that miss the complement of the union of the $W_i$ and connect any pair of $K$-points.
\end{proof}

\begin{rem}
\label{rem:spacefilling}
Still under the assumption that $k$ is infinite, the previous result can be generalized to the following statement.  If $U \subset {\mathbb A}^n$ is an open subscheme whose complement has codimension $\geq 2$, then $U$ is $\aone$-chain connected.  The condition that $k$ be infinite is necessary.  Indeed, if $k$ is finite the complement of the codimension $2$ subscheme consisting of {\em all} $k$-rational points has no $k$-rational points and is thus $\aone$-disconnected.  Slightly more generally, we may construct ``space-filling" curves $Y \subset {\mathbb A}^n$: these are geometrically connected affine curves $Y$ over $k$ such that $Y(k) = {\mathbb A}^n(k)$ (see \cite{Katz} Lemma 1).  For such a curve, the complement $U = {\mathbb A}^n - Y$ has no $k$-rational points and thus $U$ can not be $\aone$-connected.
\end{rem}

\begin{ex}
\label{ex:finitequotient}
The following example presents a more subtle variation on the situation discussed in Remark \ref{rem:problem1}.  Observe that the diagonal action of the group $\mu_{n}$ of $n$th-roots of unity on ${\mathbb A}^m - 0$ (say $m > 1$) is scheme-theoretically free.  The quotient ${\mathbb A}^m - 0/\mu_{n}$ exists as a smooth scheme and is isomorphic to the $\gm$-torsor underlying the line bundle $\O(n)$ on ${\mathbb P}^{m-1}$.  However, the quotient morphism
$$
q: {\mathbb A}^m - 0 \longrightarrow {\mathbb A}^m - 0/\mu_{n}
$$
is {\em not} a surjective morphism at the level of $L$-points if $L$ is not algebraically closed.  Since $m > 1$, ${\mathbb A}^m - 0$ is always $\aone$-connected.  However, one can show that $\pi_0^{\aone}({\mathbb A}^m - 0/\mu_{n}) \cong \gm /\gm^{n}$, where the last sheaf is the cokernel of the $n$-th power morphism $\gm \longrightarrow \gm$ in the category of Nisnevich sheaves of abelian groups.  On the other hand, if $L$ is algebraically closed, every pair of $L$-points is in the image of a morphism $\aone \longrightarrow {\mathbb A}^m - 0 / \mu_{n}$.  We thank Fabien Morel for resolving our confusion regarding this example.
\end{ex}

\begin{conj}
\label{conj:codim2}
Let $k$ be an infinite field.  If $X$ is a smooth $\aone$-connected scheme and $U \subset X$ is an open subscheme whose complement has codimension $\geq 2$, then $U$ is $\aone$-connected.
\end{conj}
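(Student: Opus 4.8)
The natural strategy is to reduce to the geometric criterion of Proposition~\ref{prop:aoneconnected}: rather than computing $\pi_0^{\aone}(U)$ directly, I would try to show that $U$ is $\aone$-chain connected, whence $\aone$-connectedness follows immediately. Concretely, for every finitely generated separable extension $L/k$ and every pair of $L$-points $x_0, x_1 \in U(L)$, one must produce a finite chain of morphisms $f_i \colon \aone_L \to U$ linking $x_0$ to $x_1$. The model is Lemma~\ref{lem:affinespace} and Remark~\ref{rem:spacefilling}, where $X = \mathbb{A}^n$ and the connecting maps are taken to be linear: the lines through a fixed point $x_0$ form an $(n-1)$-dimensional family, whereas those meeting the closed complement $Z = X \setminus U$ away from $x_0$ form a family of dimension at most $\dim Z \le n-2$; hence over an infinite field a generic line through $x_0$ lands in $U$, and a two-step chain through a generic auxiliary point connects any $x_0, x_1 \in U(L)$.

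The plan for general $X$ is to imitate this moving argument. First, assuming a supply of connecting curves is available in $X$---for instance if one strengthens the hypothesis to $\aone$-chain connectedness of $X$---one obtains a chain of maps $g_i \colon \aone_L \to X$ from $x_0$ to $x_1$, whose images and whose intermediate nodes $y_i$ may however meet $Z$. I would then attempt to (a) replace each node $y_i \in Z$ by a nearby point $y_i' \in U$, using density of $U$, and (b) deform each $g_i$, rel endpoints, to a morphism avoiding $Z$. Since $\codim_X Z \ge 2$ while the $g_i$ are one-dimensional, a sufficiently general deformation should miss $Z$; the content is to show that enough deformations exist. Over affine space this mobility is supplied for free by the linear structure, but in general it requires that $X$ be covered by rational curves that deform freely enough to avoid any fixed codimension $\ge 2$ subset, i.e.\ a ``very free curves'' condition in the spirit of Koll\'ar--Miyaoka--Mori rational connectivity.

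The main obstacle is precisely this deformation step, and it is why the statement is only conjectural. Two difficulties compound. First, the hypothesis furnishes only that $X$ is $\aone$-connected, not $\aone$-chain connected, so there is no a priori geometric chain in $X$ to start from and to deform; passing from $\aone$-connectedness (a statement about $\pi_0^{\aone}$ after $\aone$-localization) back to honest morphisms $\aone \to X$ is itself delicate, and is exactly the gap between $\aone$-connected and $\aone$-chain connected varieties left open after Proposition~\ref{prop:aoneconnected}. Second, even granting chains, controlling the deformation theory so that the connecting curves can be pushed off the (possibly singular, reducible, arithmetically subtle) locus $Z$ is genuinely hard outside the affine-linear setting. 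That the infinitude of $k$ cannot be dropped---witnessed by the space-filling curve of Remark~\ref{rem:spacefilling} and the quotient of Example~\ref{ex:finitequotient}---shows the obstruction is partly arithmetic, so any proof must use the hypothesis that $k$ is infinite in an essential way.

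Finally, an alternative, more sheaf-theoretic route would bypass curves entirely and argue that $\pi_0^{\aone}$ behaves like an unramified sheaf: if one could show that the restriction $\pi_0^{\aone}(X) \to \pi_0^{\aone}(U)$ is an isomorphism whenever $\codim_X(X \setminus U) \ge 2$---a codimension-two purity property for the sheaf of $\aone$-connected components---then triviality of $\pi_0^{\aone}(X)$ would force triviality of $\pi_0^{\aone}(U)$ directly. This is attractive in that it would parallel, at the level of $\pi_0$, the excision philosophy of Theorem~\ref{thm:excision}, but it presupposes structural results on $\pi_0^{\aone}$ (invariance under codimension $\ge 2$ modification) that appear to be at least as deep as the conjecture itself.
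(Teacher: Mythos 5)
You have not proven the statement, but no fault attaches to you for that: this statement is Conjecture~\ref{conj:codim2} in the paper, and the authors themselves give no proof of it. The paper's only supporting evidence is exactly the special case you use as your model --- complements of codimension $\geq 2$ coordinate subspace arrangements in $\mathbb{A}^n$ (Lemma~\ref{lem:affinespace}), extended to arbitrary codimension $\geq 2$ complements in $\mathbb{A}^n$ (Remark~\ref{rem:spacefilling}) via the linear moving argument over an infinite field. Beyond affine space the authors offer nothing, and in Theorem~\ref{thm:excision} they sidestep the issue entirely by taking $\aone$-connectedness of $U$ as a \emph{hypothesis} rather than deriving it from that of $X$; the remark following that theorem explicitly points back to Conjecture~\ref{conj:codim2} as the missing ingredient.

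Your diagnosis of why the obvious strategies fail is accurate and matches the paper's framing. The first obstruction you name --- that $\aone$-connectedness of $X$ yields no supply of actual morphisms $\aone \to X$ to deform, since the gap between $\aone$-connected and $\aone$-chain connected is itself open --- is real: Proposition~\ref{prop:aoneconnected} only goes in one direction, and the unstable connectivity theorem (Theorem~\ref{thm:0connectivity}) produces surjectivity of $X(S) \to \pi_0^{\aone}(X)(S)$ on Henselian local $S$, not chains of $\aone$'s in the scheme $X$. The second obstruction --- that pushing curves off a codimension $\geq 2$ locus $Z$ requires mobility of the sort guaranteed by very free curves, which $\aone$-connectedness is not known to supply --- is likewise the genuine content; note the paper only claims (after Lemma~\ref{lem:affineneighborhoods}) that one ``hopes'' $\aone$-connected smooth schemes carry many rational curves. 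Your alternative route via a codimension-two purity statement for $\pi_0^{\aone}$ is also a reasonable formulation, but as you say it is a structural property of $\pi_0^{\aone}$ (which, unlike the higher $\pi_i^{\aone}$, is not even known to be $\aone$-invariant or strongly $\aone$-invariant) at least as strong as the conjecture. In short: your proposal correctly identifies this as open, and the obstacles you isolate are the right ones; what you have written is an accurate map of the problem, not a proof, and the paper contains no proof to compare it against.
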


\begin{rem}
To contrast with the discussion of the previous section, let us observe that an open subset of affine space whose complement has codimension $1$ components is not $\aone$-connected.  For example, removing a hyperplane from ${\mathbb A}^n$ produces a variety $\aone$-weakly equivalent to $\gm$, which, as we discussed before, is known {\em not} to be $\aone$-connected.
\end{rem}

\section{$\aone$-homotopy and $\aone$-homology groups}
\label{s:higher}
Herein we review definitions and basic structural properties of $\aone$-homotopy sheaves of groups of degree $\geq 1$.  In particular, we give the $\aone$-version of the Whitehead Theorem (see  Theorem \ref{thm:Whitehead}), recall basics of Morel's theory of $\aone$-covers (to be discussed in greater detail in \S \ref{s:geometry}), recall the definition of $\aone$-homology (see Definition \ref{defn:aonehomology}), state Morel's $\aone$-Hurewicz theorem (see Theorem \ref{thm:aonehurewicz}), prove the existence of Mayer-Vietoris sequences (see Proposition \ref{prop:mayervietoris}), and recall Morel's computations of  low degree $\aone$-homotopy groups of ${\mathbb A}^n - 0$ and ${\mathbb P}^n$ (see Theorem \ref{thm:projectivespace}).  We also briefly discuss the Postnikov tower in $\aone$-homotopy theory, summarizing its main properties in Theorem \ref{thm:postnikov}. 

\subsubsection*{$\aone$-homotopy (sheaves of) groups}
Suppose $({\mathcal X},x)$ is a pointed (simplicial) space.  For us, the simplicial $i$-sphere is the (pointed) space $S^i_s = \Delta^{i}/\partial \Delta^i$, where $\Delta^i$ is the algebraic $n$-simplex $\Spec k[x_0,\ldots,x_i]/ (\sum_{j=0}^i x_j = 1)$.  For $i \geq 1$, the $\aone$-homotopy sheaves of groups $\pi_i^{\aone}({\mathcal X},x)$ are defined to be the sheaves associated with the presheaves
\begin{equation*}
U \mapsto [S^i_s \wedge U_+,({\mathcal X},x)]_{\aone},
\end{equation*}
which for $i \geq 2$ are sheaves of abelian groups.  Observe that these sheaves can also be interpreted as the sheaves associated with the presheaves $U \mapsto \pi_i(Ex_{\aone}({\mathcal X})(U),x)$; here we are considering homotopy groups of the corresponding (fibrant) simplicial sets.  The following result will be used without mention in the sequel.

\begin{lem}
Suppose $({\mathcal X},x)$ and $({\mathcal Y},y)$ are a pair of pointed $\aone$-connected (simplicial) spaces.  Then for any $i \geq 1$ there is a canonical isomorphism
\begin{equation*}
\pi_i^{\aone}({\mathcal X} \times {\mathcal Y}, (x,y)) \cong \pi_i^{\aone}({\mathcal X},x) \times \pi_i^{\aone}({\mathcal Y},y).
\end{equation*}
\end{lem}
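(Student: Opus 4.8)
The plan is to reduce everything to the classical fact that homotopy groups of fibrant simplicial sets commute with finite products, and then transport this across the two operations (evaluation at $U$ and sheafification) that enter the definition of $\pi_i^{\aone}$. The crucial input is Lemma \ref{lem:finproducts}: I fix once and for all an $\aone$-resolution functor $Ex_{\aone}$ that commutes with formation of finite products, so that there is a canonical identification $Ex_{\aone}({\mathcal X} \times {\mathcal Y}) = Ex_{\aone}({\mathcal X}) \times Ex_{\aone}({\mathcal Y})$ of simplicial sheaves. Since this functor produces the $\aone$-fibrant replacement, the right-hand product is automatically simplicially fibrant and $\aone$-local and computes the $\aone$-homotopy sheaves of ${\mathcal X} \times {\mathcal Y}$; the two projections furnish the canonical comparison map whose bijectivity is to be established, so no auxiliary comparison between distinct fibrant replacements is ever needed.

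First I would pass to sections over a fixed $U \in \Sm_k$. Evaluation at $U$ preserves products of sheaves, so $Ex_{\aone}({\mathcal X} \times {\mathcal Y})(U) = Ex_{\aone}({\mathcal X})(U) \times Ex_{\aone}({\mathcal Y})(U)$. By the remark following the definition of the $\aone$-homotopy sheaves, each factor is a fibrant (Kan) simplicial set, and a product of Kan complexes is again Kan. The standard description of the homotopy groups of a product of pointed Kan complexes then yields, naturally in $U$ and compatibly with the basepoint $(x,y)$, an isomorphism
\[
\pi_i\bigl(Ex_{\aone}({\mathcal X} \times {\mathcal Y})(U),(x,y)\bigr) \cong \pi_i\bigl(Ex_{\aone}({\mathcal X})(U),x\bigr) \times \pi_i\bigl(Ex_{\aone}({\mathcal Y})(U),y\bigr),
\]
which is precisely the map induced by the two projections. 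For $i = 1$ this is the product decomposition of the fundamental group, valid without any abelianness hypothesis; for $i \geq 2$ it is the analogous statement for the abelian homotopy groups.

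Finally I would sheafify. The displayed isomorphism is an isomorphism of the presheaves whose associated Nisnevich sheaves are, by definition, $\pi_i^{\aone}({\mathcal X} \times {\mathcal Y},(x,y))$ on the one hand and the product presheaf on the other. Because sheafification is a left exact left adjoint, it preserves finite products, so the associated sheaf of the product presheaf is $\pi_i^{\aone}({\mathcal X},x) \times \pi_i^{\aone}({\mathcal Y},y)$. Naturality in $U$ then promotes the sectionwise isomorphism to the asserted canonical isomorphism of sheaves.

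I expect the only genuine subtlety to be bookkeeping rather than mathematics: one must ensure that the product-preserving resolution functor of Lemma \ref{lem:finproducts} is the very functor used to define the homotopy sheaves, and must track carefully that both evaluation at $U$ and sheafification interact correctly with finite products. It is worth noting that the $\aone$-connectedness hypothesis plays no essential role in this argument—the product formula holds for arbitrary pointed fibrant simplicial sets—so here it serves only as a standing convention aligned with the intended applications.
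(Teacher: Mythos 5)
Your proposal is correct and follows essentially the same route as the paper: the paper's proof likewise invokes Lemma \ref{lem:finproducts} to identify $Ex_{\aone}({\mathcal X} \times {\mathcal Y})$ with $Ex_{\aone}({\mathcal X}) \times Ex_{\aone}({\mathcal Y})$ and then reduces to the corresponding statement for simplicial sheaves, which you have merely expanded by writing out the sectionwise Kan-complex product formula and the sheafification step. Your closing observation is also consistent with the paper, which uses $\aone$-connectedness only to note that the resolutions are simplicially connected, not as an essential ingredient in the product decomposition itself.
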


\begin{proof}
This follows easily from Lemma \ref{lem:finproducts}.  Indeed, using that lemma, we reduce to the corresponding simplicial result:  the assumption that ${\mathcal X}$ be $\aone$-connected implies that $Ex_{\aone}({\mathcal X})$ is simplicially connected.
\end{proof}

\begin{rem}
\label{rem:fibration}
Formal arguments in model category theory show (see \cite{Quillen} Chapter 1 \S 3 Proposition 4) that if $f: {\mathcal X} \longrightarrow {\mathcal Y}$ is an $\aone$-fibration, then one obtains a corresponding long exact sequence of $\aone$-homotopy sheaves.
\end{rem}

\begin{defn}
\label{defn:aoneiconnected}
Suppose $k$ is an integer $\geq 0$.  A pointed (simplicial) space ${\mathcal X}$ is said to be {\em $\aone$-$k$-connected} if
\begin{equation*}
\pi_i^{\aone}({\mathcal X},x) = * \text{ for all } 0 \leq i \leq k.
\end{equation*}
Spaces that are $\aone$-$1$-connected will be called {\em $\aone$-simply connected}.
\end{defn}

Part of the reason for introducing these {\em sheaves} of $\aone$-homotopy groups, as opposed to ordinary groups, is that the $\aone$-homotopy sheaves of groups form the correct class of objects with which to detect $\aone$-weak equivalences.

\begin{thm}[$\aone$-Whitehead Theorem, \cite{MV} \S 3 Proposition
2.14]
\label{thm:Whitehead}
Suppose $f: ({\mathcal X},x) \longrightarrow ({\mathcal Y},y)$ is a morphism of pointed $\aone$-connected (simplicial) spaces.  The following conditions are equivalent:
\begin{itemize}
\item the morphism $f$ is an $\aone$-weak equivalence, and
\item for every $i > 0$, the induced morphism of $\aone$-homotopy sheaves of groups $f_*: \pi_i^{\aone}({\mathcal X},x) \longrightarrow \pi_i^{\aone}({\mathcal Y},y)$ is an isomorphism.
\end{itemize}
\end{thm}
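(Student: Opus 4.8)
The plan is to deduce both implications from the classical Whitehead theorem for Kan complexes, applied stalkwise, after replacing $f$ by the induced map $Ex_{\aone}(f): Ex_{\aone}({\mathcal X}) \longrightarrow Ex_{\aone}({\mathcal Y})$ of $\aone$-fibrant replacements. Two facts make this translation possible. First, by definition $\pi_i^{\aone}({\mathcal X},x)$ is the Nisnevich sheafification of the presheaf $U \mapsto \pi_i(Ex_{\aone}({\mathcal X})(U),x)$, whose values are honest simplicial homotopy groups of a simplicially fibrant, hence sectionwise Kan, complex. Second, a morphism is an $\aone$-weak equivalence precisely when $Ex_{\aone}$ of it is a weak equivalence in $\hsnis$, and weak equivalences there are exactly the stalkwise weak equivalences of simplicial sets. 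Throughout I use the resolution functor $Ex_{\aone}$ of Lemma \ref{lem:finproducts}, together with the fact that $\theta_{\mathcal X}: {\mathcal X} \to Ex_{\aone}({\mathcal X})$ and $\theta_{\mathcal Y}: {\mathcal Y} \to Ex_{\aone}({\mathcal Y})$ are $\aone$-acyclic cofibrations fitting into a commutative square with $f$ and $Ex_{\aone}(f)$.

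For the forward direction, suppose $f$ is an $\aone$-weak equivalence. Applying two-out-of-three to the square above, in which both vertical maps $\theta_{\mathcal X},\theta_{\mathcal Y}$ are $\aone$-weak equivalences, shows that $Ex_{\aone}(f)$ is an $\aone$-weak equivalence between $\aone$-local, simplicially fibrant objects. A standard feature of Bousfield localization is that an $\aone$-weak equivalence between $\aone$-local objects is already a weak equivalence in $\hsnis$, i.e.\ a stalkwise weak equivalence of simplicial sets. Passing to homotopy sheaves, and using that sheafification does not alter stalks, immediately gives that $f_*: \pi_i^{\aone}({\mathcal X},x) \to \pi_i^{\aone}({\mathcal Y},y)$ is an isomorphism for every $i$.

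For the converse (the substantive direction), suppose $f_*$ is an isomorphism of $\aone$-homotopy sheaves in every degree $i > 0$. I want to show $Ex_{\aone}(f)$ is a stalkwise weak equivalence, after which two-out-of-three promotes $f$ to an $\aone$-weak equivalence. Fix a Henselian local scheme $S$, essentially of finite type, written as a filtered limit $S = \lim_{\alpha} U_{\alpha}$. Because sheafification preserves stalks and $\pi_i$ commutes with the filtered colimit computing sections over $S$, the stalk of $\pi_i^{\aone}({\mathcal X},x)$ at $S$ is canonically $\pi_i(Ex_{\aone}({\mathcal X})(S))$, based at the point induced from $x$ via $S \to \Spec k$. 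By hypothesis $Ex_{\aone}(f)(S)$ therefore induces isomorphisms on all homotopy groups at this distinguished base point. Moreover, since ${\mathcal X}$ and ${\mathcal Y}$ are $\aone$-connected, Lemma \ref{lem:weaklyconnected} guarantees that $Ex_{\aone}({\mathcal X})(S)$ and $Ex_{\aone}({\mathcal Y})(S)$ are connected Kan complexes; for connected complexes it suffices to check homotopy groups at a single base point, so the classical Whitehead theorem shows $Ex_{\aone}(f)(S)$ is a weak equivalence. As $S$ ranges over all stalks, $Ex_{\aone}(f)$ is a stalkwise, hence simplicial, weak equivalence, and we conclude as before.

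The main obstacle is the bookkeeping in the converse direction, specifically the identification of the stalk of $\pi_i^{\aone}({\mathcal X},x)$ at $S$ with the simplicial homotopy group $\pi_i(Ex_{\aone}({\mathcal X})(S))$: this requires both that Nisnevich sheafification does not change stalks and that $\pi_i$ commutes with the filtered colimit defining sections over the pro-scheme $S$ (valid because $Ex_{\aone}({\mathcal X})$ is sectionwise Kan and homotopy groups of Kan complexes commute with filtered colimits). The second delicate point is the base-point issue: an isomorphism of homotopy sheaves only supplies an isomorphism at the single distinguished base point over each $S$, so one genuinely needs the connectivity hypothesis---via Lemma \ref{lem:weaklyconnected}---to reduce the stalkwise Whitehead criterion to that one base point. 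This is precisely where $\aone$-connectedness of both spaces is indispensable, mirroring the necessity of connectivity (or, equivalently, of tracking local systems) in the classical Whitehead theorem.
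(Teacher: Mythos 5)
Your proposal is correct and follows essentially the same route as the paper: replace $f$ by $Ex_{\aone}(f)$, use that an $\aone$-weak equivalence between fibrant $\aone$-local objects is a simplicial (stalkwise) weak equivalence, and conclude from the definition of the $\aone$-homotopy sheaves; you simply spell out the stalkwise classical Whitehead argument and base-point bookkeeping that the paper's proof leaves implicit. One small correction: the connectedness of $Ex_{\aone}({\mathcal X})(S)$ for \emph{every} Henselian local $S$ follows directly from the definition of $\aone$-connectedness (triviality of all stalks of $\pi_0^{\aone}$), not from Lemma \ref{lem:weaklyconnected}, which runs in the opposite direction (reducing the check to field extensions).
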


\begin{proof}
Suppose $f$ is as in the hypotheses.  Then $f$ is an $\aone$-weak equivalence if and only if the induced morphism $Ex_{\aone}(f): Ex_{\aone}({\mathcal X}) \longrightarrow Ex_{\aone}({\mathcal Y})$ is an $\aone$-weak equivalence.  Since $Ex_{\aone}({\mathcal Y})$ is fibrant and $\aone$-local, to check that $f$ is an $\aone$-weak equivalence, one just has to check that $f$ is a weak equivalence of simplicial sheaves.  This is equivalent to the second condition by the definition of $\aone$-homotopy groups given above. 
\end{proof}

\begin{ex}
Observe that the $\aone$-homotopy groups $\pi_i^{\aone}(\Spec k)$ are all trivial if $i \geq 0$.  In particular, the $\aone$-contractible spaces of \cite{AD1} provide examples of spaces that are $\aone$-$i$-connected for all $i \geq 0$.  For this paper, it is important to observe that ${\mathbb A}^n$ is $\aone$-contractible (essentially by definition).   This fact, together with Lemma \ref{lem:affinespace} and the excision result proved in Theorem \ref{thm:excision}, will be used to give many examples of $\aone$-$i$-connected varieties for any {\em fixed} $i$.
\end{ex}

\subsubsection*{Strong $\aone$-invariance and $\aone$-homotopy sheaves}
In classical algebraic topology, one knows that the fundamental group of a (sufficiently nice) topological space is a discrete group.  The notion analogous to discreteness in $\aone$-algebraic topology is summarized in the following definition.

\begin{defn}[\cite{MField} Definition 5]
\label{defn:stronglyinvariant}
A sheaf of groups $G$ (possibly non-abelian) is {\em strongly $\aone$-invariant} if, for any $X \in {\mathcal Sm}_k$ and $i = 0,1$, the pull-back morphism $H^i_{Nis}(X,G) \longrightarrow H^i_{Nis}(X \times \aone,G)$ induced by the projection $X \times \aone \longrightarrow X$ is an isomorphism.
\end{defn}

If $G$ is a sheaf of groups, the usual simplicial bar construction gives rise to a pointed simplicial sheaf $BG$ ({\em cf.} \cite{MV} \S 4.1); we denote this pointed space by $(BG,\ast)$.  Theorem \ref{thm:0connectivity} shows that the sheaf $\pi_0^{\aone}(BG)$ is always trivial.  

\begin{entry}
\label{entry:strongaoneinvarianceequivalences}
Strong $\aone$-invariance for a sheaf of groups $G$ can be reformulated in several ways.  Proposition 1.16 of \cite{MV} \S 4 shows that for any smooth scheme $U$ one has canonical bijections $[U_+,(BG,\ast)]_s \isomto H^1_{Nis}(U,G)$, $[\Sigma^1_sU_+,(BG,\ast)]_s \isomto G(U)$, and $[\Sigma^i_s U_+,(BG,\ast)]_s$ vanishes for $i > 1$.  Using these identifications, together with the equivalences of \cite{MIntro} Lemma 3.2.1, we deduce that $G$ is strongly $\aone$-invariant if and only if $BG$ is $\aone$-local.  If $BG$ is $\aone$-local, we know that for any simplicial space ${\mathcal X}$, the canonical map $[{\mathcal X},BG]_s \longrightarrow [{\mathcal X},BG]_{\aone}$ is a bijection.  Thus, if $G$ is strongly $\aone$-invariant, we deduce the following results:
\begin{equation}
\label{eqn:bgaonelocal}
[\Sigma^i_s U_+,(BG,\ast)]_{\aone} \isomto \begin{cases} H^1_{Nis}(U,G) & \text{ if } i = 0, \\
G(U) & \text{ if } i = 1, \text{ and } \\
0 & \text{ otherwise }.\end{cases}
\end{equation}
Taken together, if $G$ is strongly $\aone$-invariant, one has $\pi_0^{\aone}(BG) = \ast$, $\pi_1^{\aone}(BG,\ast) = G$, and $\pi_i^{\aone}(BG,\ast) = 0$ for $i > 1$.  The identifications and computations above will be used repeatedly in the sequel.  
\end{entry}

\begin{ex}
It is proved in, e.g., \cite{MV} \S 4 Proposition 3.8 that $\gm$ is a strongly $\aone$-invariant sheaf of groups.  More generally, a split torus $T$ is a strongly $\aone$-invariant sheaf of groups (via an isomorphism $BT \cong B\gm^{\times n}$).  More generally, one can show that if $T$ is a torus (not necessarily split) over a perfect field that is a smooth group scheme, then $T$ is strongly $\aone$-invariant.  
\end{ex}

\begin{ex}
Observe that, on the contrary, $\ga$ is {\em not} strongly $\aone$-invariant; indeed, for any smooth scheme $X$, one has isomorphisms $H^0_{Nis}(X,\ga) \cong H^0_{Zar}(X,\ga) = H^0(X,\O_X)$.  The last group is obviously not isomorphic to $H^0_{Nis}(X \times \aone,\ga)$ for $X$ a smooth affine scheme.  Also, $GL_n$ is known {\em not} to be $\aone$-local (see, e.g., \cite{ADBundle} for a detailed discussion of this fact).  Indeed, it is well known that the canonical map $H^1_{Nis}(X,GL_n) \longrightarrow H^1_{Nis}(X \times \aone,GL_n)$ need not be an isomorphism if $X$ is not affine (e.g., there exist counter-examples with $X = \pone$).  More generally, one can show that essentially any non-abelian reductive group is {\em not} strongly $\aone$-invariant.  
\end{ex}

The following result is one of the main results of \cite{MField} and is one justification for introducing the concept of strong $\aone$-invariance.  

\begin{thm}[\cite{MField} Theorem 3.1, Corollary 3.3]
Suppose ${\mathcal X}$ is a pointed (simplicial) space.  For every $i > 0$ the sheaf of groups $\pi_i^{\aone}({\mathcal X},x)$ is strongly $\aone$-invariant.
\end{thm}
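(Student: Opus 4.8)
The plan is to reduce the assertion to a statement about the $\aone$-fibrant model and then to run it through Morel's $\aone$-connectivity theorem, which is the genuine engine. Since $\pi_i^{\aone}(\mathcal{X},x) = \pi_i^{\aone}(Ex_{\aone}(\mathcal{X}),x)$ and $Ex_{\aone}(\mathcal{X})$ is simplicially fibrant and $\aone$-local, it suffices to show that the simplicial homotopy sheaves $G := \pi_i^{\aone}(\mathcal{Y})$ of an arbitrary pointed fibrant $\aone$-local space $\mathcal{Y}$ are strongly $\aone$-invariant. Unwinding the definition, this amounts to two separate claims for every $X \in \Sm_k$: that $H^0_{Nis}(X,G) \to H^0_{Nis}(X \times \aone, G)$ and $H^1_{Nis}(X,G) \to H^1_{Nis}(X \times \aone, G)$ are both bijective.

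The $H^0$ half is essentially formal. Testing $\aone$-locality of $\mathcal{Y}$ against the spaces $\mathcal{T} = S^i_s \wedge U_+$ shows that the presheaf $U \mapsto \pi_i(Ex_{\aone}(\mathcal{X})(U),x)$ is already $\aone$-invariant; sheafifying yields $\aone$-invariance of $G$ itself, which is exactly the $i = 0$ clause of strong $\aone$-invariance.

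The $H^1$ half is the substance. Here I would use the reformulation recorded in \ref{entry:strongaoneinvarianceequivalences}: a sheaf of groups $G$ is strongly $\aone$-invariant if and only if its classifying space $BG$ is $\aone$-local. It therefore suffices to realize $BG$ --- and for $i \geq 2$ the Eilenberg-MacLane object $K(G,i)$ --- as a stage of the Postnikov tower of $\mathcal{Y}$ (cf. Theorem \ref{thm:postnikov}) and to prove that the Postnikov sections of an $\aone$-local space remain $\aone$-local. Concretely one studies the tower $\cdots \to P_n \mathcal{Y} \to P_{n-1}\mathcal{Y} \to \cdots$, whose successive fibers are the Eilenberg-MacLane objects assembled from the $\pi_i^{\aone}(\mathcal{Y})$, and argues by induction that each truncation inherits $\aone$-locality, so in particular $P_1\mathcal{Y} \simeq B(\pi_1^{\aone}\mathcal{Y})$ is $\aone$-local.

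The main obstacle, and the point at which the argument leaves the realm of formal model-category manipulation, is controlling the $\aone$-localization of these Postnikov fibers; this is precisely Morel's $\aone$-connectivity theorem, which asserts that over a perfect field the functor $L_{\aone}$ does not lower connectivity, so that a simplicially $n$-connected space has $\aone$-$n$-connected localization. This is what forces the fibers and $k$-invariants of the tower to stay $\aone$-local and closes the induction. The proof of the connectivity theorem is itself the deep input: it rests on the structure theory of $\aone$-invariant (unramified) sheaves over a perfect field and the Gersten/Cousin-type resolutions in the spirit of \cite{CTHK}, from which one extracts that the $\aone$-invariant sheaves arising here are automatically strictly $\aone$-invariant (giving, for $i \geq 2$, the stronger strict form and hence the stated strong form). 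Finally, since $\pi_1^{\aone}$ need not be abelian, I would run the case $i = 1$ in parallel using Morel's theory of strongly $\aone$-invariant sheaves of (possibly non-abelian) groups, obtaining strong $\aone$-invariance directly there.
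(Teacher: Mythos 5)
Before comparing: the paper contains no proof of this statement at all. It is imported wholesale from Morel (\cite{MField}, Theorem 3.1 and Corollary 3.3), and the surrounding machinery of the paper (Paragraph \ref{entry:strongaoneinvarianceequivalences}, Theorem \ref{thm:postnikov}, the excision arguments) sits downstream of it. So your proposal must stand on its own, and it has two genuine gaps. The first is the claim that the $H^0$ half is ``essentially formal.'' What is formal, from $\aone$-locality of $Ex_{\aone}({\mathcal X})$, is that the \emph{presheaf} $U \mapsto \pi_i(Ex_{\aone}({\mathcal X})(U),x)$ is $\aone$-invariant. But $\pi_i^{\aone}({\mathcal X},x)$ is the Nisnevich \emph{sheafification} of that presheaf, and sheafification does not preserve $\aone$-invariance by any formal argument: a section of the sheafification over $V \times \aone$ is only locally a section of the presheaf, for Nisnevich covers that need not be pulled back from $V$. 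The decisive sanity check is degree zero: the presheaf $U \mapsto [U,{\mathcal X}]_{\aone}$ is $\aone$-invariant for utterly trivial reasons, yet $\aone$-invariance of its sheafification $\pi_0^{\aone}({\mathcal X})$ is a well-known open conjecture of Morel. So the degree-$0$ clause of strong $\aone$-invariance for $\pi_i^{\aone}$, $i \geq 1$, already requires the heavy machinery; it is not a warm-up.

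The second gap is that your Postnikov induction is circular, and the connectivity theorem cannot close it. To propagate $\aone$-locality up the tower you need the fibers $K(\pi_n^{\aone},n)$ to be $\aone$-local, which by Paragraph \ref{entry:strictaoneinvarianceequivalences} is \emph{equivalent} to strict $\aone$-invariance of $\pi_n^{\aone}$ --- precisely the statement being proven. The connectivity theorem asserts that $\aone$-localization preserves simplicial $n$-connectedness; it says nothing about whether a given Eilenberg--MacLane object is already local. Worse, in Morel's development the unstable connectivity theorem is itself \emph{deduced from} these invariance results (via strict $\aone$-invariance of $\aone$-homology sheaves and the Hurewicz theorem), so taking it as an independent input inverts the logical order. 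The actual architecture of \cite{MField} is different and is worth internalizing: all the weight falls on the case $i=1$, which Morel proves directly for a connected $\aone$-local space by working with simplicial sheaves of groups (the loop group of the space) together with the unramified/Gersten-type theory over a perfect field; the higher cases then follow cheaply, not by a Postnikov induction, but because $\pi_i^{\aone}({\mathcal X},x) \cong \pi_1^{\aone}$ of the $(i-1)$-fold derived loop space of $Ex_{\aone}({\mathcal X})$, and loop spaces of $\aone$-local spaces are $\aone$-local --- this last point \emph{is} formal, since locality is a mapping-space condition. Strong $\aone$-invariance for all $i \geq 1$ then holds, and for $i \geq 2$ these sheaves are abelian, so the separate theorem of Morel quoted later in the paper (strongly $\aone$-invariant abelian implies strictly $\aone$-invariant) upgrades the conclusion. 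That loop-space reduction is the missing idea your proposal needs in place of the tower argument.
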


We let ${\mathcal Gr}^{\aone}_k$ denote the category of strongly $\aone$-invariant sheaves of groups.  Any homomorphism $\varphi: H \longrightarrow G$ of strongly $\aone$-invariant sheaves of groups induces a pointed map $(BH,\ast) \longrightarrow (BG,\ast)$; applying $\pi_1^{\aone}$ yields a homomorphism $H \to G$ by the discussion subsequent to Equation \ref{eqn:bgaonelocal}.  Thus one obtains a bijection
\begin{equation}
\label{eqn:nonabeliangroupstospaces}
[(BH,\ast),(BG,\ast)]_{\aone} \isomto Hom_{{\mathcal Gr}^{\aone}_k}(H,G).  
\end{equation}
This identification will be important in the proof of Theorem \ref{thm:excision}.

\subsubsection*{$\aone$-covering spaces}
Analogous to the usual theory of covering spaces, the sheaf of groups $\pi_1^{\aone}({\mathcal X},x)$ has an interpretation in terms of an $\aone$-covering space theory (see also \cite{MField} \S 4.1). In classical topology, covering spaces can be characterized by the unique path lifting property (see e.g., \cite{Spanier} Chapter 2 \S 4 Theorem 10).   One can make an analogous definition in $\aone$-homotopy theory.  

\begin{defn}
A morphism of spaces $f: {\mathcal X} \longrightarrow {\mathcal Y}$ is an {\em $\aone$-cover} if $f$ has the unique right lifting property with respect to morphisms that are simultaneously $\aone$-weak equivalences and monomorphisms.  In other words, given any square like Diagram \ref{eqn:fibrationlifting}, one requires that there exists a {\em unique} lift making both triangles commute.
\end{defn}

By definition, $\aone$-covers are $\aone$-fibrations in the sense of Definition \ref{defn:aonefibration}.  Furthermore, $\aone$-covers are closely related to strongly $\aone$-invariant sheaves of groups by the following result.  

\begin{lem}[\cite{MField} Lemma 4.5]
If $G$ is a strongly $\aone$-invariant sheaf of groups, and ${\mathcal X}$ is a space, then any $G$-torsor over ${\mathcal X}$ provides an $\aone$-cover of ${\mathcal X}$.   
\end{lem}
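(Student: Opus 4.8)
The plan is to reduce the unique lifting property to two statements about how strong $\aone$-invariance interacts with $\aone$-weak equivalences, and then to a direct manipulation of sections of torsors. Fix a $G$-torsor $\pi: {\mathcal P} \to {\mathcal X}$ and a commutative square as in Diagram \ref{eqn:fibrationlifting} with $j: {\mathcal A} \to {\mathcal B}$ a monomorphism that is also an $\aone$-weak equivalence; write $u: {\mathcal A} \to {\mathcal P}$ and $v: {\mathcal B} \to {\mathcal X}$ for the two given maps, so that $\pi u = v j$. First I would pull the torsor back along $v$ to obtain a $G$-torsor ${\mathcal Q} = {\mathcal B} \times_{{\mathcal X}} {\mathcal P} \to {\mathcal B}$. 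Unwinding definitions, the datum $u$ (together with the relation $\pi u = v j$) is precisely a section $\sigma$ of the restricted torsor $j^*{\mathcal Q} = {\mathcal A} \times_{{\mathcal X}} {\mathcal P} \to {\mathcal A}$, while a lift ${\mathcal B} \to {\mathcal P}$ solving the square corresponds exactly to a section $s$ of ${\mathcal Q} \to {\mathcal B}$ with $j^* s = \sigma$. Thus the whole statement becomes: \emph{sections of a $G$-torsor extend uniquely along monomorphisms that are $\aone$-weak equivalences.}

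Next I would invoke the two avatars of strong $\aone$-invariance recorded around Equation \ref{eqn:bgaonelocal}. Because $BG$ is $\aone$-local, the identification $[{\mathcal B}, BG]_s \isomto H^1_{Nis}({\mathcal B}, G)$ is compatible with the bijection $[{\mathcal B}, BG]_s = [{\mathcal B}, BG]_{\aone}$, and since $j$ is an $\aone$-weak equivalence, Definition \ref{defn:aoneweakequivalence} forces $j^*: H^1_{Nis}({\mathcal B}, G) \to H^1_{Nis}({\mathcal A}, G)$ to be a bijection. Likewise, the $i = 0$ half of strong $\aone$-invariance says that $G$ itself is $\aone$-local, whence $[{\mathcal B}, G]_s = G({\mathcal B})$ and the same argument gives that $j^*: G({\mathcal B}) \to G({\mathcal A})$ is a bijection. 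These two facts are the engine of the proof.

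With these in hand the argument runs as follows. The restricted torsor $j^*{\mathcal Q}$ carries the section $\sigma$, hence is trivial, so its class in $H^1_{Nis}({\mathcal A}, G)$ vanishes; by injectivity of $j^*$ on $H^1_{Nis}$ the class of ${\mathcal Q}$ in $H^1_{Nis}({\mathcal B}, G)$ also vanishes, and ${\mathcal Q} \to {\mathcal B}$ admits some section. Fixing a trivialization ${\mathcal Q} \cong G \times {\mathcal B}$ identifies sections of ${\mathcal Q}$ with $G({\mathcal B})$ and sections of $j^*{\mathcal Q}$ with $G({\mathcal A})$, compatibly with restriction, so $\sigma$ corresponds to an element $a \in G({\mathcal A})$ and a section $s$ with $j^* s = \sigma$ is the same as an element $g \in G({\mathcal B})$ with $j^* g = a$. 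Since $j^*: G({\mathcal B}) \to G({\mathcal A})$ is a bijection, such a $g$ exists and is unique, yielding the desired unique lift.

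The step I expect to be the real obstacle is the careful justification of the two bijectivity statements for an \emph{arbitrary} $\aone$-acyclic monomorphism between possibly non-representable spaces ${\mathcal A}, {\mathcal B}$: the definition of strong $\aone$-invariance only directly addresses the projections $X \times \aone \to X$ for smooth schemes $X$, so one must pass through the reformulation that $BG$ (respectively $G$) is $\aone$-local, together with the identifications $[-, BG]_s \cong H^1_{Nis}(-, G)$ and $[-, G]_s \cong G(-)$, and verify that these extend from representables to general spaces and that "a torsor has a section" is equivalent to "its classifying map is null." Once that bookkeeping is secured, the torsor manipulations in the last paragraph are entirely routine.
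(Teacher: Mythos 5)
The paper never proves this lemma---it is imported wholesale from Morel (\cite{MField} Lemma 4.5)---so the only meaningful comparison is with Morel's own argument, and your proof is correct and essentially reproduces it: you translate the unique lifting problem against an $\aone$-acyclic monomorphism $j\colon {\mathcal A} \to {\mathcal B}$ into unique extension of a section of the pulled-back torsor, and then use the two faces of strong $\aone$-invariance (that $G$ and $BG$ are both $\aone$-local, as recorded in Paragraph \ref{entry:strongaoneinvarianceequivalences}) to transport triviality of torsor classes and bijectivity on sections across $j$. The bookkeeping you flag at the end is indeed the only real content beyond this outline, and it is standard: the classification $[{\mathcal B},BG]_s \cong H^1_{Nis}({\mathcal B},G)$ of \cite{MV} \S 4 holds with a simplicial source, and because $G$ is $0$-truncated, both honest morphisms ${\mathcal B} \to G$ and simplicial homotopy classes $[{\mathcal B},G]_s$ coincide with sheaf maps out of the sheaf-theoretic $\pi_0$ of ${\mathcal B}$, which is exactly what lets you upgrade locality statements about homotopy classes to the strict existence-and-uniqueness of lifts that the definition of $\aone$-cover demands (and likewise lets you deduce $\aone$-locality of $G$ from its $\aone$-invariance on smooth schemes, by writing an arbitrary sheaf as a colimit of representables).
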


We will refer to an $\aone$-cover associated with a $G$-torsor under a strongly $\aone$-invariant sheaf of groups $G$ as a {\em Galois $\aone$-cover}.  

\begin{rem}
Observe that given an $\aone$-connected space ${\mathcal X}$, the total space of an $\aone$-cover need {\em not} be $\aone$-connected: take for instance the trivial $\gm$-torsor over any smooth scheme $X$.  Morel also proves that if $G$ is a finite {\em \'etale} group scheme of order coprime to the characteristic of $k$, then any $G$-torsor is an $\aone$-cover.  Once more, the total space of such a cover may be $\aone$-disconnected, e.g., a trivial torsor over a smooth scheme $X$.  More remarkably, in stark contrast to the topological situation, one can construct $\aone$-connected $\aone$-covers of $\aone$-disconnected spaces (see, e.g., Example \ref{ex:finitequotient}).
\end{rem}

If ${\mathcal X}$ is an $\aone$-connected space, we will say that $f: {\mathcal X}' \longrightarrow {\mathcal X}$ is an {\em $\aone$-covering space} if ${\mathcal X}'$ is $\aone$-connected and $f$ is an $\aone$-cover.  To emphasize, our terminology differs slightly from that of \cite{MField}: $\aone$-covering spaces are $\aone$-covers, but not conversely.  By definition, the {\em universal $\aone$-covering space} of a pointed, $\aone$-connected space $({\mathcal X},x)$ is the unique (up to unique isomorphism) pointed, $\aone$-1-connected $\aone$-covering space $(\tilde{{\mathcal X}},\tilde{x})$ of $({\mathcal X},x)$.  This space canonically has the structure of a $\pi_1^{\aone}({\mathcal X},x)$-torsor over ${\mathcal X}$.  If ${\mathcal X}$ is a pointed $\aone$-connected space, Theorem 4.8 of \cite{MField} guarantees the existence of a universal $\aone$-covering space.  

There is a Galois correspondence for $\aone$-covering spaces in analogy with the corresponding story in topology: one can construct an order reversing bijection between the lattice of strongly $\aone$-invariant normal subgroup sheaves of $\pi_1^{\aone}({\mathcal X},x)$ and pointed $\aone$-covering spaces of ${\mathcal X}$.  

%More generally, for any $\aone$-cover ${\mathcal X}' \longrightarrow {\mathcal X}$, we can consider the sheaf of groups $Aut({\mathcal X}'/{\mathcal X})$, which is always a quotient of $\pi_1^{\aone}({\mathcal X},x)$.  

\begin{defn}
Suppose $({\mathcal X},x)$ is a pointed $\aone$-connected space.  We write $Cov_{\aone}({\mathcal X})$ for the category whose objects are $\aone$-covers $({\mathcal X}',\varphi: {\mathcal X}' \longrightarrow {\mathcal X})$ of ${\mathcal X}$, and where morphisms between objects are morphisms of spaces making the obvious diagrams commute. 
\end{defn}

Given a cover $({\mathcal X}',\varphi)$, consider the fiber product diagram
\begin{equation*}
\xymatrix{
{\mathcal X}' \times_{{\mathcal X}} \ast \ar[r]^{x'}\ar[d]^{\varphi'} & {\mathcal X}' \ar[d]^{\varphi} \\
\ast \ar[r]^{x} & {\mathcal X}.
}
\end{equation*}
We know that pull-backs of $\aone$-covers are $\aone$-covers by \cite{MField} Lemma 4.7.1.  We use this repeatedly now.  Define the sheaf of groups $Aut({\mathcal X}'/{\mathcal X})$ to be the sheaf whose sections over a smooth scheme $U$ are automorphisms of the $\aone$-cover ${\mathcal X}' \times U \to {\mathcal X} \times U$.  

There is an action of the sheaf of groups $Aut({\mathcal X}'/{\mathcal X})$ by {\em deck transformations} on the space ${\mathcal X}' \times_{{\mathcal X}} \ast $.  In particular, there is a right action of $\pi_1^{\aone}({\mathcal X})$ on ${\mathcal X}' \times_{{\mathcal X}} \ast $.  Again using the fact that pull-backs of $\aone$-covers are $\aone$-covers, we observe that ${\mathcal X}' \times_{{\mathcal X}} \ast$ is an $\aone$-cover of $\ast$.  In particular, this means this fiber product is a sheaf of sets ${\mathcal S}$ that is $\aone$-invariant in the sense that for any smooth scheme $U$, the canonical map ${\mathcal S}(U) \to {\mathcal S}(\aone \times U)$ is a bijection (we have implicitly used \cite{MV} Section 2 Proposition 2.28 and \cite{MV} Section 2 Proposition 3.19).  

Let $\pi_1^{\aone}({\mathcal X},x)-{\mathcal Set}$ denote the category of sheaves of sets that are $\aone$-invariant (in the sense above) and come equipped with a right action of $\pi_1^{\aone}({\mathcal X})$.  Sending an $\aone$-cover $({\mathcal X}',\varphi)$ to the $\aone$-local space ${\mathcal X}' \times_{{\mathcal X}} \ast$ equipped with its right $\pi_1^{\aone}({\mathcal X},x)$-action thus defines a functor 
\begin{equation*}
\Gamma_x: Cov_{\aone}({\mathcal X}) \longrightarrow \pi_1^{\aone}({\mathcal X},x)-{\mathcal Set},
\end{equation*}
which we now study in the Galois theoretic terms just introduced.  

As above, let $\tilde{{\mathcal X}}$ denote the $\aone$-universal cover of ${\mathcal X}$ with its prescribed $\pi_1^{\aone}({\mathcal X},x)$-action.  Given an object in ${\mathcal S} \in \pi_1^{\aone}({\mathcal X},x)-{\mathcal Set}$, consider the contracted product space $\tilde{{\mathcal X}} \times^{\pi_1^{\aone}({\mathcal X},x)} {\mathcal S}$, which is the quotient of $\tilde{{\mathcal X}} \times {\mathcal S}$ by the obvious actions of $\pi_1^{\aone}({\mathcal X},x)$.    

\begin{prop}
\label{prop:contractedproduct}
Projection onto the first factor determines a morphism $\tilde{{\mathcal X}} \times^{\pi_1^{\aone}({\mathcal X},x)} {\mathcal S} \longrightarrow {\mathcal X}$ that is an $\aone$-cover.  Moreover, the aforementioned contracted product determines a functor 
\begin{equation*}
\pi_1^{\aone}({\mathcal X},x)-{\mathcal Set} \longrightarrow Cov_{\aone}({\mathcal X}). 
\end{equation*}
\end{prop}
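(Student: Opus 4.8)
The plan is to prove that $p$ is an $\aone$-cover by base change along the universal $\aone$-covering space and then descending. Write $G := \pi_1^{\aone}({\mathcal X},x)$, let $q: \tilde{\mathcal X} \longrightarrow {\mathcal X}$ denote the universal $\aone$-cover with its canonical structure of a $G$-torsor, and let $p: \tilde{\mathcal X} \times^{G} {\mathcal S} \longrightarrow {\mathcal X}$ be the map in question. The idea is that pulling $p$ back along $q$ trivializes the contracted product, turning $p$ into an honest projection that is visibly an $\aone$-cover; since $q$ is itself an $\aone$-cover, the property should then descend. Concretely I would proceed in four steps: (i) trivialize $q^{*}p$; (ii) check the trivialized map is an $\aone$-cover; (iii) verify the unique right lifting property for $p$ by lifting upstairs and descending along $q$; and (iv) check functoriality.

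For (i), I use the torsor isomorphism $\tilde{\mathcal X} \times_{\mathcal X} \tilde{\mathcal X} \cong \tilde{\mathcal X} \times G$ coming from the definition of a $G$-torsor to produce a $G$-equivariant isomorphism $\tilde{\mathcal X} \times_{\mathcal X} (\tilde{\mathcal X} \times^{G} {\mathcal S}) \cong \tilde{\mathcal X} \times {\mathcal S}$ over $\tilde{\mathcal X}$. This identifies $q^{*}p$ with the projection $\mathrm{pr}: \tilde{\mathcal X} \times {\mathcal S} \longrightarrow \tilde{\mathcal X}$, and exhibits a commuting square whose vertical maps are $\mathrm{pr}$ and $q$, whose horizontal maps are the quotient maps by $G$, and which is simultaneously a pullback square and a $G$-quotient square (so that $p$ and ${\mathcal X}$ are recovered from $\mathrm{pr}$ and $\tilde{\mathcal X}$ by passing to $G$-quotients). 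For (ii), recall from the discussion preceding this proposition that an $\aone$-invariant sheaf of sets is exactly an $\aone$-cover of $\ast$; since ${\mathcal S}$ is such a sheaf, ${\mathcal S} \longrightarrow \ast$ is an $\aone$-cover, and therefore so is its base change $\mathrm{pr}: \tilde{\mathcal X} \times {\mathcal S} \longrightarrow \tilde{\mathcal X}$ by \cite{MField} Lemma 4.7.1.

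Step (iii) is the technical heart. Given a monomorphism and $\aone$-weak equivalence $j: {\mathcal A} \longrightarrow {\mathcal B}$ and a square with top map $a: {\mathcal A} \to \tilde{\mathcal X} \times^{G}{\mathcal S}$ and bottom map $b: {\mathcal B} \to {\mathcal X}$, I base change $q$ along $b$ to get a $G$-torsor $\tilde{\mathcal B} \to {\mathcal B}$, itself an $\aone$-cover, and pull $j$ back to $\tilde{\jmath}: \tilde{\mathcal A} \to \tilde{\mathcal B}$. This $\tilde{\jmath}$ is again a monomorphism and $\aone$-weak equivalence: the first property is stable under pullback, and for the second one uses that base change of an $\aone$-weak equivalence along an $\aone$-cover is an $\aone$-weak equivalence (covers are $\aone$-fibrations; alternatively this may be checked on Nisnevich stalks, where the torsor $\tilde{\mathcal B}\to{\mathcal B}$ splits because $H^1_{Nis}(S,G)$ vanishes for $S$ henselian local). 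Via the trivialization of (i) the data then give a lifting problem against the $\aone$-cover $\mathrm{pr}$, which has a unique solution $\tilde{\ell}: \tilde{\mathcal B} \to \tilde{\mathcal X} \times {\mathcal S}$. The crux is to show $\tilde{\ell}$ is $G$-equivariant: I compare the two maps $G \times \tilde{\mathcal B} \to \tilde{\mathcal X}\times{\mathcal S}$ obtained by acting before and after applying $\tilde{\ell}$, and, because the given maps $a$ and $b$ are pulled back from ${\mathcal X}$ and hence $G$-equivariant, these two maps solve the same lifting problem against $\mathrm{pr}$, so they agree by uniqueness. A $G$-equivariant $\tilde{\ell}$ descends along the torsors to a map $\ell: {\mathcal B} = \tilde{\mathcal B}/G \to (\tilde{\mathcal X}\times{\mathcal S})/G = \tilde{\mathcal X}\times^{G}{\mathcal S}$; one checks $p\ell = b$ and $\ell j = a$ after pulling back along the epimorphism $\tilde{\mathcal B} \to {\mathcal B}$, and uniqueness of $\ell$ likewise descends from uniqueness of $\tilde{\ell}$. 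I expect this equivariance-via-uniqueness argument, together with the base-change input for $\aone$-weak equivalences, to be the main obstacle.

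Finally, for (iv), a morphism ${\mathcal S} \to {\mathcal S}'$ in $\pi_1^{\aone}({\mathcal X},x)-{\mathcal Set}$ is $G$-equivariant and so induces a morphism $\tilde{\mathcal X}\times^{G}{\mathcal S} \to \tilde{\mathcal X}\times^{G}{\mathcal S}'$ over ${\mathcal X}$; since the contracted product is a quotient construction this assignment preserves identities and composition, and by the first part each such morphism is a morphism in $Cov_{\aone}({\mathcal X})$. This yields the desired functor $\pi_1^{\aone}({\mathcal X},x)-{\mathcal Set} \longrightarrow Cov_{\aone}({\mathcal X})$.
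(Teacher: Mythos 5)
Your proposal is correct, but it takes a genuinely different route from the paper's proof. The paper passes to a fibrant, $\aone$-local replacement $Ex_{\aone}({\mathcal X})$, forms the \emph{simplicial} universal cover $\tilde{{\mathcal X}}_{\aone}$ of that replacement, reduces the claim to showing that $\tilde{{\mathcal X}}_{\aone} \times^{\pi_1^{\aone}({\mathcal X},x)} {\mathcal S} \longrightarrow Ex_{\aone}({\mathcal X})$ is a \emph{simplicial} covering --- a ``classical'' fact proved by trivializing over an open cover via \v{C}ech simplicial schemes --- and then pulls everything back along ${\mathcal X} \longrightarrow Ex_{\aone}({\mathcal X})$ using right properness. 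You instead work directly with the universal $\aone$-cover $q: \tilde{{\mathcal X}} \longrightarrow {\mathcal X}$ as a $G$-torsor over ${\mathcal X}$ itself: you trivialize the contracted product by base change along $q$ (valid because quotients by free actions are universal colimits in the sheaf topos), note that the trivialized map is the pullback of ${\mathcal S} \longrightarrow \ast$, and solve lifting problems downstairs by solving them upstairs and descending, extracting $G$-equivariance of the upstairs lift from uniqueness. What the paper's route buys is that the genuinely hard content is outsourced to existing simplicial covering space theory (Morel's Theorem 4.8 template); what your route buys is a purely formal, self-contained descent argument with no \v{C}ech machinery, and it avoids the implicit identification of the pullback of the contracted product over $Ex_{\aone}({\mathcal X})$ with the one over ${\mathcal X}$. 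Two points should be made explicit, though neither is a substantive gap. First, your step (ii) cites the preceding discussion for the equivalence ``$\aone$-invariant sheaf of sets $=$ $\aone$-cover of $\ast$,'' but that discussion only records the forward implication (covers of $\ast$ are $\aone$-invariant sheaves of sets); the converse you need holds because a sheaf of sets is simplicially fibrant and, being $\aone$-invariant, $\aone$-local, so maps into it agree with $\aone$-homotopy classes and unique lifting follows --- this is exactly the assertion the paper itself makes when it says ``${\mathcal S}$ is fibrant and $\aone$-local.'' Second, your equivariance-via-uniqueness step compares two lifts defined on $G \times \tilde{{\mathcal B}}$, so you need $\mathrm{id}_G \times \tilde{\jmath}$ to be a monomorphism and an $\aone$-weak equivalence; monomorphism is clear, and the weak equivalence part requires that taking the product with a fixed space preserves $\aone$-weak equivalences, which follows from Lemma \ref{lem:finproducts}.
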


\begin{proof}
We will reduce the check that $\tilde{X} \times^{\pi_1^{\aone}({\mathcal X},x)} {\mathcal S} \to {\mathcal X}$ is an $\aone$-cover to a ``classical" simplicial fact whose proof is straightforward but tedious.  We follow the lines of the proof of Theorem 4.8 of \cite{MField}.  Let $Ex_{\aone}({\mathcal X})$ be a fibrant and $\aone$-local replacement for ${\mathcal X}$.  Let $\tilde{{\mathcal X}}_{\aone}$ denote the universal cover of $Ex_{\aone}({\mathcal X})$ in the {\em simplicial} sense introduced on p. 116 of \cite{MField}, i.e., $\tilde{{\mathcal X}}_{\aone} \to Ex_{\aone}({\mathcal X})$ is a simplicial covering, which means it has unique right lifting with respect to cofibrations that are simplicial weak equivalences) and it is simplicially $1$-connected.  Consider the space $\tilde{{\mathcal X}}_{\aone}\times^{\pi_1^{\aone}({\mathcal X},x)} {\mathcal S}$, defined as above, which fits into a diagram of the form
\[
\tilde{{\mathcal X}}_{\aone} \times {\mathcal S}\to\tilde{{\mathcal X}}_{\aone}\times^{\pi_1^{\aone}({\mathcal X},x)} {\mathcal S} \to Ex_{\aone}({\mathcal X}).
\]
Since ${\mathcal S}$ is fibrant and $\aone$-local, it suffices to show that the last morphism is in fact a {\em simplicial} covering, since in that case it must be an $\aone$-covering.  This fact is proven along the same lines as the ``classical" argument using an open cover over which the universal covering morphism trivializes, though one now uses \u Cech simplicial schemes and the fact that ${\mathcal S}$ is fibrant.  

In any case, using the above discussion, pulling back the last sequence of morphisms along ${\mathcal X} \to Ex_{\aone}({\mathcal X})$, and using right properness of the $\aone$-model structure (i.e., that pull-backs of $\aone$-weak equivalences along $\aone$-fibrations are $\aone$-weak equivalences) we then conclude that ${\mathcal X} \times^{\pi_1^{\aone}({\mathcal X},x)} {\mathcal S}$ is in fact an $\aone$-cover of ${\mathcal X}$. 
\end{proof}

The next result follows from the construction of the functors above.  

\begin{thm}[{\em cf.} \cite{MField} Remark 4.10]
\label{thm:aonecoveringspacedictionary}
The functor $\Gamma_x: Cov_{\aone}({\mathcal X}) \longrightarrow \pi_1^{\aone}({\mathcal X},x)-{\mathcal Set}$ and the contracted product functor of Proposition \ref{prop:contractedproduct} induce mutually inverse equivalences of categories.   
\end{thm}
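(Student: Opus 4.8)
The plan is to establish that the two functors $\Gamma_x$ and the contracted product functor are mutually inverse by producing natural isomorphisms in both directions, following the classical template for the Galois correspondence of covering spaces and leaning on the simplicial universal cover $\tilde{\mathcal X}_{\aone}$ already constructed in the proof of Proposition \ref{prop:contractedproduct}. First I would reduce everything to the fibrant, $\aone$-local model $Ex_{\aone}({\mathcal X})$: since $\aone$-covers of ${\mathcal X}$ correspond (by pullback along ${\mathcal X} \to Ex_{\aone}({\mathcal X})$ and right properness of the $\aone$-model structure, exactly as invoked at the end of the previous proof) to simplicial covers of $Ex_{\aone}({\mathcal X})$, it suffices to check that the analogous functors between simplicial covers of $Ex_{\aone}({\mathcal X})$ and $\pi_1^{\aone}({\mathcal X},x)$-sets are mutually inverse. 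This is the ``classical'' simplicial statement referenced in Proposition \ref{prop:contractedproduct}, and the equivalence there is the standard fact that for a pointed connected fibrant simplicial set (or sheaf) with simplicial universal cover $\tilde{\mathcal X}_{\aone}$, the fiber functor and the contracted-product functor are inverse equivalences.

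Next I would verify the two composites. For one direction, I take an $\aone$-cover $({\mathcal X}',\varphi)$ and examine $\Gamma_x({\mathcal X}') = {\mathcal X}' \times_{{\mathcal X}} \ast$ together with its right $\pi_1^{\aone}({\mathcal X},x)$-action, then form the contracted product $\tilde{\mathcal X} \times^{\pi_1^{\aone}({\mathcal X},x)} ({\mathcal X}' \times_{{\mathcal X}} \ast)$; the natural map back to ${\mathcal X}'$ (induced by the action of $\tilde{\mathcal X}$ as universal cover) should be an isomorphism of $\aone$-covers. For the other direction, I start with ${\mathcal S} \in \pi_1^{\aone}({\mathcal X},x)$-${\mathcal Set}$, form $\tilde{\mathcal X} \times^{\pi_1^{\aone}({\mathcal X},x)} {\mathcal S}$, and compute its fiber over $x$; because $\tilde{\mathcal X}$ is the $\pi_1^{\aone}({\mathcal X},x)$-torsor over ${\mathcal X}$ whose fiber over $x$ is a free transitive $\pi_1^{\aone}({\mathcal X},x)$-set, the fiber of the contracted product recovers ${\mathcal S}$ equivariantly. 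Both identifications are natural in the respective arguments, yielding natural isomorphisms $\mathrm{Id} \cong (\text{contracted product}) \circ \Gamma_x$ and $\mathrm{Id} \cong \Gamma_x \circ (\text{contracted product})$.

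The main obstacle I expect is the careful bookkeeping of the equivariance and of the fibrant/$\aone$-local hypotheses needed to pass freely between the sheaf-theoretic and simplicial pictures. In particular, the assertion that taking the fiber of the contracted product returns ${\mathcal S}$ relies on $\tilde{\mathcal X} \times_{{\mathcal X}} \ast$ being a $\pi_1^{\aone}({\mathcal X},x)$-torsor (a free transitive object) in the category of $\aone$-invariant sheaves of sets, and one must be careful that all the relevant fiber products and quotients are formed in the right category and remain $\aone$-invariant — this is where \cite{MV} Section 2 Proposition 2.28 and Proposition 3.19, together with the fibrancy of ${\mathcal S}$ and $\tilde{\mathcal X}_{\aone}$, do the real work. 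Once the simplicial equivalence is granted (as it was essentially granted in Proposition \ref{prop:contractedproduct}), the rest is formal, so I would keep the argument brief: state that the constructions of $\Gamma_x$ and the contracted product are mutually inverse by the evident unit and counit maps, whose isomorphism is checked stalkwise via the corresponding simplicial statement of \cite{MField} Remark 4.10.
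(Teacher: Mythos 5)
Your proposal is correct and follows essentially the same route as the paper, which in fact offers no written proof at all: the theorem is prefaced only by the remark that it ``follows from the construction of the functors above'' (with a pointer to Morel's Remark 4.10). Your unit/counit verification, reduced to the simplicial covering statement for $Ex_{\aone}({\mathcal X})$ via the same pullback and right-properness mechanism used in the proof of Proposition \ref{prop:contractedproduct}, is precisely the intended fleshing-out of that assertion.
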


Recall the discussion of Paragraph \ref{entry:strongaoneinvarianceequivalences}.  Suppose $({\mathcal X},x)$ is a pointed space and $G$ is a strongly $\aone$-invariant sheaf of groups.  The set $[{\mathcal X},BG]_{\aone}$ of un-pointed homotopy classes of maps is in canonical bijection with the set of $G$-torsors on ${\mathcal X}$ by \cite{MV} \S 4 Proposition 1.15.  Thus, forgetting base-points gives rise to a map
\begin{equation}
\label{eqn:pointedtounpointed}
[({\mathcal X},x),(BG,\ast)]_{\aone} \longrightarrow [{\mathcal X},BG]_{\aone}.
\end{equation}
We would like to give a geometric description of the set $[({\mathcal X},x),(BG,\ast)]_{\aone}$.

\begin{lem}
Map \ref{eqn:pointedtounpointed} is surjective and identifies the set on right-hand side with the quotient of $[({\mathcal X},x),(BG,\ast)]_{\aone}$ by the natural conjugation action of $\pi_1^{\aone}(BG)(k) = G(k)$.
\end{lem}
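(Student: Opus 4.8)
The plan is to deduce the statement from the general homotopy-theoretic principle that, for a connected pointed target, unpointed homotopy classes are the $\pi_1$-orbits of pointed homotopy classes, transported into $\hop{k}$ by using that $BG$ is $\aone$-local. First I would fix a simplicially fibrant, $\aone$-local model $\mathcal{B}$ for $BG$; since $BG$ is $\aone$-local, both $[{\mathcal X}, BG]_s \to [{\mathcal X}, BG]_{\aone}$ and its pointed analogue are bijections, so all four sets occurring in the lemma may be computed as simplicial homotopy classes of maps into $\mathcal{B}$. The local injective structure on $\simpspc_k$ is a simplicial model category, so for the monomorphism (hence cofibration) $x: \ast \hookrightarrow {\mathcal X}$ and the fibrant object $\mathcal{B}$, the restriction of function complexes
\begin{equation*}
ev_x: \operatorname{Map}({\mathcal X}, \mathcal{B}) \longrightarrow \operatorname{Map}(\ast, \mathcal{B}) = \mathcal{B}(k)
\end{equation*}
is a Kan fibration whose fibre over the basepoint is the pointed function complex $\operatorname{Map}_\ast({\mathcal X}, \mathcal{B})$.

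Next I would read off the two assertions from the long exact sequence of this Kan fibration. By construction $\pi_0 \operatorname{Map}({\mathcal X}, \mathcal{B}) = [{\mathcal X}, BG]_{\aone}$ and $\pi_0 \operatorname{Map}_\ast({\mathcal X}, \mathcal{B}) = [({\mathcal X}, x), (BG, \ast)]_{\aone}$, and on these $\pi_0$'s the map $ev_x$ is precisely Map \ref{eqn:pointedtounpointed}. The base $\mathcal{B}(k)$ is the simplicial set of $k$-sections of $\mathcal{B}$; since $\Spec k$ is a field it carries no nontrivial Nisnevich covers, so its homotopy groups are the stalks at $k$ of the $\aone$-homotopy sheaves of $BG$. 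Using the computations following \eqref{eqn:bgaonelocal} we get $\pi_0 \mathcal{B}(k) = \pi_0^{\aone}(BG)(k) = \ast$ and $\pi_1(\mathcal{B}(k), \ast) = \pi_1^{\aone}(BG)(k) = G(k)$. The relevant segment of the long exact sequence is
\begin{equation*}
\pi_1(\mathcal{B}(k), \ast) \longrightarrow [({\mathcal X}, x), (BG, \ast)]_{\aone} \stackrel{ev_x}{\longrightarrow} [{\mathcal X}, BG]_{\aone} \longrightarrow \pi_0 \mathcal{B}(k).
\end{equation*}
Because $\pi_0 \mathcal{B}(k) = \ast$, the forgetful map is surjective; and the standard exactness of a fibration sequence at the $\pi_0$ of the fibre says exactly that two pointed classes have equal image if and only if they lie in one orbit of the monodromy action of $\pi_1(\mathcal{B}(k), \ast) = G(k)$. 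This already produces a natural bijection $[({\mathcal X}, x), (BG, \ast)]_{\aone}/G(k) \isomto [{\mathcal X}, BG]_{\aone}$.

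The remaining, and hardest, point is to identify this monodromy action with the conjugation action named in the statement; everything up to here is formal once $BG$ is known to be $\aone$-connected and $\aone$-local. For $g \in G(k)$ I would let $c_g$ denote the corresponding inner automorphism of $G$ and $B(c_g): (BG, \ast) \to (BG, \ast)$ its classifying map; post-composition with $B(c_g)$ defines the conjugation action on $[({\mathcal X}, x), (BG, \ast)]_{\aone}$, which under the torsor dictionary of \cite{MV} \S 4 Proposition 1.15 is visibly the operation of twisting the chosen trivialization over $x$ by $g$. Two facts then finish the argument. Since $B(c_g)$ is freely (though not pointedly) homotopic to the identity of $BG$, the forgetful map is constant on conjugation orbits, so it does factor through the conjugation quotient. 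To see that the two actions agree, I would observe that both are natural in the pointed space ${\mathcal X}$ for precomposition, and that every pointed class $f: ({\mathcal X}, x) \to (BG, \ast)$ is the image of the tautological class $[\mathrm{id}_{\mathcal{B}}]$ under $f^\ast$; by naturality it therefore suffices to check the two actions coincide on $[\mathrm{id}_{\mathcal{B}}]$, i.e., that dragging the identity map along a loop representing $g$ in $\mathcal{B}(k)$ reproduces $B(c_g)$. This last equality is the universal computation of the monodromy of the evaluation fibration of $BG$ at the identity, which I would verify directly from the bar construction. Granting it, the bijection of the previous paragraph is the assertion of the lemma, and the genuine obstacle is isolated in this single universal identification rather than in any feature of the general ${\mathcal X}$.
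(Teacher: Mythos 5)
Your proof is correct, and it starts with the same reduction as the paper: since $BG$ is $\aone$-local, replace it by a fibrant $\aone$-local model $\mathcal{B}$ and compute all four sets as simplicial homotopy classes. From there the mechanics genuinely differ. The paper simply asserts that "by evaluation on stalks" the simplicial-sheaf statement reduces to the corresponding statement for simplicial sets, whereas you run the classical argument globally: SM7 for the injective simplicial model structure makes $ev_x\colon \operatorname{Map}(\mathcal{X},\mathcal{B}) \to \operatorname{Map}(\ast,\mathcal{B}) = \mathcal{B}(k)$ a Kan fibration with fibre the pointed function complex, and the long exact sequence together with $\pi_0(\mathcal{B}(k)) = \ast$ and $\pi_1(\mathcal{B}(k),\ast) = G(k)$ yields both surjectivity and the orbit description. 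This buys something real: $[\mathcal{X},BG]_s \cong H^1_{Nis}(\mathcal{X},G)$ is not a stalkwise invariant, so the paper's reduction cannot be taken literally for the mapping sets themselves; your argument correctly isolates the only place where evaluation at a point is needed and legitimate, namely the base $\mathcal{B}(k)$ of the evaluation fibration, where sections compute the homotopy sheaves because $\Spec k$ is a point of the Nisnevich topos (no positive-degree Nisnevich cohomology), giving $\pi_i(\mathcal{B}(k)) = \pi_i^{\aone}(BG)(k)$. The one step where you go beyond the paper is the identification of the monodromy action with post-composition by $B(c_g)$; you leave this as a sketch via the bar construction, but this is a refinement rather than a gap: the paper's own proof never addresses it, implicitly reading "conjugation action" as the standard action of $\pi_1$ of the pointed target on pointed homotopy classes, which your long-exact-sequence argument already establishes in full.
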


\begin{proof}
Since $BG$ is already $\aone$-local, taking a fibrant replacement we may assume $BG$ is fibrant and $\aone$-local.  This reduces us to considering the corresponding statements for simplicial sheaves.  By evaluation on stalks, these last statements immediately reduce to the corresponding statements for simplicial sets.  
\end{proof}

\begin{cor}
\label{cor:torsordescription}
The set $[({\mathcal X},x),(BG,\ast)]_{\aone}$ is in canonical bijection with the set of isomorphism classes of pairs consisting of a $G$-torsor on ${\mathcal X}$ and an element $g \in G(k)$, i.e., a trivialization of the ($\aone$-invariant) fiber over $x$ of the pull-back to ${\mathcal X}$ of the universal $G$-torsor over $BG$.  
\end{cor}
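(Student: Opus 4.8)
The plan is to construct an explicit comparison map and then deduce that it is a bijection from the preceding Lemma together with the unpointed classification $[{\mathcal X},BG]_{\aone} \cong \{G\text{-torsors on } {\mathcal X}\}$ of \cite{MV} \S 4 Proposition 1.15. First I would use that the universal $G$-torsor $EG \to BG$ carries a canonical trivialization of its fiber over the basepoint $\ast \in BG$. Given a pointed map $f\colon ({\mathcal X},x) \to (BG,\ast)$, the pullback $P := f^{*}EG$ is a $G$-torsor on ${\mathcal X}$, and since $f\circ x$ is the constant map at $\ast$, the fiber $P_x = x^{*}P$ is canonically identified with the trivialized fiber of $EG$ over $\ast$. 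This produces a pair $(P,\sigma)$, where $\sigma$ is a trivialization of $P_x$; as $P_x$ is a Nisnevich $G$-torsor over $\Spec k$ and such torsors over a field are trivial, a trivialization is exactly an element $g \in G(k)$, which is the datum in the statement. Well-definedness of the resulting map $\Phi$ on $\aone$-homotopy classes rests on the $\aone$-invariance of the fiber over $x$ already noted in the construction of $\Gamma_x$ preceding Proposition \ref{prop:contractedproduct}, which guarantees that $\sigma$ is unchanged along an $\aone$-homotopy of $f$.

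Next I would reduce bijectivity of $\Phi$ to a fiberwise statement. The map $\Phi$ sits over the forgetful maps to $[{\mathcal X},BG]_{\aone} \cong \{G\text{-torsors}\}$: forgetting the basepoint on the source and forgetting $\sigma$ on the target. Both forgetful maps are surjective --- the source map by the Lemma, and the target map because every fiber $P_x$ is a trivial Nisnevich torsor, so a trivialization always exists. By the Lemma the source forgetful map is precisely the quotient by the conjugation action of $\pi_1^{\aone}(BG)(k) = G(k)$, so the preimage of a torsor class $[P]$ is a single $G(k)$-conjugation orbit. It therefore suffices to show that $\Phi$ restricts, for each fixed $P$, to a bijection between this conjugation orbit and the set of isomorphism classes of pairs $(P,\sigma)$ with underlying torsor $P$.

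The hard part will be matching these two fibers. On the target side, the isomorphism classes of pairs with fixed $P$ form $\{\text{trivializations of } P_x\}/\mathrm{Aut}_G(P)$, where the gauge group $\mathrm{Aut}_G(P)$ of global automorphisms acts through restriction $\mathrm{Aut}_G(P) \to \mathrm{Aut}(P_x) = G(k)$; since the trivializations of $P_x$ form a $G(k)$-torsor, this quotient is $G(k)$ modulo the image of the gauge group. On the source side, the conjugation orbit of $f$ is $G(k)$ modulo the stabilizer consisting of those $c \in G(k)$ for which $c f c^{-1}$ is pointedly $\aone$-homotopic to $f$. The crux of the corollary is the identification of these two subgroups of $G(k)$: both equal the centralizer of the monodromy of $P$ (the image of the map induced by $f$ on $\pi_1^{\aone}$), since global automorphisms of a torsor are exactly the sections commuting with its monodromy, which is in turn the conjugation-stabilizer of the classifying map. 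I expect to establish this by the same device used to prove the Lemma: replace $BG$ by a fibrant, $\aone$-local model, pass to simplicial sheaves, and evaluate on stalks, thereby reducing to the classical fact that based homotopy classes of maps into $BG$ classify principal bundles equipped with a trivialization over the basepoint, for which the gauge-versus-conjugation comparison is standard.
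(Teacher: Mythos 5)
Your forward construction agrees with the paper's: a pointed map $f$ yields the torsor $P = f^*EG$ together with the trivialization of its fiber induced by $f \circ x = \ast$, and trivializations of that fiber form a torsor under $G(k)$. The paper's own proof essentially stops there and ``leaves the inverse to the reader,'' whereas you attempt to prove bijectivity by fibering both sides over the unpointed classification $[{\mathcal X},BG]_{\aone} \cong H^1_{Nis}({\mathcal X},G)$ of \cite{MV} \S 4 Proposition 1.15 and matching stabilizers inside $G(k)$. That is a legitimate and genuinely different completion, and the two stabilizer identifications you assert (image of the gauge group $=$ centralizer of the monodromy $=$ conjugation stabilizer) are true. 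The gap is in the device you offer to prove them: ``evaluate on stalks'' cannot establish the gauge-versus-conjugation comparison. Nisnevich-locally every $G$-torsor is trivial, so on stalks all of the monodromy data---the very thing the comparison is about---disappears; $\mathrm{Aut}_G(P)$, like $H^1_{Nis}({\mathcal X},G)$ and $[{\mathcal X},BG]_{\aone}$ themselves, is a global invariant and is not computed stalkwise. The situation is not parallel to the preceding Lemma: there the statement being reduced (free classes are pointed classes modulo $\pi_1^{\aone}(BG)(k)$) is a formal consequence of the simplicial model structure, via the fibration of simplicial mapping spaces obtained by evaluation at the cofibration $x: \Spec k \longrightarrow {\mathcal X}$ with a fibrant $\aone$-local model of $BG$ as target. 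What you need is not formal: it is an enhancement of \cite{MV} \S 4 Proposition 1.15 from $\pi_0$ to the level of automorphisms, namely that the conjugation stabilizer of $f$ equals the image of $\mathrm{Aut}_G(P)$ in $G(k)$; ``reducing'' this to the classical fact that based maps into $BG$ classify bundles with a trivialized fiber is very close to assuming the corollary.

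The missing ingredient is available in the paper, and is presumably why the corollary is placed after the covering-space dictionary. For ${\mathcal X}$ $\aone$-connected (a hypothesis your route needs, although the corollary is stated for an arbitrary pointed space), a $G$-torsor is in particular an $\aone$-cover, and Theorem \ref{thm:aonecoveringspacedictionary} identifies $G$-equivariant automorphisms of $P$ with $(G,\pi_1^{\aone}({\mathcal X},x))$-equivariant automorphisms of the fiber $\Gamma_x(P)$; after the chosen trivialization these are multiplications by elements of $G(k)$ commuting with the monodromy action, i.e., exactly the centralizer of the image of $f_*$. On the homotopy side, Theorem \ref{thm:postnikov} identifies $[({\mathcal X},x),(BG,\ast)]_{\aone}$ with $\Hom_{{\mathcal Gr}^{\aone}_k}(\pi_1^{\aone}({\mathcal X},x),G)$, whose $G(k)$-conjugation stabilizer at $f_*$ is that same centralizer. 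Substituting these two results for your stalkwise step---and recording the easy but necessary check that your map $\Phi$ is $G(k)$-equivariant, which you use implicitly when you compare the two quotients of $G(k)$---makes your argument complete.
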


\begin{proof}
Consider the function that sends a pointed morphism ${\mathcal X} \longrightarrow BG$ to the underlying $G$-torsor ${\mathcal P}$ and the fiber $\Gamma_x({\mathcal P})$.  Since ${\mathcal P}$ is a $G$-torsor, it follows that $\Gamma_x({\mathcal P})$ is non-canonically isomorphic to the sheaf of groups $G$.  The composite map $\ast \longrightarrow {\mathcal X} \longrightarrow BG$ gives rise to a trivial $G$-torsor over $x$ and thus to an isomorphism $G \isomto \Gamma_x({\mathcal P})$.  Now, this isomorphism of sheaves with right $G$-action is uniquely determined by the image of $1 \in G$, i.e., a homomorphism $\ast \longrightarrow G$.  Also, $Hom_{\Spc_k}(\ast,G) := Hom_{\Spc_k}(\Spec k,G)$ and this last set is by definition $G(k)$.  We leave the reader the task of writing down the inverse map.  
\end{proof}

\subsubsection*{Geometric $\aone$-coverings}
\begin{defn}
\label{defn:geometriccovering}
A {\em geometric $\aone$-covering} of an $Y \in {\mathcal Sm}_k$ is a morphism of schemes $f: X \longrightarrow Y$ that makes $X$ into an $\aone$-covering space of $Y$.  If furthermore, $f: X \longrightarrow Y$ is a torsor under a strongly $\aone$-invariant sheaf of groups, then $f$ will be called a {\em geometric Galois $\aone$-covering.}\end{defn}

\begin{rem}
In \S \ref{s:geometry}, we will study geometric $\aone$-covering spaces of $\aone$-connected smooth schemes coming from torsors under split tori.  Not all $\aone$-covering spaces need be geometric.  The main complicating feature of this discussion is that, as we noted above, it is non-trivial to check that the total space of an $\aone$-cover of an $\aone$-connected smooth scheme defined by a $G$-torsor, for $G$ a split torus or a finite \'etale group scheme of order coprime to the characteristic of $k$, is itself $\aone$-connected.  On the other hand, Morel states  (see \cite{MICM} Remark 3.9) that an $\aone$-connected smooth scheme admits no finite \'etale covers of order coprime to the characteristic of the base field.

Also, the following might help to explain why $\gm$-torsors ``ought to be" $\aone$-covers.  Suppose $X$ is a smooth scheme defined over a field $k$ which is embeddable in $\real$.  Suppose $L \longrightarrow X$ is a $\gm$-torsor.  Observe that $L(\real) \longrightarrow X(\real)$ is then homotopy equivalent to a covering space of $X(\real)$ with group $\Z/2\Z$.
\end{rem}

\subsubsection*{$\aone$-homology}
Analogous to the $\aone$-homotopy (sheaves of) groups, one can define $\aone$-homology (sheaves of) groups and reduced $\aone$-homology (sheaves of) groups.  Henceforth, we will suppress the modifier ``sheaves of" and refer just to ``groups."  To define the aforementioned objects, one uses the $\aone$-derived category as constructed by Morel in \cite{MField} \S 3.2.  This construction proceeds along the same lines as the construction of the $\aone$-homotopy category.  One first considers the category of (unbounded) chain complexes of sheaves of abelian groups on $Sm_k$, which we denote by $C_*({\mathcal Ab}_k)$, then one equips it with an appropriate model category structure via an appropriate notion of $\aone$-weak equivalence.   

Let $D({\mathcal Ab}_k)$ denote the usual (unbounded) derived category of chain complexes of sheaves of abelian groups.  Let $\Z(X)$ denote the free sheaf of abelian groups on $X$.  One can define a chain complex $C_*$ to be {\em $\aone$-local} if for any chain complex $D_*$ the projection $D_* \tensor \Z(\aone) \longrightarrow D_*$ induces a bijection
\begin{equation*}
Hom_{D({\mathcal Ab}_k)}(D_*,C_*) \longrightarrow Hom_{D({\mathcal Ab}_k)}(D_* \tensor \Z(\aone),C_*).
\end{equation*}
A morphism $f: C_* \longrightarrow D_*$ is an {\em $\aone$-quasi-isomorphism} if for any $\aone$-local chain complex $E_*$ the induced morphism
\begin{equation*}
Hom_{D({\mathcal Ab}_k)}(D_*,E_*) \longrightarrow Hom_{D({\mathcal Ab}_k)}(C_*,E_*)
\end{equation*}
is bijective. Define cofibrations to be monomorphisms, weak equivalences to be $\aone$-quasiisomorphisms, and $\aone$-fibrations to be those morphisms having the right lifting property with respect to morphisms that are simultaneously monomorphisms and $\aone$-quasiisomorphisms.  We let $D_{\aone-loc}({\mathcal Ab}_k)$ denote the full subcategory of $\aone$-local objects.  Lemma 3.16 of \cite{MField} shows that the inclusion functor admits a left adjoint
\begin{equation*}
L_{\aone}: D({\mathcal Ab}_k) \longrightarrow D_{\aone-loc}({\mathcal Ab}_k)
\end{equation*}
called the $\aone$-localization functor.  The homotopy category for the above model structure will be denoted $D_{\aone}(k)$ and called the {\em $\aone$-derived category}.  The $\aone$-localization functor allows one to identify $D_{\aone}(k)$ with $D_{\aone-loc}({\mathcal Ab}_k)$.  The next result follows from the construction of the functor $L_{\aone}$ in the same way manner as \cite{MStable} Corollary 4.2.3.2.  

\begin{lem}
\label{lem:localizationexact}
The functor $L_{\aone}$ preserves exact triangles.  
\end{lem}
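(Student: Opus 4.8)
The plan is to deduce exactness of $L_{\aone}$ from the general formalism of Bousfield localization of triangulated categories, exactly along the lines of \cite{MStable} Corollary 4.2.3.2. First I would record that source and target are triangulated: $D({\mathcal Ab}_k)$ is the usual derived category, while $D_{\aone}(k) \cong D_{\aone-loc}({\mathcal Ab}_k)$ is the homotopy category of the stable model structure on $C_*({\mathcal Ab}_k)$ whose weak equivalences are the $\aone$-quasi-isomorphisms. Its shift is the usual translation $[1]$ of complexes and its distinguished triangles are the images of the cofiber sequences. The functor $L_{\aone}$ is, by Lemma 3.16 of \cite{MField}, the left adjoint to the (exact) inclusion of the full subcategory of $\aone$-local objects, so we are in precisely the situation of a reflective localization of triangulated categories.

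The key point I would isolate is that $L_{\aone}$ commutes with the shift functor $[1]$. Equivalently, the kernel $\mathcal{N} = \{ C_* : L_{\aone}C_* = 0 \} \subset D({\mathcal Ab}_k)$ of $\aone$-acyclic complexes is stable under translation. This stability follows from the very definition of $\aone$-locality: a complex $C_*$ is $\aone$-local exactly when $C_* \longrightarrow Hom(\Z(\aone), C_*)$ (equivalently, the map induced by $D_* \tensor \Z(\aone) \longrightarrow D_*$) is an isomorphism in $D({\mathcal Ab}_k)$, and since $- \tensor \Z(\aone)$ commutes with $[1]$, the classes of $\aone$-local and of $\aone$-acyclic complexes are each closed under shifting.

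Granting this compatibility with translation, exactness is formal. One route is to observe that the localization identifies $D_{\aone-loc}({\mathcal Ab}_k)$ with the Verdier quotient $D({\mathcal Ab}_k)/\mathcal{N}$, so that $L_{\aone}$ factors as the composite of the quotient functor $D({\mathcal Ab}_k) \longrightarrow D({\mathcal Ab}_k)/\mathcal{N}$ with the canonical equivalence $D({\mathcal Ab}_k)/\mathcal{N} \isomto D_{\aone-loc}({\mathcal Ab}_k)$; both are exact functors of triangulated categories, hence so is their composite. Alternatively, and more directly, one invokes the standard fact that a left adjoint between triangulated categories which commutes with the translation functor is automatically triangulated, and hence carries distinguished triangles to distinguished triangles. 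Either way, $L_{\aone}$ preserves exact triangles.

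I expect the only genuine content to be the verification that $\mathcal{N}$ is closed under $[1]$ (equivalently, that $L_{\aone}$ commutes with the shift). This is the step where the specific construction of the $\aone$-model structure on $C_*({\mathcal Ab}_k)$ enters, and it is exactly the computation carried out in \cite{MStable}. Once translation-compatibility is in hand, the conclusion is a purely formal consequence of the triangulated-localization formalism, and no complex-by-complex argument is needed.
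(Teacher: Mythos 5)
Your proposal is correct, but it takes a genuinely different route from the paper. The paper offers no argument of its own: it asserts that the lemma ``follows from the construction of the functor $L_{\aone}$'' in the same manner as \cite{MStable} Corollary 4.2.3.2, i.e., it leans on Morel's explicit construction of the localization functor, from which exactness is read off directly. You instead argue construction-independently, using only two inputs: that $L_{\aone}$ is left adjoint to the inclusion of the $\aone$-local objects (\cite{MField} Lemma 3.16), and that the classes of $\aone$-local and $\aone$-acyclic complexes are stable under translation because $-\tensor\Z(\aone)$ commutes with $[1]$; from there the conclusion is formal Bousfield--Verdier localization theory. This buys robustness --- nothing about how $L_{\aone}$ is actually built enters beyond its existence --- at the cost of invoking the triangulated-localization formalism, whereas the paper's route has essentially nothing left to verify once Morel's construction is taken as given. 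One caveat: your second, ``more direct'' route misstates the standard fact. It is not true in general that a left adjoint commuting with translation is automatically triangulated; the correct statement (Neeman) is that an adjoint of a \emph{triangulated} functor is triangulated. To apply it here you must first check that the $\aone$-local objects form a full triangulated subcategory of $D({\mathcal Ab}_k)$ --- closure under cones follows from shift-stability of the acyclics via the long exact $Hom$-sequence --- so that the inclusion is exact, whence its left adjoint $L_{\aone}$ is exact. Your first route (factoring $L_{\aone}$ as the Verdier quotient by the acyclics followed by the canonical exact equivalence with the local subcategory) is sound as written, so the argument as a whole stands.
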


The (sheaf-theoretic) Dold-Kan correspondence (see \cite{MV} \S 1.2) gives an adjoint equivalence between the category of simplicial abelian groups and the category of chain complexes (differential of degree $-1$) of abelian groups.  In one direction, this construction sends a complex $A_*$ to the corresponding Eilenberg-MacLane space $K(A_*)$.  In the other direction, we let $C_*(\Z({\mathcal X}))$ denote the normalized chain complex associated with the free simplicial sheaf of abelian groups on ${\mathcal X}$.

\begin{defn}
\label{defn:singularchaincomplex}
The {\em $\aone$-singular chain complex} of ${\mathcal X}$, denoted $C_*^{\aone}({\mathcal X})$, is the $\aone$-localization $L_{\aone}(C_*(\Z({\mathcal X})))$.
\end{defn}

If $({\mathcal X},x)$ is a pointed space, the map $\Z \longrightarrow C_*^{\aone}({\mathcal X})$ induced by the base-point splits the morphism $C_*^{\aone}({\mathcal X}) \longrightarrow \Z$ induced by the structure morphism ${\mathcal X} \longrightarrow \Spec k$.  The kernel of the morphism $C_*^{\aone}({\mathcal X}) \longrightarrow \Z$ is called the {\em reduced $\aone$-singular chain complex} of $({\mathcal X},x)$ and is denoted $\tilde{C}_*^{\aone}({\mathcal X},x)$; there is a direct sum decomposition $C_*^{\aone}({\mathcal X}) \cong \Z \oplus \tilde{C}_*^{\aone}({\mathcal X},x)$.

\begin{defn}
\label{defn:aonehomology}
If ${\mathcal X}$ is a (simplicial) space, the {\em $\aone$-homology groups} $H_n^{\aone}({\mathcal X})$ are defined to be the homology sheaves $H_n(C_*^{\aone}({\mathcal X}))$.  If $({\mathcal X},x)$ is a pointed (simplicial) space, the {\em reduced $\aone$-homology groups} $\tilde{H}_n^{\aone}({\mathcal X},x)$ are defined to be the homology sheaves $H_n(\tilde{C}_*^{\aone}({\mathcal X},x))$.
\end{defn}

One could also define $\aone$-cohomology $H^n_{\aone}({\mathcal X})$ of a space ${\mathcal X}$ by taking cohomology of the (co-chain) complex $Hom(C_*^{\aone}({\mathcal X}),\Z)$.  It is clear from the definitions that reduced $\aone$-homology commutes with simplicial suspension in the sense that we have canonical isomorphisms
\begin{equation*}
\tilde{H}^{\aone}_i(\Sigma^1_s ({\mathcal X},x)) \cong \tilde{H}^{\aone}_{i+1}({\mathcal X},x).
\end{equation*}

\begin{rem}
Given a $X \in {\mathcal Sm}_k$, Morel has shown that $H_i^{\aone}(X)$ vanishes for $i < 0$ (see \cite{MField} Corollary 3.31).  Furthermore, he has conjectured that for any smooth scheme $X$ of dimension $n$, $H_i^{\aone}(X)$ vanishes for $i > 2n$.  If $X$ is a smooth affine scheme of dimension $n$, Morel has also conjectured that $H_i^{\aone}(X)$ vanishes for $i > n$.
\end{rem}

\begin{rem}
By \cite{MVW} Definition 10.8, the Nisnevich homology sheaves of the motive ${\sf M}(X)$ give rise to the Suslin algebraic singular homology sheaves of $X$, which we denote by $H_i^{sus}(X)$.  One can construct a canonical morphism $H_i^{\aone}(X) \longrightarrow H_i^{sus}(X)$, which is not an isomorphism in general ({\em cf.} \cite{MICM} Remark 3.12).  
\end{rem}

\subsubsection*{Strict $\aone$-invariance}
The $\aone$-homology groups $H_i^{\aone}({\mathcal X})$ are also ``discrete" from the standpoint of $\aone$-homotopy theory.  They have a structure that is, {\em a priori}, stronger than strong $\aone$-invariance; the following definition is due to Morel (and historically preceded by work of Voevodsky and Rost).  

\begin{defn}[\cite{MField} Definition 5]
\label{defn:strictly}
A {\em strictly $\aone$-invariant sheaf of groups} is a sheaf of groups $A$ such that for any $X \in {\mathcal Sm}_k$, and every $i \geq 0$, the pull-back map $H^i_{Nis}(X,A) \longrightarrow H^i_{Nis}(X \times \aone,A)$ induced by projection is a bijection.
\end{defn}

\begin{entry}
\label{entry:strictaoneinvarianceequivalences}
We can discuss strictly $\aone$-invariant sheaves of groups along the same lines as in Paragraph \ref{entry:strongaoneinvarianceequivalences}.  To do this, recall that using the sheaf theoretic Dold-Kan correspondence, one can consider for any sheaf of abelian groups $A$ and positive integer $i$, the Eilenberg-MacLane space $K(A,i)$. For any smooth scheme $U$, Proposition 1.26 of \cite{MV} \S 2 then gives the identification $[U,K(A,i)]_s \isomto H^i_{Nis}(U,A)$.  Since $A$ is abelian, there is a corresponding statement for base-pointed maps as well: $[U_+,(K(A,i),\ast)]_s \isomto H^i_{Nis}(U,A)$.  Using these identifications together with Lemma 3.2.1 of \cite{MIntro}, we deduce that $A$ is strictly $\aone$-invariant if and only if $K(A,i)$ is $\aone$-local for each $i \geq 0$.  If $K(A,i)$ is $\aone$-local, we deduce that for any pointed simplicial space $({\mathcal X},x)$ the canonical map $[({\mathcal X},x),(K(A,i),\ast)]_s \longrightarrow [({\mathcal X},x),(K(A,i),\ast)]_{\aone}$ is a bijection.  Thus, if $A$ is strictly $\aone$-invariant, we conclude that we get an isomorphism of groups:
\begin{equation}
[U_+,(K(A,i),\ast)]_{\aone} \isomto H^i_{Nis}(U,A).
\end{equation}
Combining the discussion of \cite{MV} p.58-59 (see also \cite{MStable} p. 23) with the above shows that if $A$ is a strictly $\aone$-invariant sheaf of groups, the $\aone$-homotopy sheaf $\pi_j^{\aone}(K(A,i),\ast)$ vanishes if $j \neq i$ and is equal to $A$ if $i = j$.  
\end{entry}

\begin{thm}[\cite{MField} Proposition 3.22 and Theorem 3.25]
If $A$ is a strongly $\aone$-invariant sheaf of {\em abelian} groups, then $A$ is in fact strictly $\aone$-invariant .  If $({\mathcal X},x)$ is a pointed (simplicial) space, $\pi_i^{\aone}({\mathcal X},x)$ is a strictly $\aone$-invariant sheaf of groups for $i \geq 2$, and $H_i^{\aone}({\mathcal X})$ is strictly $\aone$-invariant for any $i \geq 0$.    
\end{thm}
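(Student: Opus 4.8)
The three assertions are governed by the first, which carries all the content; the remaining two will follow formally. The plan is to concentrate on showing that a strongly $\aone$-invariant sheaf of abelian groups $A$ is strictly $\aone$-invariant. Following Paragraphs \ref{entry:strongaoneinvarianceequivalences} and \ref{entry:strictaoneinvarianceequivalences}, I would first rephrase the problem in terms of Eilenberg--MacLane objects: the hypothesis says exactly that $K(A,1) = BA$ is $\aone$-local, while the conclusion says exactly that $K(A,n)$ is $\aone$-local for every $n \geq 0$. Thus the task is to show that the projection $X \times \aone \to X$ induces an isomorphism $H^n_{Nis}(X,A) \isomto H^n_{Nis}(X \times \aone, A)$ for all $n$, knowing it only for $n = 0,1$.

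It is worth noting at the outset that the naive inductive approach---passing from $K(A,n)$ to $K(A,n+1) = BK(A,n)$ by a bar construction---is circular, since deciding whether the delooping of an $\aone$-local (simplicial abelian) group remains $\aone$-local is precisely the sort of question we are trying to settle. Instead I would use the Cousin (Gersten/Rost--Schmid) resolution for strongly $\aone$-invariant sheaves, whose theory (after \cite{CTHK} and Morel) is reviewed in Section \ref{s:excision}. The strategy has three steps. First, introduce the contraction $A_{-1}$, defined from the sections of $A$ over $\gm \times (\cdot)$ via restriction to the unit section, and check that it is again strongly $\aone$-invariant, so that the contractions $A_{-p}$ are available. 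Second, show that $A$ is unramified and that its Cousin complex---whose $p$-th term assembles the groups $(A_{-p})(\kappa(x))$ over the codimension-$p$ points $x \in X^{(p)}$, suitably twisted---is an acyclic resolution computing $H^{\ast}_{Nis}(X,A)$. Third, compare this complex for $X \times \aone$ with that for $X$: the points of $X \times \aone$ over a given point of $X$ contribute in a controlled manner, and an explicit diagram chase, using the homotopy invariance already in hand at the level of the coefficient sheaves $A_{-p}$, identifies the two complexes up to quasi-isomorphism, yielding the isomorphism on $H^{\ast}_{Nis}$ in all degrees.

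The hard part is the second step: proving that the Cousin complex really is a resolution computing Nisnevich cohomology for an \emph{arbitrary} strongly $\aone$-invariant sheaf, not merely for the classical coefficient systems (Milnor $K$-theory and its relatives) to which the Bloch--Ogus--Gabber machinery applies directly. This demands the full structural theory of such sheaves---the verification of the axioms of \cite{CTHK} together with Morel's extensions---and is exactly the technical input I would import wholesale from \cite{MField} rather than reprove.

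Granting the first assertion, the other two are short. For $i \geq 2$ the sheaf $\pi_i^{\aone}({\mathcal X},x)$ is abelian and is strongly $\aone$-invariant by \cite{MField} Theorem 3.1 and Corollary 3.3 (the immediately preceding theorem), so the first assertion makes it strictly $\aone$-invariant. For the homology sheaves, recall from Definition \ref{defn:singularchaincomplex} that $C_*^{\aone}({\mathcal X}) = L_{\aone}(C_*(\Z({\mathcal X})))$ is $\aone$-local by construction and that $H_i^{\aone}({\mathcal X}) = H_i(C_*^{\aone}({\mathcal X}))$. Here I would invoke the derived counterpart of the first assertion---a complex in $D({\mathcal Ab}_k)$ is $\aone$-local if and only if all its homology sheaves are strictly $\aone$-invariant---whose substantive direction rests on the same Gersten-resolution input. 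Concretely, since $L_{\aone}$ preserves exact triangles (Lemma \ref{lem:localizationexact}), the truncations of the $\aone$-local complex $C_*^{\aone}({\mathcal X})$ stay $\aone$-local and exhibit each $K(H_i^{\aone}({\mathcal X}),i)$ as $\aone$-local; by Paragraph \ref{entry:strictaoneinvarianceequivalences} this is equivalent to strict $\aone$-invariance of $H_i^{\aone}({\mathcal X})$. In $t$-structure language, this is the statement that $D_{\aone}(k)$ carries a homotopy $t$-structure whose heart is the category of strictly $\aone$-invariant sheaves.
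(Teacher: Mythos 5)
The paper does not actually prove this statement: it is imported verbatim from Morel's work (the theorem is attributed to \cite{MField} Proposition 3.22 and Theorem 3.25, and the remark immediately following it points to \cite{MStable} Remark 8 and Theorem 6.2.7 for the homology part), so there is no in-paper argument to compare against. Measured against that, your proposal does strictly more than the paper: it correctly reconstructs the architecture of Morel's proof of the first assertion (reformulation via $\aone$-locality of the spaces $K(A,n)$, the observation that naive delooping induction is circular, contractions $A_{-1}$, the Cousin/Rost--Schmid resolution, and the comparison of the complexes for $X$ and $X \times \aone$), while honestly flagging that the resolution theorem for arbitrary strongly $\aone$-invariant sheaves is the technical core to be imported from \cite{MField}. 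Your deductions of the second and third assertions from the first are also correct and match how the sources organize the material: for $i \geq 2$ the sheaves $\pi_i^{\aone}({\mathcal X},x)$ are abelian and strongly $\aone$-invariant by the theorem quoted just before this one in the paper, and the homology statement follows from the characterization of $\aone$-local complexes by strict $\aone$-invariance of their homology sheaves. One caveat on the last step: your ``concretely'' sentence overstates what Lemma \ref{lem:localizationexact} gives. That $L_{\aone}$ preserves exact triangles does not by itself imply that truncations of an $\aone$-local complex remain $\aone$-local; the compatibility of the homotopy $t$-structure with $\aone$-localization is itself a theorem of Morel (this is essentially the content of \cite{MStable} Theorem 6.2.7, i.e., exactly the result the paper's remark cites), not a formal consequence of exactness. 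Since you explicitly state that the substantive direction of the derived characterization rests on the same imported Gersten-resolution input, this is a presentational slip rather than a gap, but the truncation claim should be attributed to Morel rather than derived from Lemma \ref{lem:localizationexact}.
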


\begin{rem}
It was proved earlier (see \cite{MStable} Remark 8 and Theorem 6.2.7) that the $\aone$-homology sheaves $H_i^{\aone}({\mathcal X})$ are strictly $\aone$-invariant.
\end{rem}

Let ${\mathcal Ab}^{\aone}_k$ denote the category of strictly $\aone$-invariant sheaves of groups.  Morel has shown (see \cite{MStable} Lemma 6.2.13) that ${\mathcal Ab}^{\aone}_k$ is in fact abelian, though we will not need this fact.  Note that if $\varphi: A \longrightarrow A'$ is a morphism of strictly $\aone$-invariant sheaves of groups, applying the functor $K(\cdot,i)$ induces a map $K(A,i) \longrightarrow K(A',i)$;  applying the functor $\pi_i^{\aone}(\cdot)$ produces a homomorphism $A \to A'$.  Thus, one obtains a bijection
\begin{equation}
\label{eqn:abeliangroupstospaces}
[K(A,i),K(A',i)]_{\aone} \isomto Hom_{{\mathcal Ab}^{\aone}_k}(A,A').  
\end{equation}
This bijection will be important in the proof of Theorem \ref{thm:excision}

\subsubsection*{Postnikov Towers}
Our present goal is to identify ``cohomology computations" with homomorphisms in ${\mathcal Gr}^{\aone}_k$ and ${\mathcal Ab}^{\aone}_k$.  To do this, we use the Postnikov tower, which we now quickly recall.  The main references for this section are \cite{MV} p. 56 and \cite{MStable} \S 3.2 (in the stable case).  

Suppose ${\mathcal X}$ is a (simplicial) space.  For simplicity, we only consider pointed, connected (simplicial) spaces $({\mathcal X},x)$, and by making an $\aone$-fibrant replacement of ${\mathcal X}$, we can assume ${\mathcal X}$ is $\aone$-fibrant as well.  Recall that the $m$-th level of the Postnikov tower $P^{(m)}({\mathcal X})$ of the space ${\mathcal X}$ is the sheaf associated with the presheaf
\begin{equation*}
U \mapsto Im({\mathcal X}(U) \rightarrow cosk_m{\mathcal X}(U)).
\end{equation*}
By construction, there are morphisms ${\mathcal X} \longrightarrow P^{(m)}({\mathcal X})$ and stalkwise fibrations (though not, in general, fibrations for the injective model structure) $p_{m+1}: P^{(m+1)}({\mathcal X}) \longrightarrow P^{(m)}({\mathcal X})$; these morphisms fit into an obvious commutative triangle.  

There is a canonical morphism ${\mathcal X} \longrightarrow \holim_{m} P^{(m)}{\mathcal X}$.  Since the site $(\Sm_k)_{Nis}$ is a site of finite type (by \cite{MV} \S 2 Theorem 1.37 coupled with \cite{MV} \S 3 Proposition 1.8), {\em ibid.} \S 2 Definition 1.31 says that this morphism is a simplicial weak equivalence.  By construction, the Postnikov tower is covariantly functorial in ${\mathcal X}$.  Furthermore, {\em ibid.} \S 2 Proposition 1.36, the definition of the $\aone$-homotopy groups, and the long exact sequence in homotopy groups of a fibration shows that the homotopy fiber of $p_m$ is $\aone$-weakly equivalent to the Eilenberg-MacLane space $K(\pi_m^{\aone}({\mathcal X}),m)$.  

Summarizing, the spaces $P^{(m)}({\mathcal X})$ have the property that the maps $\pi_i^{\aone}({\mathcal X},x) \longrightarrow \pi_i^{\aone}(P^{(m)}({\mathcal X}))$ are isomorphisms for $i \leq m$, the homotopy sheaves of groups $\pi_i^{\aone}(P^{(m)}({\mathcal X}))$ vanish for $i > m$, and the homotopy fiber of the map $p_{m}$ is a $K(\pi_m^{\aone}({\mathcal X},x),m)$.  In the special case where ${\mathcal X}$ is $\aone$-$(m-1)$-connected, $P^{(m)}({\mathcal X})$ is $\aone$-weakly equivalent to the Eilenberg-MacLane space $K(\pi_m^{\aone}({\mathcal X}),m)$.  Thus, we obtain a canonical morphism
\begin{equation*}
{\mathcal X} \longrightarrow K(\pi_m^{\aone}({\mathcal X}),m)
\end{equation*}
that is an isomorphism on $\aone$-homotopy groups of degree $\leq m$. 

We now use the discussion of Paragraphs \ref{entry:strongaoneinvarianceequivalences} and \ref{entry:strictaoneinvarianceequivalences}, together with the identifications of Equations \ref{eqn:nonabeliangroupstospaces} and \ref{eqn:abeliangroupstospaces}.  In the first case, if $({\mathcal X},x)$ is a pointed, $\aone$-connected space, and $G$ is a strongly $\aone$-invariant sheaf of groups, then the map ${\mathcal X} \longrightarrow P^{(1)}({\mathcal X})$ induces a canonically defined, functorial map
\begin{equation}
\label{eqn:nonabeliancohomology}
[({\mathcal X},x),(BG,\ast)]_{\aone} \longrightarrow Hom_{{\mathcal Gr}^{\aone}_k}(\pi_1^{\aone}({\mathcal X},x),G)
\end{equation}
together with an explicit map in the reverse direction.  Similarly, if $A$ is a strictly $\aone$-invariant sheaf of groups, and $({\mathcal X},x)$ is pointed and $\aone$-$(m-1)$-connected for some integer $m \geq 2$, then the map ${\mathcal X} \longrightarrow P^{(m)}({\mathcal X})$ induces a canonically defined, functorial map
\begin{equation}
\label{eqn:abeliancohomology}
H^m_{Nis}({\mathcal X},A) \cong [({\mathcal X},x),K(A,m)]_{\aone} \longrightarrow Hom_{{\mathcal Ab}^{\aone}_k}(\pi_m^{\aone}({\mathcal X},x),A)
\end{equation}
together with an explicit map in the reverse direction.  We summarize our discussion with the following result (see, e.g., \cite{MField} Remark 4.11 or \cite{MBundle} Lemma B.2.2).

\begin{thm}
\label{thm:postnikov}
Let $({\mathcal X},x)$ be a pointed $\aone$-connected space.  If $G$ is any strongly $\aone$-invariant sheaf of groups, we have a functorial  bijection
\begin{equation*}
[({\mathcal X},x),(BG,\ast)]_{\aone} \isomto Hom_{{\mathcal Gr}^{\aone}_k}(\pi_1^{\aone}({\mathcal X},x),G).
\end{equation*}
Suppose $m$ is an integer $\geq 2$.  If furthermore ${\mathcal X}$ is $\aone$-$(m-1)$-connected space, and $A$ is a strictly $\aone$-invariant sheaf of abelian groups, then there is a functorial bijection
\begin{equation*}
H^{m}_{Nis}({\mathcal X},A) \isomto Hom_{{\mathcal Gr}^{\aone}_k}(\pi_{m}^{\aone}({\mathcal X},x),A).
\end{equation*}
\end{thm}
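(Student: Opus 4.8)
The plan is to prove that the canonical forward maps produced in Equations \ref{eqn:nonabeliancohomology} and \ref{eqn:abeliancohomology}, together with the explicit reverse maps mentioned there, are mutually inverse bijections. Both assertions follow the same template, so I would treat the strongly $\aone$-invariant case (degree one) in detail and then indicate the routine modifications needed in the strictly $\aone$-invariant case (degree $m \geq 2$).

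The heart of the argument is that one maps into a target whose $\aone$-homotopy is concentrated in a single degree. By Paragraph \ref{entry:strongaoneinvarianceequivalences}, strong $\aone$-invariance of $G$ gives $\pi_0^{\aone}(BG) = \ast$, $\pi_1^{\aone}(BG,\ast) = G$, and $\pi_i^{\aone}(BG,\ast) = 0$ for $i > 1$; hence the canonical map $BG \longrightarrow P^{(1)}(BG)$ is an $\aone$-weak equivalence. Applying covariant functoriality of the Postnikov tower to an arbitrary pointed map $f: ({\mathcal X},x) \longrightarrow (BG,\ast)$ produces a commuting square relating ${\mathcal X} \longrightarrow P^{(1)}({\mathcal X})$ to $BG \longrightarrow P^{(1)}(BG)$; inverting the latter $\aone$-weak equivalence shows that $f$ factors, up to $\aone$-homotopy, through the projection ${\mathcal X} \longrightarrow P^{(1)}({\mathcal X})$. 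Using $\aone$-connectivity of ${\mathcal X}$ and the $m=1$ case of the Postnikov discussion, $P^{(1)}({\mathcal X})$ is $\aone$-weakly equivalent to $B\pi_1^{\aone}({\mathcal X},x)$, so Equation \ref{eqn:nonabeliangroupstospaces} yields the chain of bijections
\[
[({\mathcal X},x),(BG,\ast)]_{\aone} \isomto [(B\pi_1^{\aone}({\mathcal X},x),\ast),(BG,\ast)]_{\aone} \isomto Hom_{{\mathcal Gr}^{\aone}_k}(\pi_1^{\aone}({\mathcal X},x),G).
\]
To see this composite is the map $f \mapsto f_{\ast}$ of Equation \ref{eqn:nonabeliancohomology}, I would verify the two composites with the reverse map. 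Reverse-then-forward is immediate: a homomorphism $\varphi$ yields the map $B\pi_1^{\aone}({\mathcal X},x) \longrightarrow BG$ inducing $\varphi$ on $\pi_1^{\aone}$, and since ${\mathcal X} \longrightarrow P^{(1)}({\mathcal X})$ is an isomorphism on $\pi_1^{\aone}$, precomposition recovers $\varphi$. Forward-then-reverse is the identity precisely because of the factorization above: any $f$ is $\aone$-homotopic to the map through $P^{(1)}({\mathcal X}) \simeq B\pi_1^{\aone}({\mathcal X},x)$ determined, via Equation \ref{eqn:nonabeliangroupstospaces}, by $f_{\ast}$.

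For the degree-$m$ statement I would run the identical argument with $BG$ replaced by $K(A,m)$ and $P^{(1)}$ by $P^{(m)}$. Paragraph \ref{entry:strictaoneinvarianceequivalences} supplies that $K(A,m)$ is $\aone$-local with $\pi_j^{\aone}(K(A,m)) = 0$ for $j \neq m$ and $= A$ for $j = m$, so $K(A,m) \longrightarrow P^{(m)}(K(A,m))$ is an $\aone$-weak equivalence and every map ${\mathcal X} \longrightarrow K(A,m)$ factors through ${\mathcal X} \longrightarrow P^{(m)}({\mathcal X})$. The hypothesis that ${\mathcal X}$ be $\aone$-$(m-1)$-connected gives $P^{(m)}({\mathcal X}) \simeq K(\pi_m^{\aone}({\mathcal X},x),m)$, and Equation \ref{eqn:abeliangroupstospaces} identifies maps of Eilenberg--MacLane spaces with homomorphisms of strictly $\aone$-invariant sheaves. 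The only additional ingredient is the identification $H^m_{Nis}({\mathcal X},A) \isomto [({\mathcal X},x),K(A,m)]_{\aone}$, which comes from the sheaf-theoretic Dold--Kan correspondence together with the $\aone$-locality of $K(A,m)$ recorded in Paragraph \ref{entry:strictaoneinvarianceequivalences}, extended from representable $U$ to an arbitrary (simplicial) space ${\mathcal X}$.

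The step I expect to be the main obstacle is this factorization: making precise, inside the unstable $\aone$-homotopy category, that functoriality of the Postnikov tower together with $\aone$-locality of the target forces every map to descend to the appropriate Postnikov section, and that the resulting map is governed exactly by its effect on the single nonvanishing homotopy sheaf. Once this is in place, the remainder is bookkeeping, assembling the bijections of Equations \ref{eqn:nonabeliangroupstospaces} and \ref{eqn:abeliangroupstospaces} with the Postnikov identifications and checking compatibility of the two descriptions of the comparison maps.
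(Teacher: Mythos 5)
Your proposal is correct and takes essentially the same route as the paper's (sketched) proof: both use covariant functoriality of the Postnikov tower, the identification of $P^{(1)}({\mathcal X})$ (resp. $P^{(m)}({\mathcal X})$) with $B\pi_1^{\aone}({\mathcal X},x)$ (resp. $K(\pi_m^{\aone}({\mathcal X},x),m)$), and reduction to the bijections of Equations \ref{eqn:nonabeliangroupstospaces} and \ref{eqn:abeliangroupstospaces}. The only difference is one of packaging: where the paper cites \cite{MBundle} B.2.2 for injectivity in the fundamental-group case and says ``a similar method works'' elsewhere, you carry out that method directly via the factorization-through-the-Postnikov-section argument, which is precisely the reduction the paper alludes to.
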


\begin{proof}[Sketch of proof.]
In each case, surjectivity is clear by the discussion preceding the statement.  Thus, it suffices to prove injectivity.  In the case where ${\mathcal X}$ is pointed and connected, the injectivity statement in the case of strongly $\aone$-invariant sheaves of groups is contained in \cite{MBundle} B.2.2 p. 59.  A similar method works to prove injectivity in the remaining cases.  In all cases, choosing explicit representing morphisms of homotopy classes, one uses functoriality of the Postnikov tower to reduce to the identifications of Equations \ref{eqn:nonabeliangroupstospaces} and \ref{eqn:abeliangroupstospaces}.
\end{proof}

\subsubsection*{Some computational tools}
Sending a space to its (reduced) $\aone$-singular chain complex gives a functor $\ho{k} \longrightarrow D_{\aone}({\mathcal Ab}_k)$ (resp. $\hop{k} \longrightarrow D_{\aone}({\mathcal Ab}_k)$).  Via adjunction and the Dold-Kan correspondence, for any pointed space $({\mathcal X},x)$ there is an induced $\aone$-Hurewicz morphism $\pi_i^{\aone}({\mathcal X},x) \longrightarrow H_i^{\aone}({\mathcal X},x)$.  The structure of these Hurewicz morphisms is summarized in the following result.  

\begin{thm}[$\aone$-Hurewicz Theorem (\cite{MField} Theorems 3.35 and 3.57)]
\label{thm:aonehurewicz}
Let $n \geq 1$ be an integer and let $({\mathcal X},x)$ be a pointed  $\aone$-connected (simplicial) space.
\begin{itemize}
\item[i)] The $\aone$-Hurewicz morphism $\pi_1^{\aone}({\mathcal X},x) \longrightarrow H^{\aone}_1({\mathcal X})$ is the initial morphism from $\pi_1^{\aone}({\mathcal X},x)$ to a strongly $\aone$-invariant sheaf of abelian groups.
\item[ii)] If $\pi_1^{\aone}({\mathcal X},x)$ is abelian, the morphism of the previous statement is an isomorphism.
\item[iii)] If $n > 1$, and ${\mathcal X}$ is $\aone$-$(n-1)$-connected, then $H_i^{\aone}({\mathcal X})$ vanishes if $0 \leq i \leq n-1$, the $\aone$-Hurewicz morphism $\pi_n^{\aone}({\mathcal X},x) \longrightarrow H_n^{\aone}({\mathcal X})$ is an isomorphism, and $\pi_{n+1}^{\aone}({\mathcal X},x) \longrightarrow H_{n+1}^{\aone}({\mathcal X})$ is an epimorphism.
\end{itemize}
\end{thm}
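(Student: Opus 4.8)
The plan is to reduce everything to a statement about simplicial sheaves, where the classical Hurewicz theorem applies stalkwise, and then to transport the conclusion across the $\aone$-localization functor using Morel's $\aone$-connectivity theorem. First I would replace $({\mathcal X},x)$ by its $\aone$-fibrant model $Ex_{\aone}({\mathcal X})$, which is $\aone$-local; since its $\aone$-homotopy sheaves agree with its simplicial homotopy sheaves, the hypothesis that ${\mathcal X}$ is $\aone$-$(n-1)$-connected makes $Ex_{\aone}({\mathcal X})$ simplicially $(n-1)$-connected (compare Lemma \ref{lem:weaklyconnected}). Evaluating at stalks and invoking the ordinary simplicial Hurewicz theorem then shows that the reduced simplicial homology sheaves $\tilde{H}_i(C_*(\Z(Ex_{\aone}({\mathcal X}))))$ vanish for $i < n$, that the degree-$n$ sheaf is $\pi_n^{\aone}({\mathcal X},x)$ when $n \geq 2$ (its abelianization when $n=1$), and that the Hurewicz map in degree $n+1$ is surjective. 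Since $C_*(\Z({\mathcal X})) \to C_*(\Z(Ex_{\aone}({\mathcal X})))$ is an $\aone$-quasi-isomorphism, both have the same $\aone$-singular chain complex, so I may freely compute $\aone$-homology from the simplicially connected model.

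For part iii), the remaining point is to pass from this non-localized complex to $\tilde{C}_*^{\aone}({\mathcal X}) = L_{\aone}(C_*(\Z(\cdots)))$, whose homology is $\aone$-homology. Here I would invoke the $\aone$-connectivity theorem: $L_{\aone}$ carries an $(n-1)$-connected complex to an $(n-1)$-connected complex, which immediately yields $H_i^{\aone}({\mathcal X}) = 0$ for $i \leq n-1$. For the isomorphism in degree $n$, the bottom homology sheaf $\pi_n^{\aone}({\mathcal X},x)$ is already strictly $\aone$-invariant (for $n \geq 2$, as noted above), hence $\aone$-local, so $L_{\aone}$ leaves it unchanged; combined with Lemma \ref{lem:localizationexact} this gives $\pi_n^{\aone}({\mathcal X},x) \isomto H_n^{\aone}({\mathcal X})$ and, in the same range, the epimorphism onto $H_{n+1}^{\aone}({\mathcal X})$.

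For parts i) and ii) ($n=1$) I would argue by a universal-property/Yoneda argument rather than a chain-level computation. The sheaf $H_1^{\aone}({\mathcal X})$ is strictly, hence strongly, $\aone$-invariant and abelian. For any strongly $\aone$-invariant abelian sheaf $A$, Theorem \ref{thm:postnikov} (applied with $G = A$, so $BA = K(A,1)$) identifies $[({\mathcal X},x),(BA,\ast)]_{\aone}$ with $\Hom_{{\mathcal Gr}^{\aone}_k}(\pi_1^{\aone}({\mathcal X},x),A)$, while the cohomological interpretation of $[({\mathcal X},x),(K(A,1),\ast)]_{\aone}$ together with the defining adjunction for $\aone$-homology (no $\Ext^1$ term appears since $\tilde{C}_*^{\aone}({\mathcal X})$ is reduced and connective) identifies the same set with $\Hom_{{\mathcal Ab}^{\aone}_k}(H_1^{\aone}({\mathcal X}),A)$. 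Naturality in $A$ and Yoneda then show that precomposition with the Hurewicz morphism is a bijection $\Hom_{{\mathcal Ab}^{\aone}_k}(H_1^{\aone}({\mathcal X}),A) \isomto \Hom_{{\mathcal Gr}^{\aone}_k}(\pi_1^{\aone}({\mathcal X},x),A)$, which is precisely the assertion that $\pi_1^{\aone} \to H_1^{\aone}$ is initial among maps to strongly $\aone$-invariant abelian sheaves. Statement ii) is then immediate: if $\pi_1^{\aone}({\mathcal X},x)$ is itself abelian, hence a legitimate target, applying the universal property to the identity produces a two-sided inverse to the Hurewicz morphism.

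The genuinely hard step is the $\aone$-connectivity theorem invoked in the second paragraph --- that $L_{\aone}$ does not decrease the connectivity of a chain complex. This is the technical core and is by no means formal: the naive simplicial connectivity estimates are destroyed by $\aone$-localization, and recovering them requires substantial structural input on strictly $\aone$-invariant sheaves (Gersten/Cousin-type resolutions and the behavior of their contractions). Everything else --- the reduction to a simplicially connected model, the stalkwise simplicial Hurewicz theorem, and the Yoneda argument for $n=1$ --- is comparatively routine once that theorem is in hand.
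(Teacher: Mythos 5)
The first thing to say is that the paper contains no proof of this statement: the bracketed attribution (\cite{MField} Theorems 3.35 and 3.57) \emph{is} the paper's entire treatment, and the theorem is imported from Morel as a black box, exactly like the strong/strict $\aone$-invariance theorems and the computation of $\pi_i^{\aone}({\mathbb A}^n - 0)$ quoted nearby. So there is no internal argument to compare yours against; the only meaningful benchmark is Morel's own proof, and your sketch does track it: an $\aone$-local fibrant model computed stalkwise by the classical Hurewicz theorem, transport of the answer across $L_{\aone}$ using exactness (Lemma \ref{lem:localizationexact}), truncation, and the connectivity theorem, and, for $n=1$, the Yoneda-style characterization via the chain of natural bijections $[({\mathcal X},x),(BA,\ast)]_{\aone} \cong \Hom_{{\mathcal Gr}^{\aone}_k}(\pi_1^{\aone}({\mathcal X},x),A) \cong \Hom_{{\mathcal Ab}^{\aone}_k}(H_1^{\aone}({\mathcal X}),A)$, which is genuinely how part (i) is established, with (ii) following formally from initiality. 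You are also right that everything hinges on the $\aone$-connectivity theorem for the derived category, which you (legitimately, given that this paper itself only cites the result) invoke rather than prove.

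There is, however, one step in part (iii) that does not follow from the ingredients you allow yourself. The vanishing and the degree-$n$ isomorphism do: writing $D_*$ for the reduced chains on $Ex_{\aone}({\mathcal X})$, the triangle $\tau_{\geq n+1}D_* \to D_* \to \pi_n^{\aone}({\mathcal X},x)[n]$, exactness of $L_{\aone}$, $\aone$-locality of $\pi_n^{\aone}({\mathcal X},x)[n]$ (strict $\aone$-invariance), and $n$-connectedness of $L_{\aone}(\tau_{\geq n+1}D_*)$ give exactly what you claim. But the epimorphism $\pi_{n+1}^{\aone}({\mathcal X},x) \to H_{n+1}^{\aone}({\mathcal X})$ needs more: the same long exact sequence only exhibits $H_{n+1}(L_{\aone}(D_*))$ as a quotient of $H_{n+1}(L_{\aone}(\tau_{\geq n+1}D_*))$, so one must still show that $H_{n+1}(D_*) \to H_{n+1}(L_{\aone}(\tau_{\geq n+1}D_*))$ is surjective --- that is, that $\aone$-localization is an epimorphism on the bottom nonvanishing homology sheaf (equivalently, that the unit of the ``free strictly $\aone$-invariant sheaf'' adjunction is an epimorphism of sheaves), or else one argues via the $\aone$-connected cover together with vanishing of $H_{n+1}^{\aone}(K(A,n))$ and a Serre-type exact sequence. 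Connectivity-preservation plus exactness of $L_{\aone}$ alone do not yield this; it is part of Morel's finer analysis of the localization of connected complexes, and your phrase ``in the same range'' papers over it. Since you are already deferring the core theorem to Morel, this is a defect of citation granularity rather than a wrong turn, but as written that step is unjustified.
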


It is expected though not known that, in general, the $\aone$-Hurewicz morphism $\pi_1^{\aone}({\mathcal X},x) \longrightarrow H^{\aone}_1({\mathcal X})$ is an epimorphism and identifies $H^{\aone}_1({\mathcal X})$ as the abelianization of $\pi_1^{\aone}({\mathcal X},x)$.

\begin{prop}[Mayer-Vietoris]
\label{prop:mayervietoris}
Suppose $X \in {\mathcal Sm}_k$, and we have two open subschemes $U$ and $V$ of $X$ such that $X = U \cup V$.  There are Mayer-Vietoris long exact sequences in $\aone$-homology:
\[
\cdots \longrightarrow H_{i+1}^{\aone}(U \cup V) \longrightarrow H_i^{\aone}(U \cap V) \longrightarrow H^{\aone}_i(U) \oplus H_i^{\aone}(V) \longrightarrow H_i^{\aone}(X) \longrightarrow \cdots,
\]
and reduced $\aone$-homology:
\[
\cdots \longrightarrow \tilde{H}_{i+1}^{\aone}(X) \longrightarrow \tilde{H}_i^{\aone}(U \cap V) \longrightarrow \tilde{H}^{\aone}_i(U) \oplus \tilde{H}_i^{\aone}(V) \longrightarrow \tilde{H}_i^{\aone}(X) \longrightarrow \cdots.
\]
\end{prop}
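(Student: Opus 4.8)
The plan is to obtain the Mayer--Vietoris sequence by constructing a distinguished triangle in the $\aone$-derived category $D_{\aone}(k)$ relating the $\aone$-singular chain complexes of $U \cap V$, $U \coprod V$ (or rather $U$ and $V$ separately), and $X = U \cup V$, and then to take the associated long exact sequence of $\aone$-homology sheaves. The key input is that an open cover $X = U \cup V$ gives, at the level of the free simplicial sheaves of abelian groups $\Z(-)$, a short exact sequence of chain complexes
\begin{equation*}
0 \longrightarrow C_*(\Z(U \cap V)) \longrightarrow C_*(\Z(U)) \oplus C_*(\Z(V)) \longrightarrow C_*(\Z(U \cup V)) \longrightarrow 0,
\end{equation*}
where the first map is $(i_*, -j_*)$ (induced by the two inclusions of $U \cap V$) and the second is $k_* + l_*$ (induced by the inclusions of $U$ and $V$ into $X$). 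This is the Nisnevich-sheaf version of the classical fact that the sum of the singular chains of the members of an open cover surjects onto the chains of the union with kernel the chains of the intersection; here the crucial point is that $\Z(U) \oplus \Z(V) \to \Z(X)$ is an \emph{epimorphism of Nisnevich sheaves} precisely because $\{U, V\}$ is a Nisnevich (indeed Zariski) cover, so every section splits locally through $U$ or $V$.

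Next I would apply the $\aone$-localization functor $L_{\aone}$ to this short exact sequence. A short exact sequence of chain complexes is the same data as a distinguished triangle in $D({\mathcal Ab}_k)$, and by Lemma \ref{lem:localizationexact} the functor $L_{\aone}$ preserves exact triangles. Hence we obtain a distinguished triangle
\begin{equation*}
C_*^{\aone}(U \cap V) \longrightarrow C_*^{\aone}(U) \oplus C_*^{\aone}(V) \longrightarrow C_*^{\aone}(U \cup V) \longrightarrow C_*^{\aone}(U \cap V)[1]
\end{equation*}
in $D_{\aone}(k)$, using that $L_{\aone}$ commutes with finite direct sums and Definition \ref{defn:singularchaincomplex}. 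Taking the long exact sequence of homology sheaves $H_n(-) = H_n^{\aone}(-)$ associated to this triangle then yields exactly the unreduced Mayer--Vietoris sequence claimed, with the connecting homomorphism $H_{i+1}^{\aone}(U \cup V) \to H_i^{\aone}(U \cap V)$ coming from the boundary map of the triangle.

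For the reduced statement, I would repeat the argument with the reduced $\aone$-singular chain complexes: pointing all four spaces compatibly (a point of $U \cap V$ serves for all), the splitting $C_*^{\aone}(-) \cong \Z \oplus \tilde{C}_*^{\aone}(-)$ is natural in the space, so the short exact sequence above restricts to a short exact sequence of \emph{reduced} complexes (the $\Z$-summands cancel in the alternating way, the map $\Z \oplus \Z \to \Z$ being surjective with kernel the diagonal $\Z$, which matches the reduced term for the intersection). Applying $L_{\aone}$ and passing to homology gives the reduced sequence. \textbf{The main obstacle} I anticipate is verifying carefully that $\Z(U) \oplus \Z(V) \to \Z(U \cup V)$ is genuinely surjective \emph{as a map of Nisnevich sheaves} and that its kernel is exactly $\Z(U \cap V)$ embedded antidiagonally; this is where the sheaf-theoretic (as opposed to presheaf-theoretic) nature of the construction matters, since on sections over a general smooth $W$ a map $W \to U \cup V$ need not factor through $U$ or $V$, and one must pass to a Nisnevich-local refinement where it does. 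Once this exactness is established at the level of free sheaves, the rest is a formal consequence of the exactness properties of $L_{\aone}$ recorded in Lemma \ref{lem:localizationexact}.
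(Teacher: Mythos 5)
Your proposal is correct and follows essentially the same route as the paper's proof: the short exact sequence $0 \to \Z(U \cap V) \to \Z(U) \oplus \Z(V) \to \Z(X) \to 0$ of Nisnevich sheaves arising from the Zariski cover (the paper cites \cite{MV} \S 3 Remark 1.7 for this, where you verify it directly), followed by applying $L_{\aone}$ via Lemma \ref{lem:localizationexact} and its compatibility with finite (co)products, and then passing to homology sheaves, with the reduced case handled by splitting off the unit/$\Z$-summands exactly as you describe. Nothing further is needed.
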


\begin{proof}
A Zariski covering by two open sets gives rise to a push-out square of the form
\[
\xymatrix{
U \cap V \ar[r]\ar[d] & U \ar[d] \\
V \ar[r] & X
}.
\]
Thus, we get a short exact sequence of sheaves of abelian groups of the form ({\em cf.} \cite{MV} Section 3 Remark 1.7)
\[
0 \longrightarrow \Z(U \cap V) \longrightarrow \Z(U) \oplus \Z(V) \longrightarrow \Z(X) \longrightarrow 0.
\]
Now, the $\aone$-localization functor is exact by Lemma \ref{lem:localizationexact}, and, as a left adjoint, commutes with finite colimits.  Thus, one obtains a short exact sequence of $\aone$-singular chain complexes, and the middle term can be identified with the direct sum of the $\aone$-singular chain complexes corresponding to $U$ and $V$.  Tracking the unit map gives rise to corresponding short exact sequences for reduced $\aone$-singular chain complexes (the proof is identical to that given in e.g., \cite{Spanier} Chapter 4 \S 6).  In either case, taking homology sheaves gives rise to the required exact sequences of $\aone$-homology sheaves.
\end{proof}

\subsubsection*{Some key computations}
Consider the canonical $\gm$-torsor: 
\begin{equation*}
{\mathbb A}^{n+1} - 0 \longrightarrow {\mathbb P}^n.
\end{equation*}
For $n \geq 2$, the space ${\mathbb A}^{n+1} - 0$ is $\aone$-simply connected.  Thus, for $n \geq 2$, the space ${\mathbb A}^{n+1}$ is the universal $\aone$-covering space of ${\mathbb P}^n$.  This is {\em false} for $n = 1$, and ${\mathbb A}^2 - 0$ has a non-trivial $\aone$-fundamental group.  Thus, not all smooth schemes have geometric $\aone$-universal cover.  

We now recall Morel's computation of some $\aone$-homotopy groups as it provides the template for our results on $\aone$-homotopy groups of toric varieties.    In order to state the result, it is necessary to recall the Milnor-Witt K-theory sheaves introduced by Morel (see \cite{MField} \S 2).  Let ${\mathcal S}$ be a pointed space.  The {\em free strictly $\aone$-invariant sheaf of groups generated by ${\mathcal S}$} is by definition the reduced $\aone$-homology sheaf $\tilde{H}_0^{\aone}({\mathcal S})$.  The sheaf $\underline{\bf{K}}^{MW}_n$ of weight $n$ Milnor-Witt K-theory can be defined to be the free strictly $\aone$-invariant sheaf generated by $\gm^{\wedge n}$ (at least if $n \geq 1$).\footnote{Morel also shows that $\underline{\bf{K}}^{MW}_n$ is the free strongly $\aone$-invariant sheaf of {\em abelian} groups generated by $\gm^{\wedge n}$.}  The following result is one of the main computational achievements of \cite{MField}; we will use this result repeatedly in the sequel.  The proof of the result breaks into a largely formal part (given the coterie of theorems mentioned or proved so far), most of which we reproduce below, and the non-trivial task of identifying the sheaf of groups $\underline{\bf{K}}^{MW}_n$ ``concretely," for which we refer the reader to \cite{MField} \S 2, especially Theorem 2.37. (We implicitly fix base-points in the statement below.)  

\begin{thm}[Morel (\cite{MField} Thm 3.40, Thm 4.13)]
\label{thm:projectivespace}
Suppose $n \geq 2$.  There are canonical isomorphisms
\[
\pi_i^{\aone}({\mathbb A}^{n} - 0) \cong
\begin{cases} 0 & \text{ if } i < n-1 \\
\underline{\bf{K}}_n^{MW} & \text{ if } i = n-1.
\end{cases}
\]
Furthermore, there is a canonical central extension
\[
1 \longrightarrow \underline{\bf{K}}_2^{MW} \longrightarrow \pi_1^{\aone}({\mathbb P}^1) \longrightarrow \gm \longrightarrow 1,
\]
and if $n \geq 2$, there is canonical isomorphisms $\pi_1^{\aone}({\mathbb P}^n) \cong \gm$ and $\pi_{n}^{\aone}({\mathbb P}^n) \cong \underline{\bf{K}}_n^{MW}$.
\end{thm}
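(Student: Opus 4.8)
The plan is to reduce everything to the single space $\mathbb{A}^n - 0$ and then transport the answer to projective space through the tautological $\gm$-torsor $\mathbb{A}^{n+1} - 0 \longrightarrow \mathbb{P}^n$, which by the covering-space discussion is a Galois $\aone$-cover. The starting geometric input is the standard Morel--Voevodsky $\aone$-weak equivalence $\mathbb{A}^n - 0 \simeq_{\aone} S^{n-1}_s \wedge \gm^{\wedge n}$ (see \cite{MV}), exhibiting $\mathbb{A}^n - 0$ as an $(n-1)$-fold simplicial suspension. First I would invoke Morel's $\aone$-connectivity theorem, to the effect that simplicial suspension raises $\aone$-connectivity by one; since a one-fold suspension is already $\aone$-connected, $\mathbb{A}^n - 0$ is $\aone$-$(n-2)$-connected. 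This immediately yields the vanishing $\pi_i^{\aone}(\mathbb{A}^n - 0) = 0$ for $i < n-1$.

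For the first nonvanishing group I would apply the $\aone$-Hurewicz theorem (Theorem~\ref{thm:aonehurewicz}). When $n \geq 3$ the space is $\aone$-$(n-2)$-connected with $n-1 \geq 2$, so part~(iii) gives an isomorphism $\pi_{n-1}^{\aone}(\mathbb{A}^n - 0) \cong H_{n-1}^{\aone}(\mathbb{A}^n - 0)$. The homology is then computed by the suspension isomorphism for reduced $\aone$-homology, iterated $n-1$ times: $\tilde{H}_{n-1}^{\aone}(S^{n-1}_s \wedge \gm^{\wedge n}) \cong \tilde{H}_0^{\aone}(\gm^{\wedge n})$, which is by definition $\underline{\bf{K}}_n^{MW}$. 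The case $n = 2$ is genuinely different because the target is $\pi_1^{\aone}$, which is not known to be abelian a priori, so the caveat recorded after Theorem~\ref{thm:aonehurewicz} applies; here one must first establish independently (following Morel) that $\pi_1^{\aone}(\mathbb{A}^2 - 0)$ is abelian, after which Hurewicz~(ii) promotes the Hurewicz map to an isomorphism $\pi_1^{\aone}(\mathbb{A}^2-0) \cong H_1^{\aone}(\mathbb{A}^2-0) \cong \underline{\bf{K}}_2^{MW}$. The only remaining (and deferred) ingredient is Morel's concrete identification of $\tilde{H}_0^{\aone}(\gm^{\wedge n})$ with Milnor--Witt K-theory (\cite{MField} Theorem 2.37).

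Next I would treat $\mathbb{P}^n$ for $n \geq 2$. By the previous paragraph $\mathbb{A}^{n+1} - 0$ is $\aone$-simply connected, so the $\gm$-torsor $\mathbb{A}^{n+1} - 0 \to \mathbb{P}^n$ is the universal $\aone$-covering space of $\mathbb{P}^n$. Since the universal cover canonically carries the structure of a $\pi_1^{\aone}(\mathbb{P}^n)$-torsor while here it is also a torsor under the strongly $\aone$-invariant sheaf $\gm$, the Galois correspondence of Theorem~\ref{thm:aonecoveringspacedictionary} forces $\pi_1^{\aone}(\mathbb{P}^n) \cong \gm$. For the higher groups I would use the long exact sequence of $\aone$-homotopy sheaves attached to this $\aone$-fibration (Remark~\ref{rem:fibration}); the fiber is the $\aone$-rigid sheaf $\gm$ (Example~\ref{ex:aonerigid}), whose positive $\aone$-homotopy sheaves vanish, so the sequence collapses to isomorphisms $\pi_i^{\aone}(\mathbb{P}^n) \cong \pi_i^{\aone}(\mathbb{A}^{n+1} - 0)$ for all $i \geq 2$. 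In particular $\pi_n^{\aone}(\mathbb{P}^n) \cong \pi_n^{\aone}(\mathbb{A}^{n+1} - 0)$, which is then identified by the first part of the theorem.

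Finally, for $\mathbb{P}^1$ the torsor $\mathbb{A}^2 - 0 \to \mathbb{P}^1$ is still a Galois $\aone$-cover, but now $\mathbb{A}^2 - 0$ is \emph{not} $\aone$-simply connected, since $\pi_1^{\aone}(\mathbb{A}^2 - 0) \cong \underline{\bf{K}}_2^{MW}$; it is only an intermediate cover. The low-degree part of the long exact sequence of the cover --- equivalently, the Galois correspondence applied to this subcover --- produces the short exact sequence $1 \to \underline{\bf{K}}_2^{MW} \to \pi_1^{\aone}(\mathbb{P}^1) \to \gm \to 1$, with kernel $\pi_1^{\aone}(\mathbb{A}^2 - 0)$ and quotient the deck group $\gm$. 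I expect the main obstacle to lie precisely in the two fundamental-group phenomena: verifying that this extension is \emph{central} (which is not formal from exactness and requires controlling the conjugation action of $\gm$ on $\underline{\bf{K}}_2^{MW}$), together with the abelianness input needed in the $n=2$ case of the first part. Both reflect the recurring theme of the excerpt that the $\aone$-Hurewicz theorem and the covering-space machinery behave subtly at the level of $\pi_1^{\aone}$, whereas everything above degree one is formal given Morel's structural results.
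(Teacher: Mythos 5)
Your proposal cannot be checked against a proof in the paper, because the paper gives none: the theorem is quoted from Morel, and the paper explicitly defers the one non-formal ingredient (the identification of the free strictly $\aone$-invariant sheaf $\tilde{H}_0^{\aone}(\gm^{\wedge n})$ with $\underline{\bf{K}}^{MW}_n$) to \cite{MField} \S 2, remarking that the remainder is formal given the tools it reviews. Your reconstruction of that formal part is the intended one, and it is in fact the same argument the paper runs in greater generality later: the chain consisting of the Morel--Voevodsky equivalence $\mathbb{A}^n - 0 \simeq S^{n-1}_s \wedge \gm^{\wedge n}$, the connectivity theorem, and Hurewicz plus the suspension isomorphism for the punctured affine space, followed by the universal $\aone$-cover and the long exact sequence of the $\gm$-torsor for projective space, is exactly what Proposition \ref{prop:galoiscover}, Corollary \ref{cor:homotopycovering}, and Theorem \ref{thm:abelian} carry out for a general smooth proper toric variety, of which $\mathbb{P}^n$ with its Cox cover $\mathbb{A}^{n+1}-0 \to \mathbb{P}^n$ is the simplest instance. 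Your flags for where the non-formal content lives (abelianness of $\pi_1^{\aone}(\mathbb{A}^2-0)$, centrality of the extension for $\mathbb{P}^1$) are also accurate.

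One genuine criticism: in the last step you assert that $\pi_n^{\aone}(\mathbb{P}^n) \cong \pi_n^{\aone}(\mathbb{A}^{n+1}-0)$ ``is then identified by the first part of the theorem,'' without writing out the index. Doing so gives $\underline{\bf{K}}^{MW}_{n+1}$ (apply the first part with $n$ replaced by $n+1$), whereas the statement you were asked to prove says $\underline{\bf{K}}^{MW}_n$. These differ, and it is your computation that is correct: the statement as printed carries an index error, as one can also see by specializing the paper's own Theorem \ref{thm:abelian} to the fan of $\mathbb{P}^n$, where $Z_{\Sigma} = \setof{0}$ has codimension $d = n+1$ and $r = 1$, yielding $\pi_{n}^{\aone}(\mathbb{P}^n) \cong \underline{\bf{K}}^{MW}_{n+1}$. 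A blind attempt should have surfaced this contradiction explicitly rather than eliding it; as written, your proposal appears to establish the printed claim when in fact it establishes (correctly) a different one.
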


\section{Excision results for $\aone$-homotopy groups}
\label{s:excision}
The goal of this section is to prove excision style results for $\aone$-homotopy groups of smooth schemes.  Let us begin by stating the main theorem.

\begin{thm}[$\aone$-Excision]
\label{thm:excision} Let $k$ be an infinite field.  Suppose $X \in {\mathcal Sm}_k$ is $\aone$-connected, and $j: U \hookrightarrow X$ is an
open immersion of an $\aone$-connected scheme whose closed
complement is everywhere of codimension $d \geq 2$.  Fix a base
point $x \in U(k)$.  If furthermore $X$ is $\aone$-$m$-connected,
for $m \geq d-3$, then the canonical morphism
\begin{equation*}
j_*: \pi_i^{\aone}(U,x) \longrightarrow \pi_i^{\aone}(X,x)
\end{equation*}
is an isomorphism for $0 \leq i \leq d-2$ and an epimorphism for $i = d-1$.
\end{thm}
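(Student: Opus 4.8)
The plan is to convert each assertion about the homotopy sheaves $\pi_i^{\aone}$ into an assertion about Nisnevich cohomology and then prove the latter by a localization (coniveau) argument. The bridge is Morel's representability result, Theorem \ref{thm:postnikov}. For the fundamental group, a morphism $j_*\colon \pi_1^{\aone}(U,x) \to \pi_1^{\aone}(X,x)$ of strongly $\aone$-invariant sheaves of groups is an isomorphism (resp.\ an epimorphism) precisely when, for every strongly $\aone$-invariant sheaf of groups $G$, the induced map $[(X,x),(BG,\ast)]_{\aone} \to [(U,x),(BG,\ast)]_{\aone}$ is bijective (resp.\ injective); by Corollary \ref{cor:torsordescription} the latter sets are pointed non-abelian $H^1$'s. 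For $i \geq 2$ the sheaf $\pi_i^{\aone}$ is strictly $\aone$-invariant, the category ${\mathcal Ab}^{\aone}_k$ is abelian, and $j_*$ is an isomorphism (resp.\ epimorphism) iff $\Hom_{{\mathcal Ab}^{\aone}_k}(\pi_i^{\aone}(X,x),A) \to \Hom_{{\mathcal Ab}^{\aone}_k}(\pi_i^{\aone}(U,x),A)$ is bijective (resp.\ injective) for every strictly $\aone$-invariant abelian $A$; when both spaces are $\aone$-$(i-1)$-connected, these $\Hom$-groups are identified with $H^i_{Nis}(-,A)$ again by Theorem \ref{thm:postnikov}. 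Thus everything reduces to the restriction maps $H^i_{Nis}(X,A) \to H^i_{Nis}(U,A)$.

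The engine is cohomological excision. For a strictly $\aone$-invariant sheaf $A$, the axioms of \cite{CTHK} together with Morel's verification that such sheaves admit Cousin (Gersten) resolutions produce a localization long exact sequence
\[
\cdots \longrightarrow H^i_Z(X,A) \longrightarrow H^i_{Nis}(X,A) \longrightarrow H^i_{Nis}(U,A) \longrightarrow H^{i+1}_Z(X,A) \longrightarrow \cdots
\]
in which the cohomology with supports in the codimension-$d$ closed set $Z$ satisfies $H^i_Z(X,A) = 0$ for $i < d$. Consequently $H^i_{Nis}(X,A) \to H^i_{Nis}(U,A)$ is an isomorphism for $i \leq d-2$ and a monomorphism for $i = d-1$. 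For the non-abelian coefficients needed for $\pi_1$, I would invoke Morel's extension of this Cousin formalism to strongly $\aone$-invariant sheaves of groups, which yields that $H^1(X,G) \to H^1(U,G)$ is injective when $\codim Z \geq 2$ and bijective when $\codim Z \geq 3$. I expect this non-abelian excision statement to be the main obstacle: it is exactly the step the introduction flags as the hardest, and it is where the purely homological dualities of the abelian theory must be replaced by Morel's more delicate structural results.

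With these inputs I would run an induction on $i$. The case $i=0$ is the hypothesis that both $U$ and $X$ are $\aone$-connected. The case $i=1$ is handled by the non-abelian statement: when $d=2$ the injectivity of $H^1(-,G)$ gives the desired epimorphism on $\pi_1^{\aone}$, while when $d \geq 3$ its bijectivity gives an isomorphism. Because $m \geq d-3$, the space $X$ has trivial $\pi_j^{\aone}$ for all $j \leq d-3$; feeding the isomorphisms already proved back into the induction shows $U$ shares this vanishing, so at each stage $i \leq d-2$ both $U$ and $X$ are genuinely $\aone$-$(i-1)$-connected, and Theorem \ref{thm:postnikov} then legitimately converts the $\pi_i^{\aone}$-statement into the $H^i_{Nis}$-statement established above. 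This yields the isomorphisms for $2 \leq i \leq d-2$.

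The remaining point is the epimorphism at the top degree $i=d-1$, where a bookkeeping subtlety appears: when $m=d-3$ the common group $\pi_{d-2}^{\aone}(U) \cong \pi_{d-2}^{\aone}(X)$ may be nonzero, so the two spaces are only $\aone$-$(d-3)$-connected and the clean identification of $\Hom(\pi_{d-1}^{\aone},A)$ with $H^{d-1}_{Nis}(-,A)$ is unavailable. Here I would compare the truncated Postnikov systems: the isomorphisms in degrees $\leq d-2$ give an $\aone$-weak equivalence $P^{(d-2)}(U) \simeq P^{(d-2)}(X)$, and the two-stage section $P^{(d-1)}$ of each space is a fibration over this common base with fiber a $K(\pi_{d-1}^{\aone},d-1)$. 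A diagram chase through the resulting exact sequences for maps into $K(A,d-1)$, fed by the injectivity clause ($i=d-1$) of the localization sequence above, shows that $\Hom_{{\mathcal Ab}^{\aone}_k}(\pi_{d-1}^{\aone}(X,x),A) \to \Hom_{{\mathcal Ab}^{\aone}_k}(\pi_{d-1}^{\aone}(U,x),A)$ is injective for every $A$, which is precisely the asserted epimorphism. One could alternatively route the top degree through the $\aone$-Hurewicz theorem of Theorem \ref{thm:aonehurewicz}, applied to the $\aone$-$(d-2)$-connected homotopy fibers, trading the obstruction-theoretic chase for a homological one.
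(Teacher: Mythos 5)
Your proposal is, in its main body, the same proof as the paper's: Theorem \ref{thm:postnikov} serves as the bridge from homotopy sheaves to Nisnevich cohomology, the Cousin-complex machinery (Corollaries \ref{cor:excision-abelian} and \ref{cor:excision-nonabelian}, with Corollary \ref{cor:torsordescription} handling base points) supplies the cohomological excision, and the covariant Yoneda lemma in ${\mathcal Gr}^{\aone}_k$, resp.\ ${\mathcal Ab}^{\aone}_k$, converts functorial injectivity/bijectivity of $\Hom$-sets into epimorphism/isomorphism statements about the sheaves. Your derivation of abelian excision from a localization sequence with supports plus semi-purity ($H^i_Z(X,A)=0$ for $i<d$) is equivalent to the paper's direct comparison of the Cousin complexes of $X$ and $U$, and your induction through degrees $i \leq d-2$ is exactly what the paper compresses into its one-sentence treatment of the higher cases.

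The divergence is at $i = d-1$ in the extremal case $m = d-3$, and there your argument has a genuine gap. You are right that the identification $H^{d-1}_{Nis}(-,A) \cong \Hom(\pi_{d-1}^{\aone}(-),A)$ is unavailable there (the paper is silent on this point: as written, its tools give the epimorphism at $i=d-1$ only when $m \geq d-2$, since Theorem \ref{thm:postnikov} at level $d-1$ needs both spaces to be $\aone$-$(d-2)$-connected). But your proposed repairs do not close it. The alternative route via Theorem \ref{thm:aonehurewicz} is circular: the $\aone$-homotopy fiber $F$ of $j$ is only $\aone$-$(d-3)$-connected a priori, and the long exact sequence of the fibration, together with injectivity of $j_*$ on $\pi_{d-2}^{\aone}$, identifies $\pi_{d-2}^{\aone}(F) \cong \operatorname{coker}\bigl(j_*: \pi_{d-1}^{\aone}(U,x) \to \pi_{d-1}^{\aone}(X,x)\bigr)$; so asserting that $F$ is $\aone$-$(d-2)$-connected is asserting the conclusion. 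Your main route, comparing $P^{(d-1)}(U)$ and $P^{(d-1)}(X)$ over the common base $P^{(d-2)}$, has the right shape, but the ``exact sequences for maps into $K(A,d-1)$'' you intend to chase are not supplied by anything in the paper's toolkit: precisely because $\pi_{d-2}^{\aone}$ (and, when $d=3$, $\pi_1^{\aone}$) is nontrivial in this case, the fibration $K(\pi_{d-1}^{\aone},d-1) \to P^{(d-1)} \to P^{(d-2)}$ need not be principal with constant coefficients, the restriction-to-the-fiber map lands only in the invariants for the $\pi_1^{\aone}$-action, and the requisite twisted obstruction-theoretic sequence in the $\aone$-setting is itself a substantial result, not a formal consequence of Theorem \ref{thm:postnikov} or the Cousin corollaries. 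Either that machinery must be developed, or the top-degree assertion should be restricted to $m \geq d-2$ — which is all that the argument pattern shared by you and the paper actually delivers.
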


\begin{rem}
The expression $\aone$-$(-1)$-connected, which arises when $d=2$ and $m=-1$, means the scheme is nonempty, but the background assumption that $X$ is $\aone$-connected is of course a strictly stronger hypothesis.  In Theorem \ref{thm:excision}, either the condition that $k$ is infinite or the hypothesis that $U$ be $\aone$-connected is necessary.  Indeed, if $k$ is finite, the hypothesis that $U$ be $\aone$-connected need not be satisfied (see Remark \ref{rem:spacefilling}).  On the other hand if $k$ is infinite, it is expected that this can not happen (see Conjecture \ref{conj:codim2}).  We also refer to \cite{MStable} Theorem 6.4.1 for an excision theorem of this sort for stable $\aone$-homotopy sheaves of groups.  
\end{rem}

In the proof of this result, we will treat the case of the $\aone$-fundamental group separately from higher $\aone$-homotopy groups.  The main reason for this is that the $\aone$-fundamental group need not be abelian and thus one must introduce different techniques for its study.  

\begin{convention}
For the rest of this section, non-abelian sheaves of groups that are not necessarily abelian will be denoted using the letters $G$ or $H$ (or primed versions thereof) and abelian sheaves of groups will be denoted by the letter $A$ (or primed versions thereof).
\end{convention}

As the proof of Theorem \ref{thm:excision} is quite long and will use many of the results of the previous section together with a collection of techniques to be introduced here, let us provide an outline.  We begin by reviewing the ``Cousin resolution" that allows one to relate Nisnevich cohomology groups of a sheaf of abelian groups on a scheme $X$ to the points of $X$; this will involve some constructions from local cohomology theory.  The main result of this discussion are Corollaries \ref{cor:excision-abelian} and \ref{cor:excision-nonabelian}, which show that one has an appropriate excision statement for Nisnevich cohomology with coefficients in a strongly or strictly $\aone$-invariant sheaf of groups.  In order to prove the excision result, we need to relate this kind of cohomology to $\aone$-homotopy groups; this connection is provided by the Postnikov tower (see Theorem \ref{thm:postnikov}) that relates homomorphisms in the categories ${\mathcal Gr}^{\aone}_k$ and ${\mathcal Ab}^{\aone}_k$ ({\em cf.} Paragraphs \ref{entry:strongaoneinvarianceequivalences} and \ref{entry:strictaoneinvarianceequivalences}) to appropriate Nisnevich cohomology groups.  We then use the covariant form of the Yoneda lemma to show that the set of homomorphisms out of a particular strongly (resp. strictly) $\aone$-invariant sheaf of groups, characterizes the sheaf of groups.  

If $A$ is a sheaf of abelian groups, it is relatively easy to construct the Cousin complex; this is discussed quite beautifully in \cite{CTHK} \S 1.  Relating the Cousin complex to Nisnevich cohomology of $A$ currently requires two steps and necessitates that one impose some additional conditions on $A$; these conditions may be axiomatized as in \cite{CTHK} \S 5.  Roughly speaking, these axioms include {\em Nisnevich excision} and a form of {\em $\aone$-homotopy invariance}, and are satisfied for strictly $\aone$-invariant sheaves of groups (see Definition \ref{defn:strictly} and Lemma \ref{lem:cousin-exist}).  Both find their way into the proof by way of Gabber's (geometric) presentation lemma, which is very nicely discussed in \cite{CTHK} \S 3 (especially Theorem 3.1.1), and thus necessitate that the base field $k$ be infinite. One begins by proving that the Cousin complex is a flasque resolution of the {\em Zariski} sheaf $A$ (see Theorem \ref{thm:cousin-abelian}).  Next, one proves that the Nisnevich and Zariski cohomology of $A$ co-incide (see Theorem \ref{thm:abeliancompare}).

If $G$ is a non-abelian sheaf of groups, one cannot expect a Cousin complex for Zariski cohomology of $G$ to exist in general.  Nevertheless, Morel constructs in \cite{MField} \S 1.2 a truncated version of the Cousin complex that is refined enough to compute $H^1_{Nis}(X,G)$.  Again, these results are housed in an axiomatic framework involving an appropriate form of Nisnevich excision and $\aone$-homotopy invariance.  Gabber's presentation Lemma once more appears in the proof and necessitates the assumption that $k$ be infinite.  Strongly $\aone$-invariant sheaves of groups (see Definition \ref{defn:stronglyinvariant} and Theorem \ref{thm:stronga1}) satisfy both of the corresponding conditions (in fact Morel gives an axiomatic characterization of such sheaves).  Roughly, one constructs a flasque Zariski ``resolution" of $G$ and uses it to prove a comparison theorem between Zariski and Nisnevich cohomology (see Theorem \ref{thm:compare-nab}).

\subsubsection*{Cousin resolutions}
In this section, we review some aspects of the beautiful axiomatic framework introduced in \cite{CTHK} for construction of Cousin resolutions for sheaves of abelian groups.  We refer the reader to \cite{HartshorneRD} Chapter 4 Proposition 2.6 for a discussion of  the classical Cousin complex (associated with a filtration of a topological space by closed subspaces) and associated conditions guaranteeing that the corresponding complex is a resolution.

Suppose $X$ is a smooth scheme.  We refer the reader to \cite{HartshorneRD} Chapter IV for a discussion of cohomology with supports.  Suppose $X$ is equidimensional and $Z_{p+2} \subset Z_{p+1} \subset Z_{p} \subset X$ is a nested sequence of closed immersions with each $Z_i$ having codimension $\geq i$ in $X$.  The long exact sequence of cohomology with supports for the triple $(X,Z_{p},Z_{p+1})$ gives rise to a connecting homomorphism
\[
H^{i}_{Z_p - Z_{p+1}}(X - Z_p,A) \longrightarrow H^{i+1}_{Z_{p+1}}(X,A)
\]
and similarly, using the triple $(X,Z_{p+1},Z_{p+2})$, one obtains a morphism
\[
H^{i}_{Z_{p+1}}(X,A) \longrightarrow H^i_{Z_{p+1} - Z_{p+2}}(X - Z_{p+2},A);
\]
we write
\[
d^{p,i}: H^{i}_{Z_p - Z_{p+1}}(X - Z_p,A) \longrightarrow H^i_{Z_{p+1} - Z_{p+2}}(X - Z_{p+2},A)
\]
for the composite of these two morphisms.  If we order the collection sequences of closed immersions $\bar{Z} = \setof{Z_{p+2} \subset Z_{p+1} \subset Z_{p}}$ by saying $\bar{Z} \leq \bar{Z'}$ if $Z_p \subset Z'_p$ for all $p$, the functoriality of cohomology with supports shows that the construction of the above groups and morphisms is covariant with respect to the ordering so defined.

For a point $x \in X^{(p)}$ and an integer $q \geq 0$, define $H_x^{p+q}(X,A) := \lim_{U \ni x} H^{p+q}_{\bar{x} \cap U}(U,A)$.  With this notation, passing to the limit in the situation of the previous paragraph allows us to construct complexes (see \cite{CTHK} Lemma 1.2.1 and Sequence 1.3)
\begin{equation}
\label{eqn:cousin1}
0 \longrightarrow \coprod_{x \in X^{(0)}} H^q_{x}(X,A) \stackrel{d^{0,q}}{\longrightarrow} \coprod_{x \in X^{(1)}} H^{q+1}_{x}(X,A) \stackrel{d^{1,q}}{\longrightarrow} \cdots \stackrel{d^{m-1,q}}{\longrightarrow} \coprod_{x \in X^{(m)}} H^{q+m}_x(X,A) \stackrel{d^{m,q}}{\longrightarrow}\cdots
\end{equation}
that we shall refer to as Cousin complexes.  In fact, in the sequel, we shall only use these complexes in the case $q = 0$.

The above construction can also be sheafified for the Zariski topology in the sense that the functors
$$
U \mapsto \coprod_{x \in U^{(p)}} H_{x}^{n}(U,A)
$$
produce sheaves on the small Zariski site of $X$ (see \cite{CTHK} Lemma 1.2.2); these sheaves are in fact {\em flasque}.  Indeed, if we denote $i_x: x \hookrightarrow X$, then we can identify the above sheaf with $\coprod_{x \in X^{(p)}} {i_{x}}_* H^n_x(X,A)$.  Replacing the groups in Sequence \ref{eqn:cousin1} by the sheaves just described produces a complex of flasque sheaves:
\begin{equation}
\label{eqn:cousin2}
0 \longrightarrow \coprod_{x \in X^{(0)}} {i_{x}}_*H^q_{x}(X,A) \stackrel{d^{0,q}}{\longrightarrow} \coprod_{x \in X^{(1)}} {i_{x}}_*H^{q+1}_{x}(X,A) \stackrel{d^{1,q}}{\longrightarrow} \cdots \stackrel{d^{m-1,q}}{\longrightarrow} \coprod_{x \in X^{(m)}} {i_{x}}_*H^{q+m}_x(X,A) \stackrel{d^{m,q}}{\longrightarrow}\cdots.
\end{equation}
If $A$ is a Nisnevich sheaf, we let ${\mathcal H}^q_{Nis}(A)$ and ${\mathcal H}^q_{Zar}(A)$ denote the Nisnevich and Zariski sheaves corresponding to the presheaves $U \mapsto H^q_{Nis}(U,A)$ or $U \mapsto H^q_{Zar}(U,A)$.  We write ${\mathcal H}^q_{Nis}(X,A)$ or ${\mathcal H}^q_{Zar}(X,A)$ for the corresponding sheaves restricted to the appropriate small site of $X$.  The goal then is to give conditions guaranteeing that the complex of Sequence \ref{eqn:cousin2} is in fact a flasque resolution of the cohomology sheaf ${\mathcal H}^q(X,A)$.  We now recall aspects of the axiomatization of \cite{CTHK} \S 5.

In \cite{CTHK} \S 5.1, the authors introduce an abstract notion of ``cohomology theory with supports" taking values in some abelian category ${\mathcal A}$ (of which the usual cohomology with supports with coefficients in a sheaf is the main example).  Furthermore, they construct a ``complex" level refinement of this notion that they term a ``substratum" (see \cite{CTHK} Definition 5.1.1).  Usual cohomology with supports of an abelian sheaf $A$ (taking values in abelian groups) is certainly of this form; for the complex level refinement, assign to $X$ a fibrant resolution (think injective resolution) of the global sections of $A$ (with supports).  We will only consider functors sending a pair $(X,Z)$ consisting of a $X \in {\mathcal Sm}_k$ and a closed subscheme $Z \subset X$ to the cohomology group $H^q_Z(X,A)$.

Recall that a distinguished square is a Cartesian diagram of the form
$$
\xymatrix{
\pi^{-1}(U) \ar[r] \ar[d] & X' \ar[d]^{\pi} \\
U \ar[r]^{j} & X
}
$$
where $\pi$ is \'etale and $\pi$ is an isomorphism from the closed complement $Z'$ of $\pi^{-1}(U)$ in $X$ onto the closed complement $Z$ of $U$ in $X$.  We now recall a pair of axioms introduced in \cite{CTHK} \S 5.  Consider the functor on pairs $(Z,X) \mapsto H^q_Z(X,A)$.

\begin{enumerate}
\item({\bf Nisnevich Excision}) Given any distinguished square, the induced morphism $H^q_{Z}(X) \longrightarrow H^q_{Z'}(X')$ is an isomorphism for all $q$.
\item({\bf Weak $\aone$-homotopy invariance}) Given any open subset $V \subset {\mathbb A}^k$ and any closed subscheme $S$ of $V$, let $\pi: {\mathbb A}^1_V \longrightarrow V$ denote the canonical projection morphism.  The pull-back morphism $H^q_{S}(V,A) \longrightarrow H^q_{S_{\aone}}(V_{\aone},A)$ is an isomorphism for all $q$.
\end{enumerate}

The first key result on Cousin complexes can be summarized as follows; we paraphrase \cite{CTHK} Corollary 5.1.11.

\begin{thm}[\cite{CTHK} Corollary 5.11]
\label{thm:cousin-abelian}
Suppose $k$ is an infinite field. If $A$ is an abelian sheaf such that the functor on pairs $(Z,X) \mapsto H^i_{Z}(X,A)$ satisfies Nisnevich excision and weak $\aone$-homotopy invariance, for any smooth scheme $X$, then the Cousin complexes of Sequence \ref{eqn:cousin2} are flasque resolutions of the Zariski sheaves ${\mathcal H}^q(X,A)$.
\end{thm}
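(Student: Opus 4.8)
The plan is to follow the Bloch--Ogus--Gabber method axiomatized in \cite{CTHK}. The assertion that Sequence \ref{eqn:cousin2} is a flasque resolution of $\mathcal{H}^q(X,A)$ splits into three parts: that the terms are flasque, that the sequence is a complex carrying an augmentation from $\mathcal{H}^q(X,A)$, and that the augmented complex is exact. The first is already recorded, since the sheaves $\coprod_{x \in X^{(p)}} {i_x}_* H^n_x(X,A)$ were identified as flasque; and the facts that the differentials compose to zero and that the augmentation $\mathcal{H}^q(X,A) \to \coprod_{x \in X^{(0)}} {i_x}_* H^q_x(X,A)$ exists are formal consequences of the long exact sequences of cohomology with supports together with functoriality. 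So the entire content lies in exactness.

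Exactness of a complex of Zariski sheaves may be checked on stalks, that is, after passing to the local rings $\mathcal{O}_{X,x}$ (or, more conveniently, to the semilocal rings at finite sets of points lying on a common component). First I would reduce to this local setting and reinterpret the stalk of Sequence \ref{eqn:cousin2} as the coniveau complex built from the filtration of $\Spec \mathcal{O}_{X,x}$ by codimension of supports. Recall from \cite{HartshorneRD} Chapter IV that a Cousin complex attached to such a filtration is a resolution of $\mathcal{H}^q$ precisely when a local acyclicity, or \emph{effaceability}, statement holds: every cohomology class with supports in a closed subscheme of codimension $\geq p+1$ becomes, after shrinking the neighborhood, a boundary, while the classes appearing in the expected codimension are not killed spuriously. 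Thus the problem reduces to establishing this effaceability.

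The key geometric input is Gabber's presentation lemma (see \cite{CTHK} \S 3, especially Theorem 3.1.1), whose proof requires $k$ to be infinite: given a point $x$ on a smooth affine $X$ and a closed subscheme $Z$ through $x$, it provides, Zariski-locally around $x$, an \'etale morphism to an affine space exhibiting the pair $(X,Z)$ as pulled back from a ``standard'' relative-dimension-one situation in which $Z$ can be moved off a chosen fibre. Into this presentation I would feed the two axioms: Nisnevich excision identifies the local cohomology with supports on $Z$ with the corresponding local cohomology in the standard affine-space model, the distinguished squares matching up along the \'etale map, while weak $\aone$-homotopy invariance lets one contract the extra affine-line direction, forcing the relevant classes to vanish. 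Assembling these effacements along successive steps of the codimension filtration yields exactness of the coniveau complex over $\mathcal{O}_{X,x}$, and hence that Sequence \ref{eqn:cousin2} is a flasque resolution of $\mathcal{H}^q(X,A)$.

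The main obstacle is precisely this effacement step, which is geometric rather than formal and is exactly where the hypothesis that $k$ is infinite enters, through the general-position input underlying Gabber's lemma. Everything else --- flasqueness, the complex property, the construction of the augmentation, and the reduction to stalks --- is formal, so the real work is the careful interlacing of Gabber's presentation with the excision and homotopy-invariance axioms, carried out as in \cite{CTHK}.
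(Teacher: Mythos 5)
This theorem is imported by the paper from \cite{CTHK} (Corollary 5.1.11) without any proof of its own, and your sketch reproduces precisely the Bloch--Ogus--Gabber strategy that the paper's surrounding discussion attributes to that reference: reduction to stalkwise exactness, effaceability established via Gabber's geometric presentation lemma (\cite{CTHK} Theorem 3.1.1, which is exactly where the hypothesis that $k$ be infinite enters), interlaced with the Nisnevich excision and weak $\aone$-homotopy invariance axioms. Your approach therefore coincides with the one the paper relies on, and I see no gap in the outline.
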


However, our main interest will be in Nisnevich cohomology, not Zariski cohomology.  For this, a further comparison theorem is required.  We write ${\mathcal H}^i_{Zar}(A)$ (resp. ${\mathcal H}^i_{Nis}(A)$) for the sheaf on the big Zariski (resp. Nisnevich) site of $\Sm_k$ associated with the presheaf $U \mapsto H^q(U,A)$.  The content of Theorem 8.3.1 of \cite{CTHK} can then be summarized by the following result.

\begin{thm}[\cite{CTHK} Theorem 8.3.1]
\label{thm:abeliancompare}
Suppose $k$ is an infinite field.  If $A$ is an abelian sheaf satisfying the hypotheses of Theorem \ref{thm:cousin-abelian}, then for any $X \in {\mathcal Sm}_k$, and any $q$, the comparison morphism
$$
H^i_{Zar}(X,{\mathcal H}^q_{Zar}(A)) \longrightarrow H^i_{Nis}(X,{\mathcal H}^q_{Nis}(A))
$$
is an isomorphism for all $i \geq 0$.
\end{thm}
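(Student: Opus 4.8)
The plan is to exhibit the Cousin complex of Sequence \ref{eqn:cousin2} as an acyclic resolution of the relevant cohomology sheaf in \emph{both} topologies, and then to observe that the two resulting computations of cohomology are literally the same, being the homology of the single global-sections complex of Sequence \ref{eqn:cousin1}. Half of this is already in hand: since $k$ is infinite (so that Gabber's geometric presentation lemma of \cite{CTHK} \S 3 applies) and $A$ satisfies Nisnevich excision and weak $\aone$-homotopy invariance, Theorem \ref{thm:cousin-abelian} shows that Sequence \ref{eqn:cousin2} is a flasque resolution of ${\mathcal H}^q_{Zar}(A)$. Consequently $H^i_{Zar}(X,{\mathcal H}^q_{Zar}(A))$ is identified with the $i$-th cohomology of the complex of global sections, Sequence \ref{eqn:cousin1}.

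Next I would carry out the analogous argument for the Nisnevich topology in two steps. First, the same complex of sheaves is a resolution of ${\mathcal H}^q_{Nis}(A)$: exactness may be checked on Nisnevich stalks, i.e.\ at the Henselian local rings ${\mathcal O}^h_{X,x}$ of points of smooth schemes. The key input is that the Nisnevich excision axiom identifies the local cohomology groups $H^{q+p}_x(X,A)$ entering the complex with their Henselian-local counterparts; hence the stalk of Sequence \ref{eqn:cousin2} at a Nisnevich point is isomorphic to the Cousin complex of the corresponding essentially smooth Henselian local scheme, whose exactness is established by exactly the Gabber-presentation argument that underlies Theorem \ref{thm:cousin-abelian}. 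Second, the terms $\coprod_{x \in X^{(p)}} {i_x}_* H^{q+p}_x(X,A)$ remain acyclic for Nisnevich cohomology: they are direct images of sheaves supported on points, and a Henselian local scheme carries no higher Nisnevich cohomology with such coefficients, so these sheaves are Nisnevich-acyclic just as they are Zariski-flasque.

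With both resolutions in place, $H^i_{Nis}(X,{\mathcal H}^q_{Nis}(A))$ is likewise computed as the $i$-th cohomology of a global-sections complex, and by Nisnevich excision the local cohomology groups appearing there agree term-by-term with those in Sequence \ref{eqn:cousin1}; so the two global-sections complexes coincide. Since the comparison morphism is induced by the change-of-topology morphism $\pi \colon (\Sm_k)_{Nis} \to (\Sm_k)_{Zar}$ and is compatible with both acyclic resolutions (in particular the $0$-th cohomology sheaves ${\mathcal H}^q_{Zar}(A)$ and ${\mathcal H}^q_{Nis}(A)$ are matched under sheafification), it must be an isomorphism for every $i \geq 0$.

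The main obstacle is precisely the Nisnevich half of the argument: proving that the Gersten-type resolution survives Henselization and that the terms remain Nisnevich-acyclic. Concretely, one must verify that Nisnevich excision genuinely identifies local cohomology at a Henselian point with local cohomology at the corresponding Zariski point, and that Gabber's presentation lemma continues to govern the essentially smooth Henselian local situation; once these identifications are secured, the rest is a formal consequence of having two acyclic resolutions with identical complexes of global sections.
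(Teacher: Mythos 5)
The paper gives no proof of this statement at all---it is quoted verbatim as Theorem 8.3.1 of \cite{CTHK}---and your outline is essentially the argument given in that reference: both the Zariski and Nisnevich cohomologies of the sheafified ${\mathcal H}^q$ are computed by one and the same complex of global sections, because the Cousin complex is a resolution by acyclic sheaves locally in either topology. The two obstacles you flag (identifying Henselian-local with Zariski-local cohomology of supports via excision and a limit argument, and Nisnevich acyclicity of the point-supported terms) are precisely the content of \cite{CTHK} \S 8, so your proposal is sound and follows the same route as the cited proof.
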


Using the above, one can show that strictly $\aone$-invariant sheaves of groups admit Cousin resolutions; this is summarized in the following result.

\begin{lem}
\label{lem:cousin-exist}
Suppose $k$ is an infinite field.  If $A$ is a strictly $\aone$-invariant sheaf of groups, then the functor on pairs $(X,Z) \mapsto H^0_Z(X,A)$ satisfies Nisnevich excision and weak $\aone$-homotopy invariance.  Thus, for any smooth scheme $X$, the sheaf $A$ admits a Cousin resolution of the form
\[
0 \longrightarrow A \longrightarrow \coprod_{x \in X^{(0)}} {i_{x}}_*H^0_{x}(X,A) \longrightarrow \coprod_{x \in X^{(1)}} {i_{x}}_*H^1_{x}(X,A) \longrightarrow \cdots.
\]
\end{lem}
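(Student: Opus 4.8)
The plan is to verify the two axioms of \cite{CTHK} \S 5, namely Nisnevich excision and weak $\aone$-homotopy invariance, for the cohomology-with-supports theory $(X,Z) \mapsto H^q_Z(X,A)$, and then to feed the outcome into Theorem \ref{thm:cousin-abelian} and Theorem \ref{thm:abeliancompare}. Note first that a strictly $\aone$-invariant sheaf of groups is necessarily a sheaf of \emph{abelian} groups, since Definition \ref{defn:strictly} refers to higher Nisnevich cohomology with coefficients in $A$; thus $A$ is a legitimate input for the two cited theorems.

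Nisnevich excision uses nothing beyond the fact that $A$ is a Nisnevich sheaf. For a distinguished square, Nisnevich descent for $A$ makes the associated square of cohomology groups homotopy cartesian; comparing this with the localization sequences for the pairs $Z \subset X$ and $Z' \subset X'$ then identifies $H^q_Z(X,A)$ with $H^q_{Z'}(X',A)$ for all $q$, which is precisely the excision isomorphism. The hypothesis of strict $\aone$-invariance is instead what drives weak $\aone$-homotopy invariance. Given an open $V \subset \aone^k$ and a closed $S \subset V$, both $V$ and $V - S$ lie in $\Sm_k$, and the projection $\pi\colon V_{\aone} \to V$ induces a map of localization long exact sequences
\[
\cdots \to H^q_S(V,A) \to H^q(V,A) \to H^q(V-S,A) \to H^{q+1}_S(V,A) \to \cdots,
\]
in which the complement of $S_{\aone}$ in $V_{\aone}$ is $(V-S)_{\aone}$. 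On the terms $H^q(V,A)$ and $H^q(V-S,A)$ the pull-back is an isomorphism by strict $\aone$-invariance of $A$ applied to the smooth schemes $V$ and $V - S$. The five lemma then forces $H^q_S(V,A) \to H^q_{S_{\aone}}(V_{\aone},A)$ to be an isomorphism for every $q$, as required.

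With both axioms established, Theorem \ref{thm:cousin-abelian} shows that the $q = 0$ instance of the Cousin complex of Sequence \ref{eqn:cousin2} is a flasque resolution of the Zariski sheaf $\mathcal{H}^0_{Zar}(A)$, which in degree zero is simply $A$ restricted to the small site of $X$; augmenting by $A$ produces the displayed complex. Theorem \ref{thm:abeliancompare} then identifies the Zariski and Nisnevich cohomology of $A$, so the same flasque complex resolves $A$ in the Nisnevich setting, yielding the asserted Cousin resolution.

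The only step where the smoothness hypothesis is genuinely used---and the point deserving the most care---is the application of strict $\aone$-invariance to $V - S$ in the five-lemma argument, where one must check that $V - S \in \Sm_k$ and that the complement of $S_{\aone}$ in $V_{\aone}$ really is $(V-S)_{\aone}$; everything else is a formal assembly of the cited results of \cite{CTHK}.
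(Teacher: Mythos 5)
Your proposal is correct and takes essentially the same approach as the paper's (very terse) proof: Nisnevich excision is deduced from the descent property of a Nisnevich sheaf with respect to distinguished squares, and weak $\aone$-homotopy invariance is deduced from strict $\aone$-invariance of $A$, after which Theorems \ref{thm:cousin-abelian} and \ref{thm:abeliancompare} are invoked. Your write-up usefully spells out the localization-sequence-plus-five-lemma step (applied to the smooth schemes $V$ and $V-S$) that the paper dismisses as following ``immediately'' from the definition.
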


\begin{proof}
Nisnevich excision follows from the characterization of Nisnevich sheaves in terms of distinguished triangles.  Homotopy invariance follows immediately from the definition of strict $\aone$-invariance.
\end{proof}

For our purposes, the most important observation to make is that in order to study the $i$-th Zariski cohomology group of the sheaf $A$ it suffices to concentrate on the portion of the complex corresponding to points of codimension $i-1$ through $i+1$.

\begin{cor}
\label{cor:excision-abelian} Suppose $k$ is an infinite field and
$A$ is a strictly $\aone$-invariant sheaf of groups.  Suppose $X \in {\mathcal Sm}_k$, and $U \subset X$ is an open subscheme whose
complement is everywhere of codimension $d$.  The restriction
morphism
\[
H^i_{Nis}(X,A) \longrightarrow H^i_{Nis}(U,A)
\]
is a monomorphism for $i \leq d-1$ and an isomorphism for $i \leq d-2$.
\end{cor}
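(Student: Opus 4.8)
The plan is to compute both sides through the Cousin resolution and to leverage the codimension hypothesis to force the two resolutions to agree in low degrees. Since $A$ is strictly $\aone$-invariant, Lemma \ref{lem:cousin-exist} provides a flasque Cousin resolution of $A$ on the small Zariski site of any smooth scheme; taking global sections, $H^i_{Zar}(X,A)$ is the $i$-th cohomology of the complex
\[
C^\bullet(X): \quad \coprod_{x \in X^{(0)}} H^0_x(X,A) \longrightarrow \coprod_{x \in X^{(1)}} H^1_x(X,A) \longrightarrow \cdots \longrightarrow \coprod_{x \in X^{(p)}} H^p_x(X,A) \longrightarrow \cdots,
\]
and likewise for $U$. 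By Theorem \ref{thm:abeliancompare} the comparison $H^i_{Zar}(-,A) \isomto H^i_{Nis}(-,A)$ is an isomorphism, so it suffices to analyze the restriction map $C^\bullet(X) \to C^\bullet(U)$ induced by the open immersion $j: U \hookrightarrow X$.

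The key geometric observation is that, because $Z = X \setminus U$ is everywhere of codimension $d$, a codimension-$p$ point of $X$ lying in $Z$ is precisely a codimension-$(p-d)$ point of $Z$; hence $X^{(p)} \cap Z = Z^{(p-d)}$, and in particular $X^{(p)} = U^{(p)}$ for $p < d$. For $x \in U$ the local cohomology groups $H^p_x(X,A)$ and $H^p_x(U,A)$ coincide, so restricting sections of the flasque resolution along $j$ realizes the restriction map on cohomology as a degreewise-surjective map of complexes $C^\bullet(X) \to C^\bullet(U)$ that simply drops the summands supported on $Z$. Its kernel $K^\bullet$ therefore has $K^p = \coprod_{x \in Z^{(p-d)}} H^p_x(X,A)$, which vanishes whenever $p < d$.

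It then remains to feed the short exact sequence of complexes
\[
0 \longrightarrow K^\bullet \longrightarrow C^\bullet(X) \longrightarrow C^\bullet(U) \longrightarrow 0
\]
into its associated long exact cohomology sequence. Since $K^p = 0$ for all $p \leq d-1$, one has $H^i(K^\bullet) = 0$ for $i \leq d-1$. For $i \leq d-2$ both $H^i(K^\bullet)$ and $H^{i+1}(K^\bullet)$ vanish, yielding an isomorphism $H^i_{Nis}(X,A) \isomto H^i_{Nis}(U,A)$; for $i = d-1$ only $H^{d-1}(K^\bullet) = 0$ is guaranteed, so the map is merely injective. This is exactly the asserted monomorphism-for-$i\leq d-1$, isomorphism-for-$i\leq d-2$ statement.

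I do not expect a serious obstacle here: the one point deserving genuine care is the claim that restriction along $j$ really induces the projection of Cousin complexes dropping the $Z$-supported terms. This rests on the functoriality of the coniveau/Cousin resolution for open immersions, together with the identification $H^p_x(U,A) \cong H^p_x(X,A)$ for $x \in U$, which is the content built into Lemma \ref{lem:cousin-exist} and the locality of cohomology with supports. Once that compatibility is pinned down, the remainder is the formal long-exact-sequence bookkeeping above.
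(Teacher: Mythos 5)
Your proof is correct and follows essentially the same route as the paper: reduce to Zariski cohomology via Lemma \ref{lem:cousin-exist} and Theorem \ref{thm:abeliancompare}, compare the flasque Cousin resolutions of $A$ on $X$ and $U$ using the fact that $X^{(p)} = U^{(p)}$ for $p \leq d-1$, and conclude by the long exact sequence. If anything, your bookkeeping is slightly more precise than the paper's: the restriction of Cousin complexes is a degreewise \emph{epimorphism} whose kernel is concentrated in degrees $\geq d$ (the paper loosely calls it a monomorphism in degrees $\leq d$), and your short-exact-sequence argument with $K^\bullet$ makes the final deduction cleanly.
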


\begin{proof}
Since Zariski and Nisnevich cohomology of a strictly $\aone$-invariant sheaf co-incide by Lemma \ref{lem:cousin-exist} and Theorem \ref{thm:abeliancompare}, it suffices to prove the result for Zariski cohomology.  In this case, again by Lemma \ref{lem:cousin-exist}, the Cousin resolutions of $A$ on $X$ and $U$ are flasque resolutions; let us denote these resolutions by $C^*(X,A)$ and $C^*(U,A)$.   We obtain, by functoriality of the Cousin resolution, a canonical restriction morphism $C^*(X,A) \longrightarrow C^*(U,A)$.  Since $U$ has complement of codimension $d$ in $X$, it follows that the induced map on sets of points of codimension $\geq d-1$ is an isomorphism and a monomorphism on those of codimension $d$.  Thus, the induced morphism of Cousin complexes is an isomorphism in degrees $\leq d-1$ and a monomorphism in degrees $\leq d$.  It follows from the long exact sequence in cohomology that the restriction maps are isomorphisms in degrees $\leq d-2$ and monomorphisms in degrees $\leq d-1$
\end{proof}

\subsubsection*{Truncated Cousin resolutions}
In order to study strongly $\aone$-invariant sheaves of groups $G$, one must work harder.  The basic problem is the usual one in non-abelian cohomology: $H^i(X,G)$ is naturally defined only for $i = 0,1$. It is a group for $i = 0$, but only a pointed set if $i = 1$; this will be reflected in the ``resolution" one constructs.  Morel's idea in \cite{MField} \S 1.2 is to produce a ``truncated Cousin resolution" using an axiomatic approach similar to the above.  His key results provide analogs of Theorem \ref{thm:cousin-abelian} and Theorem \ref{thm:abeliancompare} in the non-abelian situation.  One may then use similar arguments to those above to deduce the comparison theorem for Nisnevich and Zariski cohomology  (analogous to Corollary \ref{cor:excision-abelian}) in the non-abelian setting.  Let us begin by discussing analogs of the first few terms of the Cousin complex for non-abelian groups.

First, in order to control the degree $0$ part of the Cousin complex we introduce an axiom following Morel (see \cite{MField} Defn. 1.1 (0),(1)).  Strictly speaking, this axiom is unnecessary to define the Cousin complex, but will aid us in studying the Cousin resolution.

\begin{enumerate}
\item[{\bf (C0)}]  For any $X \in {\mathcal Sm}_k$ with irreducible components $X_i$ the map $G(X) \longrightarrow \prod_{i \in X^{(0)}}G(X_i)$ is a bijection and for any open, everywhere dense subscheme $U \subset X$, the restriction morphism $G(X) \longrightarrow G(U)$ is injective.
\end{enumerate}

Suppose $x$ is a codimension $1$ point of an irreducible $X \in {\mathcal Sm}_k$.  If $A$ is an abelian sheaf, recall that $H^1_{x}(X,A)$ is by definition $\lim_{U \ni x} H^1_{\bar{x} \cap U}(U,A)$.  Let $X_x = \Spec \O_{X,x}$, $j: X_x \hookrightarrow X$, and $A_x = j^*A$.  We can then identify $H^1_{x}(X,A) = H^1_{x}(X_x,A_x)$.   This set fits into an exact sequence of the form
\[
\cdots \longrightarrow A(\O_{X,x}) \longrightarrow A(F) \longrightarrow H^1_x(X_x,A_x) \longrightarrow H^1(X_x,A_x) \longrightarrow \cdots,
\]
where $F$ is the function field of $X$.  The last group classifies Zariski locally trivial $A$-torsors on $X_x$, but since $X_x$ is a local scheme, all such torsors are trivial.  Using this, one can identify $H^1_x(X_x,A_x)$ with the quotient set $A(F)/A(\O_{X,x})$.  For a Zariski sheaf of groups $G$, observe that
\[
H^1_{x}(X,G) = G(F)/G(\O_{X,x})
\]
is no longer a group, but only a pointed set.  More generally, for a codimension $1$ point $x$ of (a not necessarily irreducible) $X \in {\mathcal Sm}_k$, observe that $H^1_{x}(X,G)$ is well-defined as a pointed set, e.g., by using the exact sequence in cohomology with supports.

Recall that the co-product in the category of pointed sets is the restricted product, i.e., the subset of the direct product consisting of sequences of elements all but finitely many of which are given by the distinguished point.  The above long exact sequence induces (as before) a quotient ``boundary" homomorphism
\[
\coprod_{x \in X^{(0)}} H^0_x(X,G) \Longrightarrow \prod_{x \in X^{(1)}} H^1_x(X,G).
\]
We use the symbol ``$\Longrightarrow$" to denote this quotient map.  In case $G$ is abelian, in order to reduce to the ordinary Cousin complex, the image of this boundary homomorphism must be contained in the restricted product and its kernel must be trivial, we can again impose this as an axiom (see \cite{MField} Defn. 1.1 (2)).  In conjunction with {\bf (C0)}, this axiom gives us the required condition.

\begin{itemize}
\item[{\bf (C1)}] For any irreducible $X \in {\mathcal Sm}_k$, the map $G(X) \longrightarrow \cap_{x \in X^{(1)}} G(\O_{X,x})$ (intersection taken in $G(F)$) is a bijection.
\end{itemize}

As in the abelian case, we would like to ``extend our sequence further to the right."  Using the description of the differential in the Cousin complex (see Sequence \ref{eqn:cousin1}) we can construct, for non-abelian $G$, a target set for the ``differential" mapping from the term corresponding to codimension $1$ points and related to codimension $2$ points.  To do this, suppose $x$ is a codimension $2$ point of $X$ and consider the set $X_x^{(1)}$ of all codimension $1$ points $y \in X$ such that $y \in X_x$.  Let $F$ be the function field of $X_x$ and observe that $G(F)$ acts on $H^1_y(X,G)$ for such $y$.  In general, there is no reason for $G(F)$ to preserve the restricted product $\coprod_{y \in X_x^{(1)}} H^1_{y}(X,G)$, but, assuming $G$ satisfies ${\bf (C1)}$, this is true.  Set
\[
H^2_{x}(X,G) = \coprod_{y \in X_x^{(1)}} H^1_{y}(X,G)/G(F).
\]

The composite maps of pointed sets
\[
{\coprod}_{x \in X^{(1)}} H^1_x(X,G) \longrightarrow {\prod}_{x \in X_z^{(1)}} H^1_x(X,G) \longrightarrow H^2_z(X,G)
\]
fit together to give a $G(F)$-equivariant ``boundary homomorphism"
\begin{equation}
\label{eqn:boundary}
{\coprod}_{x \in X^{(1)}} H^1_{x}(X,G) \longrightarrow {\prod}_{x \in X^{(2)}} H^2_x(X,G).
\end{equation}
Again, it is not clear that the image of this map is actually contained in the co-product.  Once more, following Morel (see \cite{MField} Axiom (A2') p. 24), we introduce an axiom to deal with this problem.

\begin{enumerate}
\item[{\bf (C2)}] For any $X \in {\mathcal Sm}_k$, the image of the boundary map in Map \ref{eqn:boundary} is contained in the coproduct $\coprod_{x \in X^{(2)}} H^2_x(X,G)$
\end{enumerate}

Summarizing, if $G$ is any Zariski sheaf of groups satisfying {\bf (C0)}, {\bf (C1)} and {\bf (C2)}, then we obtain a sequence of groups and pointed sets of the form
\[
\coprod_{x \in X^{(0)}} H^0_x(X,G) \Longrightarrow \coprod_{x \in X^{(1)}} H^1_x(X,G) \longrightarrow \coprod_{x \in X^{(2)}} H^2_{x}(X,G).
\]
The last arrow is a $G(F)$-equivariant homomorphism as well; this sequence will play the role of the Cousin complex of Sequence \ref{eqn:cousin1}.

We can also sheafify this construction as in Sequence \ref{eqn:cousin2}.  Indeed, for $i = 0,1,2$, the functors $U \mapsto \coprod_{x \in U^{(i)}} H_{x}(U,G)$ give rise to flasque Zariski sheaves on the big Zariski site;  denote the corresponding collection of Zariski sheaves by $G^{(i)}$ ({\em cf.} after Remark 1.19 of \cite{MField}).

Using the injective morphisms $G(X) \longrightarrow \coprod_{x \in X^{(0)}} H_x(X,G)$ and $G \longrightarrow G^{(0)}$, we obtain sequences
\begin{equation}
\label{eqn:cousin-na1}
1 \longrightarrow G(X) \longrightarrow \coprod_{x \in X^{(0)}} H^0_x(X,G) \Longrightarrow \coprod_{x \in X^{(1)}} H^1_x(X,G) \longrightarrow \coprod_{x \in X^{(2)}} H^2_{x}(X,G),
\end{equation}
and corresponding sequences of Zariski sheaves
\begin{equation}
\label{eqn:cousin-na2}
1 \longrightarrow G \longrightarrow G^{(0)} \Longrightarrow G^{(1)} \longrightarrow G^{(2)}.
\end{equation}
We will refer to these sequences as {\em truncated Cousin complexes}. In order to extend the above constructions to produce a Cousin {\em resolution} one needs to introduce an appropriate notion of exactness and relate the above sequences to Zariski cohomology of $G$ (at least in degrees $0$ and $1$).  Let us first deal with exactness.

\begin{defn}[\cite{MField} Definition 1.20]
\label{defn:exact}
We will say that Sequence \ref{eqn:cousin-na1} (or stalkwise, Sequence \ref{eqn:cousin-na2}) is {\em exact} if
\begin{enumerate}
\item the group $G(X)$ is the isotropy subgroup of the base-point in $\coprod_{x \in X^{(1)}} H^1_x(X,G)$ for the action of $\coprod_{x \in X^{(0)}} H^0_x(X,G)$, and
\item the ``kernel" of the second boundary map (of pointed sets) is equal to the orbit under $\coprod_{x \in X^{0}} H^0_x(X,G)$ of $\coprod_{x \in X^{(1)}} H^1_x(X,G)$.
\end{enumerate}
If a truncated Cousin complex satisfies the conditions of Definition \ref{defn:exact}, we will refer to it as a {\em truncated Cousin resolution.}
\end{defn}

The relation of the above complexes with Zariski cohomology can be understood as follows.  Denote by ${\mathcal Z}^1(\cdot,G)$ the {\em sheaf-theoretic} orbit under $G^{(0)}$ of the base-point in $G^{(1)}$ .  Precisely, this is the sheaf associated with the pre-sheaf whose sections over $U \in {\mathcal Sm}_k$ are the elements of the orbit under $G^{(0)}(U)$ of the base-point in $G^{(1)}(U)$.  We thus obtain an exact (in the same sense as above) sequence of sheaves
\[
1 \longrightarrow G \longrightarrow G^{(0)} \Longrightarrow {\mathcal Z}^1(\cdot,G) \longrightarrow *
\]
As $G^{(0)}$ is flasque, $H^1_{Zar}(X,G^{(0)})$ vanishes.  Thus, we can identify $H^1_{Zar}(X,G)$ with the quotient pointed set ${\mathcal Z}^1(X,G)/G^{(0)}(X)$.  Note that this identification does not involve the ``$H^2$" term of the Cousin complex.  

If $G$ is a Nisnevich sheaf of groups, then it is also a Zariski sheaf of groups, so the above discussion applies.  For such a $G$, let ${\mathcal K}^1(\cdot,G)$ denote the kernel of the boundary morphism $G^{(1)} \longrightarrow G^{(2)}$; this is {\em a priori} a Zariski sheaf of pointed sets.  There is an obvious injective morphism ${\mathcal Z}^1(\cdot,G) \hookrightarrow {\mathcal K}^1(\cdot,G)$.  Furthermore, this monomorphism is $G^{(0)}$-equivariant.  Shortly, we will relate ${\mathcal K}^1(\cdot,G)$ to $H^1_{Nis}(X,G)$.  In order to do this, however, it is important to know that ${\mathcal K}^1(\cdot,G)$ is in fact a Nisnevich sheaf; this will be related to a form of Nisnevich excision for $G$.

Observe that condition (1) of Definition \ref{defn:exact} follows from {\bf (C1)}.  To impose condition (2) of the same definition, Morel introduces analogs of the Nisnevich excision and weak $\aone$-homotopy invariance conditions above.

\begin{enumerate}
\item ({\bf Weak Nisnevich Excision})
\begin{itemize}
\item[{\bf a)}] Suppose $X$ and $X'$ are localizations of smooth $k$-schemes at points of  codimension $1$ with closed points $x$ and $x'$.  If $f: X' \longrightarrow X$ is a local \'etale morphism inducing an isomorphism of closed points, then the induced morphism $H^1_x(X) \longrightarrow H^1_{x'}(X)$ is an isomorphism.
\item[{\bf b)}] Suppose $X$ and $X'$ are localizations of smooth $k$-schemes at points of codimension $2$ with closed points $x$ and $x'$.  If $f: X' \longrightarrow X$ is a local \'etale morphism inducing an isomorphism on closed points, then map of pointed sets $H^2_{x}(X,G) \longrightarrow H^2_{x'}(X',G)$ has trivial kernel.
\end{itemize}
\item ({\bf Weak $\aone$-homotopy invariance}) Suppose $X$ is the localization of a smooth $k$-scheme at a point of codimension $\leq 1$.  The morphism $G(X) \longrightarrow G(\aone_{X})$ is a bijection and the Cousin complex
\[
1 \longrightarrow G(\aone_{X}) \longrightarrow \coprod_{x\in {\aone_{X}}^{(0)}} H^0(\aone_{X},G) \Longrightarrow \coprod_{x \in {\aone_X}^{(1)}} H^1_{x}(\aone_{X},G) \longrightarrow \coprod_{x \in {\aone_X}^{(2)}} H^2_{x}(\aone_{X},G)
\]
is exact.
\end{enumerate}

\begin{lem}[\cite{MField} Lemma 1.24]
If $G$ is a Nisnevich sheaf of groups satisfying {\bf (C0)}, {\bf (C1)}, {\bf (C2)} and weak Nisnevich excision, then ${\mathcal K}^1(\cdot,G)$ is a Nisnevich sheaf.  In this case, we have for any $X \in {\mathcal Sm}_k$ a (functorial) bijection
\[
{\mathcal K}^1(X,G)/G^{(0)}(X) \isomto H_{Nis}^1(X,G).
\]
\end{lem}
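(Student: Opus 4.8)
The plan is to establish the two assertions in turn: first that $\mathcal{K}^1(\cdot,G)$ satisfies Nisnevich descent, and then that it computes $H^1_{Nis}(X,G)$ as claimed. Throughout I will use that $\mathcal{K}^1(\cdot,G)$ is by construction the kernel (of pointed sets) of the map of Zariski sheaves $G^{(1)} \longrightarrow G^{(2)}$, together with the already-established Zariski identification $H^1_{Zar}(X,G) \isomto \mathcal{Z}^1(X,G)/G^{(0)}(X)$ coming from the flasqueness of $G^{(0)}$.

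For the sheaf assertion, recall that a Zariski sheaf is a Nisnevich sheaf precisely when it carries every distinguished square to a Cartesian square of (pointed) sets. So fix a distinguished square with $\pi\colon X' \longrightarrow X$ \'etale restricting to an isomorphism $Z' \isomto Z$ on the closed complements of $\pi^{-1}(U)$ and $U$. I would first check descent for $G^{(1)}$ itself. Since $\pi$ is \'etale it preserves codimension, so the codimension-one points of $X$ split into those lying in $U$ (which contribute identically to $G^{(1)}(X)$ and $G^{(1)}(U)$, since local cohomology is Zariski-local) and those lying in $Z$; weak Nisnevich excision (a) identifies $H^1_z(X,G)$ at a point $z \in Z$ with $H^1_{z'}(X',G)$ at the corresponding point $z' \in Z'$. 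Hence a section of $G^{(1)}$ over $X$ is exactly a pair of sections over $U$ and $X'$ agreeing over $\pi^{-1}(U)$, so $G^{(1)}$ satisfies descent; by the same bookkeeping $G^{(0)}$ does as well. To descend $\mathcal{K}^1 = \ker(G^{(1)} \longrightarrow G^{(2)})$, take a compatible pair of sections of $\mathcal{K}^1$ over $U$ and $X'$; by descent for $G^{(1)}$ they glue to a unique $s \in G^{(1)}(X)$, and I must show its boundary $\partial s \in G^{(2)}(X)$ vanishes. At each codimension-two point its value vanishes over $U$ (since the $U$-section lies in $\mathcal{K}^1$) and maps to the vanishing value of the $X'$-section; now weak Nisnevich excision (b), which guarantees that $H^2_z(X,G) \longrightarrow H^2_{z'}(X',G)$ has trivial kernel, forces $\partial s$ to vanish also at the points of $Z$. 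Thus $s \in \mathcal{K}^1(X)$ and $\mathcal{K}^1$ is a Nisnevich sheaf.

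For the bijection I would exploit that $G^{(0)}$ is flasque, hence acyclic, for the Nisnevich as well as the Zariski topology, together with the fact, just proved, that $\mathcal{K}^1$ is a Nisnevich sheaf. The inclusion $\mathcal{Z}^1(\cdot,G) \hookrightarrow \mathcal{K}^1(\cdot,G)$ is $G^{(0)}$-equivariant, so it suffices to identify $\mathcal{K}^1$ with the Nisnevich sheafification of the orbit sheaf $\mathcal{Z}^1$; granting this, the Nisnevich analogue of the flasque computation gives $H^1_{Nis}(X,G) \isomto \mathcal{K}^1(X,G)/G^{(0)}(X)$ exactly as in the Zariski case. Concretely, I would build the map directly: a Nisnevich $G$-torsor $P$ is trivial over every function field (the Nisnevich topology on a field is trivial), so a choice of generic trivialization yields at each codimension-one point $y$ a well-defined class in $H^1_y(X,G) = G(F)/G(\mathcal{O}_{X,y})$, and these assemble to a section of $G^{(1)}(X)$ whose boundary is trivial, so that it lies in $\mathcal{K}^1(X)$; altering the generic trivialization moves the section by the action of $G^{(0)}(X) = G(F)$, producing a well-defined class in $\mathcal{K}^1(X)/G^{(0)}(X)$. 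Injectivity follows because a torsor is determined by its generic fibre together with its ramification along codimension-one points, and surjectivity amounts to reconstructing a Nisnevich torsor from a boundary-trivial ramification datum.

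The main obstacle is precisely this last identification — that every element of $\mathcal{K}^1(X)$ arises from an actual Nisnevich $G$-torsor, equivalently that $G^{(0)} \longrightarrow \mathcal{K}^1$ is a Nisnevich epimorphism (the orbit sheaf $\mathcal{Z}^1$ sheafifies to $\mathcal{K}^1$). This is where the vanishing of the $H^2$-boundary must be converted into the Nisnevich-local triviality of the prospective torsor, and it is weak Nisnevich excision (b) together with the Nisnevich sheaf property of $\mathcal{K}^1$ that bridge the gap: excision lets one compute the relevant local cohomology over the Henselizations, so that a boundary-trivial datum is Nisnevich-locally in the orbit of the base point. Once this epimorphism statement is in hand, the checks that the associated class is well defined and that distinct trivializations give the same class are routine.
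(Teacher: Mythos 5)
A preliminary remark: the paper itself contains no proof of this statement --- it is quoted from Morel (\cite{MField}, Lemma 1.24) and used as a black box --- so your proposal can only be measured against what any proof must contain. Your first half, the Nisnevich sheaf property of ${\mathcal K}^1(\cdot,G)$, is essentially right: on a distinguished square one splits the codimension-one points of $X$ into those lying in $U$ and those lying in $Z \cong Z'$, glues sections of $G^{(1)}$ using weak Nisnevich excision (a), and then uses excision (b) (trivial kernel on $H^2_z$) to see that the glued section still has trivial boundary; the same bookkeeping handles $G^{(0)}$. That mechanism is correct.

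The second half has a genuine gap, which you flag but do not close: you never prove that every section of ${\mathcal K}^1$ is Nisnevich-locally in the $G^{(0)}$-orbit of the base point, i.e.\ that ${\mathcal Z}^1(\cdot,G) \hookrightarrow {\mathcal K}^1(\cdot,G)$ is a Nisnevich-local epimorphism. This is not a residual technicality; it \emph{is} the lemma. Since every Nisnevich $G$-torsor over a Henselian local scheme $S$ is trivial (any Nisnevich cover of $S$ splits), the claimed bijection, passed to the limit over $S$, says exactly that ${\mathcal K}^1(S)$ is a single $G^{(0)}(S)$-orbit --- so any proof must contain this statement, and your suggested bridge does not span it. Indeed, triviality of the boundary at a codimension-two point $z$ means, by the very definition of $H^2_z(X,G)$ as a quotient by the diagonal $G(F)$-action, only that there exists some $h_z \in G(F)$ with $g_y \equiv h_z \bmod G(\O_{X,y})$ for the finitely many codimension-one points $y$ specializing to $z$. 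When $\dim S \geq 3$ these pointwise solutions $h_z$ must be patched into a single $h \in G(F)$ valid at \emph{all} codimension-one points of $S$ simultaneously, and neither excision (a), excision (b), nor the sheaf property of ${\mathcal K}^1$ performs that patching. To see that real content is hiding here, take $G = \gm$ and a datum supported on a single codimension-one point: the assertion becomes that a height-one prime of a regular local ring is principal, i.e.\ factoriality of regular local rings (Auslander--Buchsbaum) --- true, but certainly not a consequence of excision. Two smaller elisions point the same way: a Nisnevich torsor need not be trivial over the Zariski local ring $\O_{X,y}$, only over its Henselization, so even producing your class in $H^1_y(X,G) = G(F)/G(\O_{X,y})$ from a generic trivialization already requires excision (a) (and boundary-triviality requires excision (b)) applied to the Henselization maps, which is more than ``routine''; and ``flasque, hence acyclic for the Nisnevich as well as the Zariski topology'' is not a formal implication --- Zariski flasqueness says nothing Nisnevich-locally, and what one actually uses is that $G^{(0)}$ is a pushforward from generic points together with the vanishing of higher Nisnevich cohomology of fields.
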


\begin{thm}[\cite{MField} Theorem 1.26]
\label{thm:compare-nab}
Let $k$ be an infinite field.  If $G$ is a Nisnevich sheaf of groups satisfying {\bf (C0)}, {\bf (C1)}, {\bf (C2)}, weak Nisnevich excision and weak $\aone$-homotopy invariance, then for any $X \in {\mathcal Sm}_k$, the canonical comparison map
\[
H^1_{Zar}(X,G) \longrightarrow H^1_{Nis}(X,G)
\]
is (functorially in $X$ and $G$) a bijection.
\end{thm}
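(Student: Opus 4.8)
The plan is to reduce the comparison statement to a single equality of sheaves of pointed sets and then to harvest that equality from the (non-abelian) Cousin formalism developed above. Both cohomology sets already have explicit ``orbit'' descriptions in hand. On the Zariski side, the flasqueness of $G^{(0)}$ gives, as noted above, the identification $H^1_{Zar}(X,G) \cong {\mathcal Z}^1(X,G)/G^{(0)}(X)$. On the Nisnevich side, the preceding lemma (\cite{MField} Lemma 1.24), whose proof uses weak Nisnevich excision, guarantees both that ${\mathcal K}^1(\cdot,G)$ is an honest Nisnevich sheaf and that $H^1_{Nis}(X,G) \cong {\mathcal K}^1(X,G)/G^{(0)}(X)$. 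The canonical comparison map is then nothing but the map on $G^{(0)}(X)$-orbits induced by the $G^{(0)}$-equivariant inclusion ${\mathcal Z}^1(\cdot,G) \hookrightarrow {\mathcal K}^1(\cdot,G)$. Thus it would suffice to prove that this inclusion is an \emph{equality}, since then the induced map on quotients is a bijection and functoriality in $X$ and $G$ is inherited from the functoriality of the entire truncated Cousin construction.

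The technical heart is therefore to prove that the truncated Cousin complex (Sequence \ref{eqn:cousin-na2}) is \emph{exact} in the sense of Definition \ref{defn:exact}; this is the non-abelian analogue of the resolution statement Theorem \ref{thm:cousin-abelian}. Condition (1) of Definition \ref{defn:exact}, together with the identification of ${\mathcal Z}^1$ as the $G^{(0)}$-orbit of the base-point, is formal from {\bf (C0)} and {\bf (C1)} (while {\bf (C2)} is what makes the target $G^{(2)}$, hence ${\mathcal K}^1$, well defined). The real content is condition (2): that the kernel ${\mathcal K}^1 = \ker(G^{(1)} \to G^{(2)})$ agrees stalkwise with that orbit, i.e. that ${\mathcal Z}^1 = {\mathcal K}^1$ as Zariski sheaves. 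I would verify this stalkwise at Zariski points, which reduces it to the local rings $\O_{X,x}$ at points of codimension $\leq 1$; weak $\aone$-homotopy invariance then supplies exactness of the auxiliary Cousin complex over $\aone_X$ appearing in that axiom, and Gabber's geometric presentation lemma is used to transport a general codimension-one configuration into such a standard affine situation. \emph{This exactness is the main obstacle}, and it is exactly the step that forces the hypothesis that $k$ be infinite (Gabber's lemma), so that all of the non-formal axioms are genuinely consumed here.

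Granting exactness, the conclusion is then quick. Exactness yields ${\mathcal Z}^1 = {\mathcal K}^1$ as Zariski sheaves. On the other hand, the terms $G^{(1)}$ and $G^{(2)}$ are built from push-forwards ${i_x}_* H^n_x(X,G)$ of data supported at points, and such sheaves have the same sections over $X$ whether one works in the Zariski or the Nisnevich topology; consequently the kernel ${\mathcal K}^1$ has identical Zariski and Nisnevich sections over $X$ (this is the non-abelian shadow of the mechanism behind Theorem \ref{thm:abeliancompare}). Combining the two observations, the Zariski sections of ${\mathcal Z}^1$ over $X$ coincide with the Nisnevich sections of ${\mathcal K}^1$ over $X$, compatibly with the $G^{(0)}(X)$-action. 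Passing to $G^{(0)}(X)$-orbits and invoking the two identifications of the first paragraph produces the asserted bijection $H^1_{Zar}(X,G) \isomto H^1_{Nis}(X,G)$, functorial in $X$ and $G$.
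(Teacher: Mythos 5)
A preliminary remark on the comparison itself: the paper contains \emph{no} proof of Theorem \ref{thm:compare-nab} --- it is quoted verbatim from \cite{MField} (Theorem 1.26), and the surrounding text only assembles the ingredients: the flasque identification $H^1_{Zar}(X,G) \cong {\mathcal Z}^1(X,G)/G^{(0)}(X)$, and Lemma 1.24 of \cite{MField} giving ${\mathcal K}^1(X,G)/G^{(0)}(X) \isomto H^1_{Nis}(X,G)$ under weak Nisnevich excision. Your plan is the natural reconstruction of the cited argument from exactly this setup: interpret the comparison map as the map on $G^{(0)}(X)$-orbits induced by the equivariant inclusion ${\mathcal Z}^1 \hookrightarrow {\mathcal K}^1$, reduce the theorem to the sheaf-level equality ${\mathcal Z}^1 = {\mathcal K}^1$ (i.e.\ to exactness of the truncated Cousin complex in the sense of Definition \ref{defn:exact}), and recognize that this exactness is where weak $\aone$-homotopy invariance and Gabber's presentation lemma --- hence the hypothesis that $k$ is infinite --- are consumed. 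That is consistent with the abelian template (Theorem \ref{thm:cousin-abelian} together with Theorem \ref{thm:abeliancompare}) that the paper lays out, and the overall reduction is correct.

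One step is misstated, however. Checking ${\mathcal Z}^1 = {\mathcal K}^1$ stalkwise does \emph{not} ``reduce to the local rings $\O_{X,x}$ at points of codimension $\leq 1$'': equality of stalks must be verified at \emph{every} point of every smooth $X$, and at points of codimension $\leq 1$ it is essentially formal (at a generic point both stalks are trivial; at a codimension $1$ point the stalk of $G^{(1)}$ is $G(F)/G(\O_{X,x})$, which is a single $G^{(0)}$-orbit). The entire difficulty sits at stalks of codimension $\geq 2$, where none of the axioms applies directly; Gabber's geometric presentation lemma is precisely the device that fibers such a local scheme over a lower-dimensional base with $\aone$-fibers, so that weak $\aone$-homotopy invariance (whose hypotheses concern only bases of codimension $\leq 1$) can be applied, inductively on dimension --- the same effaceability mechanism as in \cite{CTHK} \S 5. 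So the ingredients you name are the right ones, but the logic of the reduction is inverted as written: Gabber transports the high-codimension stalks into the standard situation, not ``codimension-one configurations.'' Finally, the last paragraph's concern about ``Zariski versus Nisnevich sections'' of ${\mathcal K}^1$ is redundant: sections of a given presheaf do not depend on the topology, and the only substantive point --- that ${\mathcal K}^1$ satisfies the Nisnevich sheaf condition and computes $H^1_{Nis}$ --- is exactly the cited Lemma 1.24, which you already invoked.
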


Finally, the key point is that strongly $\aone$-invariant sheaves satisfy the hypotheses of the previous theorem.

\begin{thm}[\cite{MField} Theorem 3.9]
\label{thm:stronga1}
Let $k$ be an infinite field.  If $G$ is a strongly $\aone$-invariant sheaf of groups, then $G$ satisfies {\bf (C0)}, {\bf (C1)}, {\bf (C2)}, weak Nisnevich excision and weak $\aone$-homotopy invariance.
\end{thm}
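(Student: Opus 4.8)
The plan is to verify the five conditions one at a time, organizing the argument around two essentially independent inputs: the Nisnevich sheaf property of $G$, which is responsible for the two excision axioms, and the homotopy invariance of $H^0_{Nis}(-,G)$ and $H^1_{Nis}(-,G)$ built into Definition \ref{defn:stronglyinvariant}, which---when fed into Gabber's geometric presentation lemma (see \cite{CTHK} \S 3, especially Theorem 3.1.1)---is responsible for {\bf (C0)}, {\bf (C1)}, {\bf (C2)} and for weak $\aone$-homotopy invariance. The hypothesis that $k$ be infinite enters exactly through the presentation lemma.

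First I would dispose of the formal parts of {\bf (C0)}. Since a smooth $k$-scheme is regular, its connected components coincide with its irreducible components and are open and closed; the sheaf axiom then immediately gives $G(X) \isomto \prod_{i} G(X_i)$. The injectivity of $G(X) \to G(U)$ for $U$ dense open, and the full strength of {\bf (C1)}, both follow from the key structural statement that $G$ is \emph{unramified}: for the local ring $\O_{X,x}$ of a smooth scheme at a point $x$ with function field $F$, the restriction $G(\O_{X,x}) \to G(F)$ is injective, and for $X$ local of dimension one $G(\O_{X,x})$ is the intersection inside $G(F)$ of the $G(\O_{X,y})$ over the codimension-one points $y$. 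To establish these I would run the Gersten-type argument of \cite{CTHK}: using Gabber's lemma, present $X$ étale-locally near $x$ as (an open in) a relative affine line $\aone_S$ over a smooth base $S$ of dimension $\dim X - 1$, in such a way that the relevant divisor is finite over $S$; then homotopy invariance of $G$ along the fibral $\aone$-direction (Definition \ref{defn:stronglyinvariant}, case $i=0$) forces the obstruction classes to vanish, yielding injectivity and codimension-one purity. Axiom {\bf (C2)} is then a finiteness statement: a section ramifies along only finitely many divisors through a given codimension-one point, so the boundary lands in the coproduct; this holds because the section extends over a dense open.

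Next, weak Nisnevich excision. Both parts {\bf a)} and {\bf b)} concern a local étale morphism $f : X' \to X$ inducing an isomorphism on closed points. Such an $f$ is a Nisnevich neighborhood, so by the very definition of a Nisnevich sheaf (equivalently, the behaviour of $G$ on elementary distinguished squares) the induced maps on function fields and on the local rings along the relevant strata identify the data computing local cohomology. Concretely, using the identification $H^1_x(X,G) = G(F)/G(\O_{X,x})$ from the discussion preceding {\bf (C1)}, $f$ induces a bijection of these quotient pointed sets, giving {\bf a)}. For {\bf b)} one argues similarly for $H^2_x$, which is assembled from codimension-one local cohomology via the orbit construction $\coprod_{y} H^1_y(X,G)/G(F)$; Nisnevich excision in codimension one (part {\bf a)}) together with the matching of function fields forces the kernel of $H^2_x(X,G) \to H^2_{x'}(X',G)$ to be trivial.

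The main obstacle is weak $\aone$-homotopy invariance, and specifically the \emph{exactness} of the truncated Cousin complex over $\aone_X$ for $X$ local of dimension $\leq 1$. The bijectivity $G(X) \isomto G(\aone_X)$ is exactly the case $i = 0$ of strong $\aone$-invariance, hence free. Exactness at the $G^{(1)}$ spot, however, requires identifying the isotropy of the base point and the ``kernel'' of the second boundary map in terms of the image from $G^{(0)}$, and this is where both the geometry and the non-abelian bookkeeping are most delicate: one manipulates pointed sets and orbits of $G(F)$ rather than exact sequences of abelian groups. I would again reduce the local-cohomology computation on $\aone_X$ to a relative-curve situation via Gabber's lemma, and then invoke the case $i = 1$ of strong $\aone$-invariance to trivialize the relevant $H^1$-obstructions; the combination shows that any class in the kernel of $G^{(1)} \to G^{(2)}$ over $\aone_X$ lies in the orbit of $G^{(0)}$. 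It is precisely this simultaneous use of homotopy invariance in degrees $0$ and $1$, married to the presentation lemma in the non-abelian (pointed-set) framework, that constitutes the technical heart of the theorem.
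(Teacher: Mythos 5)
There is nothing in the paper to compare your argument against: the paper does not prove this statement, but imports it verbatim as \cite{MField} Theorem 3.9 and uses it as a black box (its proof is one of the substantial technical achievements of Morel's theory). So your proposal has to be judged as a reconstruction of Morel's proof. Its broad architecture is the right one --- unramifiedness, {\bf (C0)} and {\bf (C1)} via Gabber's presentation lemma together with degree-$0$ homotopy invariance, and the exactness clause of weak $\aone$-homotopy invariance as the technical heart, attacked with degree-$1$ invariance --- but it contains at least one step that is actually wrong, not merely compressed.

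The genuine gap is your treatment of weak Nisnevich excision. You assert that, because $G$ is a Nisnevich sheaf, a local \'etale morphism $f: X' \to X$ inducing an isomorphism on closed points ``identifies the data computing local cohomology.'' It does not. The sets in question are $H^1_x(X,G) = G(F)/G(\O_{X,x})$, where $F$ is the function field of $X$, and such an $f$ induces a finite separable extension $F'/F$ that is nontrivial in general; the sheaf axiom for distinguished squares says nothing direct about the induced map of quotient pointed sets $G(F)/G(\O_{X,x}) \longrightarrow G(F')/G(\O_{X',x'})$. One could try to make the formal argument honest by identifying these quotients with Nisnevich cohomology with supports, where excision along distinguished squares \emph{is} formal; but that identification requires every Nisnevich-locally trivial $G$-torsor over the local scheme $X_x$ to be trivial, which is not formal and is in substance the Zariski-versus-Nisnevich comparison (Theorem \ref{thm:compare-nab}) sitting \emph{downstream} of the axioms you are verifying --- the argument would be circular. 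Already for $G = \gm$ the excision statement holds only because \'etale extensions of discrete valuation rings have ramification index one, i.e., it uses the specific structure of the sheaf, not the sheaf axiom; in Morel's development this compatibility with \'etale morphisms is one of the axioms of his theory of unramified sheaves, and its verification for strongly $\aone$-invariant $G$ genuinely consumes strong $\aone$-invariance. The same objection applies to your part {\bf b)}, which you derive from part {\bf a)} plus ``matching of function fields.'' Finally, the exactness clause of weak $\aone$-homotopy invariance --- which you correctly single out as the hardest point --- is only gestured at: ``invoke the case $i=1$ of strong $\aone$-invariance to trivialize the relevant $H^1$-obstructions'' names an ingredient but gives no mechanism for verifying the isotropy and orbit conditions of Definition \ref{defn:exact} over $\aone_X$, and that verification is where the bulk of Morel's proof actually lives.
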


As before, we can deduce our excision results for $H^1$ relatively easily from these facts.

\begin{cor}
\label{cor:excision-nonabelian}
Let $k$ be an infinite field.  Suppose $X \in {\mathcal Sm}_k$, and $U \subset X$ is an open subscheme whose complement is of codimension $\geq d$.  If $G$ is any strongly $\aone$-invariant sheaf of groups, then the restriction morphism
\[
H^1_{Nis}(X,G) \longrightarrow H^1_{Nis}(U,G)
\]
is a monomorphism for $d \geq 2$ and is an isomorphism for $d \geq 3$.
\end{cor}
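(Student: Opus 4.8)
The plan is to run the same argument as in Corollary \ref{cor:excision-abelian}, but with the flasque Cousin resolution replaced by Morel's truncated Cousin complex (Sequences \ref{eqn:cousin-na1} and \ref{eqn:cousin-na2}). First, since $G$ is strongly $\aone$-invariant, Theorem \ref{thm:stronga1} guarantees that $G$ satisfies {\bf (C0)}, {\bf (C1)}, {\bf (C2)}, weak Nisnevich excision, and weak $\aone$-homotopy invariance; in particular ${\mathcal K}^1(\cdot,G)$ is a genuine Nisnevich sheaf and, by \cite{MField} Lemma 1.24, there is a functorial identification $H^1_{Nis}(X,G) \isomto {\mathcal K}^1(X,G)/G^{(0)}(X)$, where ${\mathcal K}^1(X,G) = \ker(G^{(1)}(X) \to G^{(2)}(X))$ is the preimage of the base-point section and $G^{(i)}(X) = \coprod_{x \in X^{(i)}} H^i_x(X,G)$. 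By Theorem \ref{thm:compare-nab} the comparison $H^1_{Zar}(X,G) \to H^1_{Nis}(X,G)$ is a bijection, so it suffices to analyze the effect of restriction along $j: U \hookrightarrow X$ on the truncated complex.

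The key geometric input is that each term $G^{(i)}$ is built from the codimension $i$ points, while the closed complement $Z = X \setminus U$ has codimension $\geq d$. Hence $Z$ contains no point of codimension $< d$, so $X^{(i)} = U^{(i)}$ for every $i \leq d-1$, whereas $U^{(d)} \subseteq X^{(d)}$ in general. Since local cohomology $H^i_x$ at a point $x \in U$ is computed in any neighbourhood, the restriction $G^{(i)}(X) \to G^{(i)}(U)$ is a $G^{(0)}$-equivariant bijection for $i \leq d-1$, and for $i = d$ it is the projection of the coproduct of pointed sets onto the sub-coproduct indexed by $U^{(d)}$, in particular surjective. By functoriality of the truncated Cousin complex, these restriction maps commute with the boundary homomorphisms.

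I would then conclude separately in the two ranges. When $d \geq 3$, all of $G^{(0)}, G^{(1)}, G^{(2)}$ restrict to bijections; compatibility with the boundary maps gives a bijection ${\mathcal K}^1(X,G) \to {\mathcal K}^1(U,G)$ equivariant for $G^{(0)}(X) \cong G^{(0)}(U)$, and passing to $G^{(0)}$-orbits yields $H^1_{Nis}(X,G) \isomto H^1_{Nis}(U,G)$. When $d = 2$, the maps on $G^{(0)}$ and $G^{(1)}$ are still bijections, but $r_2: G^{(2)}(X) \to G^{(2)}(U)$ is only the (surjective) coproduct projection. Writing $\partial_X, \partial_U$ for the boundary maps, the relation $\partial_U \circ r_1 = r_2 \circ \partial_X$ with $r_1$ bijective shows that $r_1$ carries ${\mathcal K}^1(X,G)$ injectively into ${\mathcal K}^1(U,G)$, since $\partial_X s = \ast$ forces $\partial_U(r_1 s) = r_2(\ast) = \ast$. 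This inclusion need not be surjective, because $r_2$ has a kernel coming from the codimension $2$ points of $Z$; nonetheless it suffices for a monomorphism on quotients, as $r_1$ is a $G^{(0)}$-equivariant bijection, so two elements of ${\mathcal K}^1(X,G)$ that become $G^{(0)}(U)$-conjugate in ${\mathcal K}^1(U,G)$ are already $G^{(0)}(X)$-conjugate.

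The main obstacle is exactly this non-abelian bookkeeping: unlike the abelian Corollary \ref{cor:excision-abelian}, where one feeds term-wise isomorphisms into the long exact cohomology sequence, here $H^1$ is only a pointed set realized as the orbit space ${\mathcal K}^1/G^{(0)}$, so one must track the $G^{(0)}$-action and the equivariance of the restriction maps by hand to pass from injectivity on ${\mathcal K}^1$ to injectivity on orbit spaces. One must also be careful that the identification ${\mathcal K}^1(X,G) = \ker(G^{(1)}(X) \to G^{(2)}(X))$ at the level of global sections is legitimate, which is ensured by weak Nisnevich excision making ${\mathcal K}^1$ into a sheaf defined as the fibre of a sheaf map over the base-point section.
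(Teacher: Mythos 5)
Your proposal is correct and follows essentially the same route as the paper's proof: both reduce via Theorem \ref{thm:compare-nab} and Morel's identification $H^1_{Nis}(X,G) \cong {\mathcal K}^1(X,G)/G^{(0)}(X)$ to the observation that the truncated Cousin complex only involves points of codimension $\leq 2$, so that restriction to $U$ induces a monomorphism on ${\mathcal K}^1$ when $\codim(X \setminus U) \geq 2$ and a bijection when $\codim(X \setminus U) \geq 3$. The paper's write-up is terser, leaving the $G^{(0)}$-equivariance and the passage from ${\mathcal K}^1$ to orbit spaces implicit, which you spell out carefully; this is exactly the intended argument.
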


\begin{proof}
Since $G$ is a strongly $\aone$-invariant sheaf of groups, we know that Zariski and Nisnevich cohomology of $G$ co-incide for any smooth scheme $X$ by Theorem \ref{thm:compare-nab}.  If $U$ has complement of codimension $\geq 2$ in $X$, then it follows that ${\mathcal K}^1(X,G) \longrightarrow {\mathcal K}^1(U,G)$ is a monomorphism by definition.  Similarly, if $U$ has complement of codimension $\geq 3$, then it follows that ${\mathcal K}^1(X,G)$ and ${\mathcal K}^1(U,G)$ co-incide as they only depend on points of $X$ of codimension at most $2$.
\end{proof}

\subsubsection*{Proof of Theorem \ref{thm:excision}}
\begin{proof}[Proof of Theorem \ref{thm:excision}]
Suppose $X \in {\mathcal Sm}_k$.  Assume that $(X,x)$ is a pointed $\aone$-$m$-connected scheme, and that $U \subset X$ is an $\aone$-connected open subscheme (pointed by $x$) whose complement is of codimension $d \geq 2$.  For any strongly $\aone$-invariant sheaf of groups $G$, the inclusion map $U \subset X$ gives rise to a commutative square of the form
\begin{equation}
\label{eqn:torsorpullback}
\xymatrix{
[(X,x),(BG,\ast)]_{\aone}\ar[r]\ar[d] & [(U,x),(BG,\ast)]_{\aone}\ar[d] \\
Hom_{{\mathcal Gr}^{\aone}_k}(\pi_1^{\aone}(X,x),G) \ar[r]& Hom_{{\mathcal Gr}^{\aone}_k}(\pi_1^{\aone}(U,x),G).
}
\end{equation}
Since both $U$ and $X$ are $\aone$-connected, Theorem \ref{thm:postnikov} shows that both vertical maps are bijections.  

Suppose that $(Y,y)$ is an arbitrary pointed $\aone$-connected smooth scheme.  Corollary \ref{cor:torsordescription} allows us to identify $[(Y,y),(BG,\ast)]_{\aone}$ with the set of isomorphism classes of pairs consisting of a $G$-torsor on $Y$ together with an element $g \in G(k)$.  Equivalently, using the discussion of Paragraph \ref{entry:strongaoneinvarianceequivalences}, we can identify $[(Y,y),(BG,\ast)]_{\aone}$ with the set of pairs $({\mathcal P},g)$ consisting of an element ${\mathcal P} \in H^1_{Nis}(Y,G)$ and an element $g \in G(k)$.  

Returning to the situation of Diagram \ref{eqn:torsorpullback}, if the codimension of $X \setminus U$ is at least $3$, then we use Corollary \ref{cor:excision-nonabelian} to conclude that, for any strongly $\aone$-invariant sheaf of groups $G$, any $G$-torsor on $U$ extends, up to isomorphism, to a $G$-torsor on $X$.  Thus, assuming $\codim X \setminus U \geq 3$, we conclude that the upper horizontal map of Diagram \ref{eqn:torsorpullback} is a bijection for every strongly $\aone$-invariant sheaf of groups $G$.  Similarly, if $\codim X \setminus U \geq 2$, then we conclude again using Corollary \ref{cor:excision-nonabelian} that the upper horizontal map of Diagram \ref{eqn:torsorpullback} is, functorially in $G$, a monomorphism.  

We conclude that for any strongly $\aone$-invariant sheaf of groups $G$, the lower horizontal map of Diagram \ref{eqn:torsorpullback} is an isomorphism if $\codim X \setminus U \geq 3$ or a monomorphism if $\codim X \setminus U \geq 2$, functorially in $G$.  The covariant form of the Yoneda lemma shows that the morphism $\pi_1^{\aone}(U,x) \longrightarrow \pi_1^{\aone}(X,x)$ is an isomorphism in the first situation and an epimorphism in the second situation.  

The higher dimensional cases follow similarly.  If $X$ is $\aone$-$m$-connected for $m > 1$, and $\codim X \setminus U = d \geq 3$, then we can conclude inductively that $U$ is $\aone$-$k$-connected, for $k = \min(m,d-3)$ using Corollary \ref{cor:excision-abelian} (in place of Corollary \ref{cor:excision-nonabelian}) together with Theorem \ref{thm:postnikov}.  
\end{proof}

\section{Geometric quotients, $\aone$-covers, and toric varieties}
\label{s:geometry}
In this section, we discuss the $\aone$-covering
space theory introduced in \S \ref{s:higher} in the context of
geometric invariant theory for solvable group actions.  The
motivating principle behind this relation is Proposition
\ref{prop:solvablequot}, which shows how geometric invariant
theory (GIT) for solvable group actions may be used to construct geometric
Galois $\aone$-covering spaces (recall Definition \ref{defn:geometriccovering}).  The simplest examples to which the
theory applies are the complete flag varieties $SL_n \rightarrow
SL_n/B$, for which, as far as GIT is concerned, every point is
``stable" and so nothing need be excised.

Concrete computations of $\aone$-homotopy groups of such quotients $X$ in these instances are quite difficult, largely because we don't know enough about the $\aone$-homotopy groups of the source variety.  The better strategy is then to start with a space whose $\aone$-homotopy type one understands, e.g., $\mathbb{A}^n$, and consider GIT-style quotients by solvable groups.  We studied quotients for free unipotent actions on affine space in \cite{AD1}.  On the other hand, quotients of split torus actions on affine space yield toric varieties.

After motivating the construction of toric varieties via GIT for split torus actions, we recall Cox's description (see \cite{Cox}) of {\em any} simplicial (in particular smooth) toric variety as a geometric quotient of an open subset of affine space by the free action of a split torus.  This will allow for a broader range of open sets than would arise as the GIT-stable
points for a linearized split torus action on affine space; for
instance, one can produce non-quasi-projective smooth toric
varieties. Along the way, we will give a quick summary of the geometry
and combinatorics of toric varieties relevant to the discussion of
subsequent sections, and establish key Propositions \ref{prop:combinatorics} and \ref{prop:galoiscover}.

\subsection*{Solvable quotients and $\aone$-covers}
The motivating observation for the results in this paper is summarized in the following result.

\begin{prop}
\label{prop:solvablequot}
Let $G$ be a connected, split, solvable algebraic group. Suppose $G$ acts freely on a smooth scheme $X$ such that i) a quotient $q: X \longrightarrow X/G$ exists as a smooth scheme, ii) the triple $(X,q,G)$ is a $G$-torsor. If both $X$ and $X/G$ are $\aone$-connected, then the morphism $q$ makes $X$ into a (homotopy) geometric Galois $\aone$-covering space of $X/G$.
\end{prop}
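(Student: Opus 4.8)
The plan is to reduce the general split solvable case to the case of a split torus by quotienting out the unipotent part, which is invisible to $\aone$-homotopy theory. First I would invoke the structure theory of connected split solvable groups: by Borel, $G$ decomposes as a semidirect product $G \cong U \rtimes T$, where $U$ is the (split) unipotent radical and $T$ is a maximal torus, which is necessarily split. As a variety $U$ is isomorphic to affine space $\aone^{\dim U}$, hence $\aone$-contractible; this is the reason the qualifier ``(homotopy)'' appears in the statement, since $\ga$ (and therefore $U$) is \emph{not} strongly $\aone$-invariant and so cannot serve as a genuine deck group.

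Next I would produce the intermediate quotient. Since $\ga$ and $\gm$ are special, so is $G$, and therefore the $G$-torsor $q: X \longrightarrow X/G$ is Zariski-locally trivial. Using local triviality (or, equivalently, forming the contracted product $X \times^{G}(G/U)$), the quotient $X/U$ exists as a smooth scheme and $q$ factors as
\[
X \stackrel{p}{\longrightarrow} X/U \stackrel{r}{\longrightarrow} X/G,
\]
where $p$ is a $U$-torsor and $r$ is a $T$-torsor. The key homotopical input is that $p$ is an $\aone$-weak equivalence: a torsor under a split unipotent group factors into an iterated tower of $\ga$-torsors, each of which is an affine bundle, and affine bundles are $\aone$-weak equivalences. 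In particular $X/U$ is $\aone$-connected, because $X$ is and $\aone$-connectedness is preserved under $\aone$-weak equivalence.

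For the second map, $T$ is a split torus, hence a strongly $\aone$-invariant sheaf of groups (as recorded in the Example following Paragraph \ref{entry:strongaoneinvarianceequivalences}); by \cite{MField} Lemma 4.5 the $T$-torsor $r$ is an $\aone$-cover. Since both $X/U$ and $X/G$ are $\aone$-connected (the former by the previous paragraph, the latter by hypothesis), $r$ is a genuine geometric Galois $\aone$-covering space in the sense of Definition \ref{defn:geometriccovering}. Composing, $q$ is the geometric Galois $\aone$-covering space $r$ precomposed with the $\aone$-weak equivalence $p$; in the $\aone$-homotopy category $q$ and $r$ agree up to the isomorphism induced by $p$, which is precisely the assertion that $q$ is a homotopy geometric Galois $\aone$-covering space, with homotopy deck group the torus $T = G/U$.

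The main obstacle is conceptual rather than computational: because $\ga$ fails to be strongly $\aone$-invariant, one cannot take $G$ itself as the covering group, and the real content is isolating the torus quotient $T$ as the honest ``deck group'' while absorbing $U$ into an $\aone$-weak equivalence. The only point demanding care in the execution is verifying that $X/U$ exists as a smooth scheme and that $p$ and $r$ carry the claimed torsor structures; this is handled by the Zariski-local triviality of $q$, which reduces everything to gluing the local models $V \times G \to V \times T \to V$ over an open cover of $X/G$.
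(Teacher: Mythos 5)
Your proof is correct and follows essentially the same route as the paper: factor $q$ through the unipotent radical quotient $X \to X/R_u(G) \to X/G$, observe that the first map is an $R_u(G)$-torsor and hence an $\aone$-weak equivalence, and that the second is a $T$-torsor under the split torus $T = G/R_u(G)$, which is a geometric Galois $\aone$-covering space once the $\aone$-connectedness hypotheses are invoked. The only difference is that you justify the existence and smoothness of the intermediate quotient via specialness of $G$ (Zariski-local triviality), a point the paper asserts without elaboration, so your write-up is if anything slightly more complete.
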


\begin{proof}
Since $G$ is $k$-split, its unipotent radical $R_u(G)$ is
$k$-defined as well; the quotient $G/R_u(G)$ is a split torus $T$.
Consider the quotient $X/R_u(G)$, which exists as a scheme.  The quotient morphism $X \longrightarrow X/R_u(G)$ is an $R_u(G)$-torsor and is thus an $\aone$-weak equivalence (being Zariski locally trivial with fibers
isomorphic to affine space; see e.g., \cite{AD1} Lemma 3.3).
Observe then that $T$ acts on $X/R_u(G)$ and the quotient morphism
$X/R_u(G) \longrightarrow X/G$ is a $T$-torsor.  If both $X$ and $X/G$ are assumed $\aone$-connected, it follows that $X/R_u(G) \longrightarrow X/G$ is a geometric Galois $\aone$-covering space.  
\end{proof}

\begin{rem}
Note that as discussed in the proof, if $G$ is a split unipotent
group then $X \longrightarrow X/G$ is not merely a (homotopy)
$\aone$-covering space but in fact is an $\aone$-weak equivalence.
This is a handy way of producing moduli of schemes with fixed
$\aone$-homotopy type (see \cite{AD1}).
\end{rem}

\begin{cor}
\label{cor:homotopycovering}
Work under the assumptions of Proposition \ref{prop:solvablequot}. Let $x \in X(k)$, and $q(x)$ be the corresponding $k$-rational point of $X/G$.  The quotient morphism induces a canonical extension:
$$
1 \longrightarrow \pi_1^{\aone}(X,x) \longrightarrow \pi_1^{\aone}(X/G,q(x)) \longrightarrow G/R_u(G) \longrightarrow 1,
$$
and isomorphisms $q_*: \pi_i^{\aone}(X,x) \isomto \pi_i^{\aone}(X/G,q(x))$ for all $i \geq 2$.
\end{cor}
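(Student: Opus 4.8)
The plan is to factor $q$ exactly as in the proof of Proposition \ref{prop:solvablequot} and then run the long exact sequence of the resulting $\aone$-fibration. First I would write $q$ as the composite $X \longrightarrow X/R_u(G) \longrightarrow X/G$. The first morphism is an $R_u(G)$-torsor with affine-space fibers, hence an $\aone$-weak equivalence (as recalled in the proof of Proposition \ref{prop:solvablequot}), so by Theorem \ref{thm:Whitehead} it induces isomorphisms $\pi_i^{\aone}(X,x) \isomto \pi_i^{\aone}(X/R_u(G),\bar{x})$ for all $i \geq 0$, where $\bar{x}$ is the image of $x$. This reduces the entire statement to the $T$-torsor $p: X/R_u(G) \longrightarrow X/G$, with $T = G/R_u(G)$ a split torus.

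Since $T$ is a split torus it is strongly $\aone$-invariant, so the $T$-torsor $p$ is an $\aone$-cover, and $\aone$-covers are $\aone$-fibrations by definition. Thus Remark \ref{rem:fibration} furnishes a long exact sequence of $\aone$-homotopy sheaves
\[
\cdots \longrightarrow \pi_i^{\aone}(F) \longrightarrow \pi_i^{\aone}(X/R_u(G),\bar{x}) \longrightarrow \pi_i^{\aone}(X/G,q(x)) \longrightarrow \pi_{i-1}^{\aone}(F) \longrightarrow \cdots,
\]
where $F$ is the homotopy fiber of $p$ over $q(x)$. Because $p$ is an $\aone$-fibration, this homotopy fiber agrees with the strict fiber $X/R_u(G) \times_{X/G} \Spec k$; being the fiber of a $T$-torsor over a point, and being trivialized by $\bar{x}$ (which lies over $q(x)$), it is canonically isomorphic to $T$.

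Next I would compute the homotopy sheaves of $F \cong T$. A split torus is $\aone$-rigid (Example \ref{ex:aonerigid}), hence $\aone$-local and $0$-truncated as a simplicial sheaf; therefore $\pi_0^{\aone}(T) \cong T$ and $\pi_i^{\aone}(T) = 0$ for all $i \geq 1$. Feeding this into the long exact sequence, for $i \geq 2$ both flanking terms $\pi_i^{\aone}(T)$ and $\pi_{i-1}^{\aone}(T)$ vanish, yielding isomorphisms $\pi_i^{\aone}(X/R_u(G),\bar{x}) \isomto \pi_i^{\aone}(X/G,q(x))$; composing with the isomorphisms of the first paragraph gives $q_*: \pi_i^{\aone}(X,x) \isomto \pi_i^{\aone}(X/G,q(x))$ for $i \geq 2$. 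The tail of the sequence reads
\[
1 \longrightarrow \pi_1^{\aone}(X/R_u(G),\bar{x}) \longrightarrow \pi_1^{\aone}(X/G,q(x)) \longrightarrow T \longrightarrow \pi_0^{\aone}(X/R_u(G),\bar{x}),
\]
and since $X/R_u(G)$ is $\aone$-connected (being $\aone$-weakly equivalent to the $\aone$-connected $X$) the last sheaf is trivial; reidentifying $\pi_1^{\aone}(X/R_u(G),\bar{x}) \cong \pi_1^{\aone}(X,x)$ produces the desired extension with quotient $T = G/R_u(G)$.

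The main obstacle is the low-degree bookkeeping: one must check that the connecting map $\pi_1^{\aone}(X/G,q(x)) \longrightarrow \pi_0^{\aone}(T) = T$ is not merely a map of pointed sheaves of sets but a surjective homomorphism of sheaves of groups, so that the tail is genuinely a short exact sequence of sheaves of groups. This is where the principal (torsor) structure of $p$ is essential: the fiber $T$ is a group acting on the total space, which rigidifies the usual pointed-set exactness at the $\pi_0$ level into an honest group extension. I would verify this either by transporting the classical argument for principal fibrations through the fibrant replacement $Ex_{\aone}(\cdot)$, reducing stalkwise to simplicial sets as in the proof of Proposition \ref{prop:contractedproduct}, or via the $\aone$-covering-space dictionary of Theorem \ref{thm:aonecoveringspacedictionary}.
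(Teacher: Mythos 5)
Your proposal is correct and takes essentially the same route as the paper's own proof: factor $q$ through $X/R_u(G)$, note that the resulting $T$-torsor is (up to the $\aone$-weak equivalence $X \to X/R_u(G)$) an $\aone$-fibration, and run the long exact homotopy sequence using $\aone$-rigidity of the split torus $G/R_u(G)$ to kill the higher homotopy of the fiber. If anything, your handling of the tail of the sequence --- checking that the connecting map to $\pi_0^{\aone}(T) \cong T$ is a surjective group homomorphism, via the principal structure of the torsor --- is more careful than the paper's terse argument, which leaves this point implicit.
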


\begin{proof}
Up to $\aone$-weak equivalence, $q$ is an $\aone$-fibration.  Thus, we get a long exact sequence in homotopy groups for an $\aone$-fibration:
$$
\cdots \longrightarrow \pi_i^{\aone}(G/R_u(G)) \longrightarrow \pi_i^{\aone}(X,x) \stackrel{q_*}{\longrightarrow} \pi_i^{\aone}(X/G,q(x)) \longrightarrow \pi_{i-1}^{\aone}(G/R_u(G)) \longrightarrow \cdots
$$
Now, $\gm^{\times r}$ is $\aone$-rigid (see Example \ref{ex:aonerigid}) which shows that $\pi_0^{\aone}(X) \cong \gm^{\times r}$ and all higher $\aone$-homotopy groups vanish.  

%Similarly,
%$$[S^i_s \wedge U_+,(X,x)]_{\aone} \cong [S^i_s \wedge U_+,(\gm^{\times r},1)]_{\aone}.$$
%Since $\gm^{\times r}$ is already strongly-$\aone$-invariant, we just have to compute the group $[S^i_s \wedge U_+,(\gm,1)]_s$.  Since we have a bijection $Hom_{\Sm_k}(\Delta^i_s \times U,\gm) \isomto Hom_{\Sm_k}(U,\gm)$ it follows that the last group vanishes if $i > 0$.
\end{proof}

\begin{ex}
If $G = SL_n$ then $G \longrightarrow G/B$ is an $\aone$-covering
space, inducing an isomorphism on $\pi_i^{\aone}$ for all $i \neq 1$.
This follows from Proposition \ref{prop:solvablequot}, Corollary \ref{cor:homotopycovering}, and the fact that $SL_n$ is $\aone$-connected.  More generally, if $G$ is a connected split reductive linear algebraic group, and $B$ is a Borel subgroup of $G$, then the Bruhat decomposition shows that every point of $G/B$ has a neighborhood isomorphic to affine space (over $k$).  It follows by Lemma \ref{lem:affineneighborhoods} that such $G/B$ are $\aone$-connected.  If $G$ is $\aone$-connected,  then $G \longrightarrow G/B$ is a (homotopy)
$\aone$-covering space and thus $\pi_i^{\aone}(G) \isomto \pi_i^{\aone}(G/B)$ for all $i \neq 1$.  Furthermore, if $T$ denotes a split maximal torus of $B$, then we have a canonical extension
$$
1 \longrightarrow \pi_1^{\aone}(G) \longrightarrow \pi_1^{\aone}(G/B) \longrightarrow T \longrightarrow 1.
$$
Note that a classical result of Steinberg on generation of groups by additive subgroups shows that connected, split, semi-simple, simply connected groups are in fact $\aone$-chain connected.  
\end{ex}

To construct solvable quotients in a manner that fits well with the
excision results we have proved so far, we can use a version of the
geometric invariant theory for non-reductive groups studied in
\cite{DK}.  The essential idea of GIT is to construct nice
``parameter spaces" for orbits under a group action on $X$, known as
``good quotients."  GIT provides a tool, the Hilbert-Mumford
numerical criterion, for identifying open sets $X^s$ of stable
points for which the action is proper and hence the good quotient is
exactly a geometric quotient in the traditional sense.  In nice
enough settings (``stable equals semi-stable") the stable locus is
precisely the complement of the simultaneous vanishing locus for
invariant sections of a chosen $G$-equivariant line bundle on $X$.

\begin{prop}[\cite{DK} Theorem 5.3.1]
Let $G$ be an affine algebraic group. Let $X$ be a smooth
$G$-quasi-projective variety with a chosen $G$-equivariant line
bundle.  There is a canonically determined open set of stable points
$X^s$ on which the action is proper and whose geometric quotient
exists.
\end{prop}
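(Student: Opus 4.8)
The plan is to adapt Mumford's geometric invariant theory to the non-reductive setting by using the numerical criterion to cut out the stable locus and then establishing properness of the action directly. First I would use the chosen $G$-equivariant line bundle $L$ to define, for every one-parameter subgroup $\lambda: \gm \longrightarrow G$ and every point $x$, the Hilbert--Mumford numerical invariant $\mu^L(x,\lambda)$, namely the weight with which $\gm$ acts (through $\lambda$) on the fiber of $L$ over the limit point $\lim_{t \to 0}\lambda(t)\cdot x$ whenever that limit exists. One then \emph{defines} $X^s$ to be the set of points $x$ whose stabilizer in $G$ is finite and for which $\mu^L(x,\lambda) > 0$ for every nontrivial $\lambda$; this description makes $X^s$ manifestly canonical (independent of all choices except $L$). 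Openness of $X^s$ follows from upper semicontinuity of the numerical invariants together with the fact that having a finite stabilizer is an open condition on a smooth variety.

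The technical heart is to show that $G$ acts properly on $X^s$, i.e.\ that the map $\Psi: G \times X^s \longrightarrow X^s \times X^s$, $(g,x) \mapsto (g\cdot x, x)$, is proper. I would verify this through the valuative criterion: given a discrete valuation ring $R$ with fraction field $K$, a $K$-point $g$ of $G$, and $R$-points of $X^s$ realizing the source and target over $K$, one must extend $g$ to an $R$-point of $G$. After passing to a finite extension of $R$ and using the structure theory of $G$, the obstruction to extending is governed by a one-parameter degeneration, and positivity of $\mu^L$ on $X^s$ forces the limiting point to remain in the original orbit, which yields the desired extension. Properness together with finiteness of stabilizers then shows that orbits are closed in $X^s$.

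The main obstacle is precisely this properness step, because for non-reductive $G$ one-parameter subgroups no longer detect every degeneration: the unipotent radical $R_u(G)$ contributes no nontrivial maps from $\gm$, so the naive numerical criterion can fail to control unipotent degenerations. The way around this, following \cite{DK}, is to treat the unipotent radical separately --- quotienting first by the free $R_u(G)$-action, whose quotient is well-behaved because the fibers are affine spaces (compare the proof of Proposition \ref{prop:solvablequot}) --- and then to apply reductive GIT to the residual action of $G/R_u(G)$, whose one-parameter subgroups \emph{do} suffice. Finally, once the action of $G$ on the quasi-projective $X^s$ is proper with finite stabilizers, existence of the geometric quotient $X^s/G$ follows from the standard descent theory for proper actions: the quotient exists first as an algebraic space, and the $G$-quasi-projectivity hypothesis supplies a $G$-ample line bundle that descends, exhibiting $X^s/G$ as a quasi-projective scheme.
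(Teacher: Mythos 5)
You should note first that the paper contains no proof of this proposition: it is imported wholesale from \cite{DK} (Theorem 5.3.1), so your reconstruction has to be measured against the argument given there. Measured that way, it has a genuine gap, and it sits exactly where you located the ``main obstacle.'' Defining $X^s$ by the Hilbert--Mumford function cannot work for non-reductive $G$: a unipotent group admits no nontrivial homomorphisms $\gm \longrightarrow G$, so for $G$ with nontrivial unipotent radical the condition ``$\mu^L(x,\lambda)>0$ for every nontrivial $\lambda$'' is blind to the unipotent directions, and in the extreme case $G = R_u(G)$ it degenerates to the finite-stabilizer condition. That condition is far too weak: rings of unipotent invariants need not be finitely generated (Nagata), and free $\ga$-actions on smooth affine varieties need not be proper nor admit geometric quotients that are schemes. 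The properness step fails for the same structural reason: reducing the valuative criterion to one-parameter degenerations rests on the Iwahori decomposition $G(K)=G(A)\,\lambda(\pi)\,G(A)$ (for $A$ a discrete valuation ring with fraction field $K$ and uniformizer $\pi$), which is a theorem about \emph{reductive} groups; already for $G = \ga$ the $K$-point $\pi^{-1}$ admits no such factorization, so the claim that ``the obstruction to extending is governed by a one-parameter degeneration'' is false in this setting.

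Your proposed repair --- quotient first by $R_u(G)$, then apply reductive GIT --- is circular. The proposition carries no freeness hypothesis on the action, and even for free unipotent actions the existence of a well-behaved quotient is precisely the difficulty that non-reductive GIT is designed to address; you cannot appeal to Proposition \ref{prop:solvablequot}, since that result \emph{assumes} both freeness and the existence of the quotient as a smooth scheme. The actual proof in \cite{DK} runs in the opposite direction from a numerical criterion: stability is defined by generalizing Mumford's original \emph{definition}, namely $X^s$ is the union of basic open sets $X_f$, for $f$ an invariant section of a power of the linearized bundle, on which suitable conditions hold ($X_f$ affine, the action closed with finite stabilizers, the invariants $\O(X_f)^G$ finitely generated), and the geometric quotient is then obtained by gluing the affine quotients $X_f \longrightarrow \Spec \O(X_f)^G$. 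Canonicity and openness of $X^s$ are immediate from that definition, and properness of the action is extracted from the local structure of these affine quotients rather than from a Hilbert--Mumford-type computation, which is simply unavailable in the non-reductive setting.
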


\subsubsection*{GIT for linear torus actions on affine space}

Given a torus $T$ acting linearly on $\mathbb{A}^n$ together with a
choice of $T$-equivariant structure on the trivial line bundle, one
may consider the collection of $T$-equivariant sections (henceforth
called ``invariants").  The locus of geometric points on which all
invariants vanish is called the {\em unstable set} $Z$, which
uniquely defines an unstable (closed) subscheme $s(Z)$ of
$\mathbb{A}^n$. In an appropriate basis, represented by coordinate
functions $\{x_1, \ldots, x_n\}$, the $T$ action is diagonal and the
invariants are generated by monomials.

Because it is defined in terms of geometric points, the unstable set
$Z$ is the vanishing locus for a finite collection of invariant
monomials chosen so that each factor in a given monomial occurs
without repeats. That is, it suffices to consider monomials of the
form $x_{i_1} \cdots x_{i_k}$ where all $i_j$ are distinct.
Formally:

\begin{lem}
The unstable set $Z$ is the zero set for an ideal generated by
finitely many monomials of the form $x_{i_1} \cdots x_{i_k}$ where
the $i_j$ are all distinct.
\end{lem}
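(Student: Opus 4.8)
The plan is to reduce the statement to an elementary observation about the vanishing loci of monomials, once the torus action has been diagonalized. First I would fix a splitting $T \cong \gm^{\times r}$ and diagonalize the linear $T$-action on ${\mathbb A}^n$, so that in the coordinates $x_1,\ldots,x_n$ each $x_i$ is a $T$-eigenvector with some weight $\chi_i$ in the character lattice $\hat{T} = \Hom(T,\gm)$. The chosen $T$-equivariant structure on the trivial line bundle is recorded by a single character $\lambda \in \hat T$, and a monomial $x_1^{a_1}\cdots x_n^{a_n}$ is a $T$-equivariant section (an ``invariant'' in the terminology above) precisely when $\sum_i a_i \chi_i = \lambda$. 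Since the action is diagonal, the space of invariant sections is spanned by these weight-$\lambda$ monomials; let $M$ denote the set of all such monomials.

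By definition a geometric point $p$ lies in $Z$ exactly when $m(p) = 0$ for every $m \in M$, so that $Z = \bigcap_{m \in M} V(m)$. The key step is then purely combinatorial: the vanishing locus of a monomial depends only on its support, since $\prod_i x_i^{a_i}$ vanishes at $p$ if and only if $x_i(p) = 0$ for some $i$ with $a_i \geq 1$, which is exactly the vanishing condition for the squarefree monomial $\widetilde{m} := \prod_{i \,:\, a_i \geq 1} x_i$. Hence $V(m) = V(\widetilde{m})$ for each $m$, and $Z = \bigcap_{m \in M} V(\widetilde{m})$. Because there are only finitely many squarefree monomials in $x_1,\ldots,x_n$ (at most $2^n$ of them), the set $\{ \widetilde{m} : m \in M \}$ is finite, so $Z = V(J)$ where $J$ is the ideal generated by these finitely many squarefree monomials $x_{i_1}\cdots x_{i_k}$ with distinct indices $i_j$. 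Equivalently, $J$ is the squarefree monomial ideal with $\sqrt{J}$ equal to the radical of the ideal generated by $M$.

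I do not anticipate a serious obstacle: the content is entirely in the two translations, namely from ``$T$-equivariant section of the linearized trivial bundle'' to ``monomial of weight $\lambda$'', and from the vanishing of a monomial to the vanishing of its support. The latter replacement is legitimate precisely because $Z$ was defined as a set of geometric points, equivalently as a reduced (radical) subscheme, so passing to squarefree parts changes neither $Z$ nor its defining reduced ideal.
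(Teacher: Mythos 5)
Your proof is correct and takes essentially the same route as the paper: diagonalize the action so that the invariants become monomials, then use the fact that $Z$ consists of geometric points to replace each monomial by its squarefree support monomial, of which there are only finitely many. One immaterial caveat: GIT instability is really governed by equivariant sections of \emph{all} positive tensor powers of the linearized bundle, i.e., monomials of weight $m\lambda$ for $m \geq 1$ rather than only weight $\lambda$, but your squarefree-reduction argument applies verbatim to that larger set of monomials, so the conclusion is unaffected.
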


In particular the set $Z$ is a coordinate linear subspace arrangement determined by such a set of monomial equations.  One can then ask under what conditions the complementary open subscheme is $\aone$-connected (which is guaranteed when $\codim Z \geq 2$) and the associated restricted $T$-action is free.  Our excision results (Theorem \ref{thm:excision}) together with Proposition \ref{prop:solvablequot} would then apply.

One constraint of this method is that all the quotient varieties so
obtained are necessarily quasi-projective, as they inherit an ample
line bundle by formal properties of GIT.  For this reason and for
the sake of completeness we instead look at the general
combinatorial characterization of $T$-invariant opens of affine
space that yield toric varieties as geometric quotients, due to Cox
(see \cite{Cox}) who in turn was motivated by the traditional approach to
toric varieties via unions of affine toric varieties. Once again the
key data will be sets of monomials whose vanishing describes the
coordinate arrangement $Z$, but here they will be encoded via a
combinatorial device called a {\em fan}.

\subsubsection*{Combinatorial encoding of geometry of toric varieties}
Let $T$ be a split torus over a field $k$.  Let ${\rm X}_*(T)$ and
${\rm X}^*(T)$ denote the co-character and character groups of $T$.
We denote by $\langle \cdot, \cdot \rangle$ the canonical pairing
${\rm X}_*(T) \times {\rm X}^*(T) \longrightarrow \Z$ defined by
composition.  Since $T$ is split, the category of $k$-rational representations of $T$ is semi-simple and every
irreducible representation is given by a character.

A good reference for the theory of toric varieties is \cite{Fulton},
but due to the wealth of differing notation in the field, we felt it
best to summarize our conventions here.  A {\em normal} $T$-variety is
said to be a {\em toric $T$-variety} if $T$ acts on $X$ with an open
dense orbit.  If $T$ is clear from context, we will drop it from the notation.  By a Theorem of Sumihiro, every $k$-point in a toric
variety has a $T$-stable open affine neighborhood; such a variety is
necessarily an affine toric $T$-variety.  Thus, we can cover any
toric variety by affine toric varieties.

The theory of affine toric varieties is particularly simple (see \cite{Fulton} \S 1.3).  If $X$ is an affine toric $T$-variety, we let $k[X] = \Gamma(X,\O_X)$.  The ring $k[X]$ naturally has a $T$-module structure and decomposes as a direct sum of characters.  Since $T$ acts with a dense orbit, it is easy to see that each character in $k[X]$ appears with multiplicity at most $1$.  Since $k[X]$ is a ring, the subset $\Lambda \subset {\rm X}^*(T)$ of characters appearing in the decomposition is in fact a unital monoid.  Furthermore, one can check that it is {\em finitely generated}, the cancellation law holds (i.e., $x + y = x' + y$ in $\Lambda$ implies $x = x'$), and it is {\em saturated} (i.e., if $m$ is an integer and $mx \in \Lambda$, then $x \in \Lambda$).  All of this can be phrased nicely in terms of the co-character lattice ${\rm X}_*(T)$, or rather the associated real vector space $N_{\real} = {\rm X}_*(T) \tensor_{\Z} \real$.  The monoid $\Lambda$ determines a strongly convex rational polyhedral cone $\sigma$ in $N$ and can be uniquely recovered from this data.   We write $X_{\sigma}$ for the affine toric variety associated with a cone $\sigma$.  

\begin{rem}
Let us note here that any {\em smooth} affine toric variety is $T$-equivariantly isomorphic to a product of the form ${\mathbb A}^r \times \gm^{r'}$ (see \cite{Fulton} \S 2.1 Proposition). 
\end{rem}

The dimension of a cone is the cardinality of a minimal set of generators.  A generator $\rho$ of a $1$-dimensional cone is called {\em primitive} if $m\rho' = \rho$ implies $m=1$ and $\rho' = \rho$.  

A {\em fan} $\Sigma$ in ${\rm X}_*(T)$ is a collection of strongly convex rational polyhedral cones $\sigma \in N_{\real}$ such that (i) each face of a cone in $\Sigma$ is a cone in $\Sigma$, and (ii) the intersection of two cones in $\Sigma$ is a face of each.  Henceforth, the word {\em cone} will mean strongly convex rational polyhedral cone.  Any normal toric $T$-variety X gives rise to a fan $\Sigma$: attach to $X$ the cones corresponding to an open cover of $X$ by affine toric $T$-varieties.  Conversely, given a fan $\Sigma$, we can recover a toric $T$-variety which we denote by $X_{\Sigma}$ throughout the sequel.  

We will use the following terminology regarding fans.  We will refer to $\Sigma$ as a {\em smooth fan} if every cone $\sigma \in \Sigma$ is generated by part of a basis for the lattice ${\rm X}_*(T)$.  The support $\Supp(\Sigma)$ of $\Sigma$ is the union of the cones $\sigma \in \Sigma$.  A fan $\Sigma$ will be called {\em proper} if $\Supp(\Sigma) = N_{\real}$.  A {\em refinement} of a fan $\Sigma$ is a fan $\Sigma'$ such that
$\Supp(\Sigma) = \Supp(\Sigma')$ and for every cone of $\sigma' \in
\Sigma'$ there exists a cone $\sigma \in \Sigma$ such that $\sigma'
\subset \sigma$.

\begin{rem}
Any smooth proper toric variety can be covered by affine toric varieties isomorphic to affine space.  
\end{rem}

As the terminology suggests, smooth fans correspond bijectively to
smooth toric varieties and proper fans correspond bijectively to
proper toric varieties.  Refinements of fans correspond to proper
birational morphisms of the corresponding toric varieties.  We will
refer to a fan $\Sigma$ as {\em projective} if $X_{\Sigma}$ is a
projective toric variety.

\subsubsection*{Smooth toric varieties and geometric $\aone$-covers}
If $X_{\Sigma}$ is a toric variety associated with a smooth proper fan $\Sigma$, our goal now is to construct a canonical $\aone$-cover of $X_{\Sigma}$ which we will refer to as the {\em Cox cover} of $X_{\Sigma}$.  To do this, we will show that $X_{\Sigma}$ is a geometric quotient of an open subscheme of an appropriate affine space.

Let $\Sigma(1)$ denote the set of $1$-dimensional cones in $\Sigma$.
Recall that $Pic(X_{\Sigma})$ fits into an exact sequence
$$
0 \longrightarrow {\rm X}^*(T) \longrightarrow \Z^{\Sigma(1)} \longrightarrow Pic(X_{\Sigma}) \longrightarrow 0.
$$
The set $\Sigma(1)$ can be interpreted as the set of $T$-invariant Weil divisors on $X_{\Sigma}$.

The affine space ${\mathbb A}^{\Sigma(1)}$ can be viewed as a toric
variety equipped with an action of the torus $\gm^{\times
\Sigma(1)}$ dual to $\Z^{\Sigma(1)}$.  The above exact
sequence of lattices gives rise, by duality, to an exact
sequence of tori:
$$
0 \longrightarrow Pic(X_{\Sigma})^{\vee} \longrightarrow \gm^{\times \Sigma(1)} \longrightarrow T \longrightarrow 0.
$$
Via this sequence, we can view $Pic(X_{\Sigma})^{\vee}$ as acting on
${\mathbb A}^{\Sigma(1)}$.

Choose coordinates $x_1,\ldots,x_{\Sigma(1)}$ on ${\mathbb
A}^{\Sigma(1)}$.  The $Pic(X_{\Sigma})^{\vee}$ action on ${\mathbb
A}^{\Sigma}$ is equivalent to a $Pic(X_{\Sigma})$-grading on the
polynomial ring $k[{\mathbb A}^{\Sigma(1)}]$.  Since a monomial
$\prod_{\rho} x_\rho^{a_{\rho}}$ determines a divisor $\sum_\rho
a_\rho D_\rho$ (where $D_\rho$ is the coordinate hyperplane defined
by $x_\rho$), the degree of the aforementioned monomial is the image
of $\sum_\rho a_\rho D_\rho$ in $Pic(X_{\Sigma})$.

\begin{defn}
For a cone $\sigma \in \Sigma$, we let $\hat{\sigma}$ be the divisor
$\sum_{\rho \notin \sigma(1)} D_\rho$, and we let $x^{\hat{\sigma}}$
be the monomial $\prod_{\rho \notin \sigma(1)} x_\rho$.  We will
refer to $x^{\hat{\sigma}}$ as the {\em cone-complement monomial}
associated with $\sigma$.
\end{defn}

\begin{defn}[Irrelevant subvariety]
\label{defn:irrelevantideal} Let $\Sigma$ be a proper fan. Let
$Z_{\Sigma}$ be the variety associated with the ideal $I_{\Sigma}$
generated the monomials $x^{\hat{\sigma}}$.  The variety
$Z_{\Sigma}$ will be called the {\em irrelevant subvariety},
corresponding to the fan $\Sigma$.
\end{defn}

By definition, $Z_{\Sigma}$ is a union of coordinate subspaces of
${\mathbb A}^{\Sigma(1)}$; this will be the key property we will use
in the study of the $\aone$-homotopy groups of $X_{\Sigma}$ in what
follows.

\begin{thm}[Cox]
\label{thm:cox}
The group $Pic(X_{\Sigma})^{\vee}$ leaves $Z_{\Sigma}$ invariant and
acts freely on its complement in ${\mathbb A}^{\Sigma(1)}$.
Furthermore, there is a canonical identification $X_{\Sigma} =
({\mathbb A}^{\Sigma(1)} - Z_{\Sigma})/Pic(X_{\Sigma})^{\vee}$.
\end{thm}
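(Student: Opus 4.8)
The plan is to reduce the global assertion to the affine charts of $X_\Sigma$ together with an invariant-theoretic computation of the ring of $G$-invariants on each chart, and then to glue. Write $G = Pic(X_\Sigma)^\vee$, $M = {\rm X}^*(T)$, $N = {\rm X}_*(T)$, and for each ray $\rho \in \Sigma(1)$ let $v_\rho \in N$ denote its primitive generator, so that the map ${\rm X}^*(T) \to \Z^{\Sigma(1)}$ of the first displayed sequence sends $m$ to $(\langle v_\rho, m \rangle)_\rho$ and the dual map $\gm^{\times \Sigma(1)} \to T$ sends the cocharacter $e_\rho$ to $v_\rho$. The first assertion is immediate: $I_\Sigma$ is generated by monomials, hence homogeneous for the tautological $\Z^{\Sigma(1)}$-grading of $k[x_\rho]$, so $Z_\Sigma$ is stable under the full diagonal torus $\gm^{\times \Sigma(1)}$ and a fortiori under $G$. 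For the rest I set $U = {\mathbb A}^{\Sigma(1)} - Z_\Sigma$ and, for each cone $\sigma \in \Sigma$, the principal open $U_\sigma = D(x^{\hat\sigma}) \cong {\mathbb A}^{\sigma(1)} \times \gm^{\times(\Sigma(1) \setminus \sigma(1))}$. By the definition of $Z_\Sigma$ the $U_\sigma$ for $\sigma$ maximal cover $U$, and a short check with ray sets gives $U_\sigma \cap U_{\sigma'} = U_{\sigma \cap \sigma'}$, matching the combinatorics of the cover of $X_\Sigma$ by the affine charts $X_\sigma$.

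For freeness it suffices to compute the stabilizer of an arbitrary $x \in U$. Letting $\tau \in \Sigma$ be the cone whose rays are exactly $\{\rho : x_\rho = 0\}$ (such a cone exists precisely because $x \in U$), the stabilizer of $x$ in $\gm^{\times \Sigma(1)}$ is the subtorus $\gm^{\times \tau(1)}$, so the stabilizer in $G$ is the kernel of the restricted homomorphism $\gm^{\times \tau(1)} \to T$. On cocharacter lattices this is the map $\Z^{\tau(1)} \to N$, $e_\rho \mapsto v_\rho$; since $\Sigma$ is \emph{smooth}, the vectors $v_\rho$ ($\rho \in \tau(1)$) form part of a $\Z$-basis of $N$, so this map is a split injection and the associated map of tori has trivial kernel. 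Hence all stabilizers are trivial and $G$ acts freely on $U$. This is precisely the point at which smoothness of $\Sigma$ enters.

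The substantive step is to identify $U_\sigma/G$ with $X_\sigma$ and to glue. Passing to $G$-invariants in $k[U_\sigma] = k[x_\rho : \rho \in \Sigma(1)][1/x^{\hat\sigma}]$, a Laurent monomial $\prod_\rho x_\rho^{a_\rho}$ is invariant exactly when it has degree $0$ in $Pic(X_\Sigma)$, i.e. when $\sum_\rho a_\rho D_\rho = \operatorname{div}(\chi^m)$ for some $m \in M$, equivalently $a_\rho = \langle v_\rho, m \rangle$; imposing the constraints defining $U_\sigma$ ($a_\rho \ge 0$ for $\rho \in \sigma(1)$, arbitrary otherwise) translates exactly into $m \in \sigma^\vee \cap M$. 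Thus $m \mapsto \prod_\rho x_\rho^{\langle v_\rho, m \rangle}$ yields a canonical isomorphism $k[\sigma^\vee \cap M] \isomto k[U_\sigma]^G$, i.e. $U_\sigma /\!/ G \cong X_\sigma$. Because $G$ acts freely this categorical quotient is geometric; concretely, for $\sigma$ maximal the rays $\sigma(1)$ form a basis of $N$, so $\gm^{\times \sigma(1)} \to T$ is an isomorphism, $G$ projects isomorphically onto $\gm^{\times(\Sigma(1) \setminus \sigma(1))}$, and $U_\sigma \cong G \times {\mathbb A}^{\sigma(1)}$ as a $G$-space, exhibiting $U_\sigma \to X_\sigma$ as a trivial $G$-torsor. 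Since these identifications are given by the same monomial formulas on the overlaps $U_{\sigma \cap \sigma'}$, they are compatible with the face-restriction maps $X_{\sigma \cap \sigma'} \hookrightarrow X_\sigma$, so gluing the $U_\sigma/G \cong X_\sigma$ over the cover produces the asserted canonical identification $U/G \cong X_\Sigma$. I expect the main obstacle to lie exactly here --- matching the degree-zero part of the localized coordinate ring with $k[\sigma^\vee \cap M]$ and checking these matchings cohere across the cover --- whereas invariance and freeness are quick once the cocharacter-lattice dictionary and smoothness are in place.
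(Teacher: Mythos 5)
The paper gives no proof of this theorem: it is stated as Cox's result and cited from \cite{Cox}, so there is nothing internal to compare against. Your argument is correct and is essentially Cox's original one --- invariance from monomiality, freeness from smoothness via the split injection of cocharacter lattices, and the identification $U/G \cong X_\Sigma$ by computing $k[U_\sigma]^G \cong k[\sigma^\vee \cap M]$ chart by chart and gluing along the faces. The only two steps stated more briskly than they deserve are: (i) that $\{\rho : x_\rho = 0\}$ spans a cone of $\Sigma$ --- this uses that $x \in U$ forces this set to lie inside $\sigma(1)$ for some cone $\sigma$, and that smooth cones are simplicial, so any subset of the generators of $\sigma$ spans a face; and (ii) that $U_\sigma \cong G \times {\mathbb A}^{\sigma(1)}$ \emph{as $G$-spaces} requires untwisting the action (via $(y,s) \mapsto (g(s)^{-1}\cdot y, s)$, where $g(s) \in G$ is the unique lift of $s$), since $G$ acts on all coordinates, not just the $\gm$-factors; both are routine.
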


\subsubsection*{Combinatorics related to $Z_{\Sigma}$}
Our goal now is to relate the combinatorial structure of $\Sigma$ to
the geometry of the variety $Z_{\Sigma}$.  As we mentioned above,
$Z_{\Sigma}$ is a union of coordinate hypersurfaces in ${\mathbb
A}^{\Sigma(1)}$.  Our goal will be to give conditions that guarantee 1)
that $Z_{\Sigma}$ has codimension $\geq d$, 2) assuming (1) that $Z_{\Sigma}$ has exactly $r$ codimension $d$ components, and 3) assuming (2) that the intersection of any pair of codimension $d$ subspaces in $Z_{\Sigma}$ has codimension $\geq d +2$.  These conditions will form the
combinatorial backbone of our vanishing and non-vanishing results
for $\aone$-homotopy groups in the next section.  In order to do
this, we investigate the ideal defining $Z_{\Sigma}$ in greater
detail.  Recall (Definition \ref{defn:irrelevantideal}) that the
ideal defining $Z_{\Sigma}$ is generated by monomials of the form
$x_{\hat{\sigma}}$.

Cox shows that the variety defined by the ideal $I_{\Sigma}$
generated by the set of cone-complement monomials is precisely the
coordinate subspace arrangement $Z_{\Sigma}$.  The combinatorial
conditions we will require are summarized in the following
proposition.

\begin{prop}
\label{prop:combinatorics}
Suppose $\Sigma$ is a smooth, proper fan.  Suppose $Z_{\Sigma} \subset {\mathbb A}^{\Sigma(1)}$ is the coordinate subspace arrangement associated with the fan $\Sigma$.
\begin{enumerate}
\item The subspace arrangement $Z_{\Sigma}$ has codimension $\geq d$ in ${\mathbb A}^{\Sigma(1)}$ if and only if every (non-degenerate) collection of $d-1$ primitive vectors in $\Sigma$ is part of a cone in $\Sigma$.
\item If furthermore there exists a collection of $d$ primitive vectors that are {\em not} part of some cone of $\Sigma$, then $Z_{\Sigma}$ has a component of codimension precisely $d$.  The set of components of $Z_{\Sigma}$ of codimension exactly $d$ is in canonical bijection with the set of (unordered) collections of $d$ primitive vectors that are {\em not} part of some cone of $\Sigma$.
\item Suppose $Z_{\Sigma}$ has codimension $d$ in ${\mathbb A}^{\Sigma(1)}$. Then the intersection of two codimension $d$ subspaces of
$Z_{\Sigma}$ in ${\mathbb A}^{\Sigma(1)}$ has codimension $d+1$ if
and only if under the above bijection the corresponding collections
have $d-1$ primitive vectors in common.
\item All intersections of codimension $d$ components of
$Z_{\Sigma}$ have codimension at least $d+2$ in
$\mathbb{A}^{\Sigma(1)}$ if and only if for every set of $d-1$
primitive vectors $\setof{\rho_1,\ldots,\rho_{d-1}}$ spanning a
cone, there exists at most one primitive vector $\rho_{d}$ such that
$\setof{\rho_1,\ldots,\rho_{d-1},\rho_d}$ does not span a cone.
\end{enumerate}
\end{prop}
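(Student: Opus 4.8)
\emph{Plan.} The whole proposition rests on a single combinatorial dictionary, after which each of the four assertions is a short counting argument. For a subset $S \subseteq \Sigma(1)$ write $L_S \subset {\mathbb A}^{\Sigma(1)}$ for the coordinate subspace $\setof{x_\rho = 0 : \rho \in S}$; it is irreducible of codimension $|S|$, and $L_S \subseteq L_{S'}$ precisely when $S \supseteq S'$. Say that $S$ \emph{spans a cone} if $\mathrm{Cone}(S) \in \Sigma$; since $\Sigma$ is smooth (hence simplicial, with each cone generated by part of a basis), this is equivalent to $S \subseteq \sigma(1)$ for some $\sigma \in \Sigma$. The first step is to observe that the cone-complement monomial $x^{\hat{\sigma}} = \prod_{\rho \notin \sigma(1)} x_\rho$ restricts to zero on $L_S$ if and only if $S \not\subseteq \sigma(1)$; intersecting over all $\sigma \in \Sigma$ yields the master formula
\begin{equation*}
L_S \subseteq Z_\Sigma \iff S \text{ spans no cone of } \Sigma.
\end{equation*}
Since $Z_\Sigma$ is by construction a reduced union of coordinate subspaces (Theorem \ref{thm:cox} and the subsequent discussion), its irreducible components are the maximal such $L_S$, i.e.\ those $S$ minimal for inclusion among subsets spanning no cone; consequently $\codim Z_\Sigma = \min \setof{ |S| : S \text{ spans no cone of } \Sigma }$.

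With this dictionary in hand I would deduce (1) and (2) directly. Any subset of a cone-spanning set spans a face of that cone, which again lies in $\Sigma$, so ``every subset of size $\leq d-1$ spans a cone'' is equivalent to ``every subset of size exactly $d-1$ spans a cone.'' The master formula then gives $\codim Z_\Sigma \geq d$ iff no subset of size $\leq d-1$ fails to span a cone, iff every collection of $d-1$ distinct primitive generators (the non-degeneracy condition of (1)) is part of a cone; this is (1). For (2), if $\codim Z_\Sigma = d$ then every $(d-1)$-subset spans a cone, so any size-$d$ subset $S$ spanning no cone is automatically \emph{minimal} among non-spanning subsets (each proper subset has size $\leq d-1$). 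Hence the codimension-$d$ components are exactly the $L_S$ with $|S| = d$ and $S$ non-spanning, which is the asserted bijection with collections of $d$ primitive vectors not part of a cone.

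Statements (3) and (4) reduce to the elementary identity $L_S \cap L_{S'} = L_{S \cup S'}$, whence $\codim(L_S \cap L_{S'}) = |S \cup S'| = 2d - |S \cap S'|$ for two codimension-$d$ components. For distinct $S, S'$ of size $d$ one has $|S \cap S'| \leq d-1$, so this intersection always has codimension $\geq d+1$, with equality exactly when $|S \cap S'| = d-1$, i.e.\ when the two collections share $d-1$ primitive vectors; that is (3). For (4), the same computation shows that all pairwise intersections (and hence all intersections, each being contained in a pairwise one) have codimension $\geq d+2$ if and only if no two distinct codimension-$d$ components share $d-1$ vectors. Writing two such components as $R \cup \setof{\rho_d}$ and $R \cup \setof{\rho_d'}$ with $|R| = d-1$ and $\rho_d \neq \rho_d'$, and noting that $R$ necessarily spans a cone by (1) under the standing hypothesis $\codim Z_\Sigma = d$, this failure is precisely the existence of a cone-spanning $(d-1)$-set admitting two distinct extensions to non-spanning $d$-sets. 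Negating gives exactly the condition of (4).

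The only step requiring genuine geometric input is the master formula of the first paragraph—translating Cox's monomial description of $Z_\Sigma$ into the language of cone-spanning subsets and correctly identifying the irreducible components. I expect the main obstacle to be making the smoothness-dependent equivalence ``$S$ spans a cone $\iff S \subseteq \sigma(1)$ for some $\sigma \in \Sigma$'' fully airtight, together with the passage from the union $\bigcup_S L_S$ to its components via minimality of $S$. Everything after that is routine bookkeeping with the cardinalities $|S|$, $|S \cap S'|$, and $|S \cup S'|$.
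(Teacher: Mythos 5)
Your proof is correct and takes essentially the same route as the paper's: the ``master formula'' you isolate ($L_S \subseteq Z_{\Sigma}$ if and only if $S$ spans no cone of $\Sigma$) is precisely what the paper's DeMorgan's-Laws argument establishes, and parts (1)--(4) then reduce to the same counting of shared indices among size-$d$ non-spanning collections. The only difference is organizational: you state the dictionary once and reuse it, whereas the paper re-derives the relevant containments within each claim.
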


\begin{proof}
Observe that $Z_{\Sigma}$ is defined by the simultaneous vanishing
of the set of cone-complement monomials:
\begin{equation*}
\begin{matrix}
x_{i_{1,1}} \cdots x_{i_{1,k_1}}=0 \\
x_{i_{2,1}} \cdots x_{i_{2,k_2}}=0 \\
\vdots \\
x_{i_{l,1}} \cdots x_{i_{l,k_l}}=0\\
\end{matrix}
\end{equation*}

By DeMorgan's Laws, the locus is the union of intersections of the
vanishing of precisely one factor $x_{j_{s,t}}$ from each monomial.
To simplify notation slightly, for the rest of the proof we use
fewer indices, reverting to the convention of the above definition:
to a cone $\sigma$ with $k$ primitive vectors $\setof{\rho_{i_1},
\ldots, \rho_{i_k}}$ the associated cone-complement monomial is $x_1
\cdots \hat{x}_{i_1} \cdots \hat{x}_{i_{k}} \cdots x_{\Sigma(1)}$.
In particular, if $\sigma$ is a cone of $\Sigma$, then the
coordinate hyperplane $x_{i_j} = 0$ is not contained in $Z_{\Sigma}$
for any $j$.

To prove the forward direction of the first claim, assume there is a
collection of $d-1$ primitive vectors $\rho_1, \ldots, \rho_{d-1}$
which are {\em not} part of a cone of $\Sigma$.  Then every monomial
contains at least one factor $x_j$ for $j \in 1, \ldots d-1$.  Thus,
by the DeMorgan's Laws argument, one of the components of
$Z_{\Sigma}$ consists of a coordinate subspace determined by a set
of equations $x_j = 0$ for $j$ drawn from ${1, \ldots, d-1}$, and
hence is codimension no more than $d-1$.

To prove the reverse direction of the first claim, observe that
since any component of $Z_{\Sigma}$ is a coordinate subspace, it is
equivalent to show that no codimension $(d-1)$ coordinate subspace
$x_{i_1} = \cdots = x_{i_{d-1}} = 0$ is contained in any component
of $Z_{\Sigma}$.  By the DeMorgan's Laws statement above, if a
component contains $x_{i_1} = \cdots = x_{i_{d-1}} = 0$, then {\em
each} cone-complement monomial must have at least one of the
$x_{i_j}$ as a factor. In other words, every cone of $\Sigma$ must
then lack at least one of the $\rho_{i_j}$, which is a contradiction
because by assumption all $(d-1)$ vectors $\rho_{i_j}$ are part of a
$(d-1)$-cone.

For the second statement, note that by the first claim all
components of $Z_{\Sigma}$ are at least codimension $d$.  Consider
primitive vectors $\rho_1, \ldots, \rho_d$ that do not form part of
a $d$-cone. By assumption each of the $d$ possible $(d-1)$-element
subsets are part of a $(d-1)$-cone; we denote by $\sigma_i$ the cone
consisting of each $\rho_j$ other than $\rho_i$.  Then $x_i$ is a
factor in the cone-complement monomial of $\sigma_i$; furthermore no
other cone of $\Sigma$ contains all of the $\rho_j$, so some $x_j$
(for $1 \leq j \leq d$) is a factor in each of the remaining
cone-complement monomials.  Consequently $x_1 = \cdots = x_d = 0$ is
contained in a component of $Z_{\Sigma}$, and because it is of the
minimal codimension $d$, it must actually be a component of
$Z_{\Sigma}$.

It follows that, under the assumptions of the first claim, given any
collection of $d$ primitive vectors $\rho_{i_1}, \ldots, \rho_{i_d}$
that are {\em not} part of a cone of $\Sigma$, there is a
canonically associated codimension $d$ subspace $x_{i_1} = \ldots =
x_{i_d} = 0$ that is a component of $Z_{\Sigma}$.  It is clear from
the construction that this canonical association is a bijection
$\beta$.

For the third claim, consider two codimension $d$ components of
$Z_{\Sigma}$, call them $L_1$, given by $x_{i_1} = \cdots =
x_{i_d}=0$, and $L_2$, given by $x_{j_1} = \cdots = x_{j_d}$.  They
intersect in a codimension $d+1$ set if and only if $d-1$ of the
$i_l$ indices agree with $d-1$ of the $j_s$ indices.  Using the
definition of the bijection, this is true if and only if the
$d$-cones $\beta^{-1}(L_1)$ and $\beta^{-1}(L_2)$ have $d-1$
primitive vectors in common.

The final claim follows because by the third claim all pairwise
intersections of such spaces are codimension at least $d+2$ if and
only if under $\beta^{-1}$ the corresponding collections of $d$
primitive vectors have strictly fewer than $d-1$ elements in common.
Equivalently, given any $(d-1)$-cone there is at most one collection
of $d$ primitive vectors of the stated type.
\end{proof}

\subsubsection*{The Cox cover as an $\aone$-covering space}
One can deduce from Proposition \ref{prop:combinatorics} that
the Cox cover of Theorem \ref{thm:cox} is a geometric Galois
$\aone$-covering space (as introduced in Definition
\ref{defn:geometriccovering}).

\begin{prop}\label{prop:galoiscover}
\label{lem:connected} Suppose $X_{\Sigma}$ is a smooth proper toric
variety over an infinite field $k$.  Both $X_{\Sigma}$ and ${\mathbb
A}^{\Sigma(1)} - Z_{\Sigma}$ are $\aone$-connected.  Thus, the
quotient morphism
$$q: {\mathbb A}^{\Sigma(1)} - Z_{\Sigma} \longrightarrow X_{\Sigma}
$$
makes ${\mathbb A}^{\Sigma(1)} - Z_{\Sigma}$ into a geometric Galois
$\aone$-covering of $X_{\Sigma}$ with group
$Pic(X_{\Sigma})^{\vee}$.
\end{prop}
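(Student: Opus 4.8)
The plan is to verify the two $\aone$-connectedness assertions separately and then feed them, together with Cox's presentation, into Proposition \ref{prop:solvablequot}. Write $U := \mathbb{A}^{\Sigma(1)} - Z_{\Sigma}$ and $G := Pic(X_{\Sigma})^{\vee}$. By Theorem \ref{thm:cox}, $G$ acts freely on $U$ with geometric quotient $X_{\Sigma}$, and this free action with honest quotient realizes $q$ as a $G$-torsor, so hypotheses (i) and (ii) of Proposition \ref{prop:solvablequot} hold. Since $X_{\Sigma}$ is a smooth proper toric variety, $Pic(X_{\Sigma})$ is free abelian, whence $G$ is a split torus; in particular $G$ is a connected, split, solvable group whose unipotent radical is trivial, so $G/R_u(G) = G$.

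For the $\aone$-connectedness of $X_{\Sigma}$ I would invoke the remark that any smooth proper toric variety admits an open cover by affine toric varieties isomorphic to affine space. As $X_{\Sigma}$ is irreducible and smooth of dimension $n = \dim T$, Lemma \ref{lem:affineneighborhoods} then applies verbatim: $X_{\Sigma}$ is $\aone$-chain connected, and hence $\aone$-connected by Proposition \ref{prop:aoneconnected}.

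For the $\aone$-connectedness of $U$ I would first record that $Z_{\Sigma}$ has codimension $\geq 2$ in $\mathbb{A}^{\Sigma(1)}$. This is precisely the case $d = 2$ of Proposition \ref{prop:combinatorics}(1): $Z_{\Sigma}$ has codimension $\geq 2$ exactly when every single primitive vector is part of a cone of $\Sigma$, which holds trivially since each ray of $\Sigma$ is itself a cone. Because $Z_{\Sigma}$ is a union of coordinate subspaces (Definition \ref{defn:irrelevantideal}) of codimension $\geq 2$ and $k$ is infinite, Lemma \ref{lem:affinespace} yields that $U$ is $\aone$-connected.

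With both $U$ and $X_{\Sigma}$ shown to be $\aone$-connected, all hypotheses of Proposition \ref{prop:solvablequot} are met for the split torus $G$ acting freely on $U$, and we conclude that $q \colon U \longrightarrow X_{\Sigma}$ is a (homotopy) geometric Galois $\aone$-covering in the sense of Definition \ref{defn:geometriccovering}; since $G$ is a torus its structure group is $G/R_u(G) = G = Pic(X_{\Sigma})^{\vee}$, as claimed. I expect the genuine difficulty to lie in the $\aone$-connectedness of the \emph{total space} $U$: unlike its topological counterpart this has no classical analogue and is genuinely delicate, since removing a codimension-$2$ coordinate arrangement can destroy $\aone$-connectedness over a finite field (cf. Remark \ref{rem:spacefilling}). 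This is exactly the point at which the hypothesis that $k$ be infinite is forced, entering through Lemma \ref{lem:affinespace}; the $\aone$-connectedness of the base $X_{\Sigma}$ and the verification that $G$ is a split torus are comparatively routine.
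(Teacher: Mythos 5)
Your proof is correct and follows essentially the same route as the paper's: $\aone$-connectedness of $X_{\Sigma}$ via the cover by copies of affine space (Lemma \ref{lem:affineneighborhoods}), $\aone$-connectedness of ${\mathbb A}^{\Sigma(1)} - Z_{\Sigma}$ via the $d=2$ case of Proposition \ref{prop:combinatorics} together with Lemma \ref{lem:affinespace} (where the infinite-field hypothesis enters, as you note), and Cox's torsor presentation (Theorem \ref{thm:cox}) to conclude. The only difference is packaging: you route the last step through Proposition \ref{prop:solvablequot}, whereas the paper concludes directly from the fact that a torsor under a split torus (a strongly $\aone$-invariant sheaf of groups) is an $\aone$-cover --- which is precisely what Proposition \ref{prop:solvablequot} reduces to when the unipotent radical is trivial.
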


\begin{proof}
If $X_{\Sigma}$ is proper and strictly positive dimensional, then $\Sigma$ must have more than one maximal cone, else it would be an affine toric variety.  A degenerate fan would correspond to a toric variety which decompose into a Cartesian product with a torus, which again would contradict being proper.  The first statement of Proposition \ref{prop:combinatorics} implies that $Z_{\Sigma}$ has codimension at least $2$ in ${\mathbb A}^{\Sigma(1)}$.  Thus, ${\mathbb A}^{\Sigma(1)} - Z_{\Sigma}$ is $\aone$-connected by Lemma \ref{lem:affinespace}.

Because $X_{\Sigma}$ is smooth and proper, it follows that
$X_{\Sigma}$ can be covered by open affine subsets isomorphic to
affine space and is thus $\aone$-connected.  Indeed, the fan
$\Sigma$ can be written as a union of maximal cones $\sigma$, each
of which is a toric variety corresponding to an affine space.  Thus
every point of $X_{\Sigma}$ is contained in an open subscheme
isomorphic to an affine space and $X_{\Sigma}$ is
$\aone$-chain-connected.

Finally, Theorem \ref{thm:cox} implies that $q: {\mathbb
A}^{\Sigma(1)} - Z_{\Sigma} \longrightarrow X_{\Sigma}$ is a
$Pic(X_{\Sigma})^{\vee}$-torsor and thus a Galois $\aone$-cover.
Thus, combining this with the previous paragraphs, we see that $q$
is a geometric Galois $\aone$-cover.
\end{proof}

\begin{rem}
Observe that the proof actually works for any smooth toric variety
which is the complement of a codimension at least $2$ subvariety of
a smooth proper toric variety.  
\end{rem}

\section{$\aone$-homotopy groups of smooth toric varieties}
\label{s:vanishing} In order to study the $\aone$-homotopy groups of
smooth proper toric varieties, we use the quotient presentation
described in the previous section.  Together with the long exact sequence in $\aone$-homotopy groups of a fibration, this will allow us to reduce our computations to the study of $\aone$-homotopy groups of complements of coordinate subspaces in affine space.

While we can not, at the moment, compute the first non-vanishing
$\aone$-homotopy group of a coordinate subspace complement in
complete generality, the combinatorial conditions described in the
Proposition \ref{prop:combinatorics} allow us to make computations
for many toric examples.  Theorem \ref{thm:abelian} provides a computation of the first non-vanishing $\aone$-homotopy group of a coordinate subspace complement in the situation where the subspace
arrangement contains no components of codimension $\leq d$, and the
codimension $d$ subspaces have pairwise intersection of dimension
$\geq d+2$.

\subsubsection*{Complements of coordinate subspaces in affine space}
Consider ${\mathbb A}^n$ with fixed coordinates $x_1,\ldots,x_n$.  Let $\setof{L_i}$, $i \in I$ be a sequence of coordinate subspaces in ${\mathbb A}^n$.  Suppose furthermore that all the $L_i$ have codimension $\geq d$.  Suppose $I' \subset I$ is a subset such that for $i \in I'$ each $L_i$ has codimension exactly $d$.  In this situation we get a morphism
$$
j: {\mathbb A}^n - \cup_{i \in I} L_i \hookrightarrow {\mathbb A}^n - \cup_{i \in I'} L_i
$$
whose complement is necessarily of codimension $\geq d+1$.

\begin{cor}
\label{cor:highercodim}
The map
$$
j_*: \pi_i^{\aone}({\mathbb A}^n - \cup_{i \in I} L_i) \longrightarrow \pi_i^{\aone}({\mathbb A}^n - \cup_{i \in I'} L_i)
$$
is an isomorphism in degrees $\leq d-1$ and a surjection in degree $d$.  Furthermore, if $I'$ is non-empty, then $\pi_{d-1}^{\aone}({\mathbb A}^n - \cup_{i \in I'} L_i)$ is non-vanishing.
\end{cor}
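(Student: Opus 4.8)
The plan is to deduce both assertions from the excision theorem (Theorem \ref{thm:excision}) applied three times, exploiting that $\aone$-connectedness of every space involved is supplied by Lemma \ref{lem:affinespace} and that ${\mathbb A}^n$ is $\aone$-contractible. Write $X := {\mathbb A}^n - \cup_{i\in I'}L_i$ and $U := {\mathbb A}^n - \cup_{i\in I}L_i$, fix the base point $(1,\dots,1)$ (which, having all coordinates nonzero, avoids every proper coordinate subspace and hence lies in $U \subset X$), and assume as usual that $k$ is infinite and $d \geq 2$. Since $\cup_{i\in I'}L_i$ and $\cup_{i\in I}L_i$ are unions of coordinate subspaces of codimension $\geq d \geq 2$, Lemma \ref{lem:affinespace} shows that both $X$ and $U$ are $\aone$-connected. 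Moreover, as $I'$ consists exactly of the codimension-$d$ members of the family, the complement $X\setminus U$ is contained in $\cup_{i\in I\setminus I'}L_i$ and therefore has codimension $\geq d+1$ in $X$.

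The crucial preliminary step is to upgrade the $\aone$-connectedness of $X$ to the higher connectivity that Theorem \ref{thm:excision} will demand for the pair $(U,X)$. To this end I would first apply Theorem \ref{thm:excision} to the open immersion $X \hookrightarrow {\mathbb A}^n$, whose complement $\cup_{i\in I'}L_i$ has codimension $d$. Because ${\mathbb A}^n$ is $\aone$-contractible it is $\aone$-$m$-connected for every $m$, so the connectivity hypothesis is automatic; the theorem then yields $\pi_i^{\aone}(X) \cong \pi_i^{\aone}({\mathbb A}^n)$ for $i \leq d-2$. As $\pi_i^{\aone}({\mathbb A}^n)=0$ for $i\geq 1$, this proves that $X$ is $\aone$-$(d-2)$-connected.

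With this in hand I would apply Theorem \ref{thm:excision} a second time, now to $j\colon U\hookrightarrow X$, whose complement has codimension $\geq d+1$. The two $\aone$-connectedness hypotheses hold by the first paragraph, and the required bound ``$\aone$-$m$-connected for $m\geq (d+1)-3=d-2$'' is precisely the output of the bootstrapping step. Hence $j_*\colon \pi_i^{\aone}(U)\to\pi_i^{\aone}(X)$ is an isomorphism for $i\leq (d+1)-2=d-1$ and an epimorphism for $i=(d+1)-1=d$, which is the main assertion. One technical point deserves mention: $X\setminus U$ need not be of pure codimension $d+1$, but the excision mechanism (through Corollaries \ref{cor:excision-abelian} and \ref{cor:excision-nonabelian}, which are phrased for complements of codimension $\geq d$) depends only on the minimal codimension, so the stated range is correct.

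For the final claim, suppose $I'\neq\emptyset$ and choose $i_0\in I'$, so $L:=L_{i_0}$ is a codimension-$d$ coordinate subspace; after relabeling coordinates ${\mathbb A}^n-L\cong ({\mathbb A}^d-0)\times{\mathbb A}^{n-d}$, which is $\aone$-weakly equivalent to ${\mathbb A}^d-0$. Thus Theorem \ref{thm:projectivespace} gives $\pi_{d-1}^{\aone}({\mathbb A}^n-L)\cong\underline{\bf{K}}^{MW}_d\neq 0$ and also shows ${\mathbb A}^n-L$ is $\aone$-$(d-2)$-connected. The inclusion $X\hookrightarrow {\mathbb A}^n-L$ has complement $\cup_{i\in I'\setminus\{i_0\}}L_i$ of codimension $d$, so a third application of Theorem \ref{thm:excision} produces an epimorphism $\pi_{d-1}^{\aone}(X)\twoheadrightarrow\pi_{d-1}^{\aone}({\mathbb A}^n-L)\cong\underline{\bf{K}}^{MW}_d\neq 0$; since a surjection onto a nontrivial sheaf of groups forces the source to be nontrivial, $\pi_{d-1}^{\aone}({\mathbb A}^n-\cup_{i\in I'}L_i)$ is non-vanishing. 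The main obstacle throughout is exactly that Theorem \ref{thm:excision} is not directly applicable to the pair $(U,X)$, since a priori one knows only that $X$ is $\aone$-$0$-connected; the heart of the argument is the bootstrapping comparison of $X$ with the contractible ${\mathbb A}^n$, and everything else is bookkeeping with codimensions.
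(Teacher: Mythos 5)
Your proof is correct and takes essentially the same route as the paper's: the main assertion is deduced from Theorem \ref{thm:excision} (your bootstrap through the $\aone$-contractible ${\mathbb A}^n$, plus Lemma \ref{lem:affinespace}, simply makes explicit the connectivity verification the paper leaves implicit in ``immediate corollary''), and the non-vanishing is obtained exactly as in the paper by mapping to ${\mathbb A}^n - L_{i_0} \simeq {\mathbb A}^d - 0$ and invoking Theorem \ref{thm:projectivespace}. Your side remark that the excision mechanism only depends on the minimal codimension of the (possibly non-pure) complement is a welcome clarification, not a deviation.
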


\begin{proof}
The first statement is an immediate corollary of Theorem \ref{thm:excision}.  If $I'$ is non-empty, we can choose any $L_i$ with $i \in I'$.  We then have an open immersion ${\mathbb A}^n - \cup_{i \in I'} L_i \hookrightarrow {\mathbb A}^n - L_i$.  This map induces a surjection on $\pi_{d-1}^{\aone}$.  Since ${\mathbb A}^n - L_i$ is $\aone$-weakly equivalent to ${\mathbb A}^d - 0$, the result follows from Theorem \ref{thm:projectivespace}.
\end{proof}

Suppose now that $L_1,\ldots,L_r$ are (distinct) coordinate subspaces of codimension (exactly) $d \geq 2$ in ${\mathbb A}^n$, then we have
${\mathbb A}^n - \cup_{i=1}^r L_i = \cap_{i=1}^r {\mathbb A}^n - L_i
$ where the intersection is taken in affine space.  Similarly,
$\cup_{i=1}^r {\mathbb A}^n - L_i = {\mathbb A}^n - \cap_{i=1}^r L_i
$ where again the intersection is taken ${\mathbb A}^n$.  Observe that, since the $L_i$ are distinct, the intersection $L_i \cap L_j$ is always a non-empty coordinate subspace of lower dimension than either $L_i$ or $L_j$. Furthermore, any such coordinate subspace complement is $\aone$-connected by Lemma \ref{lem:affinespace}.  Thus, this gives us a useful inductive procedure for computing $\aone$-homotopy groups of such complements.

\begin{lem}
\label{lem:surjectivityd}
Suppose $L_1,\ldots,L_r$ are a collection of coordinate subspaces of
codimension $d$ ($d \geq 2$) in ${\mathbb A}^n$.  There is a
surjective morphism
\[
\tau: H_{d-1}^{\aone}({\mathbb A}^n - \cup_{i=1}^r L_i) \longrightarrow {\underline{{\bf K}}^{MW}_d}^{\oplus r}.
\]
\end{lem}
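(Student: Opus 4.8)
The plan is to define $\tau$ as the tuple of restriction maps and prove surjectivity by induction on $r$ using the Mayer--Vietoris sequence of Proposition \ref{prop:mayervietoris}. For each $i$, the open immersion $j_i \colon {\mathbb A}^n - \cup_{s} L_s \hookrightarrow {\mathbb A}^n - L_i$ induces a map on $\aone$-homology; since each $L_i$ is a codimension $d$ coordinate subspace, after permuting coordinates ${\mathbb A}^n - L_i \cong ({\mathbb A}^d - 0) \times {\mathbb A}^{n-d}$ is $\aone$-weakly equivalent to ${\mathbb A}^d - 0$, so Theorem \ref{thm:projectivespace} together with the $\aone$-Hurewicz theorem (Theorem \ref{thm:aonehurewicz}) gives $H_{d-1}^{\aone}({\mathbb A}^n - L_i) \cong \underline{\bf K}^{MW}_d$. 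Assembling the $(j_i)_*$ into a single map (a finite product of sheaves of abelian groups being their direct sum) defines $\tau \colon H_{d-1}^{\aone}({\mathbb A}^n - \cup_{i=1}^r L_i) \to (\underline{\bf K}^{MW}_d)^{\oplus r}$. The base case $r = 1$ is then the isomorphism just described. Note throughout that $d - 1 \geq 1$, so reduced and unreduced $\aone$-homology agree in this degree.

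For the inductive step I would set $V = {\mathbb A}^n - L_r$ and $W = {\mathbb A}^n - \cup_{i=1}^{r-1} L_i$, so that $V \cap W = {\mathbb A}^n - \cup_{i=1}^r L_i$ while $V \cup W = {\mathbb A}^n - \cup_{i=1}^{r-1}(L_r \cap L_i)$. Because the $L_i$ are distinct codimension $d$ coordinate subspaces, each $L_r \cap L_i$ is a coordinate subspace of codimension $\geq d+1$, so the complement of $V \cup W$ has codimension $\geq d+1$. The reduced Mayer--Vietoris sequence then reads, in degree $d-1$,
\begin{equation*}
\tilde H_{d-1}^{\aone}(V \cap W) \xrightarrow{\ \alpha\ } \tilde H_{d-1}^{\aone}(V) \oplus \tilde H_{d-1}^{\aone}(W) \xrightarrow{\ \beta\ } \tilde H_{d-1}^{\aone}(V \cup W).
\end{equation*}

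The key connectivity input is that $V \cup W$ is $\aone$-$(d-1)$-connected: its complement has codimension $\geq d+1 \geq 3$, so it is $\aone$-connected by Lemma \ref{lem:affinespace}, and applying Theorem \ref{thm:excision} with $X = {\mathbb A}^n$ (which is $\aone$-contractible, hence $\aone$-$m$-connected for every $m$) yields $\pi_i^{\aone}(V \cup W) \cong \pi_i^{\aone}({\mathbb A}^n) = \ast$ for $i \leq d-1$. The $\aone$-Hurewicz theorem then forces $\tilde H_{d-1}^{\aone}(V \cup W) = 0$, so $\beta = 0$ and $\alpha$ is surjective. Identifying $\tilde H_{d-1}^{\aone}(V) = \underline{\bf K}^{MW}_d$ as above and composing $\alpha$ with $\mathrm{id} \oplus \tau_{r-1}$, where $\tau_{r-1}$ is the surjection furnished by the inductive hypothesis for $W$, produces a surjection onto $\underline{\bf K}^{MW}_d \oplus (\underline{\bf K}^{MW}_d)^{\oplus(r-1)} = (\underline{\bf K}^{MW}_d)^{\oplus r}$. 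Since the components of $\alpha$ are induced by the inclusions $V \cap W \hookrightarrow V$ and $V \cap W \hookrightarrow W$, this composite agrees (up to the harmless sign in the Mayer--Vietoris map) with the restriction map $\tau$ for the full collection, completing the induction.

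The main obstacle is the connectivity bookkeeping: one must verify that passing from $W$ to $V \cup W$ raises the codimension of the removed locus from $d$ to at least $d+1$, and that this genuinely improves $\aone$-connectivity by one degree so that the right-hand term of the Mayer--Vietoris sequence vanishes. This is exactly where the excision theorem (Theorem \ref{thm:excision}) and the identification of the first non-vanishing $\aone$-homology of ${\mathbb A}^d - 0$ are indispensable; without the vanishing of $\tilde H_{d-1}^{\aone}(V \cup W)$ one could only control the image of $\alpha$ rather than conclude its surjectivity. In contrast to the finer splitting results needed later, no hypothesis on the codimension of the pairwise intersections $L_i \cap L_j$ beyond the automatic bound $\geq d+1$ is required here.
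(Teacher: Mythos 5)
Your proof is correct and follows essentially the same route as the paper's: induction on the number of subspaces via the Mayer--Vietoris sequence of Proposition \ref{prop:mayervietoris}, with the same decomposition $V = {\mathbb A}^n - L_r$, $W = {\mathbb A}^n - \cup_{i<r} L_i$, vanishing of $\tilde{H}_{d-1}^{\aone}(V \cup W)$ via excision (Theorem \ref{thm:excision}) plus the $\aone$-Hurewicz theorem, and the identification $H_{d-1}^{\aone}({\mathbb A}^n - L_i) \cong \underline{\bf{K}}^{MW}_d$ from Theorem \ref{thm:projectivespace}. You are in fact slightly more careful than the paper in two places it leaves implicit: verifying the $\aone$-connectedness of $V \cup W$ (via Lemma \ref{lem:affinespace}) needed to invoke the excision theorem, and pinning down $\tau$ as the tuple of restriction maps compatible with the inductively constructed surjection.
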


\begin{proof}
We know that for each pair of subspaces $L_i,L_j$, the intersection $L_i \cap L_j$ has codimension $\geq d+1$ in ${\mathbb A}^n$ as the $L_i$ are distinct coordinate subspaces.

We will proceed by induction using the Mayer-Vietoris sequence of Proposition \ref{prop:mayervietoris}.  Consider the subspace ${\mathbb A}^n - \cup_{j=1}^m L_j$.  We can write this subspace as the intersection of ${\mathbb A}^n - \cup_{j=1}^{m-1} L_j$ and ${\mathbb A}^n - L_m$.  The union of these two subspaces is ${\mathbb A}^n - (\cup_{i=1}^{m-1} \cap L_m)$.  Consider the Mayer-Vietoris sequence for this pair of open sets.  We get
\begin{equation}
\label{eqn:mayervietorisd}
\begin{split}
\cdots \longrightarrow& {H}_{d}^{\aone}({\mathbb A}^n - ((\cup_{j=1}^{m-1}L_j) \cap L_m)) \longrightarrow {H}_{d-1}^{\aone}({\mathbb A}^n - (\cup_{j=1}^m L_j)) \\ \longrightarrow& {H}_{d-1}^{\aone}({\mathbb A}^n - \cup_{j=1}^{m-1} L_j ) \oplus {H}_{d-1}^{\aone}({\mathbb A}^n - L_m) \longrightarrow {H}_{d-1}^{\aone}({\mathbb A}^n - ((\cup_{j=1}^{m-1}L_j) \cap L_m)) \longrightarrow \cdots
\end{split}
\end{equation}
Since ${\mathbb A}^n - ((\cup_{j=1}^{m-1}L_j) \cap L_m)$ has complement of codimension $\geq d+1$ in ${\mathbb A}^n$, by excision we know its $d-1$st homotopy group vanishes.  By the $\aone$-Hurewicz theorem its $d-1$st reduced homology group vanishes as well.  Thus the last term in the diagram vanishes.  By Theorem \ref{thm:projectivespace} combined with the Hurewicz theorem, we know that ${H}_{d-1}^{\aone}({\mathbb A}^n - L_m)$ is isomorphic to $\underline{\bf{K}}^{MW}_{d}$.  Induction on $m$ gives the result.
\end{proof}

Observe that if $d \geq 3$, the $\aone$-Hurewicz theorem allows us to identify the $(d-1)$st $\aone$-homotopy and $\aone$-homology groups.  Thus, the morphism $\tau$ in the statement of Lemma \ref{lem:surjectivityd} is actually a surjection on $\aone$-homotopy groups.  The next result gives a computation of the first non-trivial $\aone$-homotopy group.

\begin{prop}
\label{prop:abelian}
Under the hypotheses of Lemma \ref{lem:surjectivityd}, if all the $L_i$ have pair-wise intersection of codimension $\geq d+2$, then the morphism $\tau$ is an isomorphism and $$\pi_{d-1}^{\aone}({\mathbb A}^n - \cup_{i=1}^r L_i) \cong {\underline{\bf{K}}^{MW}_{d}}^{\oplus r}.$$
\end{prop}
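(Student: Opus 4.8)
The plan is to establish the homology statement—that $\tau$ is an isomorphism—by an induction on $r$ built on the Mayer--Vietoris sequence of Proposition \ref{prop:mayervietoris}, and then to transport the result to $\aone$-homotopy via the $\aone$-Hurewicz theorem (Theorem \ref{thm:aonehurewicz}). Throughout, write $Y_m = {\mathbb A}^n - \cup_{j=1}^m L_j$, so that $Y_r$ is the space in question; since each $L_j$ is a coordinate subspace of codimension $d$, every $Y_m$ is a coordinate subspace complement of codimension $d \geq 2$ and is therefore $\aone$-connected by Lemma \ref{lem:affinespace}. The surjection $\tau$ of Lemma \ref{lem:surjectivityd} is, by its construction, the product of the maps on $H_{d-1}^{\aone}$ induced by the open immersions $Y_r \hookrightarrow {\mathbb A}^n - L_j$, each target being $\aone$-weakly equivalent to ${\mathbb A}^d - 0$ with $H_{d-1}^{\aone} \cong \underline{\bf{K}}^{MW}_d$ (Theorem \ref{thm:projectivespace}); I will show this product map is an isomorphism.

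For the inductive step I apply Mayer--Vietoris to the cover of $E_m := {\mathbb A}^n - \cup_{j=1}^{m-1}(L_j \cap L_m)$ by $A = Y_{m-1}$ and $B = {\mathbb A}^n - L_m$, so that $A \cap B = Y_m$ and $A \cup B = E_m$. The crucial input is the hypothesis that each pairwise intersection $L_j \cap L_m$ has codimension $\geq d+2$: the complement of $E_m$ in ${\mathbb A}^n$ is then a coordinate subspace arrangement of codimension $\geq d+2$, and applying excision (Theorem \ref{thm:excision}) to the pair $E_m \subset {\mathbb A}^n$—using that ${\mathbb A}^n$ is $\aone$-contractible—shows $E_m$ is $\aone$-$d$-connected. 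The $\aone$-Hurewicz theorem then forces $H_{d-1}^{\aone}(E_m) = H_d^{\aone}(E_m) = 0$, so that the relevant segment of the (reduced) Mayer--Vietoris sequence collapses to an isomorphism $H_{d-1}^{\aone}(Y_m) \isomto H_{d-1}^{\aone}(Y_{m-1}) \oplus H_{d-1}^{\aone}({\mathbb A}^n - L_m)$. Feeding in the inductive hypothesis $H_{d-1}^{\aone}(Y_{m-1}) \cong (\underline{\bf{K}}^{MW}_d)^{\oplus(m-1)}$ together with $H_{d-1}^{\aone}({\mathbb A}^n - L_m) \cong \underline{\bf{K}}^{MW}_d$ splits off one further summand and gives $H_{d-1}^{\aone}(Y_m) \cong (\underline{\bf{K}}^{MW}_d)^{\oplus m}$, the base case $m=1$ being $Y_1 = {\mathbb A}^n - L_1$, $\aone$-weakly equivalent to ${\mathbb A}^d - 0$. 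Naturality of the Mayer--Vietoris maps with respect to the inclusions $Y_m \hookrightarrow {\mathbb A}^n - L_j$ identifies the resulting isomorphism with $\tau$.

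It remains to pass from $H_{d-1}^{\aone}$ to $\pi_{d-1}^{\aone}$. Excision applied to $Y_r \subset {\mathbb A}^n$ shows $Y_r$ is $\aone$-$(d-2)$-connected. When $d \geq 3$ we have $d-1 \geq 2$, and part (iii) of the $\aone$-Hurewicz theorem gives $\pi_{d-1}^{\aone}(Y_r) \cong H_{d-1}^{\aone}(Y_r) \cong (\underline{\bf{K}}^{MW}_d)^{\oplus r}$ directly. The case $d = 2$ is the delicate one, since $\pi_1^{\aone}$ is a priori non-abelian and only part (i) of Hurewicz is available. Here the codimension $\geq d+2 = 4$ hypothesis is decisive: two codimension-$2$ coordinate subspaces meeting in codimension $\geq 4$ must involve disjoint pairs of coordinates, so after reindexing $Y_r \cong ({\mathbb A}^2 - 0)^{\times r} \times {\mathbb A}^{n-2r}$. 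The product formula for $\aone$-homotopy sheaves of $\aone$-connected spaces, together with $\aone$-homotopy invariance, then yields $\pi_1^{\aone}(Y_r) \cong \pi_1^{\aone}({\mathbb A}^2-0)^{\times r} \cong (\underline{\bf{K}}^{MW}_2)^{\oplus r}$, which is abelian; part (ii) of Hurewicz identifies this with $H_1^{\aone}(Y_r)$ compatibly with $\tau$.

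The main obstacle is the inductive step, specifically the vanishing of the two ``gluing'' groups $H_{d-1}^{\aone}(E_m)$ and $H_d^{\aone}(E_m)$: it is exactly here that the codimension $\geq d+2$ hypothesis enters, since without it the complement of $E_m$ could have codimension only $d+1$, the term $H_d^{\aone}(E_m)$ need not vanish, and the Mayer--Vietoris sequence would fail to split—the failure-to-split phenomenon described in the introduction. A secondary subtlety, invisible to the homology argument but essential for the homotopy conclusion, is the possible non-abelianness of $\pi_1^{\aone}$ when $d = 2$; the combinatorial reduction to a product is what circumvents it.
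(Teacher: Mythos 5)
Your proposal is correct and follows essentially the same route as the paper: run the Mayer--Vietoris induction of Lemma \ref{lem:surjectivityd} with the strengthened hypothesis, use excision into the $\aone$-contractible ${\mathbb A}^n$ plus the Hurewicz theorem to kill \emph{both} flanking terms $H_d^{\aone}(E_m)$ and $H_{d-1}^{\aone}(E_m)$ (the paper compresses this into ``follows immediately from Lemma \ref{lem:surjectivityd}'' for $d \geq 3$), and then transport to homotopy via Hurewicz, isolating $d=2$ because of possible non-abelianness of $\pi_1^{\aone}$. The one place you genuinely diverge is the $d=2$ case: the paper only establishes a \emph{surjection} $\pi_1^{\aone}(({\mathbb A}^2-0)^{\times \ell}) \twoheadrightarrow \pi_1^{\aone}(Y_r)$ (via a projection that is an $\aone$-weak equivalence, an open immersion of the product, and Theorem \ref{thm:excision}), from which abelianness follows since quotients of abelian sheaves of groups are abelian; you instead observe that codimension $\geq 4$ pairwise intersections force the defining coordinate pairs to be disjoint, so $Y_r$ is \emph{isomorphic} as a scheme to $({\mathbb A}^2-0)^{\times r} \times {\mathbb A}^{n-2r}$, and the product formula for $\aone$-homotopy sheaves computes $\pi_1^{\aone}(Y_r) \cong (\underline{\bf{K}}^{MW}_2)^{\oplus r}$ outright. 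Your version is cleaner and slightly stronger: it bypasses the paper's somewhat awkward parity/open-embedding argument and yields the group directly rather than only its abelianness, at no extra cost.
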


\begin{proof}
If $d \geq 3$, this follows immediately from Lemma \ref{lem:surjectivityd}.  If $d = 2$, we get an identification $\tilde{H}_1({\mathbb A}^n - \cup_{i=1}^r L_i) \cong {\underline{\bf{K}}^{MW}_{2}}^{\oplus r}$.  Thus, it suffices to prove that $\pi_1^{\aone}({\mathbb A}^n - \cup_{i=1}^r L_i)$ is abelian; the result then will follow from the $\aone$-Hurewicz theorem.  We claim that, up to homotopy, there is a surjective morphism
$$
\pi_1^{\aone}(({\mathbb A}^2 - 0)^{\times \ell}) \longrightarrow \pi_1^{\aone}({\mathbb A}^n - \cup_{i=1}^r L_i).
$$
Assuming this, observe that the first sheaf is abelian by Theorem \ref{thm:projectivespace}, so the result follows.

Choose a basis $x_1,\ldots,x_n$ of ${\mathbb A}^n$.  Observe that codimension $2$ coordinate subspaces are defined by the vanishing of pairs $x_i,x_j$ for $i,j \in \setof{1,\ldots,n}$.  By assumption, the successive intersections of the $L_i$ have codimension $\geq 4$.  A parity argument shows that there can only be an even number of codimension $2$ subspaces in ${\mathbb A}^n$ satisfying these conditions.  Thus, the number of coordinates that do not vanish for some codimension $2$ subspace is odd if $n$ is odd and even if $n$ is even.  Projection onto the coordinates that do not appear defines an $\aone$-weak equivalence to a subspace complement in ${\mathbb A}^{2n'}$ where each of the coordinates vanishes in at least one coordinate subspace.

In this last case, it is easy to construct an open embedding from a product of copies of ${\mathbb A}^2 - 0$.  By Theorem \ref{thm:excision}, this open immersion defines a surjective morphism of homotopy groups.
\end{proof}

\subsubsection*{Vanishing and non-vanishing results}
\label{s:nonvanishing}
Proposition \ref{prop:combinatorics} gives a purely combinatorial condition guaranteeing that the intersection of any pair of codimension $d$ subspaces has codimension $d + 2$.  This together with all of the previous results proved leads us to our main computation.

\begin{thm}
\label{thm:abelian}
Suppose $X_{\Sigma}$ is a toric variety corresponding to a smooth, proper fan $\Sigma$.  Let ${\mathbb A}^{\Sigma(1)}$ denote the affine space of dimension equal to the number of $1$-dimensional cones in $\Sigma$.  Let $Z_{\Sigma}$ be the irrelevant subvariety.
\begin{enumerate}
\item Let $r$ denote the number of (unordered) collections of pairs of primitive vectors $\rho_i,\rho_j$ that do not span a cone.  Suppose for each primitive vector $\rho_i$, there exists at most one index $i(j)$ such that $\rho_i,\rho_{i(j)}$ does not span a cone.  Then $\pi_1^{\aone}(X_{\Sigma})$ fits into an extension of the form:
\[
1 \longrightarrow {\underline{\bf{K}}^{MW}_2}^{\oplus r} \longrightarrow \pi_1^{\aone}(X_{\Sigma},x) \longrightarrow Pic(X_{\Sigma})^{\vee} \longrightarrow 1.
\]
\item Suppose $d \geq 3$ and every non-degenerate collection of $d-2$-primitive vectors spans a cone.  Let $r$ denote the number of (unordered) collections of $d-1$-primitive vectors that do not span a cone.  Suppose for each (unordered) collection of $d-2$ primitive vectors $\rho_1,\ldots,\rho_{d-2}$ that do span a cone, there is a unique primitive vector $\rho_{d-1}$ (distinct from the $\rho_i$) such that $\setof{\rho_1,\ldots,\rho_{d-1}}$ does not span a cone.  Furthermore, the quotient map ${\mathbb A}^{\Sigma(1)} - Z_{\Sigma} \longrightarrow X_{\Sigma}$ induces (functorial) isomorphisms
\begin{equation*}
\begin{split}
\pi_1^{\aone}(X_{\Sigma},x) &\cong Pic(X_{\Sigma})^{\vee}, \\
\pi_i^{\aone}(X_{\Sigma},x) &\cong 0 \text{ for all } 2 \leq i \leq d-2, \text{ and }\\
\pi_{d-1}^{\aone}(X_{\Sigma},x) &\cong {\underline{\bf{K}}^{MW}_d}^{\oplus r}.
\end{split}
\end{equation*}
\end{enumerate}
\end{thm}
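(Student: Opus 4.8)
The plan is to reduce everything to the $\aone$-homotopy sheaves of the Cox cover $U := {\mathbb A}^{\Sigma(1)} - Z_{\Sigma}$ and then transport the answer across the quotient. By Proposition \ref{prop:galoiscover}, $q: U \longrightarrow X_{\Sigma}$ is a geometric Galois $\aone$-covering space whose structure group is the split torus $Pic(X_{\Sigma})^{\vee}$; its unipotent radical is trivial, so Corollary \ref{cor:homotopycovering} applies verbatim and supplies a short exact sequence $1 \to \pi_1^{\aone}(U,x) \to \pi_1^{\aone}(X_{\Sigma},x) \to Pic(X_{\Sigma})^{\vee} \to 1$ together with isomorphisms $\pi_i^{\aone}(U,x) \isomto \pi_i^{\aone}(X_{\Sigma},x)$ for every $i \geq 2$. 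It therefore suffices to compute $\pi_1^{\aone}(U)$ and the first higher non-vanishing homotopy sheaf of $U$. As a preliminary step I would record, via Proposition \ref{prop:combinatorics}, that the spanning hypotheses on primitive vectors translate precisely into the geometric statements that $Z_{\Sigma}$ has codimension $\geq d$, that it has exactly $r$ irreducible components of codimension exactly $d$, and that any two of these codimension-$d$ components meet in codimension $\geq d+2$. In particular $\codim Z_{\Sigma} \geq 2$, so $U$ is $\aone$-connected by Lemma \ref{lem:affinespace} and the extension above is meaningful.

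For part (1) (the case $d=2$), the codimension $\geq d+2$ condition on pairwise intersections is exactly the hypothesis of Proposition \ref{prop:abelian} with $d=2$; applying it yields $\pi_1^{\aone}(U,x) \cong (\underline{\bf{K}}^{MW}_2)^{\oplus r}$. Substituting this into the short exact sequence furnished by Corollary \ref{cor:homotopycovering} produces precisely the asserted extension of $\pi_1^{\aone}(X_{\Sigma},x)$ by $(\underline{\bf{K}}^{MW}_2)^{\oplus r}$ with quotient $Pic(X_{\Sigma})^{\vee}$.

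For part (2) ($d \geq 3$) I would first show that $U$ is $\aone$-$(d-2)$-connected. Since ${\mathbb A}^{\Sigma(1)}$ is $\aone$-contractible and $Z_{\Sigma}$ has codimension $\geq d$, Theorem \ref{thm:excision} applied to the pair $({\mathbb A}^{\Sigma(1)}, U)$ forces $\pi_i^{\aone}(U,x) \isomto \pi_i^{\aone}({\mathbb A}^{\Sigma(1)}) = 0$ for $0 \leq i \leq d-2$. Being $\aone$-$(d-2)$-connected with $d-1 \geq 2$, the $\aone$-Hurewicz theorem (Theorem \ref{thm:aonehurewicz}) identifies $\pi_{d-1}^{\aone}(U)$ with $H_{d-1}^{\aone}(U)$; Corollary \ref{cor:highercodim} then lets me discard the components of $Z_{\Sigma}$ of codimension $> d$ without altering this group, and Proposition \ref{prop:abelian} evaluates it as $(\underline{\bf{K}}^{MW}_d)^{\oplus r}$ using the codimension $\geq d+2$ intersection hypothesis. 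Feeding $\pi_1^{\aone}(U) = 1$ and the values $\pi_i^{\aone}(U)$ for $i \geq 2$ back into Corollary \ref{cor:homotopycovering} gives $\pi_1^{\aone}(X_{\Sigma}) \cong Pic(X_{\Sigma})^{\vee}$, the vanishing of $\pi_i^{\aone}(X_{\Sigma})$ for $2 \leq i \leq d-2$, and $\pi_{d-1}^{\aone}(X_{\Sigma}) \cong (\underline{\bf{K}}^{MW}_d)^{\oplus r}$.

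I expect the delicate point to be the $\aone$-fundamental group. The fibration sequence and the Hurewicz comparison are clean only in the abelian range, and the real force of the ``at most one partner'' combinatorial condition — equivalently, codimension $\geq d+2$ pairwise intersections — is that it guarantees $\pi_1^{\aone}(U)$ is abelian in the boundary case $d=2$, via the surjection from $\pi_1^{\aone}(({\mathbb A}^2-0)^{\times \ell})$ exploited inside Proposition \ref{prop:abelian}, so that Hurewicz is available and the relevant Mayer--Vietoris terms split. Checking that the stated spanning hypotheses deliver exactly these three geometric conditions through Proposition \ref{prop:combinatorics}, and that no information is lost in passing from $U$ to $X_{\Sigma}$ along the $Pic(X_{\Sigma})^{\vee}$-torsor, is where the genuine care lies; the rest is assembly of the cited results.
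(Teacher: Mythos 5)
Your proposal is correct and follows essentially the same route as the paper, which proves the theorem by combining Proposition \ref{prop:galoiscover}, Corollary \ref{cor:homotopycovering}, Proposition \ref{prop:combinatorics}, Corollary \ref{cor:highercodim}, and Proposition \ref{prop:abelian} in exactly the way you assemble them. Your write-up simply makes explicit the connectivity and excision steps that the paper's one-line proof leaves implicit.
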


\begin{proof}
The Cox cover is a geometric Galois $\aone$-cover by Proposition \ref{prop:galoiscover}.  Thus, Corollary \ref{cor:homotopycovering} shows that we have a corresponding long exact sequence in $\aone$-homotopy groups.  The results then follow immediately by combining Proposition \ref{prop:combinatorics}, Corollary \ref{cor:highercodim}, and Proposition \ref{prop:abelian}.
\end{proof}

\begin{rem}
Theorem 1 of \cite{Wendt} provides a complete determination of the $\aone$-fundamental group of ${\mathbb A}^{\Sigma(1)} -  Z_{\Sigma}$ for a smooth proper toric variety $X_{\Sigma}$ associated with a fan $\Sigma$.  Nevertheless, the group structure on $\pi_1^{\aone}(X_{\Sigma})$ is not completely understood.  In particular, one must specify the precise extension in question.  
\end{rem}

\subsection*{Examples}
\label{ss:examples}
In this subsection, we present some sample computations of $\aone$-homotopy groups of smooth proper toric varieties.  We will use notation following the previous sections throughout.  Thus, if $\Sigma$ is a fan, we write $X_{\Sigma}$ for the associated toric variety, $\Sigma(1)$ for the set of $1$-dimensional cones in $\Sigma$, ${\mathbb A}^{\Sigma(1)}$
for the affine space containing the Cox cover and $Z_{\Sigma}$ for
the irrelevant subvariety corresponding to $\Sigma$.

\subsubsection*{Blow-ups}
Suppose ${\sf Bl}_{Y}(X) \longrightarrow X$ is a blow-up of a smooth variety $X$ at a smooth center $Y$ having codimension $\geq 2$.  Since $Pic({\sf Bl}_Y(X))$ is necessarily isomorphic to $Pic(X) \oplus \Z$ with the generator of $\Z$ being the class of the exceptional divisor, the $\aone$-fundamental group of ${\sf Bl}_Y(X)$ is necessarily non-isomorphic to the $\aone$-fundamental group of $X$.  Thus, a blow-up of a smooth variety at a smooth subvariety having codimension $\geq 2$ can never be an $\aone$-weak equivalence.

Recall that birational morphisms of toric varieties correspond to locally-finite subdivisions of fans.  Suppose $f: X_{\Sigma'} \longrightarrow X_{\Sigma}$ is a blow-up of $X_{\Sigma}$ at a smooth toric subvariety $X_{\Delta}$ corresponding to a smooth sub-fan $\Delta \subset \Sigma$.  We will consider the induced homomorphism:
\[
f_*: \pi_1^{\aone}(X_{\Sigma'}) \longrightarrow \pi_1^{\aone}(X_{\Sigma}).
\]
Suppose $Z_{\Sigma}$ has codimension $\geq d$ in ${\mathbb A}^{\Sigma(1)}$, $d \geq 3$.  Observe that $f_*$ is surjective in this situation.  Indeed, pulling back the $\aone$-universal cover of $X_{\Sigma}$ via $f$ gives a torsor under a torus on $X_{\Sigma'}$; one can then apply the $\aone$-covering space dictionary.  The discussion in the previous paragraph shows that there is always a factor of $\gm$ in the kernel of $f_*$, and one might ask whether this is the entire kernel.  Unfortunately, we will see that this is never the case.

If $\Sigma$ is a fan, closures of orbits of codimension $d$ correspond to cones of dimension $d$.  If $\Sigma$ is a smooth fan, one can check that any cone $\sigma \subset \Sigma$ corresponds to a smooth fan as well.    If $\sigma$ is a cone in a fan $\Sigma$, let $\Star({\sigma})$ be the set of all cones $\tau \in \Sigma$ which contain $\sigma$.

\begin{construction}[Blowing up $\sigma \subset \Sigma$]
Let us describe the refinement of $\Sigma$ corresponding to the blow-up of $X_{\Sigma}$ at the (smooth) toric subvariety corresponding to $\sigma$.
\begin{enumerate}
\item Let $\rho_1,\ldots,\rho_d$ be the set of primitive generators for $\sigma$.  Set $\rho_0 = \sum_{i=1}^d \rho_i$, and let $\sigma_i$ be the cone with primitive generators $\rho_0,\rho_1,\ldots,\hat{\rho_i},\ldots,\rho_d$ (where the $\hat{\rho_i}$ indicates that $\rho_i$ has been omitted).
\item For each $\tau \in \Star(\sigma)$, decompose $\tau = \sigma + \sigma'(\tau)$, where $\sigma'(\tau) \cap \sigma = \setof{0}$ (such a decomposition exists because $\tau$ is generated by a subset of the basis for ${\rm X}^*(T)$).
\item For each $\tau \in \Star(\sigma)$, replace $\tau$ by the cones $\sigma_i + \sigma'(\tau)$.
\item Write $\Sigma'$ to be the fan with this new collection of cones.
\end{enumerate}
\end{construction}

\begin{ex}
Let $X_{\Sigma}$ be a smooth proper $n$-dimensional toric variety corresponding to a fan $\Sigma$.  Suppose we want to blow-up $X_{\Sigma}$ at a torus fixed point.  Torus fixed points correspond to cones of dimension $n$, so fix such a cone and call it $\sigma$; the star of such a cone consists of the cone itself.  If $\rho_1,\ldots,\rho_n$ are primitive vectors for $\sigma$, then we consider the vector $\rho_0 = \rho_1+ \cdots + \rho_n$ and subdivide the cone $\sigma$ into subcones of the form $\rho_0,\rho_1,\ldots,\hat{\rho_i},\ldots,\rho_n$.  Let $\Sigma'$ denote the new fan.
\end{ex}

\begin{lem}
Let $X_{\Sigma}$ be a smooth proper toric variety and let $\sigma \subset \Sigma$ be a cone.  Let $\Sigma'$ denote the refinement corresponding to the blow-up of $X_{\Sigma}$ at the subvariety corresponding to $\sigma$.  Then $\pi_1^{\aone}(X_{\Sigma'})$ is {\em never} isomorphic to $Pic(X_{\Sigma'})$.
\end{lem}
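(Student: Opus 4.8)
The plan is to reduce the statement to a property of the Cox cover and then to a purely combinatorial feature of the star subdivision. By Proposition \ref{prop:galoiscover} the quotient $q\colon \mathbb{A}^{\Sigma'(1)} - Z_{\Sigma'} \to X_{\Sigma'}$ is a geometric Galois $\aone$-covering space with group $Pic(X_{\Sigma'})^{\vee}$, so Corollary \ref{cor:homotopycovering} yields an exact sequence
\[
1 \longrightarrow \pi_1^{\aone}(\mathbb{A}^{\Sigma'(1)} - Z_{\Sigma'}) \longrightarrow \pi_1^{\aone}(X_{\Sigma'}) \longrightarrow Pic(X_{\Sigma'})^{\vee} \longrightarrow 1.
\]
Thus it suffices to show that the kernel $\pi_1^{\aone}(\mathbb{A}^{\Sigma'(1)} - Z_{\Sigma'})$ is nontrivial and that a nontrivial kernel is incompatible with an isomorphism $\pi_1^{\aone}(X_{\Sigma'}) \cong Pic(X_{\Sigma'})^{\vee}$.

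For the second point I would first note that $Pic(X_{\Sigma'})^{\vee}$ is a split torus $\gm^{\times m}$. If $\pi_1^{\aone}(X_{\Sigma'})$ were isomorphic to it, the surjection in the displayed sequence would become an epimorphism of Nisnevich sheaves $\gm^{\times m} \to \gm^{\times m}$. Since a group-sheaf endomorphism of $\gm$ is a power map, so that $\operatorname{End}(\gm)=\Z$, every such endomorphism is given by an integer matrix $A$, and its cokernel sheaf is trivial only when $A \in GL_m(\Z)$: for non-invertible $A$ the cokernel contains a nontrivial factor of the form $\gm/\gm^{n}$ (cf.\ Example \ref{ex:finitequotient}). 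Hence the surjection would be an isomorphism and the kernel would vanish, a contradiction. This records the expected principle that a split torus cannot be a nontrivial quotient of itself.

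It remains to prove $\pi_1^{\aone}(\mathbb{A}^{\Sigma'(1)} - Z_{\Sigma'}) \neq 0$. Here I would invoke the excision package: $\mathbb{A}^{\Sigma'(1)}$ is $\aone$-contractible, and by Theorem \ref{thm:excision} (or directly Corollary \ref{cor:highercodim}) the complement of a coordinate subspace arrangement of codimension $\geq 3$ is $\aone$-$1$-connected, whereas the presence of a codimension-$2$ component forces $\pi_1^{\aone}$ to be nonzero. By Proposition \ref{prop:combinatorics}(1)--(2), $Z_{\Sigma'}$ has a codimension-$2$ component precisely when there is a pair of primitive vectors (rays) of $\Sigma'$ that does not span a cone. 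So the lemma is equivalent to the combinatorial assertion that the star subdivision $\Sigma'$ always contains two rays lying in no common cone.

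This combinatorial assertion is the heart of the matter, and I expect it to be the main obstacle. Two cases are immediate. If $\dim\sigma = 2$, the two primitive generators $\rho_1,\rho_2$ of $\sigma$ spanned $\sigma$ in $\Sigma$, but after subdivision $\sigma$ is replaced by $\langle\rho_0,\rho_1\rangle$ and $\langle\rho_0,\rho_2\rangle$, so $\{\rho_1,\rho_2\}$ no longer spans a cone. If $\sigma$ is maximal (blow-up of a torus-fixed point), then the new ray $\rho_0=\sum_i\rho_i$ is adjacent only to $\rho_1,\ldots,\rho_{\dim\sigma}$, and properness of $\Sigma$ supplies a further ray $\rho'$, whence $\{\rho_0,\rho'\}$ spans no cone. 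The genuinely delicate situation is an intermediate-codimension center, where the original generators of $\sigma$ still pairwise span cones and $\rho_0$ can be adjacent to many rays; to produce a ray non-adjacent to $\rho_0$ one must argue with the geometry of $\Star(\sigma)$, for instance by locating the cone of $\Sigma'$ containing $-\rho_0$ and exploiting strong convexity together with completeness of the fan. Ruling out that every ray of $\Sigma'$ becomes adjacent to $\rho_0$ is exactly where the real combinatorial work — and the subtlety of the statement — lies.
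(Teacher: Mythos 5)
Your overall frame coincides with the paper's (mostly implicit) one: pass to the Cox cover via Proposition \ref{prop:galoiscover} and Corollary \ref{cor:homotopycovering}, translate the lemma through Proposition \ref{prop:combinatorics} and Corollary \ref{cor:highercodim} into the assertion that $\Sigma'$ has two rays spanning no common cone, and note that a nontrivial kernel is incompatible with $\pi_1^{\aone}(X_{\Sigma'})\cong Pic(X_{\Sigma'})^{\vee}$. Your argument for this last incompatibility (endomorphisms of $\gm^{\times m}$ are integer matrices, power maps are not Nisnevich-surjective by Example \ref{ex:finitequotient}, so a sheaf surjection $\gm^{\times m}\to\gm^{\times m}$ is an isomorphism) is correct and is a step the paper never addresses at all. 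The paper's entire published proof is then a single sentence: take $\rho_0$ together with a primitive generator of any ray \emph{not} lying in a cone of $\Star(\sigma)$; since every cone of $\Sigma'$ containing $\rho_0$ is a face of some $\sigma_i+\sigma'(\tau)$ with $\tau\in\Star(\sigma)$, such a pair spans no cone. Your maximal-$\sigma$ case is exactly this argument (there an outside ray exists by properness), while your $\dim\sigma=2$ case, the pair $(\rho_1,\rho_2)$, is a different mechanism the paper does not use. Formally, then, your proposal has a gap where the paper claims completeness: the intermediate-codimension case is left open.

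However, your instinct that the intermediate case is ``where the real work lies'' is vindicated in the strongest sense: the paper's proof silently assumes that a ray outside $\Star(\sigma)$ exists, this can fail, and when it fails with $\dim\sigma\geq 3$ the lemma itself is false. Take $\Sigma$ to be the fan of ${\mathbb P}^4$ with rays $e_1,\dots,e_4$ and $e_5=-(e_1+\cdots+e_4)$, and $\sigma=\langle e_1,e_2,e_3\rangle$. Every ray of $\Sigma$ generates a cone of $\Star(\sigma)$ (both $\langle e_1,e_2,e_3,e_4\rangle$ and $\langle e_1,e_2,e_3,e_5\rangle$ are cones), so the paper's pair does not exist; moreover \emph{every} pair of rays of $\Sigma'$ spans a cone: each $(\rho_0,e_i)$ is a face of one of the subdivided cones $\sigma_j+\sigma'(\tau)$, and each $(e_i,e_j)$ spans a $2$-dimensional cone of $\Sigma$, which cannot contain the $3$-dimensional $\sigma$ and therefore survives into $\Sigma'$. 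By Proposition \ref{prop:combinatorics}(1), $Z_{\Sigma'}$ then has codimension $\geq 3$, so Theorem \ref{thm:excision} makes the Cox cover $\aone$-simply connected, and Corollary \ref{cor:homotopycovering} gives $\pi_1^{\aone}(X_{\Sigma'})\cong Pic(X_{\Sigma'})^{\vee}\cong\gm^{\times 2}$, contradicting the lemma. (For ${\mathbb P}^3$ blown up along $V(\langle e_1,e_2\rangle)$ the outside ray likewise fails to exist, but there your pair $(\rho_1,\rho_2)$ rescues the statement; that pair is unavailable once $\dim\sigma\geq 3$, since $\langle\rho_1,\rho_2\rangle$ remains a face of $\sigma_3$.) The correct statement is that the lemma holds precisely when $Z_{\Sigma}$ already has a codimension-$2$ component, or $\dim\sigma=2$, or some ray of $\Sigma$ lies outside $\Star(\sigma)$. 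So the step you declined to hand-wave is not merely a gap in your argument: it is an error in the paper, and completing your proposal honestly would have required discovering the counterexample rather than the missing combinatorial lemma.
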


\begin{proof}
Take the vector $\rho_0$ introduced in the blow-up and any primitive vector in a $1$-dimensional cone not lying in the star of $\sigma$; this gives a pair of primitive vectors not contained in a cone and hence shows that $Z_{\Sigma}$ has a codimension $2$ component.
\end{proof}

\begin{ex}
Consider ${\mathbb P}^n$ viewed as a toric variety under $\gm^{\times n}$.  We can view ${\mathbb P}^n$ as associated with the following fan in ${\rm X}^*(T)$.  Identify the last group with $\Z^n$ and let $e_i$ denote the usual basis vectors.  Let $e_{n+1} = -(e_1+ \cdots,e_n)$, and consider the fan $\Sigma$ whose $n$-dimensional cones correspond to subsets of the form $e_1,\ldots,\hat{e_i},\ldots,e_{n+1}$.

Consider the torus fixed point corresponding to the cone $e_1,\ldots,e_n$ and let $e_{n+2} = \sum_{i=1}^n e_i$; denote this fixed-point by $x$.  Then the $n$-dimensional cones of the blow-up of ${\mathbb P}^n$ at the corresponding torus fixed point are $e_1,\ldots,\hat{e_i},\ldots,e_{n+1}$ (with $i \neq n+1$) and $e_1,\ldots,\hat{e_j},\ldots,e_n,e_{n+2}$ (with $j = 1,\ldots,n$); let $\Sigma'$ denote the corresponding fan. Observe that, so long as $n > 2$, the {\em only} pair of primitive vectors not lying in some cone is the pair $e_{n+1},e_{n+2}$.  Thus, $Z_{\Sigma'}$ has a unique codimension $2$ component and Theorem \ref{thm:abelian} shows that $\pi_1^{\aone}({\sf Bl}_x({\mathbb P}^n))$ fits into an exact sequence of the form
$$
1\longrightarrow {\underline{\bf{K}}^{MW}_2} \longrightarrow \pi_1^{\aone}({\sf Bl}_x({\mathbb P}^n)) \longrightarrow \gm^{\times 2} \longrightarrow 1.
$$
Recall also that one may identify ${\sf Bl}_x({\mathbb  P}^n)$ as a ${\mathbb P}^1$-bundle over ${\mathbb P}^{n-1}$.  We refer the reader to the next example for the case $n = 2$.   

Next, let $x'$ denote the torus fixed-point corresponding to the cone $e_2,\ldots,e_{n+1}$.  Let $e_{n+3} = \sum_{i=1}^n e_{i+1}$.  If we blow-up ${\mathbb P}^n$ at both points $x$ and $x'$, the resulting toric variety has $n$-dimensional cones given by $e_1,\ldots,\hat{e_i},\ldots,e_{n+1}$ (with $i = 2,\ldots,n$), $e_1,\ldots,\hat{e_j},\ldots,e_n,e_{n+2}$ (with $j = 1,\ldots,n$), and $e_2,\ldots,\hat{e_k},\ldots,e_{n+1},e_{n+3}$ (with $k = 2,\ldots,n+1$).  Let $\Sigma''$ be the corresponding fan.  There are now a host of pairs of primitive vectors which do not fit into some cone and one may check that some pairs corresponding to codimension $2$ subspaces in $Z_{\Sigma''}$ have intersection of codimension $3$.  Thus, while there is still a surjection $\pi_1^{\aone}({\sf Bl}_{x,x'}{\mathbb P}^n) \longrightarrow \gm^{\times 3}$, the kernel of this surjection is rather difficult to describe.
\end{ex}

\subsubsection*{Hirzebruch surfaces}
Smooth projective toric surfaces correspond to $2$-dimensional fans.  By general theory of toric surfaces, every smooth projective toric surface can be obtained from either ${\mathbb P}^2$ or a Hirzebruch surface ${\mathbb F}_a$ by successive blow ups at torus fixed points (see \cite{Fulton} \S 2.5 for more details).  For us, the surface ${\mathbb F}_a$ can be identified with the projectivization of the rank $2$ vector bundle $\O \oplus O(a)$ (for some integer $a$) over ${\mathbb P}^1$.  Morel's results describe the $\aone$-fundamental group of ${\mathbb P}^2$, so let us describe the $\aone$-fundamental group of ${\mathbb F}_a$.

\begin{ex}
The group $\pi_1^{\aone}({\mathbb F}_a)$ fits into a short exact sequence of the form
\begin{equation*}
1 \longrightarrow {\underline{\bf{K}}^{MW}_2}^{\oplus 2} \longrightarrow \pi_1^{\aone}({\mathbb F}_a) \longrightarrow \gm^{\oplus 2} \longrightarrow 1.
\end{equation*}
The group structure on this sheaf has been determined by Morel and the $\aone$-fundamental group of ${\mathbb F}_a$ depeds only on the value of $a$ mod $2$.  In other words, the extension appearing in Corollary \ref{cor:homotopycovering} can be non-trivial!
\end{ex}

\subsubsection*{Smooth complete toric varieties with small numbers of generators}
Kleinschmidt has classified all $d$-dimensional smooth proper fans with at most $d+2$-generators (see \cite{Kleinschmidt}).  Any smooth proper fan must have at least $d+1$-generators; and any such fan with exactly $d+1$-generators is necessarily the fan of projective space ${\mathbb P}^d$.

Assume now $d \geq 2$.  As before let $e_i$ denote the unit vectors in $\Z^{\oplus d}$.  Suppose we have given a pair of integers $s$ and $r$ satisfying $2 \leq s \leq d$, $r = d - s + 1$ and a collection of non-negative integers with $0 \leq a_1 \leq a_2 \leq \cdots \leq a_r$.  Consider the following collection of primitive vectors
$$
(e_1,\ldots,e_r,-\sum_{i=1}^r e_i, e_{r+1},\ldots,e_{r+s-1},\sum_{i=1}^r a_i e_i - \sum_{j=1}^{s-1} e_{r+j})
$$
Set $U = \setof{e_1,\ldots,e_r,\sum_{i=1}^r e_i}$ and $V = \setof{e_{r+1},\ldots,e_{r+s -1},\sum_{i=1}^r a_i e_i - \sum_{j=1}^{s-1} e_{r+j}}$.  We define a fan $\Sigma_d(a_1,\ldots,a_r)$ that has $U \cup V$ as its set of primitive generators and whose $d$-dimensional (maximal) cones are the positive linear spans of sets of the form $U \cup V - \setof{u,v}$ where $u \in U$ and $v \in V$.  Any smooth proper toric $d$-variety with $d+2$-generators is isomorphic to precisely one of the varieties $X_{\Sigma_d(a_1,\ldots,a_r)}$ by \cite{Kleinschmidt} Theorem 1.  Furthermore, by \cite{Kleinschmidt} Theorem 3, any such variety is isomorphic to a ${\mathbb P}^r$-bundle over ${\mathbb P}^{s-1}$.  We may thus deduce from Theorem \ref{thm:abelian} then that $\pi_1^{\aone}(X_{\Sigma_d(a_1,\ldots,a_r)})$ is isomorphic to $\gm \times \gm$ if $r \geq 2, s \geq 3$, is an extension of $\gm \times \gm$ by $\underline{{\bf K}}^{MW}_2$ if either $s = 2$ or $r = 1$ (but not both) and is an extension of $\gm \times \gm$ by ${\underline{{\bf K}}^{MW}_2}^{\oplus 2}$ if $s = 2$ and $r = 1$.

\subsubsection*{Proper, non-projective examples}
It is known that any smooth proper, non-projective toric variety must have $rk Pic(X) \geq 4$ (see \cite{KlSt}).  An example of a smooth proper, non-projective toric $3$-fold with $rk Pic(X) = 4$ was constructed by Oda (see \cite{Oda} p. 84); this example admits a morphism to projective space, though there exist smooth proper toric $3$-folds admitting no morphism to a projective variety (see e.g., \cite{PaFu}).  In all of these cases, we know that $\pi_1^{\aone}(X)$ surjects onto the torus dual to the Picard group, but we can't describe the kernel.  Nevertheless, this discussion suggests the following combinatorial question. 

\begin{question}
Does there exist a smooth proper, non-projective fan $\Sigma$ having the property that any pair of primitive vectors is contained in a cone?
\end{question}

\begin{footnotesize}
\bibliographystyle{alpha}
\bibliography{toric}
\end{footnotesize}

\end{document}